\numberwithin{equation}{section}
\definecolor{dgreen}{rgb}{0.0, 0.5, 0.0}
\newcommand{\xtruev}{\mathbf{x}_0}
\newcommand{\ev}{\mathbf{e}}
\newcommand{\e}{\epsilon}
\newcommand{\fv}{\mathbf{f}}
\newcommand{\xv}{\mathbf{x}}
\newcommand{\vv}{\mathbf{v}}
\newcommand{\rv}{\mathbf{r}}
\newcommand{\pv}{\mathbf{p}}
\newcommand{\uv}{\mathbf{u}}
\newcommand{\qv}{\mathbf{q}}
\newcommand{\yv}{\mathbf{y}}
\newcommand{\zv}{\mathbf{z}}
\newcommand{\wv}{\mathbf{w}}
\newcommand{\bv}{\mathbf{b}}
\newcommand{\cv}{\mathbf{c}}
\newcommand{\sv}{\mathbf{s}}
\newcommand{\Zv}{\mathbf{Z}}
\newcommand{\Am}{\mathbf{A}}
\newcommand{\Idm}{\mathbf{I}}
\newcommand{\Vm}{\mathbf{V}}
\newcommand{\Sm}{\mathbf{S}}
\newcommand{\Qm}{\mathbf{Q}}
\newcommand{\Um}{\mathbf{U}}
\newcommand{\Bm}{\mathbf{B}}
\newcommand{\Cm}{\mathbf{C}}
\newcommand{\Dm}{\mathbf{D}}
\newcommand{\Vv}{\mathbf{V}}
\newcommand{\Pm}{\mathbf{P}}
\newcommand{\Fm}{\mathbf{F}}
\newcommand{\Gm}{\mathbf{G}}
\newcommand{\Om}{\mathbf{O}}
\newcommand{\E}{\mathbb{E}}
\newcommand{\inn}{\mathrm{in}}
\newcommand{\out}{\mathrm{out}}
\newcommand{\betav}{\boldsymbol{\beta}}
\newcommand{\Delm}{\boldsymbol{\Delta}}
\newcommand{\covm}{\boldsymbol{\Sigma}}
\newcommand{\presm}{\boldsymbol{\Pi}}
\newcommand{\nuv}{\boldsymbol{\nu}}
\newcommand{\muv}{\boldsymbol{\mu}}
\newcommand{\ide}[1]{\accentset{*}{#1}}
\newcommand{\dv}{\mathbf{d}}
\newcommand{\RR}{\mathbb{R}}
\newcommand{\gram}{\mathbf{G}}
\newcommand{\be}{\begin{equation}}
	\newcommand{\ee}{\end{equation}}
\newcommand{\ben}{\begin{equation*}}
	\newcommand{\een}{\end{equation*}}
\newcommand{\abs}[1]{\lvert#1\rvert}
\newcommand{\norm}[1]{\lVert#1\rVert}
\newtheorem{lem}{Lemma}
\newtheorem{defi}{Definition}
\newtheorem{fact}{Fact}
\newtheorem{rem}{Remark}
\def\app#1#2{%
  \mathrel{%
    \setbox0=\hbox{$#1\sim$}%
    \setbox2=\hbox{%
      \rlap{\hbox{$#1\propto$}}%
      \lower1.1\ht0\box0%
    }%
    \raise0.25\ht2\box2%
  }%
}
\def\approxprop{\mathpalette\app\relax}
\renewcommand{\P}{\mathbb{P}}
\title{A Non-asymptotic Analysis of Generalized Approximate Message Passing Algorithms with Right Rotationally Invariant Designs} 
\author{%
Collin Cademartori
  \thanks{Columbia University, USA, Email: {\tt cac2301@columbia.edu}}
  \and Cynthia Rush
  \thanks{Columbia University, USA, Email: {\tt cgr2130@columbia.edu}. This work was supported in part by NSF CCF $\#1849883$. }
}
\begin{document}

\maketitle

%
\begin{abstract}
Approximate Message Passing (AMP) algorithms  are a class of iterative procedures for computationally-efficient estimation in high-dimensional inference and estimation tasks. Due to the presence of an `Onsager' correction term in its iterates, for  $N \times M$ design matrices $\Am$ with i.i.d.\ Gaussian entries, the asymptotic distribution of the estimate at any iteration of the algorithm can be exactly characterized in the large system limit as $M/N\to \delta\in (0,\infty)$ via a scalar recursion referred to as state evolution. In this paper, we show that appropriate functionals of the iterates, in fact, concentrate around their limiting values  predicted by these asymptotic distributions with rates exponentially fast in $N$ for a large class of AMP-style algorithms, including those that are used when high-dimensional generalized linear regression models are assumed to be the data-generating process, like the generalized AMP algorithm, or those that are used when the measurement matrix is assumed to be right rotationally invariant instead of i.i.d.\ Gaussian, like vector AMP and generalized vector AMP. In practice, these more general AMP algorithms have many applications, for example in  in communications or imaging, and this work provides the first study of finite sample behavior of such algorithms.
\end{abstract}

\tableofcontents

\newpage

\section{Introduction}
Approximate message passing (AMP) algorithms are a class of iterative methods for solving various high-dimensional statistical estimation and inference problems  \cite{AMP, AMP_GM, BayMont11, krz12, GAMP}.  In this paper we focus specifically on the problem of high-dimensional (generalized) linear regression. In particular, we consider estimating an unknown coefficient vector or signal $\xtruev$, assuming knowledge of both the output $\yv$ and matrix $\Am$, using models of the form
\begin{equation}
\begin{split}
\label{eqn:gen_reg}
\yv &\sim p\left(\yv\mid\Am \xtruev\right), \qquad \xtruev \stackrel{iid}{\sim} p(\xtruev),
\end{split}
\end{equation}
where $\Am\in \mathbb{R}^{M\times N}$ with $M < N$,  $p(\xtruev)$ is a prior distribution on the signal, and $p\left( \yv\mid \cdot \right)$ is some output distribution. We will give special consideration in the paper to the case where $p(\yv\mid \cdot)$ is a Gaussian distribution. Letting $f(\cdot)$ denote a normal density, this allows us to model the problem as
\begin{equation}
\begin{split}
\label{eqn:lin_reg}
\yv &= \Am \xtruev + \wv, \qquad \wv \stackrel{iid}{\sim} f(\wv), \qquad \xtruev \stackrel{iid}{\sim} p(\xtruev).
\end{split}
\end{equation}
 AMP-style algorithms can accommodate a range of estimation procedures for the models in \eqref{eqn:gen_reg}-\eqref{eqn:lin_reg}, including maximum a posteriori (MAP)  and minimum mean squared error (MMSE) estimation. See \cite{feng2022unifying} for a tutorial on AMP.

In its original form, AMP
is the following two-step iteration for recovering $\xtruev$ from knowledge of $\yv$ and $\Am$ under the linear model of \eqref{eqn:lin_reg} and the assumption that $\Am$ is i.i.d.\ Gaussian.  At iteration $k \geq 0$, the algorithm updates its estimate of the signal $\xtruev$ with estimates $\hat{\xv}_1, \hat{\xv}_2, ...$. Initializing with $\hat{\xv}_{-1} = \vv_{-1} = \mathbf{1}$, calculate for $k \geq 0$,
\begin{align}
 \rv_k &= \hat{\xv}_{k-1} + \Am^T \vv_{k-1}, \qquad \hat{\xv}_k = g_{k}(\rv_k),  \label{eq:AMP1} \\
\vv_k &= \yv-\Am\hat{\xv}_k + \frac{N}{M} \vv_{k-1} \mathrm{div}[ g_k(\rv_k)] , \label{eq:AMP2}
\end{align}
where $g_k: \mathbb{R} \rightarrow \mathbb{R}$, which acts elementwise on its vector input, is the so-called `denoiser' function, an appropriately-chosen Lipschitz function  depending on the estimation procedure to be performed, and $\mathrm{div}[g_k(\rv)] = \sum_{i=1}^N \frac{\partial}{\partial [\rv]_i} g_k([\rv]_i)$ is the divergence of the denoiser, measuring the sensitivity of $g_k$ at its input.  
The $\vv_k$ update in \eqref{eq:AMP2} can be interpreted as  a corrected residual: the usual residual $\yv-\Am\hat{\xv}_k$ with a so-called `Onsager correction' given by $\frac{N}{M} \vv_{k-1} \mathrm{div}[ g_k(\rv_k)]$.

With the presence of the correction term in the residual step,
it is possible under certain conditions to characterize the asymptotic distribution of $\rv_k$ in the large system limit where $N\to \infty$ and ${M}/{N}\to \delta\in (0,1)$ (see \cite{BayMont11, Bolt12}). In particular, for variances $\tau_{k}$ that can be characterized exactly by a scalar recursion referred to as state evolution, in this limit, the elements of the vectors $\rv_k$ behave like samples from a Gaussian distribution with mean vector $\xtruev$ and covariance matrix $\tau_k \mathbf{I}_N$, denoted henceforth as $N(\xtruev,\tau_k \mathbf{I}_N)$, where $\xtruev$ is the true signal and $\mathbf{I}_N$ is an $N \times N$ identity matrix. 

More formally, it is shown in \cite{BayMont11} that for all pseudo-Lipschitz (defined in Section~\ref{sec:main}) loss functions $\phi$, in the large system limit, empirical averages converge to deterministic limits, 
\begin{equation}\label{eq:emp_avg}
  \frac{1}{N}\sum_{i=1}^N \phi([\rv_k]_i, [\xtruev]_i) \rightarrow \mathbb{E}\{\phi(X_0+\tau_k Z, X_0)\},
\end{equation}
where $Z \sim N(0,1)$ is independent of $X_0 \sim p(\xtruev)$.  In \cite{AMP_FS}, this distributional convergence is refined with a finite sample analysis showing that these empirical averages 
exhibit concentration around their limits with rates exponential in $N$ up to $t = O\left(\frac{\log N}{\log  \log  N}\right)$ iterations. 

However, asymptotic guarantees for the AMP algorithm in \eqref{eq:AMP1}-\eqref{eq:AMP2} have been primarily studied in settings where  the elements of the design matrices $\Am$ are i.i.d.\ sub-Gaussian \cite{BayMont11, Bayati2015} (though, an active line of recent research studies \emph{universality} properties of AMP algorithms more generally; see Section~\ref{sec:related}) and the concentration results only when the elements are i.i.d.\ Gaussian. Furthermore, considering the linear models in \eqref{eqn:gen_reg}-\eqref{eqn:lin_reg} concentration results have only been given for AMP for the model in \eqref{eqn:lin_reg}, which is limited to the case of Gaussian outputs. In particular, there has been no concentration results developed for generalized versions of AMP, which can handle models such as \eqref{eqn:gen_reg}, or for AMP algorithms with measurement matrices that are not i.i.d.\ Gaussian, despite such algorithms playing an important role in high-dimensional statistics \cite{candes2020, sur2019modern, donoho2016high}, wireless communications \cite{hou2022sparse, biyik2017generalized}, and many other applications \cite{venkataramanan2022estimation, mondelli2022optimal, mondelli2021approximate, mondelli2021pca, barbier2019optimal, zhang2022precise}. 

In this work, we will extend the finite sample analysis for AMP for the linear model in \eqref{eqn:lin_reg} given in \cite{AMP_FS} for a suite of AMP-style algorithms that can be used when it is  assumed that \textbf{(i)} the data follows the generalized linear model in \eqref{eqn:gen_reg}, \textbf{(ii)} the random design matrices $\Am$ comes from a class of right-rotationally invariant matrices (defined below), and \textbf{(iii)} both are assumed. In particular, generalized approximate message passing (GAMP) \cite{GAMP}, is an AMP-style algorithm used to perform estimation when $\Am$ is assumed to be i.i.d.\ Gaussian but the data come from the generalized linear regression model in \eqref{eqn:gen_reg}. Moreover, vector approximate message passing (VAMP) and its generalized extension GVAMP are AMP-style algorithms, recently introduced in \cite{VAMP} and \cite{VAMP_general}, designed to perform estimation for the models \eqref{eqn:lin_reg} and \eqref{eqn:gen_reg}, respectively, but under a much larger class of random design matrices $\Am$.
In particular, this is the class of  \emph{right orthogonally invariant} $\Am$, meaning that $\Am \mathbf{V}$ has the same distribution as $\Am$ for any $N \times N$ orthogonal matrix $\mathbf{V}$.  As we will see, this assumption represents a significant relaxation of the i.i.d.\ sub-Gaussian condition.

This work refines the existing GAMP and VAMP \cite{VAMP, GAMP, fletcher2018inference}  asymptotics by providing a finite sample analysis  of GVAMP -- which works under the assumption of right rotationally invariant design matrices and generalized liner models \eqref{eqn:gen_reg}  -- and by extension to GAMP and VAMP as well. In particular, the non-asymptotic result for VAMP shown in this paper is given under only slightly stronger conditions than are used in the proof of its large system limit asymptotics in \cite{VAMP}. In more detail, in \cite{VAMP}, it was shown that empirical averages of pseudo-Lipschitz functions of the VAMP estimates $\frac{1}{N}\sum_{i=1}^N\phi\left([ \hat{\xv}_{1k}]_i,[\xv_0]_i \right)$ converge in the large system limit (i.e.\ as $N\to\infty$) to their expected values under limiting distributions that can be characterized exactly. Here, we use concentration tools to show that this convergence in fact occurs exponentially fast in $N$ and demonstrate how these results can be extended to the GVAMP and GAMP algorithms as well. 

To summarize, this work generalizes the previous finite sample analyses for the AMP algorithm simultaneously in two main ways:
\begin{itemize}
\item Our work covers GAMP and GVAMP algorithms  for generalized linear problems in the form of \ref{eqn:gen_reg}, with output distributions constrained only by a certain log-concavity requirement. This encompasses many common data models not covered by the Gaussian case, including, for example, logistic and Poisson regression.
\item Our analysis applies to a much less restrictive distributional assumption over the measurement matrix $\Am$ than previous finite sample analyses. In particular, our measurement matrix model samples the singular vector matrices uniformly from their orthogonal groups and allows the singular values to be marginally drawn from any arbitrary distribution with bounded support. This setup is substantially more general than the case of i.i.d.\ Gaussian matrices, and can in particular accommodate arbitrarily poor conditioning of the measurement matrix.
\end{itemize}

We emphasize that while refining earlier asymptotic results to provide non-asymptotic guarantees is useful to clarify the effect of the iteration number on the problem dimension and the sped of convergence to the state evolution predictions, it also has other application-specific implications; e.g.\ the finite sample analyses of \cite{AMP_FS} were used to establish the error exponent for sparse regression codes with AMP decoding for the additive white Gaussian noise channel \cite {rush2017error} and the results in this work will be similarly useful in establishing error exponent for sparse regression codes with GAMP or VAMP decoding for more general channel models considered in \cite{hou2022sparse, biyik2017generalized}.

Structurally, the proof we provide is based on an approach used for the finite sample analysis of AMP in \cite{AMP_FS}; though, there are critical differences in the required analysis for GVAMP.   In short, we develop a suite of concentration of measure tools to prove that, as the algorithm runs, the output retains concentration around its expected values.  The idea is that if the algorithm concentrates through iteration $k$, then we can prove that it will concentrate at iteration $k+1$, with only a slightly degraded rate for the concentration.  The differences between our proof and that given in \cite{AMP_FS} are detailed in Section~\ref{sec:discuss}. 
At a high level, when studying generalized versions of AMP algorithms, one must employ the output vector $\mathbf{y}$ in the denoising functions, which is not the case in standard AMP. This causes added dependencies within the algorithm, as now the measurement matrix $\mathbf{A}$ is no longer independent of the denoioser functions. 
On the other hand, studying the algorithm under rotationally invariant assumptions on $\mathbf{A}$ (as opposed to i.i.d.\ Gaussian assumptions), also complicates the analyses by adding additional dependencies that need to be handled carefully in the concentration arguments (see point \textbf{(1)} in Section~\ref{sec:discuss}).

\subsection{Related Work} \label{sec:related}

Recent work by Li and Wei \cite{li2022non} extends the finite sample analysis of \cite{AMP_FS} to a related AMP algorithm for spiked matrix estimation.  This work shows rates exponential in $N$ for up to $O\left(\frac{N}{\texttt{poly} \log N}\right)$ iterations by using novel proof methods, improving on the $O\left(\frac{\log N}{\log  \log  N}\right)$ iteration guarantees found in this paper and in \cite{AMP_FS}. An intriguing open question is whether their proof technique can be extended to AMP for linear and generalized linear regression, as is studied here.

As we mentioned above, AMP algorithms have been demonstrated both empirically and theoretically to perform well over a wide range of problems and to converge faster than the simple iterative soft thresholding method. However, there are now many variants on the original AMP algorithm. Remaining within the realm of Gaussian design matrices, for example, works have extended the AMP analyses to non-separable denoising functions that do not act componentwise on their vector arguments and can therefore take advantage of correlation between entries of the unknown signal \cite{berthier2020state, ma2019analysis}, to matrices with independent entries and a blockwise variance structure \cite{javanmard2013state, donoho2013information}, or to incorporate additional side information on the signal in order to accommodate streaming estimation procedures \cite{liu2022rigorous}. As mentioned, GAMP is designed to accommodate general separable output distributions; similarly for GVAMP. In other words, GAMP extends AMP to solve problems like \eqref{eqn:gen_reg}. MLAMP \cite{MLAMP}  takes this idea further, accommodating multi-layer problems where separable nonlinear channels are stacked between multiple linear transformations. 

In addition to VAMP, there have been other approaches to extending AMP to work under more general assumptions on the measurement matrix beyond i.i.d.\ Gaussian entries. These include the orthogonal approximate message passing algorithm (OAMP) \cite{OAMP}, extensions to matrices with i.i.d.\ sub-Gaussian entries \cite{bayati2015universality, chen2021universality}, semi-random matrices \cite{dudeja2022universality}, and other generalizations of AMP for rotationally invariant matrices  \cite{fan2022approximate, wang2022universality, opper2016theory}.
The OAMP algorithm is designed, like VAMP, to accommodate a larger class of design matrices $\Am$. In particular, OAMP assumes that $\Am$ has distribution which is orthogonally invariant. If $\Am$ has singular value decomposition $\mathbf{U} \mathbf{S} \mathbf{V}^T$, then this requires that $\mathbf{U}$ and $\mathbf{V}$ are uniformly distributed and $\mathbf{U}$, $\mathbf{V}$, and $\mathbf{S}$ are mutually independent.  Extending finite sample analyses to more general classes of AMP algorithms like those discussed above is an exciting avenue for future work.

The types of generalized versions of AMP we study here, namely GAMP and GVAMP, which are designed for data models such as \eqref{eqn:gen_reg}, and VAMP, which allows to extend beyond i.i.d.\ Gaussian measurement matrices, have found many applications in the literature to date. These algorithms play an important role in high-dimensional statistics \cite{candes2020, sur2019modern, donoho2016high}, wireless communications \cite{hou2022sparse, biyik2017generalized}, and many other applications \cite{venkataramanan2022estimation, mondelli2022optimal, mondelli2021approximate, mondelli2021pca, barbier2019optimal, zhang2022precise}. In particular, the finite sample analysis for VAMP and GAMP presented here can be used to find the error rates for the capacity-achieving sparse regression coding schemes introduced in~\cite{hou2022sparse, biyik2017generalized}, which use VAMP and GAMP as decoders. 
Moreover, AMP algorithms are often used to establish computational limits of various high-dimensional statistics problems, as they are conjectured to be optimal amongst polynomial-time algorithms in some problem settings. In particular, finite sample analyses like that presented in this work, allow one to study such computational limits in the context of changing sparsity,  where the fraction of nonzeros amongst the signal elements approaches zero as the sample size grows (as opposed to the fraction of nonzeros
being fixed in the limit). For example, in
a very simple Bernoulli-Rademacher signal model of rank-one matrix estimation from noisy observations, the work in \cite{macris2020all} considers both information theoretic and computational limits, with the computational limits found via a finite sample analysis of the associated AMP algorithm. We believe the analyses in this work could be used to study computational limits for changing sparsity regimes in more general problem settings.

As we shall see below, the i.i.d.\ (sub-)Gaussian assumption is stronger than is needed to characterize the asymptotic behavior of (G)VAMP. And if one restricts attention to VAMP, the orthogonally invariant condition can be relaxed to the less stringent right orthogonally invariance condition. Furthermore, (G)VAMP has been demonstrated empirically to converge faster than GAMP across a range of design matrices (see \cite[Section VI]{VAMP}).

We finally mention that part of this work was presented at ISIT 2020 \cite{cademartori2020exponentially}. In particular, \cite{cademartori2020exponentially} provides the statement of the result for VAMP but does not include proof details, nor does it include the extension of these finite sample analyses to generalized versions of VAMP or AMP, namely to GVAMP or GAMP. These extensions are significant contributions beyond the original VAMP result, as they provide finite sample analyses for AMP algorithms for the generalized regression model of \eqref{eqn:gen_reg}, whereas the VAMP result applied only to \eqref{eqn:lin_reg}. Applications of the generalized versions of VAMP and AMP were discussed above.

\subsection{Notation}
Throughout, we use lowercase boldface letters like $\vv$ to denote vectors and uppercase boldface letters like $\Vm$ to denote matrices. We use $\Vm^T$ to denote the transpose of $\Vm$. For matrices $\Vm$, we use $\Vm_{ij}$ to denote the element in the $i^{\mathrm{th}}$ row and $j^{\mathrm{th}}$ column. Similarly, $\Vm_{\cdot j}$ is the $j^{\mathrm{th}}$ columns of the matrix and $\Vm_{i\cdot}$ is the $i^{\mathrm{th}}$ row. For vectors $\vv$, we use $[\vv]_i$ to denote the $i^{\mathrm{th}}$ element of $\vv$.

For general vectors $\xv$ and $\yv$, $p(\xv)$  denotes the probability density function of $\xv$ and $p(\xv | \yv)$ is the conditional probability density function of $\xv$ given $\yv$. When $p(\xv)$ is a (potentially multivariate) normal density, we will denote it by $f\left( \xv\mid \boldsymbol{\mu},\boldsymbol{\Sigma} \right)$ where $\boldsymbol{\mu}$ is the mean vector and $\boldsymbol{\Sigma}$ the covariance matrix, and we let $N(\mu,\sigma^2)$ denote a Gaussian random variable with mean $\mu$ and variance $\sigma^2$.

\subsection{Outline}
In Section \ref{sec:VAMP}, we introduce the VAMP and GVAMP iterations and provide intuition for their estimation schemes. In section \ref{sec:main}, we state our main concentration result, Theorem \eqref{thm:main}, and introduce the state evolution recursion which exactly characterizes the distributions of the limiting variables. In section \ref{sec:general}, we state and prove a concentration result for a more general abstract recursion and then prove Theorem \eqref{thm:main} by showing how to recover the (G)VAMP estimates from this abstract recursion. In section \ref{sec:discuss}, we conclude with a  discussion of future work.

 \section{Vector AMP and its Generalized Variant} \label{sec:VAMP}
In this section we discuss the VAMP algorithm and its generalized version, GVAMP. We focus on these two algorithms primarily, as the way that GAMP generalizes AMP is analogous, and our main analysis studies the most general of all these algorithms, GVAMP (from which one can derive results for GAMP and VAMP as well).
The VAMP iteration is similar to that of AMP in \eqref{eq:AMP1}-\eqref{eq:AMP2}, and, like AMP, can be derived as a quadratic approximation to a belief propagation algorithm associated to a particular factor graph. The reader is referred to \cite{VAMP} for such a derivation. The GVAMP iteration extends the core ideas of the VAMP algorithm  to handle arbitrary output distributions. Formally, GVAMP operates like two VAMP iterations glued together at the ends. Here, we provide some self-contained intuition for VAMP and GVAMP.

Algorithm $\ref{alg:VAMP}$ presents VAMP. We define a function $g_2:\mathbb{R}^{N} \times \mathbb{R}_{+} \rightarrow  \mathbb{R}^N$ as
\begin{equation}\label{eqn:LMMSE}
g_2(\rv_{2k},\gamma_{2k}) = (\gamma_w \Am^T \Am+\gamma_{2k} \mathbf{I})^{-1} (\gamma_w\Am^T \yv+\gamma_{2k} \rv_{2k}),
\end{equation}
where $\gamma_w = 1/\tau_w$ and $\tau_w < \infty$ is the elementwise variance of the noise $\wv$. We assume that the noise variance  $\tau_w$  is known to simplify the analysis, though, practically VAMP can be run without such knowledge \cite{EM-VAMP}.  The function $g_2$ in \eqref{eqn:LMMSE} has divergence with respect to the components of its first argument $\rv_{2k}$ given by
\[
  \mathrm{div}\left[g_2(\rv_{2k},\gamma_{2k})\right] = \sum_{i=1}^N \frac{\partial}{ \partial [\rv_{2k}]_i} g_2([\rv_{2k}]_i,\gamma_{2k})  = \gamma_{2k}\mathrm{Tr}\left((\gamma_w\Am^T \Am+\gamma_{2k} \mathbf{I})^{-1}\right),
\]
where $\mathrm{Tr}(\cdot)$ is the trace operator, i.e.\ it sum the diagonal elements of its argument.
Analogous to denoisers $g_k$ in AMP, the function $g_1:  \mathbb{R}^{N} \times \mathbb{R}_{+} \rightarrow  \mathbb{R}^N$ taking input $(\rv_{1k},\gamma_{1k})$ 
must be specified by the user.  In VAMP, however, we characterize the dependence of the denoiser on the iteration number through a parameter, $\gamma_{1k}>0$, and the function $g_1$ itself does not change across iterations.  
\begin{algorithm}
\caption{VAMP\label{alg:VAMP}}
\begin{algorithmic}[1]
\Require{Number of iterations $K$, design matrix $\Am\in\mathbb{R}^{M\times N}$, observed $\yv\in\mathbb{R}^M$, and denoiser $g_1(\cdot,\gamma_{1k})$. (Note: $\Am$ and $\yv$ enter the algorithm through the definition of $g_2$ in \eqref{eqn:LMMSE}.)}
\State Initialize $\rv_{10}$ and $\gamma_{10}\geq 0$.
\For{$k\gets 0,\ldots,K$}
\State $\hat{\xv}_{1k} \gets g_1(\rv_{1k},\gamma_{1k}),$
\hspace{35mm}  $\alpha_{1k}\gets \mathrm{div}\left[g_1(\rv_{1k},\gamma_{1k})\right],$
\State $\eta_{1k} \gets \gamma_{1k}/\alpha_{1k},$
\hspace{41mm}  $\gamma_{2k} \gets \eta_{1k} - \gamma_{1k},$
\State $\rv_{2k} \gets \left(\eta_{1k}\hat{\xv}_{1k} - \gamma_{1k} \rv_{1k}\right)/\gamma_{2k},$
\State
\State $\hat{\xv}_{2k} \gets g_2(\rv_{2k},\gamma_{2k}),$
\hspace{35mm}  $\alpha_{2k}\gets \mathrm{div}\left[g_2(\rv_{2k},\gamma_{2k})\right],$
\State $\eta_{2k} \gets \gamma_{2k}/\alpha_{2k},$ 
\hspace{41mm}  $\gamma_{1(k+1)} \gets \eta_{2k} - \gamma_{2k},$
\State $\rv_{1(k+1)} \gets \left(\eta_{2k}\hat{\xv}_{2k} - \gamma_{2k} \rv_{2k}\right)/\gamma_{1k}.$
\EndFor
\end{algorithmic}
\end{algorithm} 
The choice of $g_1$, and its relationship to $g_2$, can be better understood by carefully examining VAMP in the regression setting of~\eqref{eqn:lin_reg}
with, specifically, Gaussian noise $\wv\sim N\left(\mathbf{0},\tau_w \mathbf{I}_M \right)$.
As described above, we can view VAMP as trying to estimate some summary (e.g.\ the mode or mean) of the posterior distribution of the signal given the output.  Recall that the posterior distribution is proportional to the product of the data likelihood and the signal prior, so in our case is given by
\begin{equation}
\label{eq:posterior}
p(\xv_0 \mid \yv) \propto f\left(\yv\mid \Am \xv_0,\gamma_w^{-1} \mathbf{I} \right)\cdot\prod_{i=1}^Np([\xv_{0}]_i),
\end{equation}
where $f(\yv \mid \boldsymbol{\mu}, \mathbf{\Sigma})$ is a multivariate normal density evaluated at $\yv$ having mean $\boldsymbol{\mu}$ and covariance $\mathbf{\Sigma}$, so that $ f\left(\yv\mid \Am \xv_0,\gamma_w^{-1} \mathbf{I} \right)$ is the data likelihood in the case of Gaussian noise. In \eqref{eqn:lin_reg}, we assume that $\xtruev$ has i.i.d.\ elements, thus $\prod_{i=1}^Np([\xv_{0}]_i)$ is the signal prior distribution.  

In general, the posterior in \eqref{eq:posterior} is difficult to calculate; therefore, so is computing any summary statistic from it.  For this reason, at each iteration, VAMP replaces the task of computing a posterior summary with two easier ones. First, observe that $f(\yv \mid \Am \xv_0,\gamma_w^{-1} \mathbf{I})$, as a function of $[\xv_{0}]_i$, is proportional to a normal density $f([\xv_{0}]_i\mid [\rv_1]_{i}, [{\boldsymbol \tau}_1]_{i})$, for some mean $[\rv_1]_{i}$ and variance $[{\boldsymbol \tau}_1]_{i}$. 

Making the further simplifying assumption that $[{\boldsymbol \tau}_1]_{i}$ are equal across $i$, this yields the approximate posterior
\begin{equation}\label{eqn:approx_1}
p(\xv_0 \mid \yv) \approxprop f(\xv_0 \mid \rv_1,\tau_1 \mathbf{I})\cdot\prod_{i=1}^N p([\xv_{0}]_i).
\end{equation}
If we instead approximate the priors $p([\xv_{0}]_i)$ by independent normal distributions with means $[\rv_2]_{i}$ and equal variances $\tau_2$, then we get the approximate posterior 
\begin{equation}\label{eqn:approx_2}
p(\xv_0 \mid \yv) \approxprop f\left(\yv \mid \Am \xv_0,\gamma_w^{-1}\mathbf{I}\right) \cdot f\left(\xv_0 \mid \rv_{2},\tau_{2} \mathbf{I}\right).
\end{equation}

In each iteration, VAMP uses both approximate posteriors -- \eqref{eqn:approx_1} and \eqref{eqn:approx_2} -- to update the estimate of $\xv_0$. Estimating $\xv_0$ by posterior \eqref{eqn:approx_1} requires specifying $\rv_1$ and $\tau_1$, which control the approximate likelihood of the data. Since approximation \eqref{eqn:approx_2} uses the true likelihood, VAMP uses the estimate from \eqref{eqn:approx_2} to update $\rv_1$ and $\tau_1$ in \eqref{eqn:approx_1}. Similarly, the estimates from \eqref{eqn:approx_1} (which use the true prior) are used to update parameters $\rv_2$ and $\tau_2$ for the approximate prior in \eqref{eqn:approx_2}. By iterating these steps, VAMP uses past estimates to improve both approximations, and then uses the improved approximations to further improve our estimates.

In this context, $g_1$ and $g_2$ perform estimation of the two approximate models. The definitions of these functions depend on both the signal prior and desired posterior summary. In $\eqref{eqn:approx_2}$, the prior is modeled as Gaussian regardless of the true prior. Since the resulting approximate posterior is again Gaussian, and the MMSE and MAP estimates are identical, \eqref{eqn:LMMSE} is the natural choice for $g_2,$ as it calculates the mean of \eqref{eqn:approx_2}. However,  \eqref{eqn:approx_1} depends on the choice of prior $p(\xv_0)$ and the desired summary, so $g_1$ must be specified by the user accordingly.

Next we turn to the GVAMP iteration. For GVAMP, the user must supply two denoising functions, $g_{x1}$ and $g_{z1}$, because now both the prior and the output distributions are user-specified. Indeed, the function $g_{x1}$ is meant to combine an estimate $\rv_{2k}$ of $\xv_0$ with the prior information $p(\xv_0)$ to produce a new estimate of $\xv_0$. Likewise, the function $g_{z1}$ is meant to combine an estimate $\pv_{2k}$ of $\zv_0$ with the specified output distribution $p(y\mid z)$ to produce an updated estimate of $\zv_0$.

We define two other denoising functions in terms of the singular value decomposition $\Am=\Um \Sm \Vm^T$ with $\Um\in\mathbb{R}^{M\times M}$, $\Vm\in\mathbb{R}^{N\times N}$ orthogonal, and $\Sm\in\mathbb{R}^{M\times N}$ the rectangular diagonal matrix of singular values. Let $g_{x2}: \mathbb{R}^N \times \mathbb{R}^M  \times \mathbb{R}_{+}  \times \mathbb{R}_{+} \rightarrow  \mathbb{R}^N$ and $g_{z2}: \mathbb{R}^N \times \mathbb{R}^M  \times \mathbb{R}_{+}  \times \mathbb{R}_{+} \rightarrow  \mathbb{R}^M$ be defined as
\begin{align}
  g_{x2}\left( \rv_{2k},\pv_{2k},\gamma_{2k},\tau_{2k}\right) &= \Vm\Dm_k\left( \tau_{2k}\Sm^T\Um^T\pv_{2k} + \gamma_{2k}\Vm^T \rv_{2k}\right),\\
  g_{z2}\left( \rv_{2k},\pv_{2k},\gamma_{2k},\tau_{2k} \right) &= \Um\Sm\Dm_k\left( \tau_{2k}\Sm^T\Um^T\pv_{2k} + \gamma_{2k}\Vm^T \rv_{2k}\right).
\end{align} 
where
\[
\Dm_k = \mathrm{diag}(\mathbf{d}_k), \text{ with } [\mathbf{d}_k]_{n} = \frac{1}{\tau_{2k}s_{n}^2 + \gamma_{2k}} \text{ and } s_n = \begin{cases}[\Sm]_{nn}, & n\leq M, \\ 0, & M<n\leq N.\end{cases}
\]
The divergences of these functions with respect to the components of $\rv_{2k}$ and $\pv_{2k}$, respectively, are
\begin{align}
  \mathrm{div}[g_{x2}\left( \rv_{2k},\pv_{2k},\gamma_{2k},\tau_{2k}\right)] &= \sum_{n=1}^N \frac{\gamma_{2k}}{\tau_{2k}s^2_n + \gamma_{2k}}, \qquad  \mathrm{div}[g_{z2}\left( \rv_{2k},\pv_{2k},\gamma_{2k},\tau_{2k} \right)] = \sum_{n=1}^N \frac{\tau_{2k}s^2_n}{\tau_{2k}s^2_n + \gamma_{2k}}.\label{eqn:gx2_def}
\end{align}
The above formulas follow because $\mathrm{Tr}(\mathbf{A}\mathbf{B}) = \mathrm{Tr}(\mathbf{B}\mathbf{A})$ when both products are defined. As for VAMP, the iteration dependence of the denoising is entirely controlled by parameters $\gamma_{2k}$ and $\tau_{2k}$. 

With this setup, we specify the GVAMP algorithm  in Algorithm~\ref{alg:GVAMP}.
\begin{algorithm}
\caption{GVAMP\label{alg:GVAMP}}
\begin{algorithmic}[1]
\Require{Number of iterations $K$, design matrix $\Am\in\mathbb{R}^{M\times N}$, observed $\yv\in\mathbb{R}^M$, and denoisers $g_{x1}(\cdot,\gamma_{1k})$ and $g_{z1}(\cdot,\tau_{1k})$}. (Note: $\Am$ and $\yv$ enter the algorithm through $g_{x2}$ and $g_{z2}$ in \eqref{eqn:gx2_def}.)
\State Initialize $\rv_{11}$, $\pv_{11}$, $\gamma_{11}\geq 0$, and $\tau_{11}\geq 0$.
\For{$k\gets 1,\ldots,K$}
\State $\hat{\xv}_{1k} \gets g_{x1}(\rv_{1k},\gamma_{1k}),$ 
\hspace{34mm}  $\alpha_{1k}\gets \mathrm{div}\left[g_{x1}(\rv_{1k},\gamma_{1k})\right],$
\State $\rv_{2k} \gets \left(\hat{\xv}_{1k} - \alpha_{1k} \rv_{1k}\right)/(1-\alpha_{1k}),$ 
\hspace{13mm}   $\gamma_{2k} \gets \gamma_{1k}\left( \frac{1}{\alpha_{1k}}-1 \right), $
\State
\State $\hat{\zv}_{1k} \gets g_{z1}(\pv_{1k},\tau_{1k}),$  
 \hspace{35mm}    $\beta_{1k}\gets \mathrm{div}\left[g_{z1}(\pv_{1k},\tau_{1k})\right],$
\State  $\pv_{2k} \gets \left(\hat{\zv}_{1k} - \beta_{1k} \pv_{1k}\right)/(1-\beta_{1k}),$
 \hspace{13mm}   $\tau_{2k} \gets \tau_{1k}\left( \frac{1}{\beta_{1k}}-1 \right),$
 \State
\State $\hat{\xv}_{2k} \gets g_{x2}(\rv_{2k},\pv_{2k},\gamma_{2k},\tau_{2k}),$
 \hspace{20mm}   $\alpha_{2k}\gets \mathrm{div}\left[g_{x2}(\rv_{2k},\pv_{2k},\gamma_{2k},\tau_{2k})\right],$
\State $\rv_{1(k+1)} \gets \left(\hat{\xv}_{2k} - \alpha_{2k} \rv_{2k}\right)/(1-\alpha_{2k}),$
 \hspace{7mm}  $\gamma_{1(k+1)} \gets \gamma_{2k}\left( \frac{1}{\alpha_{2k}}-1 \right),$
\State
\State $\hat{\zv}_{2k} \gets g_{z2}(\rv_{2k},\pv_{2k},\gamma_{2k},\tau_{2k}),$
 \hspace{21mm}  $\beta_{2k}\gets \mathrm{div}\left[g_{z2}(\rv_{2k},\pv_{2k},\gamma_{2k},\tau_{2k})\right],$
\State $\pv_{1(k+1)} \gets \left(\hat{\zv}_{2k} - \beta_{2k} \pv_{2k}\right)/(1-\beta_{2k}),$
 \hspace{7mm}  $\tau_{1(k+1)} \gets \tau_{2k}\left( \frac{1}{\beta_{2k}}-1 \right).$
\EndFor
\end{algorithmic}
\end{algorithm}
As in the case of VAMP, we can understand the GVAMP iteration as solving a series of simpler surrogate estimation problems that are combined between steps to yield an estimate of $\xv_0$ that accounts for all of the information in the full model. Each of these surrogate models incorporates the estimates from one or more of the other surrogate models by relating them to the parameters they are estimating through additive Gaussian noise. In other words, each of the surrogate models assume that $\rv_{uk}=\xv_0 + \vv$ and $\pv_{uk}=\zv_0 + \wv$ for $u \in \{1,2\}$ where $\vv$ and $\wv$ are i.i.d.\ mean $\mathbf{0}$ Gaussian vectors with variances estimated by the model parameters $\gamma_{uk}$ and $\tau_{uk}$.

The first surrogate model incorporates the prior with an estimate $\rv_{1k}$ of $\xv_0$, having density
\begin{equation}
  \label{eq:approx_gvamp_1}
  p_1(\rv_{1k}\mid\gamma_{1k}) = \prod_{i=1}^Np([\xv_0]_i) \, f([\rv_{1k}]_i\mid [\xv_0]_i,1/\gamma_{1k}),
\end{equation}
where $f(\cdot\mid\mu,\sigma^2)$ is the normal density with mean $\mu$ and variance $\sigma^2$. We can view $g_{x1}$ as performing inference for the parameter $\xv_0$ of this model. 
The second surrogate model combines estimates $\rv_{2k}$ of $\xv_0$ and $\pv_{2k}$ of $\zv_0$ with the information from the full model encoded by the linear constraint $\zv_0 = \Am\xv_0$. Its density is given as follows.
\begin{equation}
  \label{eq:approx_gvamp_2}
  p_2(\rv_{2k},\pv_{2k}\mid\gamma_{2k},\tau_{2k}) = \prod_{i=1}^Nf([\xv_0]_i\mid [\rv_{2k}]_i,1/\gamma_{2k})\prod_{j=1}^Mf([\zv_0]_j\mid [\Am\xv_0]_j,1/\gamma_w) \, f([\pv_{2k}]_j\mid [\zv_0]_j,1/\tau_{2k}).
\end{equation}
We can view $g_{x2}$ and $g_{z2}$ as performing inference for the parameters $\xv_0$ and $\zv_0$ of this model respectively in the limit as $\gamma_w\to \infty$ (i.e. as the linear constraint is made exact).
The final surrogate model combines an estimate $\pv_{1k}$ of $\zv_0$ with the output distribution and has density
\begin{equation}
  \label{eq:approx_gvamp_3}
  p_3(\pv_{1k}\mid\tau_{1k}) = \prod_{j=1}^Mf([\zv_0]_j\mid [\pv_{1k}]_j,1/\tau_{1k}) \, p([\yv]_j\mid [\zv_0]_j).
\end{equation}
The function $g_{z1}$ can be seen as performing inference for the parameter $\zv_0$ of this model. The steps between the denoising steps in the GVAMP algorithm can viewed analogously to those in VAMP. In particular, they update the running estimates of the parameters $\xv_0$ and $\zv_0$ by combining the denoiser outputs with the previous estimates in a convex combination, with the relative weighting determined by the sensitivity (i.e.\ the divergence) of the denoiser at the previously denoised input.

\section{Main Result} \label{sec:main}
Before stating our main concentration result for GVAMP, from which results for VAMP and GAMP follow, we first revisit the notion of empirical convergence, we provide some more details on the matrix assumptions required for the VAMP algorithm to perform well, and we spend some time discussing the assumptions under which our main results follow.
 First, we define the class of functions that act as test functions for assessing convergence and concentration.
\begin{defi}
  A function $\phi:\mathbb{R}^J \rightarrow \mathbb{R}$ is pseudo-Lipschitz
of order $2$, denoted $\phi\in\mathrm{PL}(2)$, if for vectors $\vv, \vv' \in \mathbb{R}^J$ and some constant $L\geq 0$, it satisfies 
$\left|\phi(\vv)-\phi(\vv')\right|\leq L\|\vv-\vv'\|\left(1+\|\vv\|+\|\vv'\|\right).$
\end{defi}
With this concept, we can then define empirical convergence and concentration of vector sequences to $L^2$ random variables/distributions in the following way.
\begin{defi}[Empirical convergence]
We will say that a sequence of random vectors $\{\vv_n\}_{n=0}^\infty\subset\mathbb{R}^J$ converges empirically (with $2$nd moment) to $\mathbf{V}\in\mathbb{R}^J$ if $\mathbb{E}\{ [\mathbf{V}_j]^2\}<\infty$ for all $1\leq j\leq J$ and if
$\frac{1}{N}\sum_{n=0}^N\phi(\vv_n) \to \mathbb{E}\{\phi(\mathbf{V})\},$
almost surely for any $\phi\in\mathrm{PL}(2):\mathbb{R}^J\to\mathbb{R}$.
\end{defi}

\begin{defi}[Exponentially fast concentration]
  We will say that a random vector sequence $\{\vv_n\}_{n=0}^\infty\subset\mathbb{R}^J$ concentrates exponentially fast (in $N$) on $\mathbf{V}\in\mathbb{R}^J$ if $\mathbb{E}\{ [\mathbf{V}_j]^2\}<\infty$ for all $1\leq j\leq J$, and if for all $\epsilon>0$ and all $N\geq 0$,
\[
\P\Big(\Big| \frac{1}{N}\sum_{n=0}^N\phi(\vv_n) - \mathbb{E}\phi(\Vv) \Big| > \epsilon \Big) \leq K\exp(-kN\epsilon^2),
\]
for any $\phi\in\mathrm{PL}(2):\mathbb{R}^J\to\mathbb{R}$, where $K,k >0$ are universal constants independent of $\phi$ and $N$. 
\label{def:concentration}
\end{defi}

If, for any finite $N\geq 0$, $\vv_1,\ldots,\vv_N$ are drawn i.i.d.\ from a subgaussian distribution, then \cite[Lemma B.4]{AMP_FS} shows that $\{\vv_n\}$ concentrates on $\vv_1$. Thus, this definition roughly requires the sequence $\{\vv_n\}$ to behave asymptotically like an i.i.d.\ sample from a subgaussian distribution.
In what follows, we simply write that a vector sequence converges empirically to or concentrates exponentially fast on some random variable, without explicitly stating  that the random variable has a finite $2$nd moment.

The following definitions allow us to characterize those matrices $\Am$ to which our results apply.

\begin{defi}[Haar Distribution] \label{def:Haar}
A matrix $\Vm\in\mathbb{R}^{N\times N}$ is Haar distributed on the class of $N\times N$ orthogonal matrices if $\Vm_0\Vm \stackrel{d}{=} \Vm$ for any non-random orthogonal matrix $\Vm_0 \in \mathbb{R}^{N \times N}$.
\end{defi}

\begin{defi}[Orthogonal Invariance]  \label{def:orthog_inv}
For a matrix $\Am \in \mathbb{R}^{M \times N}$, let $\sv$ be its vector of singular values and let $\Am = \Um \Sm \Vm^T$ be its singular  value decomposition, where $ \Um\in\mathbb{R}^{M\times M}$ and $\Vm\in\mathbb{R}^{N\times N}$ are orthogonal matrices and $\Sm\in\mathbb{R}^{M\times N}$ is the rectangular, diagonal matrix with $\Sm_{ii}=\sv_{i}$ for $1\leq i\leq M$.  $\Am$ is \textbf{orthogonally invariant} if  $\Vm$ and $\Um$ are independent and \textbf{Haar distributed} on the groups of $N\times N$ and $M\times M$ orthogonal matrices, respectively. This implies that the distributions of $\Um_0\Am \Vm_0$ for any fixed orthogonal $\Um_0\in\mathbb{R}^{M\times M}$ and $\Vm_0\in\mathbb{R}^{N\times N}$ and $\Am$ are identical.
\end{defi}

\subsection{Assumptions for the Main Results}
Now we state the assumptions under which we provide our main result for the GVAMP algorithm. We can remove some of these assumptions for the finite sample analysis of VAMP (see Remark~\ref{rem:VAMP}).

\textbf{Assumption 0.} The initial estimates are generated as $\rv_{10}=\Vm\rv^{\mathrm{init}}$ and $\pv_{10}=\Um\pv^{\mathrm{init}}$ for vectors $\rv^{\mathrm{init}}$ and $\pv^{\mathrm{init}}$ with i.i.d. subgaussian entries and independent of the matrices $\Vm$, $\Um$. The output vector $\yv$ can be expressed as $\yv = h\left( \zv_0,\wv \right)$ where $\zv_0=\Am\xv_0$ is the transformed input, $\wv$ is an independent ``disturbance'' vector with i.i.d. subgaussian components, and $h$ is a measurable function. We note that this function is also implicitly constrained by the Lipschitz assumption on $g_{z1}$ below since this denoiser leverages the likelihood $f(y=h(z,w)\mid \cdot)$ for estimating $\zv_0$.

Next, the truth $\xv_0$ is also independent of these other quantities and has i.i.d. subgaussian components. These conditions imply that $\xv_0$, $\wv$, and the initial estimates $\rv_{10}$ and $\pv_{10}$ jointly concentrate on a random vector $(X_0, W,R_{10}, P_{10})$ by Lemma \ref{lem:orth_concentration}. Our main technical result Lemma \ref{lem:main_general} will also imply that $\zv_0$ concentrate on a Gaussian random variable $Z_0$.

The entries of the singular value vector $\sv\in\mathbb{R}_+^{M}$ are sampled i.i.d. from a distribution supported in a bounded interval $[0,s_{\max}]$, where the upper bound $s_{\max}$ is independent of $N$.
Furthermore, $\sv$ is independent of the singular vector matrices $\Um$ and $\Vm$. The concentration assumption for $\xv_0$ is satisfied whenever the components of $\xv_0$ are drawn independently from a subgaussian distribution.

Furthermore, the initial precision estimates $\gamma_{10}$ and $\tau_{10}$ are positive for all $N$ and converge to some $\overline{\gamma}_{10},\overline{\tau}_{10}>0$, respectively. Finally, we assume a fixed sparsity level $\delta = \frac{M}{N}$ independent of $N$.

\textbf{Assumption 1.} The design matrix $\Am \in \mathbb{R}^{M \times N}$ is orthogonally invariant. If $\Am$ has singular value decomposition $\Um\Sm\Vm^T$, then this is equivalent to $\Vm$ and $\Um$ being independently Haar distributed on the group of $N\times N$ and $M\times M$ orthogonal matrices, denoted $\textsf{O}(N)$ and $\textsf{O}(M)$, respectively. The Haar property means that $\Vm'\Vm\stackrel{d}{=}\Vm$ and $\Um'\Um\stackrel{d}{=}\Um$ for any other $\Vm'\in \textsf{O}(N)$ and $\Um'\in \textsf{O}(M)$. Definitions of right orthogonal invariance and the Haar distribution along with some useful properties of these kinds of random elements are given in Definition~\ref{def:Haar}.

To obtain concentration for VAMP, it suffices to assume that $\Am$ is just right orthogonally invariant. This only makes the Haar assumption on $\Vm$, and $\Um$ may be an arbitrary orthogonal matrix. (In particular, no independence needs to be assumed between $\Vm$ and $\Um$.)

\textbf{Assumption 2.}
The log prior $\log p(x)$ and the log likelihood $\log p(y\mid z)$ (as a function of $z$) are concave and $\beta$-smooth. The latter property just requires that, for some $\beta>0$,
\begin{equation}\label{eq:beta_smooth}
  -\frac{\partial^2}{\partial x^2}\log p(x) < \beta, \qquad -\frac{\partial^2}{\partial z^2}\log p(y\mid z) < \beta.
\end{equation}

\textbf{Assumption 3.} The estimating functions $g_{x1}$, $g_{z1}$, $g_{x2}$, and $g_{z2}$ are separable\footnote{A function $g:\mathbb{R}^J\times\mathbb{R}\to\mathbb{R}^J$ is \emph{separable} if for $\vv \in \mathbb{R}^J$ and $z\in\mathbb{R}$, there exist a function $\widetilde{g}:\mathbb{R}^2\to\mathbb{R}$ for which $[g(\vv,z)]_{j} = \widetilde{g}([\vv]_{j},z)$ for all $j \in [J]$.}, and both $g_{x1}$ and $g_{z1}$ and their derivatives are uniformly Lipschitz\footnote{A function $\phi(\wv,c)$ is uniformly Lipschitz at $c_0$ if there is an open neighborhood $U$ of $c_0$ and a constant $L>0$ such that $\phi(\cdot,c)$ is $L-$Lipschitz for any fixed $c \in U$ and  $\left|\phi(\wv,c_1) - \phi(\wv,c_2)\right|\leq L\left(1+\|\wv\|\right)\left|c_1-c_2\right|$ for all $c_1,c_2\in U$.} at $\overline{\gamma}_{1k}$ for all $k\geq 0$. When $g_{x1}$ and $g_{z1}$ are the MAP or MMSE estimators for the models \eqref{eqn:approx_1}, they are separable. Henceforth, we will slightly abuse notation by writing
these functions as taking both vector and scalar input. For VAMP, these same assumptions apply to the estimating functions $g_1$ and $g_2$.

\textbf{Assumption 4.} The $\alpha_{ik}$ and $\beta_{ik}$ are truncated to lie in some interval $[t_{\min},t_{\max}]\subset (0,1)$. The $\gamma_{ik}$ and $\tau_{ik}$ are also clipped so that they lie in some intervals $[\gamma_{\min},\gamma_{\max}]$ with $0<\gamma_{\min}<\gamma_{\max}<\infty$ and $[\tau_{\min},\tau_{\max}]$ with $0<\tau_{\min}<\tau_{\max}<\infty$ respectively.
Intuitively, this prevents the $\rv$ and $\pv$ updates from blowing up and thus assists with convergence. From the point of view of concentration, the truncation allows us to control the exponential rate of convergence. Truncating the $\alpha_{ik}$ and $\beta_{ik}$ is not necessary if the second derivatives in \eqref{eq:beta_smooth} are also lower bounded by some $\alpha>0$ (in which case the densities are also $\alpha$-strongly log-concave).

\textbf{Assumption 5.} We terminate the algorithm according to stopping criteria, given explicitly in the next section. Essentially they imply that we stop if the MSE of our current estimate is sufficiently small or if there is a sufficiently small change in the estimate between successive iterations.

\begin{rem}
\label{rem:VAMP}
  For VAMP, some simplifications of the above assumptions are possible. Specifically, in Assumption 1, we only need $\Vm$ to be Haar distributed ($\Um$ can be treated as deterministic), and in Assumption 2, the latter inequality in \eqref{eq:beta_smooth} is automatic (since the output is normal by definition).
\end{rem}

\begin{rem}
We note that these assumptions do exclude our analysis from applying to (G)VAMP using a denoiser $g_1$ having a derivative that is piecewise constant and, thus, not uniformly Lipschitz, like the soft-thresholding function. However, if the derivative  is bounded (like in the case of the soft-thresholding function), the analysis will still apply to appropriately constructed smooth approximations to the function that would provide qualitatively similar inference.
\end{rem}

\begin{rem}
  The requirement of \textbf{Assumption 0} that $(\xv_0,\rv_{10},\pv_{10})$ jointly  concentrate exponentially fast holds if $\rv^{\mathrm{init}}$ and $\pv^{\mathrm{init}}$ are i.i.d. sampled from a subgaussian distribution and are independent of each other and of all other quantities. This follows by Lemmas \ref{lem:PLsubgaussconc} and \ref{lem:orth_concentration} as well as the assumed concentration of $\xv_0$ and the assumed independence of $\xv_0$, $\Um$, and $\Vm$.
\end{rem}

\begin{rem}
The assumption of fixed sparsity level $\delta$ can be relaxed to allow for sequences $\delta_N=\frac{M}{N}$, bounded away from $0$ and converging to some limit $\delta_N\to\delta > 0$ as $N\to\infty$. However, this requires introducing an $N$ dependence to the state evolution, adding notational and conceptual complexity. 
\end{rem}

\begin{rem}
If we make the additional assumption that $\P(S=0)=0$ (i.e.\ that $\Am$ has full rank with probability one), then can express the SVD of $\Am$ as $\Am = \Um\Sm_{M}\Vm^T_M$ where $\Sm_M$ and $\Vm_M$ are the matrices obtained from $\Sm$ and $\Vm$ by retaining only the first $M$ columns, so that $\Sm\in\mathbb{R}^{M\times M}$ and $\Vm\in\mathbb{R}^{N\times M}$. This ``economy'' decomposition is unique up to permutations of the singular values/vectors with probability one, so we have that $p(\Am) \propto p(\Um,\Vm_M,\sv) = p(\Um)p(\Vm_M)p(\sv)$.
It follows from symmetry properties of the Haar measure that the distribution of $\Vm_M$ is uniform over $N\times M$ matrices with orthonormal columns and, for any singular value bound $s_{\max} < \infty$, we are free within our assumptions to take $[\sv]_i\stackrel{i.i.d.}{\sim}\mathsf{uniform}\left( [0,s_{\max}] \right)$ (as all bounded variables exhibit subgaussian tail concentration). With this choice, the right-hand side above becomes constant; thus, our assumptions are broad enough to apply to the uniform distribution over the class of full rank matrices $\Am$ with singular values drawn from $(0,s_{\max}]^M$.
\end{rem}

For VAMP, our assumptions are only slightly more restrictive than those used for the asymptotic analyses in \cite{VAMP}, with the main differences being that we assume (i) exponentially fast concentration rather than convergence of input quantities and (ii)  clipping and truncating of scalar iterates in Assumption 4 (though,   \cite{VAMP} also suggest clipping $\gamma$ as a practical matter of algorithm stability).

\subsection{Main Results}
Rangan \emph{et al.} \cite{VAMP} show empirical convergence of the vector sequence $\left\lbrace\left([\hat{\xv}_{1k}]_{i}, [\rv_{1k}]_{i}, [\xv_0]_{i}\right)\right\rbrace_{i=1}^N$
to $(\hat{X}_{1k},R_{1k},X_0)$ for the case of the VAMP iterates where $R_{1k} =X_0 + \sqrt{\tau_{1k}}Z$ with $\hat{X}_{1k} = g_1(R_{1k},\overline{\gamma}_{1k})$ and  $Z\sim N(0,1)$ independent of $X_0 \sim p(\xv_0)$.
The constants $\tau_{1k}$ and $\overline{\gamma}_{1k}$ that describe the limiting variable $\hat{X}_{1k}$ can be characterized exactly by the state evolution equations for VAMP, which we turn our attention to in Section~\ref{sec:SE} after we present our main result, Theorem~\ref{thm:main}, characterizing the asymptotic rate of this empirical convergence for VAMP and GVAMP. 

\newtheorem{theorem}{Theorem}
\begin{theorem}
\textbf{[VAMP and GVAMP Concentration.]} Under \textbf{Assumption 0} - \textbf{Assumption 5}  given above, for any $\phi\in\mathrm{PL}(2):\mathbb{R}^2\to\mathbb{R}$, any $k\geq 0$, and any $\epsilon \in (0,1)$,
\begin{eqnarray}
\mathbb{P}\Big(\Big|\frac{1}{N}\sum_{i=1}^N \phi([\hat{\xv}_{1k}]_{i}, [\xv_0]_{i}) - \mathbb{E}\{\phi(\hat{X}_{1k},X_0)\}\Big|\geq\epsilon\Big)\leq C' C_{k}e^{-c' c_{k}N\epsilon^2},
\label{eq:thm1_result}
\end{eqnarray}
where $C_k =  C^{2k} (k!)^{16}$ and $c_k =  \frac{1}{c^{2k} (k!)^{22}}$  with $C, C'$ and $c, c'$ being universal constants not depending on $\epsilon$ or $N$ and $\hat{X}_{1k}= g_1(R_{1k},\overline{\gamma}_{1k})$ where $R_{1k} =X_0 + \sigma_{1k}Z$ with $Z\sim N(0,1)$ independent of $X_0$. The constants $\sigma_{1k}$ and $\overline{\gamma}_{1k}$  are defined in Eq.\ \eqref{eq:SE} in Section~\ref{sec:SE}. In \eqref{eq:thm1_result}, the vector $\hat{\xv}_{1k}$ is that given in either Algorithm~\ref{alg:GVAMP} or Algorithm~\ref{alg:VAMP}.
\label{thm:main}
\end{theorem}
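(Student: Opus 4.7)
The plan is to prove the bound by induction on the iteration index $k$, reducing GVAMP to the abstract recursion referenced in Section~\ref{sec:general} (whose concentration properties will be established in Lemma~\ref{lem:main_general}) and then lifting that concentration back to the GVAMP iterates. The induction hypothesis asserts joint exponential concentration of all scalar parameters $(\alpha_{ik},\beta_{ik},\gamma_{ik},\tau_{ik})$ and of every $\mathrm{PL}(2)$ functional of the vector iterates $(\rv_{ik},\pv_{ik},\hat{\xv}_{ik},\hat{\zv}_{ik})$ through iteration $k$, with the iteration-dependent constants $C^{2k}(k!)^{16}$ and $c^{2k}(k!)^{22}$. The inductive step must then show that each new quantity at step $k+1$ can be represented as a separable, pseudo-Lipschitz function of previously-concentrated quantities, plus one fresh random projection through the Haar matrices $\Vm$ and $\Um$.

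The key technical device is the conditional distribution of $\Vm$ and $\Um$ given the algorithm history. Writing $\Am = \Um\Sm\Vm^T$ and introducing the ``pre-rotated'' iterates $\tilde{\rv}_{2k} = \Vm^T \rv_{2k}$ and $\tilde{\pv}_{2k} = \Um^T \pv_{2k}$, the singular vector matrices enter only through explicit products of the form $\Vm\tilde{\rv}$ and $\Um\tilde{\pv}$, precisely as in the formulas for $g_{x2}$ and $g_{z2}$ in Section~\ref{sec:VAMP}. Conditioning on the collection of linear constraints of the form $\Vm\tilde{\rv}_{2j} = \rv_{2j}$ revealed through iteration $k$, the Haar property (Definition~\ref{def:Haar}) implies that the conditional distribution of $\Vm$ is still uniform over the coset of $\textsf{O}(N)$ satisfying those constraints, so a fresh action $\Vm\tilde{\rv}_{2,k+1}$ decomposes into a deterministic component in the span of previously revealed directions plus a uniform draw on the orthogonal complement; this uniform draw concentrates on the Gaussian image dictated by state evolution via Lemma~\ref{lem:orth_concentration}. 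The same argument applies to $\Um$ on the ``$z$-side'' of the recursion.

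Combining this conditioning with the induction hypothesis, every new quantity at iteration $k+1$ becomes a pseudo-Lipschitz separable function of (i) quantities already known to concentrate and (ii) one new Gaussian-like projection that concentrates by Lemma~\ref{lem:orth_concentration}. The uniform Lipschitz property of $g_{x1}$ and $g_{z1}$ assumed in Assumption~3, the clipping of $\alpha_{ik},\beta_{ik}$ away from $0$ and $1$ in Assumption~4, and the bounded spectrum of $\Am$ together ensure that denominators like $1-\alpha_{1k}$ and matrix operations like $(\gamma_w \Am^T\Am + \gamma_{2k}\Idm)^{-1}$ do not destroy the pseudo-Lipschitz chain, and that $g_{x2}$, $g_{z2}$ retain Lipschitz constants that are bounded independently of $N$. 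A union bound over the $O(k)$ concentration events introduced at each iteration, combined with a product of PL constants that inflates combinatorially when PL functions are composed across $k$ levels of the recursion, explains the $(k!)^{16}$ and $(k!)^{22}$ factors in the final rate.

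The main obstacle, absent from the AMP analysis of \cite{AMP_FS}, is the entanglement of the output $\yv = h(\Am\xv_0,\wv)$ with the Haar randomness: because $\Am\xv_0 = \Um\Sm\Vm^T\xv_0$, the vector $\yv$ is itself a function of $\Um$ and $\Vm$ and cannot be treated as independent when conditioning on revealed linear constraints. The resolution is to enlarge the state of the abstract recursion to include $\zv_0 = \Am\xv_0$ together with its pre-rotated representations $\Vm^T\xv_0$ and $\Um^T\zv_0$, and to prove their joint concentration on a Gaussian $Z_0$ at the outset, as already anticipated by the statement of Assumption~0 and Lemma~\ref{lem:main_general}; the $\beta$-smoothness and log-concavity of $\log p(y\mid z)$ assumed in Assumption~2 then guarantee that the likelihood-based denoiser $g_{z1}$ is uniformly Lipschitz in its first argument and propagates concentration through the $\pv_{1k}$ update without rate blow-up. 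A second subtlety, also absent from the i.i.d.\ Gaussian AMP analysis, is that the two independent Haar factors $\Vm$ and $\Um$ must be decoupled by invoking the conditional Haar distribution separately for each in the correct order matching how the algorithm alternates between $x$-side and $z$-side updates; this bookkeeping, rather than any single hard estimate, is where the proof is most delicate.
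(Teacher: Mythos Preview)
Your proposal is correct and follows essentially the same approach as the paper: reduce GVAMP to an abstract two-sided recursion driven by the Haar matrices $\Vm,\Um$, use the conditional Haar law given the linear constraints revealed so far to decompose each new iterate into a deterministic part plus a fresh rotationally-invariant piece that concentrates on a Gaussian, handle the $\yv$--$\Am$ dependence by carrying $\zv_0$ and its rotated versions as part of the state, and run an induction on $k$ that accumulates the factorial rate constants. The only organizational difference is that the paper makes the ``Gaussian equivalent recursion'' and its joint distributional equality with the true iterates into explicit standalone lemmas (Lemmas~\ref{lem:cond_dist} and~\ref{lem:joint_dists}) before running the induction, and separately isolates the passage from concentration at the limiting parameter $\overline{\gamma}_{1k}$ to the random parameter $\gamma_{1k}$ when deducing Theorem~\ref{thm:main} from Lemma~\ref{lem:main_general}; you fold these steps into the inductive narrative, but the content is the same.
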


\begin{rem}
We emphasize that Theorem~\ref{thm:main} is true for both GVAMP (Algorithm~\ref{alg:GVAMP}) and VAMP (Algorithm~\ref{alg:VAMP}), while the assumptions can be slightly simplified for VAMP as discussed in Remark~\ref{rem:VAMP}. This is because moving from proving VAMP concentration to proving GVAMP concentration requires the addition of an 'output' channel (whereas as the 'input' channel is analyzed in both cases). The analysis of the 'output' channel is symmetric to that of the input' channel. There is a technical difficulty in that GVAMP requires analyzing denoising functions that depend on the vector $\zv_0 = \Am\xv_0$, whereas a direct analysis of the VAMP algorithm would not need to consider this, but our theoretical analysis of the GVAMP algorithm below still implies the equivalent result for the (simplified) VAMP algorithm.
\end{rem}

The proof of Theorem~\ref{thm:main} is given in Section~\ref{sec:proof_thm1}. The key to proving Theorem 1 comes in Lemma \ref{lem:joint_dists}, which establishes that the GVAMP iterates are jointly equal in distribution to the iterates of another recursion defined entirely in terms of functions of i.i.d.\ Gaussian variables. The proof strategy for our main technical concentration lemma (Lemma \ref{lem:main_general_long}, which implies Theorem \ref{thm:main}) then boils down to using the theory concentration for transformations of Gaussians to inductively establish the desired results for this distributionally equivalent recursion.

We finally mention that the proof of Theorem~\ref{thm:main} implies a similar concentration result for GAMP as well. This is because, conceptually, we have seen that upgrading the finite sample analysis of an AMP-style algorithm (VAMP) to its generalized version (GVAMP) only requires repeating the same analysis for a symmetric 'output' channel and including a few more assumptions to deal with dependencies between the measurement matrix in the algorithm and the dependence of the denoiser on the output. Hence, GAMP concentration follows from the proof in~\cite{AMP_FS}.

To save space and to avoid introducing further notation, we only state an informal theorem of this result here, and we refer to the statement of the GAMP algorithm in \cite[Section 4 Equation (55)]{feng2022unifying} and its corresponding state evolution in \cite[Section 4 Equation (57)-(58)]{feng2022unifying}. 

\begin{theorem}
\textbf{[Informal GAMP Concentration.]} Considering the GAMP algorithm in \cite[Section 4 Equation (55)]{feng2022unifying} and its state evolution in \cite[Section 4 Equation (57)-(58)]{feng2022unifying}, the asymptotic equivalence of its iterates and its state evolution  (see for example for an asymptotic statement in  \cite[Section 4 Theorem 4.2]{feng2022unifying}) can be upgraded to exponentially fast concentration with rates of $C C_{k}e^{-c c_{k}N\epsilon^2}$ where $C_k =  C^{2k} (k!)^{\kappa}$ and $c_k =  \frac{1}{c^{2k} (k!)^{\kappa'}}$  with $\kappa, \kappa'$ and $c, c'$ being universal constants not depending on $\epsilon$ or $N$. While we do not specify $\kappa, \kappa'$ explicitly in this informal statement, these will be small constant values  (like $\kappa=16$ and $\kappa'=22$ from Theorem~\ref{thm:main}).
\label{thm:GAMP}
\end{theorem}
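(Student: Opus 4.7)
The plan is to adapt the conditioning-based finite sample analysis of \cite{AMP_FS} for standard AMP directly to the GAMP setting, mirroring the strategy used above for GVAMP but now in the simpler i.i.d.\ Gaussian design matrix regime. The core technique of \cite{AMP_FS} is a Bolthausen-style conditional distribution argument: one conditions on the sigma-algebra generated by all the iterates produced through step $k$, and expresses the next AMP iterate as a deterministic function of the past iterates (matching state evolution in expectation) plus a conditionally Gaussian residual whose norm can be shown to concentrate by Hanson--Wright-type inequalities. The induction is then on the iteration index $k$, with pseudo-Lipschitz concentration used to transfer control from empirical averages of the iterates to empirical averages of the denoiser outputs.

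The first step is to state the appropriate conditional distribution lemma for GAMP. Since $\Am$ is still i.i.d.\ Gaussian, conditioning on past iterates decomposes $\Am$ as an orthogonal projection onto the (finite-dimensional) span of stored past iterates plus an independent Gaussian matrix acting on the orthogonal complement, exactly as in \cite[Lemma 4.1]{AMP_FS}. The new ingredient relative to AMP is the second ``output'' channel of GAMP, whose denoiser $g_{\mathrm{out}}(\cdot,\yv)$ depends on $\yv = h(\zv_0,\wv)$ with $\zv_0 = \Am \xv_0$. Because the output denoiser processes quantities of dimension $M$ (rather than $N$), conditioning must be carried out both for the row space and the column space of $\Am$; each channel then contributes an analogous concentration bound via the same algebra, and this is exactly the ``symmetry'' between input and output channels that we appealed to in the GVAMP argument.

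The induction is then carried out over $k$: assuming joint exponentially fast concentration of all quantities through iteration $k$, one proves the same for iteration $k+1$ by chaining four kinds of bounds. First, Hanson--Wright style inequalities handle the freshly sampled Gaussian portion of the conditional decomposition in both channels. Second, a pseudo-Lipschitz subgaussian-concentration lemma analogous to \cite[Lemma B.4]{AMP_FS} and Lemma~\ref{lem:PLsubgaussconc} handles empirical averages. Third, Lipschitz regularity of the input and output denoisers, which follows from $\beta$-smooth log-concavity of $p(x)$ and $p(y\mid z)$ (Assumption~2 applied in the GAMP setting), transfers concentration from denoiser arguments to denoiser outputs. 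Fourth, one controls the Onsager correction, which involves the divergence of the denoiser and whose clipping via Assumption~4 is what keeps the resulting constants finite and iteration-independent.

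The main obstacle, as already discussed in Section~\ref{sec:discuss}, is that $g_{\mathrm{out}}(\cdot,\yv)$ is not independent of $\Am$, so a naive application of the conditional Gaussian decomposition breaks. The resolution, which is the main borrowed piece from our GVAMP analysis, is to enlarge the conditioning sigma-algebra from the start to include $\zv_0 = \Am \xv_0$ (equivalently, to include $\yv$): $\yv$ then acts as a fixed parameter for the output denoiser, and the conditional decomposition of $\Am$ merely picks up one additional rank-one projection. With this modification in hand, the iteration-to-iteration induction goes through with essentially the same algebra as in \cite{AMP_FS}, the only consequence being that each iteration accumulates a bounded number of additional concentration factors. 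The resulting constants retain the form $C_k = C^{2k}(k!)^{\kappa}$ and $c_k = 1/(c^{2k}(k!)^{\kappa'})$, with slightly larger constants $\kappa,\kappa'$ than for pure AMP but of the same order as the $\kappa=16$, $\kappa'=22$ obtained for GVAMP in Theorem~\ref{thm:main}, yielding the informal statement of Theorem~\ref{thm:GAMP}.
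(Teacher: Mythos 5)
Your proposal matches the paper's intended justification of Theorem~\ref{thm:GAMP}, which the paper gives only at a sketch level: upgrade the conditional finite-sample analysis of \cite{AMP_FS} for i.i.d.\ Gaussian designs by carrying out a symmetric analysis of the output channel, under a few extra conditions to control the dependence of the output denoiser on $\yv$. The skeleton you fill in — conditional Gaussian decomposition of $\Am$ on row and column spaces, Hanson--Wright-type concentration for the fresh Gaussian residual, a pseudo-Lipschitz subgaussian concentration lemma for empirical averages (cf.\ Lemma~\ref{lem:PLsubgaussconc}), denoiser regularity via $\beta$-smooth log-concavity, and control of the Onsager term through truncation — is the right one, and the claimed constants are of the expected order.

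One thing to correct. You resolve the dependence of $g_{\mathrm{out}}$ on $\yv = h(\Am\xv_0,\wv)$ by conditioning on $\zv_0 = \Am\xv_0$ from the outset, and present this as ``the main borrowed piece from our GVAMP analysis.'' That is a mischaracterization of the paper's GVAMP argument: there, $\zv_0$ explicitly cannot be added to the disturbance vectors without breaking the independence structure required by \textbf{Condition 1}, so the paper instead promotes the iterates to $N\times 2$ and $M\times 2$ matrices ($d=2$) whose second columns regenerate $\xv_0$ and $\zv_0$ within the recursion itself (see point (2) of Section~\ref{sec:discuss} and Appendix~\ref{app:matrix_case}). For GAMP with i.i.d.\ Gaussian $\Am$, your mechanism nevertheless works and is arguably more direct: conditioning on $\Am\xv_0$ simply appends $\xv_0$ to the set of directions on which the action of $\Am$ has already been observed — a rank-one enrichment of the conditioning lemma of \cite{AMP_FS}, equivalently described as initializing the abstract GAMP recursion so that $\zv_0$ is produced at time zero. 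So the step is valid, but present it as a GAMP-specific simplification enabled by the i.i.d.\ Gaussian structure rather than as a transcription of the GVAMP device; the two devices coincide in effect for Gaussian $\Am$ but not for the Haar-distributed $\Vm,\Um$ of (G)VAMP, which is precisely why the paper resorts to the matrix-iterate trick there.
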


The assumptions for this result are slightly different than for GVAMP. In particular, all assumptions about the matrix $\Am$ above are replaced by the assumption that $\Am$ has i.i.d. $N(0,1)$ entries (the same condition under which the large system analysis of GAMP was originally given). Furthermore, the initialization may now be randomly generated without the additional multiplication by $\Vm$ and $\Um$ (as in \textbf{Assumption 0}). The remaining assumptions are analogous to those given above, and these represent natural extensions of the assumptions used for original large system analysis of GAMP. In particular, the Lipschitz condition on the denoisers must be extended to the uniformly Lipschitz condition in order to control the rate of change in the $\tau$ parameters, the initial data and disturbance vector $\wv$ must now be drawn i.i.d. from subgaussian distributions in order to ensure concentration at the appropriate rate, and stronger control is needed over the denoiser derivatives in order to control the probability of algorithm divergence.

\subsection{State Evolution} \label{sec:SE}
As mentioned previously, state evolution equations describe the dynamics of AMP-style algorithms. Generally, these are recursive relationships that characterize the effective signal-to-noise ratios at various stages of the algorithm.
To specify the GVAMP state evolution, which provides the constants in the expectation in Theorem~\ref{thm:main} we must define sensitivity and error functions for each of the approximate models \eqref{eq:approx_gvamp_1}, \eqref{eq:approx_gvamp_2}, and \eqref{eq:approx_gvamp_3}. Following \cite{VAMP}, the sensitivity functions are
\begin{align}
  A_{x1}(\gamma_1,\sigma^2_{1}) = \mathbb{E}\{g_{x1}'(R_1,\gamma_1)\}, \quad &\text{ and } \quad A_{x2}(\gamma_2,\tau_2) = \lim_{N\to\infty} \frac{\gamma_2}{N}\mathrm{Tr}\left[\left( \tau_2\Sm^T\Sm+\gamma_2 \right)^{-1}\right], \label{eq:A2}\\
  A_{z1}(\tau_1,\rho^2_1) = \mathbb{E}\{g_{z1}'(P_1,\tau_1)\}, \quad &\text{ and } \quad A_{z2}(\gamma_2,\tau_2) = \lim_{N\to\infty} \frac{\tau_{2}}{M}\mathrm{Tr}\left[(\Sm^T\Sm)\left( \tau_2\Sm^T\Sm+\gamma_2 \right)^{-1}\right], \label{eq:A3}
\end{align}
where $R_1\sim N(X_0,\sigma^2_{1})$ and $P_1\sim N(Z_0,\rho^2_{1})$ and the derivatives of $g_{x1}$ and  $g_{z1}$ are taken with respect to their first input.  Next, the error functions are defined as
\begin{align*}
  \mathcal{E}_{x1}(\gamma_1,\sigma^2_{1}) = \mathbb{E}[\left(g_{x1}(R_1,\gamma_1)-X_0\right)^2], \, &\text{ and } \, \mathcal{E}_{x2}(\gamma_2,\tau_2,\sigma^2_2,\rho^2_2) = \lim_{N\to\infty} \frac{1}{N}\mathbb{E}[\left\|g_{x2}(\rv_2,\pv_2,\gamma_2,\tau_2)-\xv_0\right\|^2 ],\\
  \mathcal{E}_{z1}(\tau_1,\rho^2_{1}) = \mathbb{E}[\left(g_{z1}(P_1,\tau_1)-Z_0\right)^2], \, &\text{ and } \, \mathcal{E}_{z2}(\gamma_2,\tau_2,\sigma^2_2,\rho^2_2) = \lim_{N\to\infty} \frac{1}{N}\mathbb{E}[\left\|g_{z2}(\rv_2,\pv_2,\gamma_2,\tau_2)-\zv_0\right\|^2 ],
\end{align*}
where $\rv_2\sim N(\xv_0,\sigma^2_2\mathbf{I})$ and $\pv_2\sim N(\zv_0,\rho^2_2\mathbf{I})$, and where $\zv_0 = \Am\xv_0$.

In terms of these functions, we define the scalar evolution equations for GVAMP.  This sequence of equations determines the constants used in expectation in Theorem~\ref{thm:main}.  Initializations $\overline{\gamma}_{10}$ and $\overline{\tau}_{10}$ are defined in \textbf{Assumption 0} as limits of the precision inputs to the GVAMP algorithm, and we initialize the  $\sigma$ and $\rho$ terms as $\sigma^2_{10} = \mathbb{E}\left\lbrace \left( R_{10}-X_0 \right)^2\right\rbrace$  and  $\rho^2_{10} = \mathbb{E}\left\lbrace \left( P_{10}-Z_0 \right)^2\right\rbrace$ where the random variables $X_0, Z_0, R_{10},$ and $P_{10}$ are defined in \textbf{Assumption 0}. Then for $k \geq 0$,
\begin{equation}
  \label{eq:SE}
  \begin{split}
    \overline{\alpha}_{1k} &= A_{x1}(\overline{\gamma}_{1k},\sigma^2_{1k}),\\
    \sigma^2_{2k} &= \frac{ \mathcal{E}_{x1}(\overline{\gamma}_{1k},\sigma^2_{1k}) - \overline{\alpha}_{1k}^2\sigma^2_{1k}}{(1-\overline{\alpha}_{1k})^2},\\
    \overline{\gamma}_{2k} &= \overline{\gamma}_{1k}\Big( \frac{1}{\overline{\alpha}_{1k}}-1 \Big),\\
    \overline{\alpha}_{2k} &= A_{x2}(\overline{\gamma}_{2k},\overline{\tau}_{2k}),\\
    \sigma^2_{1(k+1)} &= \frac{\mathcal{E}_{x2}(\overline{\gamma}_{2k},\overline{\tau}_{2k},\rho^2_{2k},\sigma^2_{2k}) - \overline{\alpha}_{2k}^2\sigma^2_{2k}}{(1-\overline{\alpha}_{2k})^2},\\
     \overline{\gamma}_{1(k+1)} &= \overline{\gamma}_{2k}\Big( \frac{1}{\overline{\alpha}_{2k}}-1\Big),
  \end{split}
  \hspace{0.5cm}
  \begin{split}
    \overline{\beta}_{1k} &= A_{z1}(\overline{\tau}_{1k},\sigma^2_{2k}),\\
    \rho^2_{2k} &= \frac{\mathcal{E}_{z1}(\overline{\tau}_{1k},\rho^2_{1k}) - \overline{\beta}_{1k}^2\rho^2_{1k} }{(1-\overline{\beta}_{1k})^2},\\
    \overline{\tau}_{2k} &= \overline{\tau}_{1k}\Big( \frac{1}{\overline{\beta}_{1k}}-1 \Big),\\
    \overline{\beta}_{2k} &= A_{z2}(\overline{\gamma}_{2k},\overline{\tau}_{2k}),\\
    \rho^2_{1(k+1)} &= \frac{ \mathcal{E}_{z2}(\overline{\gamma}_{2k},\overline{\tau}_{2k},\rho^2_{2k},\sigma^2_{2k}) - \overline{\beta}_{2k}^2\rho^2_{2k} }{(1-\overline{\beta}_{2k})^2},\\
    \overline{\tau}_{1(k+1)} &= \overline{\tau}_{2k}\Big( \frac{1}{\overline{\beta}_{2k}}-1 \Big).
  \end{split}
\end{equation}
This state evolution closely resembles the state evolution for VAMP derived in \cite{VAMP}, with additional terms to account for the added complexity of the output distribution. In short, the VAMP state evolution is structurally the same recursion with only half the terms.

\section{A General Concentration Result} \label{sec:general}

Theorem \ref{thm:main} is a consequence of a concentration result for a more general iteration, Algorithm~\ref{alg:general_gvamp}, which we introduce now. Let $f_p^{\inn}(p_1,p_2,w_p^{(1)},w_p^{(2)},\tau,\gamma):\mathbb{R}^{2d}\times\mathbb{R}^2\times\mathbb{R}^2\to\mathbb{R}^{d}$ be a function of $2d+2$ arguments and two parameters, and let $f_p^{\out}$, $f_q^{\inn}$, and $f_q^{\out}$ be defined similarly. Furthermore let $\fv_p^{\inn}(\pv_1,\pv_2,\wv^{\inn}_p,\tau,\gamma):\mathbb{R}^{2d\times N}\times\mathbb{R}^{2\times N}\times \mathbb{R}^2\to\mathbb{R}^{N\times d}$ be given by applying $f_p^{\inn}$ separably over the $N$ rows of its arguments. 

We note that if $M< N$, then $\pv^\inn_k$ and $\pv^\out_k$ (defined in Algorithm~\ref{alg:general_gvamp} below) have different numbers of columns. When this occurs, we must take care in defining the notion of separability. In general, we will encounter situations in which we want to extend a function $\phi(x,y,w):\mathbb{R}^d\times\mathbb{R}^d\times\mathbb{R}^2\to\mathbb{R}^d\in PL(2)$ of $2d+2$ arguments $x,y$ and $w$ separably over vectorized inputs $\xv\in\mathbb{R}^{d\times N}$, $\yv\in\mathbb{R}^{d\times M}$ and $\wv^{\inn}\in\mathbb{R}^{2\times N}$ or $\wv^{\out}\in\mathbb{R}^{2\times M}$ for which $M\leq N$.
When $\wv\in\mathbb{R}^{2\times M}$, define the separable extension by simply truncating $\xv$ to an $d\times M$ matrix, and the resulting extended function takes values in $\mathbb{R}^M$. When $\wv\in\mathbb{R}^{2\times N}$, it will suffice for our purposes to consider only those $\phi$ for which $\phi([\xv]_i,[\yv]_i,[\wv]_i)$ is independent of $y$ for all $i> M$. In this case, the separable extension of $\phi$ over these vectorized arguments clearly makes sense. By convention, unless otherwise noted, we take $[\yv]_i=\mathbf{0}$ for $i > M$ so that expressions like $\phi([\xv]_i,[\yv]_i,[\wv]_i)$ are well-defined for all $i\leq N$. In particular, we use these conventions to define separable extensions of the functions $\fv_p^{\inn/\out}(\cdot,\cdot,\cdot,\tau,\gamma)$ and $\fv_q^{\inn/\out}(\cdot,\cdot,\cdot,\tau,\gamma)$ over vectorized arguments/parameters in the following sections.

Now, letting $\Vm$ and $\Um$ be orthogonal matrices, we can state Algorithm \ref{alg:general_gvamp} which defines a recursion in terms of these quantities.
\begin{algorithm}[ht]
  \setstretch{1.3}
\caption{General Recursion for GVAMP\label{alg:general_gvamp}}
\begin{algorithmic}[1]
  \Require{Orthogonal matrices $\Vm \in\mathbb{R}^{N \times N}$ and $\Um\in\mathbb{R}^{M\times M}$; separable denoisers $\fv^{\inn}_p$, $\fv_p^{\out}$, $\fv^{\inn}_q$, and $\fv_q^{\out}$ (defined over their argument according to the conventions given above);
    parameter update functions $\Gamma_{p/q}^{\inn/\out}:\mathbb{R}^d\to\mathbb{R}^d$; and disturbance vectors $\wv^{\inn}_p, \wv^{\inn}_q\in\mathbb{R}^{L\times N}$ and $\wv^{\out}_p, \wv^{\out}_q\in\mathbb{R}^{2\times M}$.}
   \State Initialize $\uv_0^{\inn}\in\mathbb{R}^{N\times d}$, $\uv_0^{\out}\in\mathbb{R}^{M\times d}$, $\gamma_{pk}^{\inn}\in\mathbb{R}^d$, and $\gamma_{pk}^{\out}\in\mathbb{R}^d$.
\For{$k\gets 0,\ldots,K$} 
\State $\pv^{\inn}_{k}\gets \Vm \uv_k^{\inn}$, \hspace{55mm}  $\pv_{k}^{\out}\gets\Um\uv_{k}^{\out}$,   \label{eq:alg_gvamp_p_step}
\State $\alpha_{pk}^\inn \gets \mathrm{div}\left[ \fv_p^\inn\left( \pv_k^\inn,\pv_k^\out,\wv^\inn_p,\gamma^\out_{pk},\gamma^\inn_{pk} \right) \right]$, \quad  \quad $\alpha_{pk}^\out \gets \mathrm{div}\left[ \fv_p^\out\left( \pv_k^\inn,\pv_k^\out,\wv^\out_p,\gamma^\out_{pk},\gamma_{pk} \right) \right]$,
\State \begin{varwidth}[t]{\linewidth}
  $[\vv_k^{\inn}]_j\gets \frac{1}{1-[\alpha_{pk}^\inn]_j}\left[ \fv_p^{\inn}\left( \pv_k^{\inn},\pv_k^{\out},\wv^\inn_p,\gamma^\out_{pk},\gamma^\inn_{pk} \right)_j-[\alpha_{pk}^\inn]_j[\pv_k^{\inn}]_j \right]$,
  \par \qquad  $[\vv_k^{\out}]_j\gets \frac{1}{1-[\alpha_{pk}^\out]_j}\left[ \fv_p^{\out}\left( \pv_k^{\inn},\pv_k^{\out},\wv^\out_p,\gamma^\out_{pk},\gamma^\inn_{pk} \right)_j-[\alpha_{pk}^\out]_j[\pv_k^{\out}]_j \right]$.   \label{eq:alg_gvamp_v_step}
  \end{varwidth}
  \State
  \State $\gamma^\inn_{qk} \gets \Gamma^\inn_{q}(\gamma^\inn_{pk},\alpha_{pk}^\inn)$,  \hspace{42mm}  $\gamma^\out_{qk}\gets \Gamma_q^\out(\gamma^\out_{pk},\alpha_{pk}^\out)$,  \label{eq:alg_gvamp_gamma_step}
  \State $\qv_k^{\inn}\gets \Vm^T\vv_k^{\inn}$,  \hspace{51mm}   $\qv_k^{\out}\gets \Um^T\vv_k^{\out}$, \par   \label{eq:alg_gvamp_q_step}
 \State $\alpha_{qk}^\inn \gets \mathrm{div}\left[ \fv_q^\inn\left( \qv_k^\inn,\qv_k^\out,\wv^\inn_q,\gamma^\out_{qk},\gamma^\inn_{qk} \right) \right]$, \quad \quad   $\alpha_{qk}^\out \gets \mathrm{div}\left[ \fv_q^\out\left( \qv_k^\inn,\qv_k^\out,\wv^\out_q,\gamma^\out_{qk},\gamma^\inn_{qk} \right) \right]$,
\State \begin{varwidth}[t]{\linewidth}
  $[\uv_{k+1}^{\inn}]_j\gets \frac{1}{1-[\alpha_{qk}^\inn]_j}\left[ \fv_q^{\inn}\left( \qv_k^{\inn},\qv_k^{\out},\wv^\inn_q,\gamma^\out_{qk},\gamma^\inn_{qk} \right)_j-[\alpha_{qk}^\out]_j[\qv_k^{\inn}]_j \right]$, \par
\qquad   $[\uv_{k+1}^{\out}]_j\gets \frac{1}{1-[\alpha_{qk}^\out]_j}\left[ \fv_q^{\out}\left( \qv_k^{\inn},\qv_k^{\out},\wv^\out_q,\gamma^\out_{qk},\gamma^\inn_{qk} \right)_j-[\alpha_{qk}^\out]_j[\qv_k^{\out}]_j \right]$.
\end{varwidth}
\State $\gamma^\inn_{p(k+1)} \gets \Gamma^\inn_{p}(\gamma^\inn_{qk},\alpha_{qk}^\inn)$,  \hspace{35mm}   $\gamma^\out_{p(k+1)}\gets \Gamma_p^\out(\gamma^\out_{qk},\alpha_{qk}^\out)$,

\EndFor
\end{algorithmic}
\end{algorithm}
Likewise, the VAMP iterates can be generated from a simpler general recursion given in Algorithm~\ref{alg:general_vamp}. The VAMP algorithm is not strictly a special case of GVAMP; similarly, Algorithm \ref{alg:general_vamp} is not a special case of Algorithm~\ref{alg:general_gvamp}. However, Algorithm \ref{alg:general_vamp} can be seen as a subset of Algorithm~\ref{alg:general_gvamp} by discarding all the ``output'' steps in Algorithm \ref{alg:general_gvamp} and making the denoisers $f_p^\inn$ and $f_q^\inn$ depend only on the input variables. The resulting ``input only'' algorithm exactly recovers Algorithm \ref{alg:general_vamp}.
\begin{algorithm}
\caption{General Recursion for VAMP\label{alg:general_vamp}}
\begin{algorithmic}[1]
\Require{Orthogonal matrix $\Vm \in\mathbb{R}^{N \times N}$, separable (w.r.t.\ the first two arguments) denoisers 
\[f_p: \mathbb{R}^{d\times N} \times  \mathbb{R}^{d\times N} \times  \mathbb{R} \rightarrow  \mathbb{R}^{2\times N} \text{ and } f_q: \mathbb{R}^{d\times N} \times  \mathbb{R}^{2\times N} \times  \mathbb{R} \rightarrow  \mathbb{R}^N,\] 
divergence functions $C_i: \mathbb{R}\rightarrow \mathbb{R}$ for $i\in\{1,2\}$, parameter update functions $\Gamma_i: \mathbb{R}^2\rightarrow \mathbb{R}$ for $i\in\{1,2\}$, and disturbance vectors $\wv^p \in\mathbb{R}^{N}$ and $\wv^q \in\mathbb{R}^{2N}$.}
   \State Initialize $\uv_0 \in\mathbb{R}^{N}$.
\For{$k\gets 0,\ldots,K$} 
\State $\pv_k\gets \Vm \uv_k,$  \label{eq:alg_vamp_p_step}
\hspace{71.5mm} $\alpha_{1k} \gets \mathrm{div} [f_p(\pv_k,\wv^p,\gamma_{1k})],$
\State $\vv_k\gets C_1(\alpha_{1k})\left[f_p(\pv_k,\wv^p,\gamma_{1k})-\alpha_{1k}\pv_{k}\right],$
\hspace{23mm}  $\gamma_{2k}\gets \Gamma_1\left(\gamma_{1k},\alpha_{1k}\right),$
\State
\State $\qv_k \gets \Vm^T\vv_k,$  \label{eq:alg_vamp_q_step}
\hspace{69.5mm}  $\alpha_{2k} \gets \mathrm{div}[f_q(\qv_k,\wv^q,\gamma_{2k})],$
\State $\uv_{k+1} \gets C_2(\alpha_{2k})\left[f_q(\qv_k,\wv^q,\gamma_{2k})-\alpha_{2k} \qv_k\right],$
\hspace{20mm}  $\gamma_{1(k+1)} \gets \Gamma_2\left(\gamma_{2k},\alpha_{2k}\right).$
\EndFor
\end{algorithmic}
\end{algorithm}

As a result, all of the arguments regarding concentration of the iterates in Algorithm \ref{alg:general_gvamp} can be specialized to arguments for concentration of the iterates of Algorithm \ref{alg:general_vamp}. Henceforth, unless specified otherwise, all arguments and discussion will focus on Algorithm \ref{alg:general_gvamp}. To recover the relevant statements for Algorithm \ref{alg:general_vamp}, it suffices to drop the ``$\inn$'' and ``$\out$'' superscripts (and, when both occur, eliminate dependence on the ``$\out$'' variable).

We note that the divergence terms $\boldsymbol {\alpha} \in \mathbb{R}^d$ in Algorithm \ref{alg:general_gvamp} are computed component-wise, so e.g.
\[
  [\alpha^{\inn}_{pk}]_j = \sum_{i=1}^n \frac{\partial}{\partial p^{\inn}_j}\left[f_p^{\inn}\left( p_1^{\inn},\ldots,p_d^{\inn},p^{\out}_1,\ldots,p^{\out}_d,w_p,\gamma^{\out},\gamma^{\inn} \right)\right]_j\;\Bigg\vert_{\left([\pv^{\inn}_k]_i,[\pv^{\out}_k]_i,[\wv^{\inn}_p]_i,\gamma^{\inn}_{pk},\gamma^{\out}_pk\right)},
\]
for each $j=1,\ldots,d$. 

We establish a correspondence between the general algorithm, Algorithm~\ref{alg:general_gvamp}, and GVAMP in Algorithm~\ref{alg:GVAMP}, through the following translation. We take the orthogonal matrices $\Um, \Vm$ in Algorithm \ref{alg:general_gvamp} to be the left and right singular vector matrices of $\Am$, respectively. We make the following definitions:
\begin{equation}\label{eq:general_identity}
  \begin{split}
     \pv_k^{\inn} = \left[\rv_{1k}\; \boldsymbol{0} \right], \quad \qquad \pv_k^{\out}=\left[\pv_{1k}\; \zv_0,\right],   &\quad \qquad \vv_k^{\inn}= \left[\rv_{2k}\;, \xv_0\right], \quad  \qquad \vv_k^{\out} = \left[\pv_{2k},\; \boldsymbol{0}\right],\\
      \qv_k^{\inn} = \left[\Vm^T \rv_{2k},\; \Vm^T\xv_0\right], \quad \qv_k^{\out} = \left[\Um^T\pv_{2k},\; \boldsymbol{0}\right],  &\quad \qquad\uv_{k}^{\inn} = \left[\Vm^T \rv_{1k},\; \boldsymbol{0}\right],\quad \uv_{k}^{\out} = \left[\Um^T\pv_{1k},\; \Um^T\zv_0\right].
  \end{split}
\end{equation}
Furthermore, we make the following function definitions:
\begin{equation}
  \begin{split}
  \label{eq:translation_2}
  f_p^{\inn}\left( p^{\inn},p^{\out},w_p,\tau,\gamma \right) &= \left[g_{x1}\left([p^{\inn}]_1,\gamma \right),\; [w_p]_1 \right],\\
  f_p^{\out}\left( p^{\inn},p^{\out},w_p,\tau,\gamma \right) &= \left[g_{z1}\left([p^{\out}]_1,\tau,h\left([p^{\out}]_2,[w_p]_1\right)\right),\; 0\right],\\
  f_q^{\inn}\left( q^{\inn},q^{\out},w_q,\tau,\gamma \right) &= \left[\left( [\tau]_1 [w_q]_1 [q^{\out}]_1 + [\gamma]_1 [q^{\inn}]_1 \right)/\left( [\tau]_1[w_q]_1^2+[\gamma]_1\right)\;,0\right],\\
  f_q^{\out}\left( q^{\inn},q^{\out},w_q,\tau,\gamma \right) &= \left[[w_q]_1\left( [\tau]_1 [w_q]_1 [q^{\out}]_1 + [\gamma]_1 [q^{\inn}]_1 \right)/\left( [\tau]_1[w_q]_1^2+[\gamma]_1 \right),\; [w_q]_1[q^{\inn}]_2\right].
  \end{split}
\end{equation}
Finally we define disturbance vectors and constants:
\begin{align}
\wv^\inn_p = \xv_0, \qquad \wv^\out_p = \wv, \qquad &\wv^\inn_q = [(\sv\;\boldsymbol{0})\; \uv_0^{\inn}], \qquad \wv^\out_q = [\sv\; \uv_0^{\out}], \label{eq:disturbance_trans} \\
\alpha^\inn_{pk} = \left(\alpha_{1k},0\right), \qquad \alpha^\inn_{qk} = \left(\alpha_{2k},0\right), \qquad &\alpha^\out_{pk} = \left(\beta_{1k},0\right), \qquad \alpha^\out_{qk} = \left(\beta_{2k},0\right), \label{eq:alpha_trans} \\
\gamma^\inn_{pk}=(\gamma_{1k},1), \qquad \gamma^\inn_{qk} = (\gamma_{2k},1), \qquad &\gamma^\out_{pk} = (\tau_{1k},1), \qquad \gamma^\out_{qk} = (\tau_{2k},1). \label{eq:gamma_trans}
\end{align}
In \eqref{eq:disturbance_trans}, $(\sv\;\boldsymbol{0})$ is the $N$-vector obtained by padding the positive singular values of $\Am$ with $0$'s, and $[\sv\; \uv_0^{\out}]$ is the $M\times 2$ matrix with columns $\sv$ and $\uv_0^{\out}$. Keeping with our matrix notation, $[\wv^{\out}_p]_i$ will denote column $i$ of this matrix, i.e.\ the vector $([\zv_0]_i,[\yv]_i)$. In all cases, where we perform truncation of the scalar quantities in GVAMP, we assume the same truncation here. We claim that these quantities satisfy the recursion in Algorithm \ref{alg:general_gvamp}. We start by establishing this for the scalar terms inductively. For the $\gamma$ terms, if equality has been established for all scalar terms up to the current term, then equality follows for the current term immediately if we define the $\Gamma$ functions as follows:
\[
 \Gamma^\inn_p(\gamma,\alpha) = \Gamma^\out_p(\gamma,\alpha) = \Gamma^\inn_q(\gamma,\alpha) = \Gamma^\out_q(\gamma,\alpha) =  \left(\frac{\gamma_1(1-\alpha_1)}{\alpha_1},1\right).
\]
For the $\alpha$ terms, we first observe that the equality follows immediately for $\alpha^\inn_{pk}$ and $\alpha^\out_{pk}$ given the definitions of $f^\inn_p$ and $f^\out_p$ respectively (assuming equality for all prior scalar terms). It follows directly from the definitions of $f^\inn_q$ and $f^\out_q$ that
\[
[\alpha^\inn_{qk}]_1 = \frac{1}{N}\sum_{i=1}^N \frac{[\gamma_{qk}^\inn]_1}{[\gamma_{qk}^\out]_1[\wv^\inn_q]^2_{i1} + \gamma_{qk}^\inn}_1, \qquad [\alpha_{qk}^\out]_1 = \frac{1}{M} \sum_{i=1}^M\frac{[\wv^\out_q]_{i1}^2[\gamma^\out_{qk}]_1}{[\gamma^\out_{qk}]_1[\wv^\out_q]_{i1}^2 + [\gamma^\inn_{qk}]_1}.
\]
Substituting $(\sv\;\boldsymbol{0})$ for $\wv^\inn_q$ and $\sv$ for $\wv^\out_q$, it follows that these are exactly the divergences of $g_{x2}$ and $g_{z2}$ derived in \cite{VAMP_general}, and hence are equal to $\alpha_{2k}$ and $\beta_{2k}$, respectively.

Observe that lines $1$ and $4$ in Algorithm~\ref{alg:general_gvamp} follow trivially from the definitions in \eqref{eq:general_identity} and that
\begin{align*}
  [\vv_k^{\inn}]_1 = \rv_{2k}&\stackrel{(a)}{=} \frac{ g_{x1}\left( \rv_{1k},\gamma_{1k} \right)-\alpha_{1k}\rv_{1k} }{1-\alpha_{1k}} \stackrel{(b)}{=} \frac{g_{x1}\left( [\pv_k^\inn]_1,[\gamma^\inn_{pk}]_1 \right)-[\alpha^\inn_{pk}]_1\pv^\inn_{k}}{1-[\alpha^{\inn}_{pk}]_1},
\end{align*}
where $(a)$ follows from Algorithm~\ref{alg:GVAMP}.
The proof for $\vv_k^{\out}$ follows in the same way. Next,
\begin{align*}
[\uv_{k}^{\inn}]_1 = \Vm^T \rv_{1k}&\stackrel{(a)}{=} \Vm^T\left[ \Vm \Dm_k\left( \tau_{2k}\Sm^T\Um^T\pv_{2k}+ \gamma_{2k}\Vm^T\rv_{2k}\right)-\alpha_{2k}\rv_{2k} \right]/(1-\alpha_{2k})\\
                   &\stackrel{(b)}{=} \left[ \Dm_k\left( [\gamma^\out_{qk}]_1\Sm^T[\qv_k^{\out}]_1  + \gamma^\inn_{qk} [\qv_k^{\inn}]_1 \right)-[\alpha^\inn_{qk}]_1[\qv_k^{\inn}]_1 \right]/(1-[\alpha^\inn_{qk}]_1)\\
  &\stackrel{(e)}{=} \left[\left[ \fv_q^{\inn}\left( \qv_k^{\inn},\qv_k^{\out},\wv^\inn_q,\gamma^\inn_{qk},\gamma^\out_{qk} \right)-\alpha^\inn_{qk}\qv_k^{\inn} \right]/(1-\alpha^\inn_{qk})\right]_1,
\end{align*}
where $(a)$ follows from Algorithm~\ref{alg:GVAMP} and the definition (15) in \cite{VAMP_general}, $(b)$ follows from the definitions in \eqref{eq:general_identity},
and $(e)$ follows from definition \eqref{eq:translation_2}. The proof for $[\uv_k^{\out}]_1$ follows in the same way using the fact that
$g_{z2}\left( \rv_{2k},\pv_{2k},\tau_{2k},\gamma_{2k} \right) = \Um\Sm\Dm_k\left[ \tau_{2k}\Sm^T\Um^T\pv_{2k}+\gamma_{2k}\Vm^T\rv_{2k} \right].$

Relative to the large system behavior of the iterates, the primary innovation of (G)VAMP can be understood in terms of the fact that its iterates can be generated by this more general recursion.
Utilizing the left and right singular vectors separately allows for the effect of $\Am$ to be broken up within the general algorithm, isolating the effects of the (rectangular diagonal) singular values matrix $\Sm  \in\mathbb{R}^{M\times N}$ and of the orthogonal matrices $\Vm  \in\mathbb{R}^{N\times N}$ and $\Um\in\mathbb{R}^{M\times M}$ in various stages.

Notice that in Theorem \ref{thm:main}, we want to understand the concentration properties of the estimate $\hat{\xv}_{1k}$. But this is just a function of $\rv_{1k}$ and, thus, by the above identities \eqref{eq:general_identity} of $\pv^\inn_k$ and $\wv^\inn_p = \xv^0$. Therefore, once we have concentration for the iterates of the general recursion, we will be able to easily obtain concentration for $\hat{\xv}_{1k}$ in the proof of Theorem 1.

As this translation shows, the effect of the singular values
can be entirely subsumed within the denoiser $f_q$ through the disturbance vectors $\wv^{\inn/\out}_q$. Thus, we do not require any distributional assumptions about
the singular values beyond empirical convergence to some bounded random variable (\textbf{Assumption 0}). In the case of GVAMP, we require only distributional assumptions about $\Um$ and $\Vm$. Moreover, the matrix $\Um$ does not occur in Algorithm \ref{alg:general_vamp}, and it follows that the large system behavior of VAMP can be characterized using only distributional assumptions on $\Vm$.

\subsection{Conditions}
\label{sec:assumptions_general}
To prove our general concentration result, we require a number of conditions on the quantities in Algorithm~\ref{alg:general_gvamp}. Using the translations above, we show that these conditions are implied by \textbf{Assumptions 0}-\textbf{5} that we made about the GVAMP quantities in Section~\ref{sec:main}.

\textbf{Condition 0.} The initial sequences $\uv^{\inn/\out}_0$ are independent of each other and the matrices $\Um$, $\Vm$ and concentrate on random variables $U_0^\inn$ and $U_0^\out$, respectively. This will always occur if the initializations are random samples from subgaussian prior distributions, as is guaranteed by our translation \eqref{eq:general_identity} and \textbf{Assumption 0}.

Furthermore, the initial $\gamma^\inn_{p0}$ and $\gamma^\out_{p0}$ converge to $\overline{\gamma}^\inn_{p0}\geq 0$ and $\overline{\gamma}^\out_{p0}\geq 0$, respectively, as $N\to\infty$. 
From \eqref{eq:general_identity}, we can see that the conditions on the $\gamma^{\inn/\out}_{p0}$ are guaranteed by \textbf{Assumption 0} on the corresponding constants in the GVAMP algorithm.

\textbf{Condition 1.}
For any $\phi\in PL(2)$, assume that the components of the sequences $\wv^{\inn/\out}_p$ and $\wv^{\inn/\out}_q$ are sampled i.i.d. from subgaussian distributions.
Furthermore, we assume that the (components of the) corresponding limiting variables $W^{\inn/\out}_p$ and $W^{\inn/\out}_q$ are independent of $\Vm$ and $\Um$ (thus, independent of the limiting $P_k^{\inn/\out}$ and $Q_k^{\inn/\out}$) and independent of each other.
Under the translation in \eqref{eq:disturbance_trans}, this condition is implied by \textbf{Assumption 1} for $\wv^{\inn}_p$, $\wv^\out_p$, and $\wv^\out_q$. However, we need to verify the desired concentration for $\wv^\inn_q=(\sv\;\boldsymbol{0})$. Let $W^\inn_q$ be a random variable with mixture distribution $\delta \mu_S + (1-\delta)\mu_0$ where $\mu_S$ is the distribution of $S$, $\mu_0$ is the Dirac point measure at $0$, and $\delta = \lim_{N\to\infty}\frac{M}{N}$. Then for any $\phi\in PL(2)$, we have
\begin{align*}
  \frac{1}{N}\sum_{i=1}^N \phi([\wv^\inn_q]_i) = \delta_N\frac{1}{M}\sum_{i=1}^M\phi(\sv_i) + (1-\delta_N)\phi(0)
\end{align*}
Now using our standard concentration results (Lemmas \ref{products} and \ref{sums}) as well as the assumed concentration of $\sv$ and $\delta_N$, we get that the above concentrates around $\delta \mathbb{E}\phi(S) + (1-\delta)\phi(0)=\mathbb{E}\phi(W^\inn_q) $, as needed.

\textbf{Condition 2.} Same as \textbf{Assumption 2}.

\textbf{Condition 3.} The denoisers $f^{\inn/\out}_p$ and $f^{\inn/\out}_q$ are separable, and both these functions and their derivatives are either
\begin{enumerate}
\item uniformly Lipschitz at each $\left(\overline{\gamma}^{\inn}_{pk},\overline{\gamma}^{\out}_{pk}\right)$ and $\left(\overline{\gamma}^{\inn}_{qk},\overline{\gamma}^{\out}_{qk}\right)$, respectively, for $k\geq 1$, or
\item uniformly bounded conditionally Lipschitz (UBCL) at the same parameters for all $k\geq 1$. This condition requires the following (in the notation for the $p$ case, the $q$ case being entirely symmetric):
  \begin{enumerate}
  \item $f^{\inn/\out}_p$ is continuous in $(p^\inn,p^\out,w_p)$ for all parameters $(\gamma^\inn,\gamma^\out)$.
  \item In a neighborhood of $\left(\overline{\gamma}^{\inn}_{pk},\overline{\gamma}^{\out}_{pk}\right)$, $f^{\inn/\out}_p$ is Lipschitz in $(p^{\inn},p^{\out})$ for all $w_p$, with Lipschitz constant continuous in $w_p$.
  \item The domain of $w_p$ is compact.
  \item The Lipschitz uniformity over the parameters is satisfied for all inputs, i.e.
    \[
      \left| f^{\inn/\out}_p\left(p^{\inn},p^{\out},w_p,\gamma_1 \right)-f^{\inn/\out}_p\left(p^{\inn},p^{\out},w_p,\gamma_2 \right) \right|\leq L\left( 1 + \|\left(p^{\inn},p^{\out},w_p\right) \| \right)\left| \gamma_1-\gamma_2 \right|
    \]
    for all $\gamma_1,\gamma_2$ in a neighborhood of $\overline{\gamma}_{pk}$.
  \end{enumerate}
\end{enumerate}
It is easy to see that the $f^{\inn/\out}_q$ given in the translation \eqref{eq:translation_2} are separable by definition and either uniformly Lipschitz or UBCL. In particular, the only functions which are not simply uniformly Lipschitz are the $[f_q^{\inn/\out}]_2 = [w_q]^1q^{\inn/\out}$. However, it is immediately clear that these are UBCL since they are free of $\gamma_{qk}^{\inn/\out}$ and since $w_q^{\inn/\out}$ consists of the singular values which were assumed to lie in a compact interval $[0,S_{\max}]$ in \textbf{Assumption 0}. We note that the $f^{\inn/\out}_p$ are separable and uniformly Lipschitz when the $g_{x1}/g_{z1}$ are, which is guaranteed by \textbf{Assumption 3} above.

\textbf{Condition 4.}  The functions $C^{\inn/\out}_p, C^{\inn/\out}_q, \Gamma^{\inn/\out}_p, \Gamma^{\inn/\out}_q$ are bounded over their domains and Lipschitz continuous with Lipschitz constant independent of $N$ and $k$.
For the functions given in the translation \eqref{eq:translation_2}, this condition is satisfied as long as the domains of the functions are compact (and independent of $N$ and $k$). We note that clipping of the $\gamma_1$ and $\alpha_1$ scalars is guaranteed by \eqref{eq:alpha_trans} and \eqref{eq:gamma_trans} along with  \textbf{Assumption 4}.

\textbf{Condition 5.}  Following \cite{AMP_FS}, we define stopping criteria that determine when the algorithm has effectively converged. First, we stop the iteration if $\gamma^\out_{pk} < \epsilon_1$ or $\gamma^\out_{qk}<\epsilon_2$. In terms Algorithm~\ref{alg:VAMP}, this  is equivalent to stopping the algorithm when the variance of $|R_{1k}-X_0|$ or $|R_{2k}-X_0|$ is sufficiently small. Next, we stop if $\rho^{\inn/\out}_{pk} < \epsilon'_1$ or $\rho^{\inn/\out}_{qk}<\epsilon'_2$, where these $\rho$ quantities are defined in the next section. This condition essentially stops the algorithm if the difference in the (asymptotic distributions of) successive iterates is sufficiently small.

\textbf{Condition 6.} The interval $[t_{\min},t_{\max}]$ within which the $\alpha_1$ scalars are truncated includes the $[\overline{\alpha}^{\inn/\out}_{pk}]_1$ and $[\overline{\alpha}^{\inn/\out}_{qk}]_1$ (defined in the general recursion state evolution \eqref{eq:se_general}) for all iterations $k$ prior to termination.

This condition is satisfiable as long as these $[\overline{\alpha}]_1$ scalars do not converge to $1$ or $0$ as $k\to\infty$. But as long as the second derivatives in \eqref{eq:beta_smooth} are bounded away from zero over any compact subset of their domains, then for either the MAP or MMSE denoisers $g_{x1}$ and $g_{z1}$, the derivatives of the $f_p^{\inn/\out}$ and $f_q^{\inn/\out}$ will be bounded away from $1$ and $0$ over any compact subset of their domains (see e.g.\ Lemma 1 in \cite{RanganDerivatives}). Likewise, if the asymptotic variances $\tau^{\inn/\out}_{pk}$ and $\tau^{\inn/\out}_{qk}$ (also defined in \eqref{eq:se_general}) do not diverge or shrink to $0$ before termination, the $[\overline{\alpha}^{\inn/\out}_{pk}]_1$ and $[\overline{\alpha}^{\inn/\out}_{qk}]_1$ cannot converge to $1$ or $0$ as $k\to\infty$ because the $\gamma$ scalars are clipped to a compact interval.

\subsection{Notation} \label{sec:notation}
In the above translation between the general recursion and GVAMP, we used general iterates of dimension $d=2$ in order to track the GVAMP iterates, the initial data, and the transformed input $\zv_0$ throughout the algorithm. Henceforth, we will analyze the general recursion with $d=1$ in order to simplify notation and proof ideas. However, these arguments can all be extended to the case of general dimension $d$ (see Appendix \ref{app:matrix_case} for details on how the following results generalize to $d>1$). For $k \geq 0$, define matrices $\Um^{\inn/\out}_k \in \mathbb{R}^{N \times (k+1)}$ having columns $\uv^{\inn/\out}_i$ for $0\leq i\leq k$, and define $N \times (k+1)$ matrices $\Vm^{\inn/\out}_k$, $\Pm^{\inn/\out}_k$, and $\Qm^{\inn/\out}_k$ analogously.

Now, for $k \geq 1$,  define $  \Cm^{\inn/\out}_{pk}, \Cm^{\inn/\out}_{uk} \in \mathbb{R}^{N \times (2k+1)}$ to be the matrices containing, respectively, the $\pv^{\inn/\out}, \vv^{\inn/\out}$ iterates of the algorithm up to and including $\pv^{\inn/\out}_k$ and the  $\uv^{\inn/\out}, \qv^{\inn/\out}$ iterates up to and including  $\uv^{\inn/\out}_{k}$. Similarly, let $\Cm^{\inn/\out}_{vk},  \Cm^{\inn/\out}_{qk}  \in \mathbb{R}^{N \times 2k}$ be the matrices containing, respectively, the $\pv^{\inn/\out}, \vv^{\inn/\out}$ iterates   up to and including $\vv^{\inn/\out}_{k-1}$  and the $\uv^{\inn/\out}, \qv^{\inn/\out}$ iterates up to and including  $\qv^{\inn/\out}_{k-1}$. We use the initial conditions $\Cm^{\inn/\out}_{p0} = \Pm^{\inn/\out}_0 = \pv^{\inn/\out}_0, \Cm^{\inn/\out}_{u0} = \Um^{\inn/\out}_0 = \uv^{\inn/\out}_0,$ and $\Cm^{\inn/\out}_{v0} = \Cm^{\inn/\out}_{q0}= \emptyset$.
 In other words, these matrices are defined  as follows:
\begin{align}
  \label{eq:Cmat}
  \Cm^{\inn/\out}_{pk} = [\Pm^{\inn/\out}_k\;  \Vm^{\inn/\out}_{k-1}],\;& \quad \Cm^{\inn/\out}_{uk} = [\Um^{\inn/\out}_k\;\Qm^{\inn/\out}_{k-1}],\nonumber\\
  \Cm^{\inn/\out}_{vk} = [\Pm^{\inn/\out}_{k-1}\;\Vm^{\inn/\out}_{k-1}],\;& \quad \Cm^{\inn/\out}_{qk}= [ \Um^{\inn/\out}_{k-1}\;\Qm^{\inn/\out}_{k-1}],
\end{align}
where for two matrices $\mathbf{M}_1 \in \mathbb{R}^{N \times p_1}$ and $\mathbf{M}_2 \in \mathbb{R}^{N \times p_2}$, by the notation $\mathbf{M} =  [\mathbf{M}_1 \;  \mathbf{M}_2]$, we mean the $N \times (p_1 + p_2)$ matrix obtained from concatenating the columns of $\mathbf{M}_1$ and $\mathbf{M}_2$. 

Next, we define sigma-algebras generated by previous algorithm output at any iteration using the matrices defined in the previous paragraph. We use the initializations $\mathcal{P}^{\inn/\out}_{0} =\sigma\lbrace \wv_p^{\inn/\out},\Um^{\inn/\out}_{0}\rbrace$ and $\mathcal{Q}^{\inn/\out}_0 =\sigma\lbrace \wv_q^{\inn/\out}, \Um^{\inn/\out}_0,\Pm^{\inn/\out}_0,\Vm^{\inn/\out}_0\rbrace$, and for $k \geq 1$ define
\begin{equation}
\begin{split}
 \mathcal{P}^{\inn/\out}_{k} &=\sigma\lbrace \wv_p^{\inn/\out},\Um^{\inn/\out}_{k},\Pm^{\inn/\out}_{k-1},\Vm^{\inn/\out}_{k-1},\Qm^{\inn/\out}_{k-1}\rbrace,\\
\mathcal{Q}^{\inn/\out}_{k} &=\sigma\lbrace \wv_q^{\inn/\out},\Um^{\inn/\out}_{k},\Pm^{\inn/\out}_{k},\Vm^{\inn/\out}_{k},\Qm^{\inn/\out}_{k-1}\rbrace.
\label{eq:sigma_algebras}
\end{split}
\end{equation}
Notice that $ \mathcal{P}^{\inn/\out}_{k} $ includes the iterates of the algorithm up to and including $ \uv^{\inn/\out}_{k}$ (i.e.\ all output after completing the $(k-1)^{th}$ iteration of the algorithm) and  $\mathcal{Q}^{\inn/\out}_{k}$ includes the iterates  up to and including $ \qv^{\inn/\out}_{k}$ (i.e.\ all output through step \eqref{eq:alg_gvamp_v_step} in the $k^{th}$ iteration). We define the combined sigma-algebras $\mathcal{P}_k = \sigma\left\lbrace \mathcal{P}_k^{\inn},\mathcal{P}_k^{\out}\right\rbrace$ and $\mathcal{Q}_k = \sigma\left\lbrace \mathcal{Q}_k^{\inn},\mathcal{Q}_k^{\out}\right\rbrace$.

Finally, for $k \geq 0$, define the matrices $\Bm^\perp_{\Cm_{qk}^{\inn/\out}}, \Bm^\perp_{\Cm_{vk}^{\inn/\out}} \in\mathbb{R}^{N\times(N-2k)}$, and $\Bm^\perp_{\Cm_{uk}^{\inn/\out}}, \Bm^\perp_{\Cm_{pk}^{\inn/\out}} \in\mathbb{R}^{N\times (N-2k-1)}$ where, for example, $\Bm^\perp_{\Cm_{pk}^{\out}}$ has columns that form an orthonormal basis for $\mathrm{span}( \Cm^{\out}_{pk})^\perp$ where $\Cm^{\out}_{pk}$ is defined in \eqref{eq:Cmat}, and the other matrices are defined analogously.
Moreover, for $k \geq 1$, define  the matrix $\Bm_{\Cm^{\inn/\out}_{vk}}\in\mathbb{R}^{N\times 2k}$ to have as columns an orthonormal basis for $\mathrm{span}( \Cm^{\inn/\out}_{vk})$, and for $k\geq 0$, define $\Bm_{\Cm^{\inn/\out}_{uk}}\in\mathbb{R}^{N\times (2k+1)}$ as a matrix having as columns an orthonormal basis for $\mathrm{span}( \Cm^{\inn/\out}_{uk})$.
Then we let $\Om^{\inn/\out}_{p0} =\Bm^\perp_{\Cm_{v0}^{\inn/\out}} \in \mathbb{R}^{n \times n}$ be any (deterministic) orthogonal matrix, and for $k \geq 1$, we define the orthogonal matrices
\be
\Om^{\inn/\out}_{pk} = [\Bm^\perp_{\Cm_{vk}^{\inn/\out}}\;\Bm_{\Cm_{vk}^{\inn/\out}}]\in \mathbb{R}^{N\times N} \quad \text{ and } \quad \Om^{\inn/\out}_{q(k-1)} = [\Bm^\perp_{\Cm_{u(k-1)}^{\inn/\out}}\;\Bm_{\Cm_{u(k-1)}^{\inn/\out}}]\in \mathbb{R}^{N\times N}.
\label{eq:Up_Uq_matrices}
\ee
To wrap up the notation, we state a definition and lemma that will be useful throughout the proof.

\begin{defi}[Isotropic Invariance] \label{def:isotropic_inv}
A vector $\xv \in \mathbb{R}^N$ is \textbf{isotropically invariant} (or, spherically symmetric) if $\Vm_0 \xv \overset{d}{=} \xv$ for any orthogonal  matrix $\Vm_0 \in \mathbb{R}^{N \times N}$. Notice this implies that $\xv/\|\xv\|$ is uniformly distributed on the unit sphere.
\end{defi}

The following lemma about isotropically invariant vectors will be used throughout the proof.

\begin{lem}
\label{lem:isotropic_inv}
$\xv \overset{d}{=} \|\xv\| (\wv_0/\|\wv_0\|)$ where $\wv_0 \sim \mathcal{N}(\mathbf{0}, \mathbb{I}_{N \times N})$ for any isotropically invariant $\xv\in \mathbb{R}^N$.
\end{lem}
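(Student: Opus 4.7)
The plan is to show that both sides of the claimed distributional identity admit the same polar decomposition: each equals $R \cdot U$ where $R$ has the distribution of $\|\xv\|$, $U$ is uniformly distributed on the unit sphere $S^{N-1}\subset\mathbb{R}^N$, and $R \perp U$. Once this is established, the equality in distribution follows immediately.

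First I would prove that for any isotropically invariant $\xv$, the direction $\xv/\|\xv\|$ (defined arbitrarily, say as a fixed point $\ev_1$, on the event $\{\xv=\mathbf 0\}$) is uniform on $S^{N-1}$ and independent of $\|\xv\|$. The clean way to do this is via a randomization argument: introduce a Haar-distributed orthogonal matrix $\Vm_0\in\mathbb{R}^{N\times N}$ independent of $\xv$, so that isotropic invariance gives $\xv \stackrel{d}{=} \Vm_0 \xv$. Then for any bounded measurable $f:S^{N-1}\to\mathbb{R}$ and $g:\mathbb{R}_{\ge 0}\to\mathbb{R}$,
\[
\mathbb{E}\bigl[f(\xv/\|\xv\|)\, g(\|\xv\|)\bigr]
= \mathbb{E}\bigl[f(\Vm_0 \xv/\|\xv\|)\, g(\|\xv\|)\bigr]
= \mathbb{E}\Bigl[g(\|\xv\|)\,\mathbb{E}\bigl[f(\Vm_0 \xv/\|\xv\|)\,\big|\,\xv\bigr]\Bigr].
\]
Conditional on $\xv$ with $\xv\neq \mathbf 0$, the vector $\Vm_0\,\xv/\|\xv\|$ is uniformly distributed on $S^{N-1}$ by the left-invariance of Haar measure, so the inner conditional expectation equals the deterministic constant $\int_{S^{N-1}} f\, d\sigma$, where $\sigma$ is the uniform probability measure on the sphere. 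This yields
\[
\mathbb{E}\bigl[f(\xv/\|\xv\|)\, g(\|\xv\|)\bigr]
= \Bigl(\int_{S^{N-1}} f\, d\sigma\Bigr)\cdot \mathbb{E}[g(\|\xv\|)],
\]
which simultaneously shows (i) $\xv/\|\xv\| \sim \mathsf{Uniform}(S^{N-1})$ and (ii) $\xv/\|\xv\| \perp \|\xv\|$.

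Next, I would apply the same fact to $\wv_0\sim N(\mathbf 0,\Idm_N)$: an isotropic Gaussian is trivially isotropically invariant, so by the above $\wv_0/\|\wv_0\|$ is uniform on $S^{N-1}$ and independent of $\|\wv_0\|$. Taking $\wv_0$ independent of $\xv$ (which is implicit in the statement, since otherwise the right-hand side is not a well-defined construction), we have $\wv_0/\|\wv_0\| \perp \|\xv\|$ as well. Thus $\|\xv\|\,(\wv_0/\|\wv_0\|)$ decomposes as $R' \cdot U'$ where $R' \stackrel{d}{=} \|\xv\|$, $U' \sim \mathsf{Uniform}(S^{N-1})$, and $R'\perp U'$ — exactly the same polar decomposition as $\xv = \|\xv\|\cdot(\xv/\|\xv\|)$. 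Since the joint distribution of $(R,U)$ determines the distribution of their product $R\cdot U$, the two random vectors are equal in distribution.

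There is no real obstacle here; the only mild subtlety is the measure-zero set $\{\xv=\mathbf 0\}$ on which $\xv/\|\xv\|$ is undefined, which is handled by setting $\xv/\|\xv\|$ to an arbitrary fixed unit vector there — this does not affect the product $\|\xv\|\cdot(\xv/\|\xv\|)$, which is $\mathbf 0$ on that event, matching the right-hand side $\|\xv\|(\wv_0/\|\wv_0\|) = \mathbf 0$ as well.
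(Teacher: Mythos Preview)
Your proposal is correct and follows essentially the same approach as the paper's proof: both argue via the polar decomposition $\xv = \|\xv\|\cdot(\xv/\|\xv\|)$, establish that $\xv/\|\xv\|$ is uniform on the sphere and independent of $\|\xv\|$, and then match this to the analogous decomposition of $\|\xv\|(\wv_0/\|\wv_0\|)$. Your version is somewhat more careful---the paper simply asserts that ``$\xv/\|\xv\|$ is uniformly distributed on the sphere irrespective of the value of $\|\xv\|$; thus, they are independent,'' whereas you supply a clean randomization argument with an independent Haar matrix and handle the null event $\{\xv=\mathbf 0\}$ explicitly---but the underlying idea is identical.
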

\begin{proof}
The result follows from two facts. First, $ \xv/\|\xv\|  \overset{d}{=}\wv_0/\|\wv_0\|$ since both $ \xv/\|\xv\| $ and $\wv_0/\|\wv_0\|$ are uniformly distributed on the unit sphere. Second, $\xv/\|\xv\|$ and $\|\xv\|$ are independent. To see this, notice that $\xv/\|\xv\|$ is uniformly-distributed on the sphere irrespective of the value of $\|\xv\|$; thus, they are independent. Therefore, $\xv= \|\xv\| (\xv/\|\xv\|)  \overset{d}{=} \|\xv\| (\wv_0/\|\wv_0\|)$, giving the desired result.
\end{proof}

\subsection{General Algorithm Concentration}  \label{sec:main_result_general}
Under the conditions given in Section~\ref{sec:assumptions_general}, we state our general concentration result for Algorithm~\ref{alg:general_gvamp} in Lemmas~\ref{lem:cond_dist} - \ref{lem:main_general}. 
To obtain our concentration results, we first introduce deviance terms $\Delm^{\inn/\out}_{pk}$ and $\Delm^{\inn/\out}_{qk}$ quantifying the discrepancy between the finite sample behavior of the iterates $(\pv^{\inn/\out}_1,...,\pv^{\inn/\out}_k)$ and $(\qv^{\inn/\out}_1,...,\qv^{\inn/\out}_k)$ and their limiting values.  This is done in Lemma~\ref{lem:cond_dist} by studying the distributions of the vectors conditional on the previous output of the algorithm, summarized by the sigma-algebras in \eqref{eq:sigma_algebras}.  Before we state and prove Lemma~\ref{lem:cond_dist}, we state a lemma, originally from \cite[Lemmas 4 and 5]{VAMP}, that characterizes the distribution of random Haar matrices, conditional on linear constraints.

\begin{lem}{\cite[Lemmas 4 and 5]{VAMP}}
Let $\Vm \in \mathbb{R}^{N \times N}$ be a random, Haar-distributed matrix (see Definition~\ref{def:Haar}).  Suppose that for deterministic matrices $\textbf{M}_1, \textbf{M}_2 \in \mathbb{R}^{N \times s}$, for some $1 \leq s \leq N$, we know that $\Vm$ satisfies $\textbf{M}_1 = \Vm \textbf{M}_2$. Then, if $\textbf{M}_1$ and $\textbf{M}_2$ are full column rank, 
\[\Vm \Big \lvert_{\{\textbf{M}_1 = \Vm \textbf{M}_2\}} \overset{d}{=} \textbf{M}_1 (\textbf{M}_1^T \textbf{M}_1)^{-1} \textbf{M}_2^T + \Bm^\perp_{\textbf{M}_1} \widetilde{\Vm}  [\Bm^\perp_{\textbf{M}_2}]^T,\]
where $\widetilde{\Vm}$ is Haar distributed, independent of $\Vm$, and  $ \Bm^\perp_{\textbf{M}_1},  \Bm^\perp_{\textbf{M}_2} \in \mathbb{R}^{N \times (N-s)}$ are any matrices whose columns are orthogonal bases for $Range(\mathbf{M}_1)^{\perp}$ and $Range(\mathbf{M}_2)^{\perp}$, respectively.
\label{lem:V_cond}
\end{lem}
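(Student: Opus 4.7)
The plan is to decompose the randomness of $\Vm$ into a part fixed by the linear constraint and a ``free'' part whose distribution is then pinned down by Haar invariance. First I would observe that the constraint $\mathbf{M}_1 = \Vm \mathbf{M}_2$ together with $\Vm^T\Vm = \mathbf{I}$ forces $\mathbf{M}_1^T\mathbf{M}_1 = \mathbf{M}_2^T\mathbf{M}_2$, so $(\mathbf{M}_1^T\mathbf{M}_1)^{-1}\mathbf{M}_2^T$ and $(\mathbf{M}_2^T\mathbf{M}_2)^{-1}\mathbf{M}_2^T$ define the same map (the Moore--Penrose pseudoinverse $\mathbf{M}_2^+$). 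Splitting $\mathbb{R}^N = \mathrm{col}(\mathbf{M}_2)\oplus\mathrm{col}(\mathbf{M}_2)^\perp$: for $\mathbf{v} = \mathbf{M}_2\mathbf{c}\in \mathrm{col}(\mathbf{M}_2)$ the constraint fixes $\Vm \mathbf{v} = \mathbf{M}_1\mathbf{c}$; for $\mathbf{v}\in\mathrm{col}(\mathbf{M}_2)^\perp$, orthogonality of $\Vm$ forces $\Vm\mathbf{v}\in\mathrm{col}(\mathbf{M}_1)^\perp$. Writing these two actions in matrix form using the orthonormal bases $\mathbf{B}^\perp_{\mathbf{M}_2}$ and $\mathbf{B}^\perp_{\mathbf{M}_1}$ gives, for every orthogonal $\Vm$ satisfying the constraint,
\[
\Vm \;=\; \mathbf{M}_1(\mathbf{M}_1^T\mathbf{M}_1)^{-1}\mathbf{M}_2^T + \mathbf{B}^\perp_{\mathbf{M}_1}\,\mathbf{W}\,[\mathbf{B}^\perp_{\mathbf{M}_2}]^T,
\]
where $\mathbf{W}\in\mathbb{R}^{(N-s)\times(N-s)}$ is necessarily orthogonal (it sends one orthonormal basis to another). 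Conversely, every $\mathbf{W}\in\textsf{O}(N-s)$ produces an orthogonal $\Vm$ satisfying the constraint. Hence the problem reduces to identifying the conditional law of $\mathbf{W}$.

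The heart of the proof is then to show that $\mathbf{W}$ is Haar-distributed on $\textsf{O}(N-s)$ under the conditional law. I would use left-invariance of the Haar measure on $\textsf{O}(N)$. Fix any $\mathbf{W}_0\in\textsf{O}(N-s)$ and define
\[
\mathbf{Q} \;=\; \mathbf{M}_1(\mathbf{M}_1^T\mathbf{M}_1)^{-1}\mathbf{M}_1^T + \mathbf{B}^\perp_{\mathbf{M}_1}\,\mathbf{W}_0\,[\mathbf{B}^\perp_{\mathbf{M}_1}]^T \;\in\; \textsf{O}(N),
\]
which acts as the identity on $\mathrm{col}(\mathbf{M}_1)$ and as $\mathbf{W}_0$ (in the basis $\mathbf{B}^\perp_{\mathbf{M}_1}$) on $\mathrm{col}(\mathbf{M}_1)^\perp$. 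By Haar invariance, $\mathbf{Q}\Vm\stackrel{d}{=}\Vm$, and because $\mathbf{Q}\mathbf{M}_1 = \mathbf{M}_1$, the conditioning event $\{\Vm\mathbf{M}_2=\mathbf{M}_1\}$ is preserved. Substituting the decomposition of $\Vm$ and using $[\mathbf{B}^\perp_{\mathbf{M}_1}]^T\mathbf{B}^\perp_{\mathbf{M}_1} = \mathbf{I}$ and $\mathbf{M}_1^T\mathbf{B}^\perp_{\mathbf{M}_1}=\mathbf{0}$, premultiplication by $\mathbf{Q}$ leaves the deterministic summand unchanged and sends $\mathbf{W}\mapsto \mathbf{W}_0\mathbf{W}$. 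Therefore, under the conditional law, $\mathbf{W}\stackrel{d}{=}\mathbf{W}_0\mathbf{W}$ for every $\mathbf{W}_0\in\textsf{O}(N-s)$, and by uniqueness of the Haar measure on a compact group, $\mathbf{W}$ must itself be Haar. Since this conditional law depends only on the parameters $\mathbf{M}_1,\mathbf{M}_2$, we may represent $\mathbf{W}$ by an independent Haar-distributed $\widetilde{\Vm}$, yielding the claimed identity in distribution.

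The main obstacle is the bookkeeping required to verify that conjugation by $\mathbf{Q}$ reduces to the pure left-action $\mathbf{W}\mapsto\mathbf{W}_0\mathbf{W}$ on the free block; this uses the compatible pair of relations $\mathbf{M}_1^T\mathbf{B}^\perp_{\mathbf{M}_1}=\mathbf{0}$, $[\mathbf{B}^\perp_{\mathbf{M}_1}]^T\mathbf{B}^\perp_{\mathbf{M}_1}=\mathbf{I}$, and $\mathbf{Q}\mathbf{M}_1=\mathbf{M}_1$, all simultaneously. Once this is in place, the Haar-invariance argument is essentially the classical uniqueness-of-invariant-measure principle. One could alternatively build a symmetric $\mathbf{Q}'$ from $\mathbf{B}^\perp_{\mathbf{M}_2}$ and apply right-invariance to the same end, but left-invariance alone suffices.
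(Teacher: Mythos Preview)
The paper does not prove this lemma; it is stated with a citation to \cite[Lemmas 4 and 5]{VAMP} and used as a black box in the proof of Lemma~\ref{lem:cond_dist}. Your argument is correct and is essentially the standard one found in that reference: parametrize the constraint set $\{\Vm\in\textsf{O}(N):\Vm\mathbf{M}_2=\mathbf{M}_1\}$ by $\textsf{O}(N-s)$ via the block decomposition, then push left-invariance of Haar measure on $\textsf{O}(N)$ through the stabilizer $\mathbf{Q}$ to conclude that the induced law on the free block $\mathbf{W}$ is itself Haar. The only point you leave implicit is the measure-zero nature of the conditioning event, which requires interpreting the conditional law via disintegration (or, equivalently, as the unique invariant measure on the fiber); this is also glossed over in the original reference and does not affect the validity of the argument.
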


To use the above lemma, we notice that conditioning on sigma-algebras $\mathcal{Q}^{\inn/\out}_k$ and $\mathcal{P}^{\inn/\out}_k$, defined in \eqref{eq:sigma_algebras}, is equivalent to conditioning on two linear constraints. As an illustrative example, consider conditioning on the sigma-algebra $\mathcal{Q}^{\inn}_k=\sigma\lbrace \Um^{\inn}_k,\Pm^{\inn}_k,\Vm^{\inn}_k,\Qm^{\inn}_{k-1}\rbrace$, which contains all of the input iterates up until just before step \eqref{eq:alg_gvamp_q_step}. First, notice that the action of $\Vm$  in Algorithm~\ref{alg:general_gvamp} is only in step \eqref{eq:alg_gvamp_p_step}, which reads $\pv^\inn_k\gets \Vm \uv^\inn_k$, and step \eqref{eq:alg_gvamp_q_step}, which reads $\qv^\inn_k \gets \Vm^T\vv^\inn_k$. Thus, conditioning on $\mathcal{Q}^{\inn}_k$ is equivalent to conditioning on the linear constraint $ [\Pm^{\inn}_k\;  \Vm^{\inn}_{k-1}] = \Vm [\Um^\inn_k\;\Qm^\inn_{k-1}]$ or, equivalently, on $\Cm_{pk}^\inn=\Vm\Cm^\inn_{uk}$ for $\Cm^\inn_{pk}$ and $\Cm_{uk}^\inn$ defined in \eqref{eq:Cmat}.
By a similar argument, we can see that conditioning on $\mathcal{P}^{\inn}_{k+1}=\sigma\lbrace \Um^{\inn}_{k+1},\Pm^{\inn}_{k},\Vm^{\inn}_{k},\Qm^{\inn}_{k}\rbrace$ is equivalent to conditioning on $ [\Pm^\inn_k\;  \Vm^\inn_{k}] = \Vm [\Um^\inn_k\;\Qm^\inn_{k}]$ or on $\Cm^\inn_{v(k+1)}=\Vm\Cm^\inn_{q(k+1)}$. Continuing this argument, conditioning on $\mathcal{Q}^{\out}_k$ or $\mathcal{P}^{\out}_{k+1}$ is equivalent to conditioning on $\Cm_{pk}^\out=\Um\Cm^\out_{uk}$ or  $\Cm^\out_{v(k+1)}=\Um\Cm^\out_{q(k+1)}$, respectively.

Now we give the conditional distribution lemma,  Lemma~\ref{lem:cond_dist}.  In the lemma and in what follows, we use the notation $\mathbf{0}_{k}$ to denotes a vector of zeros of length $k$. This lemma and the following, Lemma \ref{lem:joint_dists}, depend on vectors $\betav_{pk}^{\inn/\out},\betav_{qk}^{\inn/\out}\in\mathbb{R}^k$ and constants $\rho_{pk}^{\inn/\out},\rho_{qk}^{\inn/\out}\in\mathbb{R}_{>0}$, which we leave unspecified for the time being. The statements of these lemmas are valid for any choice of these parameters, and in section \ref{sec:limit_defs} we define them as appropriate limits of GVAMP quantities so as to ensure the desired concentration behavior.

\begin{lem}
If $[\Cm^{\inn}_{vk}]^T\Cm^{\inn}_{vk}$ has full rank for $0\leq k\leq K$, then we have for all such $k$ that
\begin{equation}
\pv^{\inn}_0 \lvert_{\mathcal{P}_0} \stackrel{d}{=} \sqrt{\rho^\inn_{p0}} \,\Om^\inn_{p0} \, \overline{\Zv}^\inn_{p0} + \Delm^\inn_{p0}, \qquad \text{ and  } \qquad \pv^\inn_k\lvert_{\mathcal{P}_k} \stackrel{d}{=} \sum_{\ell=0}^{k-1}[\betav^\inn_{pk}]_{\ell +1} \, \pv^\inn_{\ell} + \sqrt{\rho^\inn_{pk}} \, \Om^\inn_{pk} \, \overline{\Zv}^\inn_{pk} + \Delm^\inn_{pk},
\label{eq:p_conds}
\end{equation}
where $\Om^\inn_{pk}$ is defined in \eqref{eq:Up_Uq_matrices} while $\rho^\inn_{pk}$ and $\betav^\inn_{pk}$ are unspecified for the time being, with appropriate values given in what follows in \eqref{eq:rhos}, and 
\begin{align*}
\Delm^\inn_{p0} &= \left(\frac{\|\uv^\inn_0\|}{\|\Zv^\inn_{p0}\|}-\sqrt{\rho^\inn_{p0}}\right)\Bm^\perp_{\Cm^\inn_{v0}}\Zv^\inn_{p0},\\
\Delm^\inn_{pk} &=  \Cm^\inn_{vk}\left(([\Cm^\inn_{qk}]^T\Cm^\inn_{qk})^{-1}[\Cm_{qk}^\inn]^T\uv^\inn_k-\left[\begin{matrix}\betav^\inn_{pk}\\ \boldsymbol{0}_k\end{matrix}\right]\right) +\left[\frac{\|[\Bm^\perp_{\Cm_{qk}^\inn}]^T\uv^\inn_k\|}{\|\Zv^\inn_{pk}\|}-\sqrt{\rho^\inn_{pk}}\right]\Bm^\perp_{\Cm_{vk}^\inn}\Zv^\inn_{pk}- \sqrt{\rho^\inn_{pk}} \, \Bm_{\Cm_{vk}^\inn} \, \breve{\Zv}^\inn_{pk}.
\end{align*}
The matrices $\Cm^\inn_{vk}$ and $\Cm^\inn_{qk}$ are defined in \eqref{eq:Cmat}. In the above, $\overline{\Zv}^\inn_{pk}$ are length-$N$ vectors with independent, standard Gaussian entries that are independent across $0 \leq k \leq K$ and independent of the corresponding conditioning sigma-algebra $\mathcal{P}_k$. These are decomposed as $\overline{\Zv}^\inn_{pk}=[\Zv^\inn_{pk} \, \lvert \, \breve{\Zv}^\inn_{pk}]$ where $\Zv^\inn_{pk}$ is length $N-2k$ and $\breve{\Zv}^\inn_{pk}$ is length $2k$. In particular, this means $\Om^\inn_{pk}\overline{\Zv}^\inn_{pk}=\Bm^\perp_{\Cm_{vk}^\inn}\Zv^\inn_{pk}+\Bm_{\Cm^\inn_{vk}}\breve{\Zv}^\inn_{pk}$.

If $[\Cm^\inn_{pk}]^T\Cm^\inn_{pk}$ has full rank for $0\leq k\leq K$, we have for all such $k$ that
\be
\qv^\inn_0\lvert_{\mathcal{Q}_0} \stackrel{d}{=} \sqrt{\rho^\inn_{q0}} \, \Om^\inn_{q0} \, \overline{\Zv}^{\inn}_{q0}+\Delm^\inn_{q0}, \qquad \text{ and  } \qquad \qv^\inn_k\lvert_{\mathcal{Q}_k} \stackrel{d}{=} \sum_{\ell=0}^{k-1} [\betav^\inn_{qk}]_{\ell +1} \, \qv^\inn_{\ell} + \sqrt{\rho^\inn_{qk}} \, \Om^\inn_{qk} \, \overline{\Zv}^{\inn}_{qk} + \Delm^\inn_{qk},
\label{eq:q_conds}
\ee
where $\Om^\inn_{qk}$ is defined in \eqref{eq:Up_Uq_matrices}  while  $\rho^\inn_{qk}$ and $\betav^\inn_{qk}$  are unspecified for the time being, with appropriate values given in what follows in \eqref{eq:rhos}, and 
\begin{align}
\Delm^\inn_{q0} &= \frac{[\pv^\inn_0]^T\vv^\inn_0}{\|\pv^\inn_0\|^2}\uv^\inn_0 +\left[\frac{\|[\Bm^\perp_{\Cm_{p0}^\inn}]^T\vv^\inn_0\|}{\|\Zv^\inn_{q0}\|}-\sqrt{\rho^\inn_{q0}}\right]\Bm^\perp_{\Cm_{u0}^\inn} \, \Zv^\inn_{q0} - \sqrt{\rho^\inn_{q0}} \, \Bm_{\Cm_{u0}^\inn} \, \breve{\Zv}^\inn_{q0}, \label{eq:Deltaq0} \\
  \Delm^\inn_{qk} &= \Cm^\inn_{uk}\left(([\Cm^\inn_{pk}]^T\Cm^\inn_{pk})^{-1}[\Cm^\inn_{pk}]^T \vv^\inn_k - \left[\begin{matrix} \mathbf{0}_{k+1} \\ \betav^\inn_{qk}\end{matrix}\right]\right)+ \left[\frac{\|[\Bm^\perp_{\Cm_{pk}^\inn}]^T\vv^\inn_k\|}{\|\Zv^\inn_{qk}\|} -\sqrt{\rho^\inn_{qk}}\right]\Bm^\perp_{\Cm_{uk}^\inn}  \, \Zv^\inn_{qk} -  \sqrt{\rho^\inn_{qk}}  \, \Bm_{\Cm_{uk}^\inn} \, \breve{\Zv}^\inn_{qk}.
\label{eq:Deltaqk}
\end{align}
The matrices $\Cm^\inn_{uk}$ and $\Cm^\inn_{pk}$ are defined in \eqref{eq:Cmat}. In the above, $\overline{\Zv}^\inn_{qk} \in\mathbb{R}^N$ are length-$N$ vectors with independent, standard Gaussian entries that are independent across $0 \leq k \leq K$ and independent of the corresponding conditioning sigma-algebra $\mathcal{Q}_k$. As above, these are decomposed as $\overline{\Zv}^\inn_{qk}=[\Zv^\inn_{qk} \, \lvert \, \breve{\Zv}^\inn_{qk}]$ where $\Zv^\inn_{qk}$ is length $N-2k-1$ and $\breve{\Zv}^\inn_{qk}$ is length $2k+1$. Again we have that $\Om^\inn_{qk}\overline{\Zv}^\inn_{qk}=\Bm^\perp_{\Cm_{uk}^\inn}\Zv^\inn_{qk}+\Bm_{\Cm^\inn_{uk}}\breve{\Zv}^\inn_{qk}$. The same statement holds with all input variables replaced by their output equivalents. 

In both cases, the output statements for $ \pv^\out_k\lvert_{\mathcal{P}_k} $ and  $\qv^\out_k\lvert_{\mathcal{Q}_k}$ are analogous to those given in  \eqref{eq:p_conds} and \eqref{eq:q_conds} with $\out$ replacing $\inn$ everywhere.

Furthermore, we have that $\pv_k^\inn$ and $\pv_k^\out$ are conditionally independent given $\mathcal{P}_k$ and that $\qv_k^\inn$ and $\qv_k^\out$ are conditionally independent given $\mathcal{Q}_k$ for all $k\geq 0$.
\label{lem:cond_dist}
\end{lem}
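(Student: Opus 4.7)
The plan is to exploit the equivalence between conditioning on the sigma-algebras $\mathcal{P}_k,\mathcal{Q}_k$ and conditioning on certain linear constraints on the Haar matrices $\Vm,\Um$, then apply Lemma~\ref{lem:V_cond} to each matrix separately. Concretely, as noted in the paragraph before the lemma, conditioning on $\mathcal{P}^\inn_k$ is equivalent to conditioning on $\Cm^\inn_{vk} = \Vm\Cm^\inn_{qk}$, and likewise for the other sigma-algebras and for the output variants with $\Um$. The disturbance vectors and initializations appearing in these sigma-algebras are independent of $\Vm$ and $\Um$ by \textbf{Condition 1} and \textbf{Condition 0}, so they do not alter the conditional law of the Haar matrices. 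Under the assumed full-rank condition on $[\Cm^\inn_{vk}]^T\Cm^\inn_{vk}$ (note that because $\Vm$ is orthogonal on the conditioning event, $[\Cm^\inn_{vk}]^T\Cm^\inn_{vk}=[\Cm^\inn_{qk}]^T\Cm^\inn_{qk}$, so both are full rank simultaneously), Lemma~\ref{lem:V_cond} yields
\[
\Vm \big\lvert_{\mathcal{P}_k^\inn} \stackrel{d}{=} \Cm^\inn_{vk}\bigl([\Cm^\inn_{qk}]^T\Cm^\inn_{qk}\bigr)^{-1}[\Cm^\inn_{qk}]^T + \Bm^\perp_{\Cm^\inn_{vk}}\,\widetilde{\Vm}\,[\Bm^\perp_{\Cm^\inn_{qk}}]^T,
\]
where $\widetilde{\Vm}$ is a fresh Haar matrix on $\textsf{O}(N-2k)$.

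Next I will substitute this expression into $\pv^\inn_k = \Vm \uv^\inn_k$ and massage the two resulting terms into the target form. For the deterministic piece, I add and subtract $\Cm^\inn_{vk}[\betav^\inn_{pk};\,\mathbf{0}_k]$; using the column ordering of $\Cm^\inn_{vk} = [\Pm^\inn_{k-1}\;\Vm^\inn_{k-1}]$, this produces the sum $\sum_{\ell=0}^{k-1}[\betav^\inn_{pk}]_{\ell+1}\pv^\inn_\ell$ plus the first summand of $\Delm^\inn_{pk}$. For the random piece $\Bm^\perp_{\Cm^\inn_{vk}}\widetilde{\Vm}[\Bm^\perp_{\Cm^\inn_{qk}}]^T\uv^\inn_k$, I invoke Lemma~\ref{lem:isotropic_inv}: conditionally, $\widetilde{\Vm}x$ for fixed $x\in\mathbb{R}^{N-2k}$ is isotropically invariant, hence equal in distribution to $\|x\|\cdot \Zv^\inn_{pk}/\|\Zv^\inn_{pk}\|$ for a standard Gaussian $\Zv^\inn_{pk}\in\mathbb{R}^{N-2k}$ independent of $\mathcal{P}_k$. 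Adding and subtracting $\sqrt{\rho^\inn_{pk}}\Bm^\perp_{\Cm^\inn_{vk}}\Zv^\inn_{pk}$ produces the second summand of $\Delm^\inn_{pk}$, and then appending an independent Gaussian block $\breve{\Zv}^\inn_{pk}\in\mathbb{R}^{2k}$ (independent of everything else, via the $+\sqrt{\rho^\inn_{pk}}\Bm_{\Cm^\inn_{vk}}\breve{\Zv}^\inn_{pk}-\sqrt{\rho^\inn_{pk}}\Bm_{\Cm^\inn_{vk}}\breve{\Zv}^\inn_{pk}$ trick) recombines the basis pair into $\Om^\inn_{pk}\overline{\Zv}^\inn_{pk} = \Bm^\perp_{\Cm^\inn_{vk}}\Zv^\inn_{pk}+\Bm_{\Cm^\inn_{vk}}\breve{\Zv}^\inn_{pk}$ and leaves the third summand of $\Delm^\inn_{pk}$. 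The $k=0$ case is simpler: there is no deterministic projection to subtract, so only the isotropic-invariance argument survives, yielding the stated $\Delm^\inn_{p0}$.

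For the $\qv^\inn_k$ identities, I repeat the above with $\Vm$ replaced by $\Vm^T$ (which is also Haar-distributed on $\textsf{O}(N)$), reading the constraint as $\Cm^\inn_{uk}=\Vm^T\Cm^\inn_{pk}$ and expanding $\qv^\inn_k=\Vm^T\vv^\inn_k$; the coefficient vector acquires the block structure $[\mathbf{0}_{k+1};\,\betav^\inn_{qk}]$ in \eqref{eq:Deltaqk} because the target linear combination is over the $\qv^\inn_\ell$ columns of $\Cm^\inn_{uk}=[\Um^\inn_k\;\Qm^\inn_{k-1}]$ rather than the $\uv^\inn_\ell$ columns. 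The $\qv^\inn_0$ case requires the slight variation visible in \eqref{eq:Deltaq0}, where the projection onto $\mathrm{span}(\Cm^\inn_{p0})=\mathrm{span}(\pv^\inn_0)$ gives the rank-one term $([\pv^\inn_0]^T\vv^\inn_0/\|\pv^\inn_0\|^2)\uv^\inn_0$. The output statements for $\pv^\out_k,\qv^\out_k$ are obtained by the same argument verbatim with $\Um$ in place of $\Vm$. Finally, the conditional independence of $\pv^\inn_k$ and $\pv^\out_k$ given $\mathcal{P}_k$ (and similarly for $\qv$) follows from \textbf{Assumption 1}: since $\Vm\perp\Um$ unconditionally and the constraints defining $\mathcal{P}_k^\inn$ and $\mathcal{P}_k^\out$ involve disjoint collections of random matrices (up to the conditionally-deterministic $\uv^\inn_k,\uv^\out_k$), the Haar matrices remain independent after conditioning, and so do their actions on these fixed vectors. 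The main obstacle is bookkeeping — carefully verifying that adding and subtracting the $\sqrt{\rho^\inn_{pk}}$-scaled Gaussian pieces reproduces exactly the three summands in $\Delm^\inn_{pk}$ and that the block sizes of $\Zv^\inn_{pk},\breve{\Zv}^\inn_{pk}$ align with $N-2k$ and $2k$ — but no genuinely new technique is required beyond Lemmas~\ref{lem:V_cond} and \ref{lem:isotropic_inv}.
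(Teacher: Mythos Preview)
Your proposal is correct and follows essentially the same approach as the paper's own proof: reduce conditioning on $\mathcal{P}_k$ (resp.\ $\mathcal{Q}_k$) to the linear constraints on $\Vm$ and $\Um$, use the independence $\Vm\perp\Um$ to drop the cross constraint, apply Lemma~\ref{lem:V_cond}, replace the Haar-times-vector piece via Lemma~\ref{lem:isotropic_inv}, and then add and subtract the $\betav$ and $\sqrt{\rho}$ terms to isolate $\Delm$. The only cosmetic difference is that the paper works out the $\qv^\inn$ case in detail (because the $k=0$ base case there already has a nontrivial projection term) and leaves $\pv^\inn$ as analogous, whereas you do the reverse; one small point worth making explicit in your write-up is that the reduction $\Vm\lvert_{\mathcal{P}_k}\stackrel{d}{=}\Vm\lvert_{\mathcal{P}_k^\inn}$ (dropping the $\Um$-constraint) is needed \emph{before} invoking Lemma~\ref{lem:V_cond}, not only in the conditional-independence paragraph at the end.
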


\begin{proof} 
We prove the lemma for $\qv^\inn_0$ and $\qv^\inn_k$, as the base case for $\qv^\inn_0$ is slightly more involved than that of $\pv^\inn_0$. The proof for $\pv^\inn_0$ and $\pv^\inn_k$ is broadly similar, and the proof for the output variables exactly mirrors the proof for the input variables. 

We first prove the statement for $\qv^\inn_0$. Observe from the Algorithm~\ref{alg:general_gvamp} definition, $\qv^\inn_0 = \Vm^T\vv^\inn_0$ and that $\vv^\inn_0\in\mathcal{Q}_0$ since $\mathcal{Q}_0 =\sigma\left\lbrace\mathcal{Q}^\inn_0,\mathcal{Q}^\out_0\right\rbrace= \sigma\lbrace \wv_q^{\inn/\out},\uv^{\inn/\out}_0,\pv^{\inn/\out}_0,\vv^{\inn/\out}_0\rbrace$, as defined in \eqref{eq:sigma_algebras}.  

By the reasoning in the paragraph just before the lemma statement, conditioning on $\mathcal{Q}_k$ is equivalent to conditioning on the linear constraints $\Cm^\inn_{pk}=\Vm\Cm^\inn_{uk}$ and $\Cm^\out_{pk}=\Um\Cm^\out_{uk}$. Thus, 
\ben
\Vm\lvert_{\mathcal{Q}_k} = \Vm\lvert_{\left\lbrace\Cm^\inn_{pk}=\Vm\Cm^\inn_{uk},\Cm^\out_{pk}=\Um\Cm^\out_{uk}\right\rbrace} \stackrel{(a)}{=} \Vm\lvert_{\left\lbrace\Cm^\inn_{pk}=\Vm\Cm^\inn_{uk}\right\rbrace} = \Vm\lvert_{\mathcal{Q}^\inn_k},
\een
where $(a)$ follows as $\Um$ and $\Vm$ are assumed to be independent.
The right-hand side can be handled by Lemma~\ref{lem:V_cond}. In particular, the conditional distribution of $\Vm$ given $\mathcal{Q}^\inn_k$ can be decomposed as
  \begin{equation}
  \label{eq:cond_lem_app}
 \Vm\lvert_{\mathcal{Q}^\inn_k} \stackrel{d}{=} \Cm^\inn_{pk}([\Cm^\inn_{pk}]^T\Cm^\inn_{pk})^{-1}[\Cm^\inn_{uk}]^T+\Bm^\perp_{\Cm_{pk}^\inn}\widetilde{\Vm}[\Bm^\perp_{\Cm_{uk}^\inn}]^T,
\end{equation}
where $\widetilde{\Vm}$ is a random $(N-(2k+1))\times (N-(2k+1))$ matrix that is independent of $\Vm$, but taking the same distribution, in that it is Haar (or uniformly) distributed on the group of orthogonal matrices of its dimensions. 
In what follows, we drop the explicit ``$\inn$'' superscript to save space because throughout we will be working with the input version of the variables.

Thus, for $k=0$, conditioning on $\mathcal{Q}_0 = \sigma\lbrace \wv_q,\uv_0,\pv_0,\vv_0\rbrace$ is equivalent to conditioning on $\Cm_{p0}=\Vm\Cm_{u0}$ (since $\Vm$ and $\wv_q$ are independent), and the result in \eqref{eq:cond_lem_app} gives
\be
 \Vm^T\vv_0\lvert_{\mathcal{Q}_0} \stackrel{d}{=} \Cm_{u0}(\Cm_{p0}^T\Cm_{p0})^{-1}\Cm_{p0}^T\vv_0+\Bm^\perp_{\Cm_{u0}}\widetilde{\Vm}[\Bm^\perp_{\Cm_{p0}}]^T\vv_0,
 \label{eq:q_decomp_1}
\ee
where we have removed the ``$\inn$'' superscript from the vector $\vv_0$ and the matrices $\Cm_{u0}$ and $\Cm_{p0}$.
Now observe that $\Cm_{p0} = \pv_0$ and $\Cm_{u0} = \uv_0$, by definition in \eqref{eq:Cmat}, and therefore,
\be
\Cm_{u0}(\Cm_{p0}^T\Cm_{p0})^{-1}\Cm_{p0}^T\vv_0 = \uv_0 {\|\pv_0\|^{-2}}\pv_0^T\vv_0.
\label{eq:q0_T1}
\ee

Next, as $\pv_0$ and $\vv_0$ are both measurable with respect to $\mathcal{Q}_0$, so is the vector $[\Bm^\perp_{\Cm_{p0}}]^T\vv_0$. We will next argue that the vector $\widetilde{\Vm}[\Bm^\perp_{\Cm_{p0}}]^T\vv_0$ is isotropically invariant as defined in Definition~\ref{def:isotropic_inv}. To see this,  recall that $\widetilde{\Vm}$ is Haar-distributed (see Definition~\ref{def:Haar}), thus $\Vm' \widetilde{\Vm} \overset{d}{=} \widetilde{\Vm}$ for any other orthogonal matrix $\Vm' \in \mathbb{R}^{N-1 \times N-1}$. Hence, $\Vm' \widetilde{\Vm}[\Bm^\perp_{\Cm_{p0}}]^T\vv_0 \overset{d}{=} \widetilde{\Vm}[\Bm^\perp_{\Cm_{p0}}]^T\vv_0$, meaning $\widetilde{\Vm}[\Bm^\perp_{\Cm_{p0}}]^T\vv_0$ is isotropically invariant; therefore, by Lemma~\ref{lem:isotropic_inv},
its distribution is entirely determined by the distribution of its magnitude. In other words, since $\| \widetilde{\Vm} [\Bm^\perp_{\Cm_{p0}}]^T\vv_0\| = \|[\Bm^\perp_{\Cm_{p0}}]^T\vv_0\|$, we have
\be
\widetilde{\Vm}[\Bm^\perp_{\Cm_{p0}}]^T\vv_0 \stackrel{d}{=} \|[\Bm^\perp_{\Cm_{p0}}]^T\vv_0\| \left({\Zv_{q0}}/{\|\Zv_{q0}\|}\right),
\label{eq:q0_T2_firsteq}
\ee
for $\Zv_{q0} \sim \mathcal{N}(\mathbf{0}, \mathbb{I}_{N-1 \times N-1})$, independent of $\mathcal{Q}_0$. 
This gives us that, conditional on $\mathcal{Q}_0$,
\begin{align}
  \Bm^\perp_{\Cm_{u0}}\widetilde{\Vm}[\Bm^\perp_{\Cm_{p0}}]^T\vv_0&\stackrel{d}{=}\frac{\|[\Bm^\perp_{\Cm_{p0}}]^T\vv_0\|}{\|\Zv_{q0}\|} \,\, \Bm^\perp_{\Cm_{u0}} \Zv_{q0}\nonumber\\
  &=\left[\frac{\|[\Bm^\perp_{\Cm_{p0}}]^T\vv_0\|}{\|\Zv_{q0}\|}-\sqrt{\rho_{q0}}\right]\Bm^\perp_{\Cm_{u0}} \, \Zv_{q0}-\sqrt{\rho_{q0}} \, \Bm_{\Cm_{u0}} \, \breve{\Zv}_{q0}+\sqrt{\rho_{q0}} \, \Om_{q0} \, \overline{\Zv}_{q0}.
\label{eq:q0_T2}
\end{align}
The second equality in the above uses that $\Om_{q0} = [\Bm^\perp_{\Cm_{u0}}\;\Bm_{\Cm_{u0}}]\in \mathbb{R}^{N\times N}$ by \eqref{eq:Up_Uq_matrices} for $\Bm_{\Cm_{u0}} \in \mathbb{R}^{N\times 1}$ and $\Bm^\perp_{\Cm_{u0}} \in \mathbb{R}^{N\times (N-1)}$; therefore, $\Bm^\perp_{\Cm_{u0}}\Zv_{q0} = \Om_{q0}\overline{\Zv}_{q0} - \Bm_{\Cm_{u0}}\breve{\Zv}_{q0}$. 

 Plugging result \eqref{eq:q0_T1} and \eqref{eq:q0_T2} into \eqref{eq:q_decomp_1}, we find the result in \eqref{eq:q_conds}-\eqref{eq:Deltaq0}, namely
\ben
 \Vm^T\vv_0\lvert_{\mathcal{Q}_0} \stackrel{d}{=}\left(\frac{\pv_0^T\vv_0}{\|\pv_0\|^2}\right)\uv_0 +\left[\frac{\|[\Bm^\perp_{\Cm_{p0}}]^T\vv_0\|}{\|\Zv_{q0}\|}-\sqrt{\rho_{q0}}\right]\Bm^\perp_{\Cm_{u0}} \,  \Zv_{q0}-\sqrt{\rho_{q0}} \,  \Bm_{\Cm_{u0}} \, \breve{\Zv}_{q0}+\sqrt{\rho_{q0}} \,  \Om_{q0} \, \overline{\Zv}_{q0}.
\een

To establish the statement for $\qv^\inn_k$,  notice that $\qv^\inn_k = \Vm^T\vv^\inn_k$ by Algorithm~\ref{alg:general_gvamp}, so that $\qv^\inn_k\lvert_{\mathcal{Q}_k} = \Vm^T\vv^\inn_k \lvert_{\mathcal{Q}_k}$ (where $\vv^\inn_{k}\in\mathcal{Q}_k$). Recall from the discussion for the $k=0$ case above that $\Vm^T\lvert_{\mathcal{Q}_k}\stackrel{d}{=}\Vm^T\lvert_{\mathcal{Q}_k^\inn}$. Thus, as before, we drop  the explicit ``$\inn$'' superscript, and we can use Lemma \ref{lem:V_cond} to write
\be
\begin{split}
\qv_k\lvert_{\mathcal{Q}_k} &\stackrel{d}{=} \Cm_{uk}(\Cm_{pk}^T\Cm_{pk})^{-1}\Cm_{pk}^T \vv_k + \Bm^\perp_{\Cm_{uk}}\widetilde{\Vm}[\Bm^\perp_{\Cm_{pk}}]^T\vv_k\\
&= \sum_{\ell=0}^{k-1}[ \betav_{qk}]_{\ell+1}\qv_{\ell} + \Cm_{uk}\left((\Cm_{pk}^T\Cm_{pk})^{-1}\Cm_{pk}^T \vv_k - \left[\begin{matrix}  \mathbf{0}_{k+1} \\ \betav_{qk}\end{matrix}\right]\right) + \Bm^\perp_{\Cm_{uk}}\widetilde{\Vm}[\Bm^\perp_{\Cm_{pk}}]^T\vv_k,
\label{eq:qk_T2}
\end{split}
\ee
where we recall that $\widetilde{\Vm}$ is Haar-distributed and independent of the conditioning sigma-algebra. The second equality in \eqref{eq:qk_T2} follows from the fact that $\Cm_{uk} = [\Um_k\;\Qm_{k-1}]$ from \eqref{eq:Cmat}; thus,
\[
\Cm_{uk}\left[\begin{matrix} \mathbf{0}_{k+1} \\ \betav_{qk}\end{matrix}\right] = \Um_k \mathbf{0}_{k+1} + \Qm_{k-1} \betav_{qk}=  \sum_{\ell=0}^{k-1}[ \betav_{qk}]_{\ell+1}\qv_{\ell}.
\]

As above in \eqref{eq:q0_T2_firsteq}-\eqref{eq:q0_T2}, because $\widetilde{\Vm}$ is Haar distributed and because $\Bm^\perp_{\Cm_{uk}}\Zv_{qk} = \Om_{qk}\overline{\Zv}_{qk} - \Bm_{\Cm_{uk}}\breve{\Zv}_{qk}$, we find that conditional on $\mathcal{Q}_k$,
\begin{align*}
\Bm^\perp_{\Cm_{uk}}\widetilde{\Vm}[\Bm^\perp_{\Cm_{pk}}]^T\vv_k &\stackrel{d}{=} \frac{\|[\Bm^\perp_{\Cm_{pk}}]^T\vv_k\|}{\|\Zv_{qk}\|} \,\, \Bm^\perp_{\Cm_{uk}}\,\Zv_{qk}\\
 &=   \left[\frac{\|[\Bm^\perp_{\Cm_{pk}}]^T\vv_k\|}{\|\Zv_{qk}\|} -\sqrt{\rho_{qk}}\right]\Bm^\perp_{\Cm_{uk}} \, \Zv_{qk}  - \sqrt{\rho_{qk}} \, \Bm_{\Cm_{uk}} \, \breve{\Zv}_{qk} + \sqrt{\rho_{qk}}\, \Om_{qk} \, \overline{\Zv}_{qk}.
\end{align*}
Combining with \eqref{eq:qk_T2} and using the definition of $\Delm_{qk}$ in \eqref{eq:Deltaqk}, we find the desired result:
\begin{align*}
\qv_k\lvert_{\mathcal{Q}_k} &\stackrel{d}{=}\sum_{\ell=0}^{k-1} [\betav_{qk}]_{\ell+1}\qv_{\ell} + \Cm_{uk}\left((\Cm_{pk}^T\Cm_{pk})^{-1}\Cm_{pk}^T \vv_k - \left[\begin{matrix} \mathbf{0}_{k+1} \\ \betav_{qk}\end{matrix}\right]\right) + \sqrt{\rho_{qk}} \, \Om_{qk} \, \overline{\Zv}_{qk}  \\
&\hspace{1cm} -\sqrt{\rho_{qk}} \, \Bm_{\Cm_{uk}} \, \breve{\Zv}_{qk} +  \left[\frac{\|[\Bm^\perp_{\Cm_{pk}^\inn}]^T\vv^\inn_k\|}{\|\Zv_{qk}\|} -\sqrt{\rho_{qk}}\right]\Bm^\perp_{\Cm_{uk}}\Zv_{qk}\\
&=\sum_{\ell=0}^{k-1} [\betav_{qk}]_{\ell+1}\qv_{\ell} +  \sqrt{\rho_{qk}} \, \Om_{qk} \, \overline{\Zv}_{qk}  +\Delm_{qk}.
\end{align*}

Finally, we establish the conditional independence of $\qv^\inn_k$ and $\qv^\out_k$ given the $\sigma$-algebra $\mathcal{Q}_k$ for all $k\geq 0$. Notice that conditioning on $\mathcal{Q}_k$ is equivalent to conditioning on $\left\lbrace g(\Vm)=0,h(\Um)=0\right\rbrace$ where $g,h$ are the (deterministic) functions $\mathbf{X}\mapsto \Cm^\inn_{pk}-\mathbf{X}\Cm^\inn_{uk}$ and $\mathbf{X}\mapsto \Cm^\out_{pk}-\mathbf{X}\Cm^\out_{uk}$, respectively. This is just a reformulation of the linear constraints discussed earlier.  Hence,
\begin{equation}
\begin{split}
\label{eq:independence1}
  \P\left( \qv^\inn_k\in A,\qv^\out_k\in B\mid \mathcal{Q}_k \right)&=\P\left( \qv^\inn_k\in A,\qv^\out_k\in B\mid g(\Vm)=0,h(\Um)=0 \right)\\
  &=\P\left( \Vm^T\vv^\inn_k\in A,\Um^T\vv^\out_k\in B\mid g(\Vm)=0,h(\Um)=0 \right).
\end{split}
\end{equation}
In the above, the second equality follows because  $\qv^\inn_k = \Vm^T\vv^\inn_k$ and  $\qv^\out_k = \Um^T\vv^\out_k$ by Algorithm~\ref{alg:general_gvamp} step \eqref{eq:alg_gvamp_q_step}. Now applying the definition of conditional probability,
\begin{equation}
\begin{split}
\label{eq:independence2}
  \P\left( \qv^\inn_k\in A,\qv^\out_k\in B\mid \mathcal{Q}_k \right)&=\frac{\P\left( \Vm^T\vv^\inn_k\in A,\Um^T\vv^\out_k\in B, g(\Vm)=0,h(\Um)=0 \right)}{\P\left( g(\Vm)=0,h(\Um)=0 \right)}\\
                                                                      &=\frac{\P\left( \Vm^T\vv^\inn_k\in A,g(\Vm)=0\right)}{\P\left( g(\Vm)=0\right)}\times \frac{\P\left( \Um^T\vv^\out_k\in B, h(\Um)=0 \right)}{\P\left(h(\Um)=0 \right)}\\
                                                                      &= \P\left( \qv^\inn_k\in A\mid g(\Vm) = 0 \right)\P\left( \qv^\out_k\in B\mid h(\Um)=0 \right).
\end{split}
\end{equation}
The second equality of \eqref{eq:independence2} follows as $\Um$ and $\Vm$ are independent and $\vv^{\inn/\out}_k$ is measurable with respect to $ \mathcal{Q}^{\inn/\out}_k$ (and because conditioning on $\left\lbrace g(\Vm)=0\right\rbrace$ or $\mathcal{Q}^{\inn}_k$  is equivalent; similarly for $\left\lbrace h(\Um)=0\right\rbrace$ and $\mathcal{Q}^{\out}_k$).
This establishes conditional independence of $\qv^\inn_k$ and $\qv^\out_k$ given $\mathcal{Q}_k$ as
\begin{align*}
&\P\left( \qv^\inn_k\in A\mid g(\Vm) = 0 \right)\P\left( \qv^\out_k\in B\mid h(\Um)=0 \right) \\
& \qquad = \P\left( \qv^\inn_k\in A\mid g(\Vm) = 0,h(\Um)=0 \right)\P\left( \qv^\out_k\in B\mid g(\Vm)=0,h(\Um)=0 \right)\\
&\qquad  = \P\left( \qv^\inn_k\in A\mid \mathcal{Q}_k \right)\P\left( \qv^\out_k\in B\mid\mathcal{Q}_k \right).
\end{align*}
\end{proof}

Next we extend this lemma to a result characterizing the discrepancy between the finite sample iterates and the limiting behavior of the $\pv$ and $\qv$ iterates jointly. To do this, we introduce a new recursion in terms of Gaussian vectors, $\overline{\Zv}^{\inn/\out}_p$ and $\overline{\Zv}^{\inn/\out}_q$ defined in Lemma \ref{lem:cond_dist}. As we see in Lemma \ref{lem:joint_dists}, the iterates generated by this recursion,  referred to as the equivalent Gaussian recursion, are jointly equal in distribution to the iterates in the general recursion, Algorithm \ref{alg:general_gvamp}.
\begin{algorithm}[H]
  \setstretch{1.3}
\caption{Equivalent Gaussian Representation of General Recursion\label{alg:gaussian}}
\begin{algorithmic}[1]
  \Require{Separable denoisers $\fv^{\inn}_p$, $\fv_p^{\out}$, $\fv^{\inn}_q$, and $\fv_q^{\out}$;
    parameter update functions $\Gamma_i:\mathbb{R}^2\to\mathbb{R}$; initial data $\widetilde{\uv}_0^{\inn}\in\mathbb{R}^N$ and $\widetilde{\uv}_0^{\out}\in\mathbb{R}^M$; and disturbance vectors $\wv^{\inn}_p, \wv^{\inn}_q\in\mathbb{R}^N$ and $\wv^{\out}_p, \wv^{\out}_q\in\mathbb{R}^M$.}
\State  
$\widetilde{\pv}^{\inn}_k\gets\sum_{r=0}^k  [\cv_{pk}^\inn]_r \, \left( \sqrt{\rho^\inn_{pr}} \, \widetilde{\Om}_{pr}^\inn \, \overline{\Zv}^\inn_{pr} + \widetilde{\Delm}^\inn_{pr}\right),$ \hspace{6mm} $\widetilde{\pv}^{\out}_k\gets\sum_{r=0}^k  [\cv_{pk}^\out]_r \, \left(\sqrt{\rho^\out_{pr}} \, \widetilde{\Om}_{pr}^\out \, \overline{\Zv}^\out_{pr} + \widetilde{\Delm}^\out_{pr}\right),$   \label{gaussian1}
\State \begin{varwidth}[t]{\linewidth}
  $\widetilde{\vv}_k^{\inn}\gets \frac{1}{1-\alpha^\inn_{pk}}\left[ \fv_p^{\inn}\left( \widetilde{\pv}_k^{\inn},\widetilde{\pv}_k^{\out},\wv^\inn_p,\gamma_{pk}^\out,\gamma_{pk}^\inn \right)-\alpha_{pk}^\inn\widetilde{\pv}_k^{\inn} \right]$,
  \par \qquad \qquad  $\widetilde{\vv}_k^{\out}\gets \frac{1}{1-\alpha^\out_{pk}}\left[ \fv_p^{\out}\left( \widetilde{\pv}_k^{\inn},\widetilde{\pv}_k^{\out},\wv^\out_p,\gamma_{pk}^\out,\gamma_{pk}^\inn \right)-\alpha_{pk}^\out\widetilde{\pv}_k^{\out} \right],$
  \end{varwidth}
\State 
  $\widetilde{\qv}^{\inn}_k\gets\sum_{r=0}^k  [\cv_{qk}^\inn]_r \, \left(\sqrt{\rho^\inn_{qr}} \, \widetilde{\Om}_{qr}^\inn \, \overline{\Zv}^\inn_{qr} + \widetilde{\Delm}^\inn_{qr}\right),$
\hspace{7mm}  $\widetilde{\qv}^{\out}_k\gets\sum_{r=0}^k  [\cv_{qk}^\out]_r \, \left(\sqrt{\rho^\out_{qr}} \, \widetilde{\Om}_{qr}^\out \, \overline{\Zv}^\out_{qr} + \widetilde{\Delm}^\out_{qr}\right),$
\State \begin{varwidth}[t]{\linewidth}
  $\widetilde{\uv}_{k+1}^{\inn}\gets \frac{1}{1-\alpha^\inn_{qk}}\left[ \fv_q^{\inn}\left( \widetilde{\qv}_k^{\inn},\widetilde{\qv}_k^{\out},\wv^\inn_q,\gamma_{qk}^\out,\gamma_{qk}^\inn \right)-\alpha_{qk}^\inn\widetilde{\qv}_k^{\inn} \right]$, \par \qquad  \qquad 
  $\widetilde{\uv}_{k+1}^{\out}\gets \frac{1}{1-\alpha^\out_{qk}}\left[ \fv_q^{\out}\left( \widetilde{\qv}_k^{\inn},\widetilde{\qv}_k^{\out},\wv^\out_q,\gamma_{qk}^\out,\gamma_{qk}^\inn \right)-\alpha_{qk}^\out\widetilde{\qv}_k^{\out} \right].$
  \end{varwidth}
\end{algorithmic}
\end{algorithm}

In the above, all relevant scalar quantities (e.g.\ $\alpha^{\inn/\out}_{pk}$) are understood to be defined analogously as in the general recursion, Algorithm \ref{alg:general_gvamp}, but now in terms of the Gaussian equivalent iterates (e.g.\ $\widetilde{\pv}^\inn_k$ and $\widetilde{\pv}^\out_k$). Likewise, the discrepancy terms $\widetilde{\Delm}^{\inn/\out}_{pk}$ and $\widetilde{\Delm}^{\inn/\out}_{qk}$ are defined as those in Lemma \ref{lem:cond_dist}, but using the Gaussian equivalent iterates. Constant vectors $\cv_{pk}^{\inn/\out}$ and $\cv_{qk}^{\inn/\out}$ are defined explicitly \eqref{eq:lemma_c_def} below in terms of the $\betav_{pk}^{\inn/\out}$ and $\betav_{qk}^{\inn/\out}$ vectors. These vectors, along with $\rho_{pr}^{\inn/\out}$ and $\rho_{qk}^{\inn/\out}$ remain unspecified, as the statement of Lemma \ref{lem:joint_dists} is valid for any choice of these constants. 

The Gaussian equivalent recursion is initialized in the same way as the GVAMP recursion, so in particular $\widetilde{\uv}^{\inn/\out}_0=\uv^{\inn/\out}_0$, and thus $\widetilde{\Delm}^{\inn/\out}_{p0}=\Delm^{\inn/\out}_{p0}$. Furthermore, $\widetilde{\Om}_{p0}^{\inn/\out}$ and $\Om^{\inn/\out}_{p0}$ are deterministic orthogonal matrices, so we choose them to be equal as well.

In order to state Lemma \ref{lem:joint_dists}, we first introduce some new notation. For all $j\geq 0$, we will let
\begin{equation}
\rv_j = \left( \pv^\inn_j,\pv^\out_j,\qv^\inn_j,\qv^\out_j \right),\hspace{1cm}\widetilde{\rv}_j = \left( \widetilde{\pv}^\inn_j,\widetilde{\pv}^\out_j,\widetilde{\qv}^\inn_j,\widetilde{\qv}^\out_j \right),
\label{eq:r_def}
\end{equation}
where the terms in $\rv_j $ are those in Algorithm~\ref{alg:general_gvamp}, while those in $\widetilde{\rv}_j$ are from Algorithm~\ref{alg:gaussian}.
We also define the concatenation of these up through iteration $k$ as
\begin{equation}
\label{eq:under_r}
\underline{\rv}_k = \left( \rv_0,\ldots,\rv_k \right), \hspace{1cm}\underline{\widetilde{\rv}}_k = \left(\widetilde{\rv}_0,\ldots,\widetilde{\rv}_k  \right).
\end{equation}

\begin{lem}
\label{eq:lemma_res0}
Let $\{\overline{\Zv}_{pk}^{\inn}\}_{k \geq 0}$ and $\{\overline{\Zv}_{pk}^{\out}\}_{k \geq 0}$ be the i.i.d.\ standard Gaussian vector sequences defined in Lemma \ref{lem:cond_dist}. Then, with respect to the Gaussian equivalent recursion in Algorithm~\ref{alg:gaussian}, define
\begin{equation}
\begin{split}
\ide{\pv}^{\inn}_0 = \sqrt{\rho_{p0}^{\inn}} \, \widetilde{\Om}_{p0}^\inn \, \overline{\Zv}^{\inn}_{p0}, &\qquad \text{ and } \qquad  \ide{\pv}^\inn_k = \sum_{r=0}^{k-1}[\betav^\inn_{pk}]_{r+1} \ide{\pv}_r^\inn + \sqrt{\rho^\inn_{pk}}\, \widetilde{\Om}^\inn_{pk} \, \overline{\Zv}^\inn_{pk}, \\
\ide{\pv}^{\out}_0 = \sqrt{\rho_{p0}^{\out}} \, \widetilde{\Om}^\out_{p0} \, \overline{\Zv}^{\out}_{p0}, & \qquad \text{ and } \qquad  \ide{\pv}_k^\out = \sum_{r=0}^{k-1}[\betav^\out_{pk}]_{r+1} \ide{\pv}_r^\out + \sqrt{\rho_{pk}^\out}\, \widetilde{\Om}^\out_{pk}\, \overline{\Zv}^\out_{pk}.
\label{eq:lemma_res0}
\end{split}
\end{equation}
Similarly define $ \ide{\qv}^\inn_k$ and $\ide{\qv}^\out_k$. With these definitions, we have
\begin{enumerate}
\item 
\begin{equation}
\ide{\pv}_k^\inn = \sum_{r=0}^k \sqrt{\rho^\inn_{pr}} \, [\cv^\inn_{pk}]_r \, \widetilde{\Om}^\inn_{pr} \, \overline{\Zv}^\inn_{pr},\qquad \ide{\pv}_k^\out = \sum_{r=0}^k \sqrt{\rho^\out_{pr}} \, [\cv^\out_{pk}]_r \, \widetilde{\Om}^\out_{pr} \, \overline{\Zv}^\out_{pr},
\label{eq:lemma_res1}
\end{equation}
where for $k\geq 1$ and $0 \leq r\leq k-1$, $[\cv^\inn_{pk}]_r$ and $[\cv^\out_{pk}]_r$ are defined recursively as
\begin{equation}
[\cv^\inn_{pk}]_r = \sum_{i=r}^{k-1}[\cv^\inn_{pi}]_r[\betav^\inn_{pk}]_{i+1}, \qquad \text{ and } \qquad [\cv^\out_{pk}]_r = \sum_{i=r}^{k-1}[\cv^\out_{pi}]_r[\betav^\out_{pk}]_{i+1},
\label{eq:lemma_c_def}
\end{equation}
with $[\cv^\inn_{pk}]_k=[\cv^\out_{pk}]_k = 1$ for all $k\geq 0$. 
\item
  Let $\underline{\ide{\rv}}_k = \left( \ide{\rv}_0,\ldots,\ide{\rv}_k \right)$ where $\ide{\rv}_j = (\ide{\pv}^{\inn}_j, \ide{\pv}^{\out}_j, \ide{\qv}^{\inn}_j, \ide{\qv}^{\out}_j)$ are defined as above for the ideal variables $\ide{\pv}$ and $\ide{\qv}$. Then we have that
  \begin{equation}
    \label{eq:lemma_res2}
    \widetilde{\underline{\rv}}_k - \underline{\ide{\rv}}_k = \left( \dv_0,\ldots,\dv_k \right),
  \end{equation}
 where $ \underline{\widetilde{\rv}}_k$ is defined in \eqref{eq:under_r} with
  \be
  \label{eq:dv_vec}
    \dv_k = \left(\sum_{r=0}^k[\cv^\inn_{pk}]_r\widetilde{\Delm}^\inn_{pr}, \, \sum_{r=0}^k[\cv^\out_{pk}]_r\widetilde{\Delm}^\out_{pr}, \, \sum_{r=0}^k[\cv^\inn_{qk}]_r\widetilde{\Delm}^\inn_{qr}, \, \sum_{r=0}^k[\cv^\out_{qk}]_r\widetilde{\Delm}^\out_{qr} \right).
  \ee
  Furthermore, for all $k\geq 0$, we have that, conditional on $\left( \wv_p^{\inn/\out},\wv_q^{\inn/\out} \right)$,
  \begin{equation}
    \widetilde{\underline{\rv}}_k \stackrel{d}{=} \underline{\rv}_k.
    \label{eq:lemma_eq_dist}
  \end{equation}
\item For all $i\geq 1$, we have $\left( [\ide{\pv}_0^\inn]_i,\ldots,[\ide{\pv}_k^\inn]_i,[\ide{\pv}_0^\out]_i,\ldots,[\ide{\pv}_k^\out]_i \right) \stackrel{d}{=}\left( P^\inn_0,\ldots,P^\inn_k ,P_0^\out,\ldots,P_k^\out\right)$ where $(P_0^\inn,\ldots,P_k^\inn)$ and $(P_0^{\out},\ldots,P_k^\out)$ are independent, zero-mean, jointly Gaussian vectors (and hence the right hand side is itself jointly Gaussian). 
\end{enumerate}
Analogous statements for (1.)-(3.) hold for the $\ide{\qv}$ variables.
\label{lem:joint_dists}
\end{lem}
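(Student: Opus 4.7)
The plan is to establish the three parts of the lemma in sequence, with parts (1) and (2) treated by induction on $k$ and part (3) then following almost immediately from the explicit expansion in (1).

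For part (1), the ideal variables $\ide{\pv}_k^\inn$ are defined via the affine recursion \eqref{eq:lemma_res0}, which is structurally identical to the conditional distribution of $\pv_k^\inn$ given $\mathcal{P}_k$ in Lemma \ref{lem:cond_dist} but with the deviance $\Delm_{pk}^\inn$ removed. I would induct on $k$: the base case $k=0$ is immediate because $\ide{\pv}_0^\inn = \sqrt{\rho_{p0}^\inn}\widetilde{\Om}_{p0}^\inn\overline{\Zv}_{p0}^\inn$ matches \eqref{eq:lemma_res1} with $[\cv_{p0}^\inn]_0 = 1$. For the inductive step, substitute the inductive hypothesis into $\ide{\pv}_k^\inn = \sum_{r=0}^{k-1}[\betav_{pk}^\inn]_{r+1}\ide{\pv}_r^\inn + \sqrt{\rho_{pk}^\inn}\widetilde{\Om}_{pk}^\inn\overline{\Zv}_{pk}^\inn$ and swap the order of summation; collecting the coefficient of $\widetilde{\Om}_{pr}^\inn\overline{\Zv}_{pr}^\inn$ for each $r < k$ produces exactly the recursive definition \eqref{eq:lemma_c_def}, while the coefficient of $\widetilde{\Om}_{pk}^\inn\overline{\Zv}_{pk}^\inn$ is $1 = [\cv_{pk}^\inn]_k$. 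The output case and the corresponding statement for the $\ide{\qv}$ variables are identical.

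For part (2), the identity $\widetilde{\underline{\rv}}_k - \underline{\ide{\rv}}_k = (\dv_0,\ldots,\dv_k)$ is a direct algebraic consequence of part (1) and the definition of the four steps in Algorithm \ref{alg:gaussian}: subtracting the expanded expression for $\ide{\pv}_k^\inn$ in \eqref{eq:lemma_res1} from the defining sum of $\widetilde{\pv}_k^\inn$ leaves exactly $\sum_{r=0}^k [\cv_{pk}^\inn]_r\widetilde{\Delm}_{pr}^\inn$, which is the first component of $\dv_k$ in \eqref{eq:dv_vec}, and likewise for the other three coordinates. The more substantive piece is the conditional distributional equivalence $\widetilde{\underline{\rv}}_k \stackrel{d}{=} \underline{\rv}_k$ given $(\wv_p^{\inn/\out},\wv_q^{\inn/\out})$. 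I would prove this by induction on $k$, invoking Lemma \ref{lem:cond_dist} at each half-step. Assuming the claim holds through $\qv_{k-1}^{\inn/\out}$, the conditional law of $\pv_k^\inn$ given $\mathcal{P}_k$ (and symmetrically $\pv_k^\out$ given $\mathcal{P}_k$) has exactly the form $\sum_\ell [\betav_{pk}^\inn]_{\ell+1}\pv_\ell^\inn + \sqrt{\rho_{pk}^\inn}\Om_{pk}^\inn\overline{\Zv}_{pk}^\inn + \Delm_{pk}^\inn$ with $\overline{\Zv}_{pk}^\inn$ a fresh standard Gaussian independent of everything so far, and by the last assertion of Lemma \ref{lem:cond_dist} the input and output updates are conditionally independent. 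This matches line \ref{gaussian1} of Algorithm \ref{alg:gaussian} once one identifies the tilded quantities with those of the original recursion under the inductive equality in distribution; the same argument applied to the $\qv$-step extends the inductive equality to time $k$.

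For part (3), fix $i \geq 1$. By part (1), the vector $([\ide{\pv}_0^\inn]_i,\ldots,[\ide{\pv}_k^\inn]_i)$ is a linear function, with coefficients depending only on $\sqrt{\rho_{pr}^\inn}[\cv_{pr}^\inn]_r$ and the $i$-th rows of $\widetilde{\Om}_{pr}^\inn$, of the random variables $\xi_r := [\widetilde{\Om}_{pr}^\inn\overline{\Zv}_{pr}^\inn]_i$ for $0 \leq r \leq k$. Since each $\overline{\Zv}_{pr}^\inn$ is standard Gaussian and independent of $\widetilde{\Om}_{pr}^\inn$ and of all other $\overline{\Zv}$'s, conditioning on the $\sigma$-algebra generated by the $\widetilde{\Om}_{pr}^\inn$ and the scalar parameters makes $(\xi_0,\ldots,\xi_k)$ i.i.d.\ $N(0,1)$ (orthogonal rows have unit norm). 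Integrating out leaves a zero-mean jointly Gaussian vector whose covariance is fully determined by the scalar limits of $\rho_{pr}^\inn$ and $[\cv_{pk}^\inn]_r$. The same reasoning gives joint Gaussianity on the output side, and independence of the input and output blocks follows because $\overline{\Zv}^\inn$ and $\overline{\Zv}^\out$ are independent, which in turn traces back to the independence of $\Vm$ and $\Um$ in \textbf{Assumption 1}.

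The main obstacle is the distributional induction in part (2). The subtlety is that Lemma \ref{lem:cond_dist} expresses the conditional law of $\pv_k^\inn$ through the matrix $\Om_{pk}^\inn$ and the coefficients $\betav_{pk}^\inn$, $\rho_{pk}^\inn$, $\Delm_{pk}^\inn$, all of which are random functions of iterates produced up to time $k$; to close the induction one must carefully match these against the tilded counterparts in Algorithm \ref{alg:gaussian} under the inductive equality in distribution, and justify that a common probability space can be chosen on which the fresh Gaussian $\overline{\Zv}_{pk}^\inn$ in the two constructions agree. Once this bookkeeping is in place, the three assertions follow in a single pass.
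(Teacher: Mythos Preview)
Your treatment of parts (1) and (2) is essentially the paper's: induction with a swap of summation for (1), and for (2) the algebraic identity is immediate while the distributional equality $\widetilde{\underline{\rv}}_k\stackrel{d}{=}\underline{\rv}_k$ is proved by induction using Lemma~\ref{lem:cond_dist} at each half-step. The paper makes your ``common probability space'' concern precise via two elementary facts---(A) $\yv\mid_{\xv}\stackrel{d}{=}\widetilde{\yv}$ implies $(\xv,\yv)\stackrel{d}{=}(\xv,\widetilde{\yv})$, and (B) $\yv\stackrel{d}{=}\widetilde{\yv}$ with $\xv$ independent of both implies $f(\xv,\yv)\stackrel{d}{=}f(\xv,\widetilde{\yv})$---which let one swap $\underline{\rv}_{k-1}$ for $\widetilde{\underline{\rv}}_{k-1}$ inside the conditional representation of $\pv_k^{\inn/\out}$ since $\overline{\Zv}_{pk}^{\inn/\out}$ is fresh.

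Your argument for part (3), however, has a real gap. You claim that conditioning on the $\sigma$-algebra generated by \emph{all} the matrices $\widetilde{\Om}_{pr}^\inn$ at once makes $(\xi_0,\ldots,\xi_k)$ i.i.d.\ $N(0,1)$. But for $r\geq 1$, $\widetilde{\Om}_{pr}^\inn$ is a deterministic function of the earlier Gaussian-equivalent iterates and hence of $\overline{\Zv}_{p0}^\inn,\ldots,\overline{\Zv}_{p(r-1)}^\inn$; conditioning on $\widetilde{\Om}_{p1}^\inn,\ldots,\widetilde{\Om}_{pk}^\inn$ therefore constrains those earlier Gaussians, and $\xi_0,\ldots,\xi_{k-1}$ need not be standard normal or independent under this conditioning. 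The paper instead establishes, by a separate induction, that the full vector $(\widetilde{\Om}_{p0}^\inn\overline{\Zv}_{p0}^\inn,\ldots,\widetilde{\Om}_{pk}^\inn\overline{\Zv}_{pk}^\inn)$ has the same joint law as $(\overline{\Zv}_{p0}^\inn,\ldots,\overline{\Zv}_{pk}^\inn)$: conditional on the first $r$ products (equivalently on $\overline{\Zv}_{p0}^\inn,\ldots,\overline{\Zv}_{p(r-1)}^\inn$, since the two $\sigma$-algebras are shown to coincide), $\widetilde{\Om}_{pr}^\inn$ becomes deterministic and $\overline{\Zv}_{pr}^\inn$ is independent of the past, so $\widetilde{\Om}_{pr}^\inn\overline{\Zv}_{pr}^\inn$ is a fresh standard Gaussian. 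It is this \emph{sequential} conditioning that makes joint Gaussianity go through; conditioning on all the $\widetilde{\Om}$'s simultaneously does not.
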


Recall that we introduced the Gaussian equivalent recursion in Algorithm~\ref{alg:gaussian} as an idealized `Gaussian' version of the GVAMP general recursion in Algorithm~\ref{alg:general_gvamp}. Lemma~\ref{lem:joint_dists} makes this precise. Indeed, 
Lemma~\ref{lem:joint_dists} introduces a recursion in \eqref{eq:lemma_res0} that is shown to be purely   Gaussian in point (3.), in that its iterates are elementwise equal in distribution to Gaussian vectors who covariance structure can be given exactly.
We see from point (2.) of the lemma, that the difference between the terms in Algorithm~\ref{alg:gaussian} (given in     $\widetilde{\underline{\rv}}_k$) and the purely Gaussian terms from  \eqref{eq:lemma_res0}  (collected in $\underline{\ide{\rv}}_k$) is only in the deviation terms (see \eqref{eq:dv_vec}). Moreover, the terms of  Algorithm~\ref{alg:gaussian} (given in   $\widetilde{\underline{\rv}}_k$) are equal in distribution to those from Algorithm~\ref{alg:general_gvamp} (given in  $\underline{\rv}_k$). Taken together, this shows that the difference between the GVAMP general recursion and the purely Gaussian recursion can be summarized via the deviation terms. The major technical part of the proof, given in Lemma~\ref{lem:main_general} is in showing that these deviation terms concentrate exponentially fast to zero, so that the dynamics of the GVAMP general recursion can be characterized by studying the purely Gaussian equivalent.

Before proving Lemma~\ref{lem:joint_dists}, we note that the Gaussian vectors used to characterize the dynamics of the purely Gaussian recursion in point (3.), namely $(P_0^{\inn/out},\ldots,P_k^{\inn/\out})$ and $(Q_0^{\inn/\out},\ldots,Q_k^{\inn/\out})$, may be explicitly defined in terms of the $\overline{\Zv}$ variables defined in Lemma~\ref{lem:cond_dist} as follows. For $k\geq 1$,
\begin{equation}
  \begin{split}
    P^{\inn/\out}_k = \sum_{r=0}^{k-1}[\betav_{pk}]_{r+1}P_r^{\inn/\out} + \sqrt{\rho_{pk}^{\inn/\out}}[\overline{\Zv}_{pk}^{\inn/\out}]_1, \quad Q^{\inn/\out}_k = \sum_{r=0}^{k-1}[\betav_{qk}]_{r+1}Q_r^{\inn/\out} + \sqrt{\rho_{qk}^{\inn/\out}}[\overline{\Zv}_{qk}^{\inn/\out}]_1,
  \end{split}
  \label{eq:PQ_def}
\end{equation}
with initializations $P_0^{\inn/\out}=\sqrt{\rho_{p0}^{\inn/\out}}[\overline{\Zv}_{p0}^{\inn/\out}]_1$ and $Q_0^{\inn/\out}=\sqrt{\rho_{q0}^{\inn/\out}}[\overline{\Zv}_{q0}^{\inn/\out}]_1$.

\begin{proof}
We prove the results for $\pv^\inn_k$; the arguments for $\qv^\inn_k$ and the output iterates are completely analogous.
To establish the first part, namely the result of \eqref{eq:lemma_res1}, we proceed by induction. First consider the base case $k=0$. We have $[c^\inn_{p0}]_0=1$ by definition \eqref{eq:lemma_c_def}, and thus the statement in \eqref{eq:lemma_res1} reduces to $\ide\pv_0^\inn=\sqrt{\rho^\inn_{p0}} \, \widetilde{\Om}^\inn_{p0} \, \overline{\Zv}^\inn_{p0}$,  which is just the definition of $\ide\pv_0^\inn$ in \eqref{eq:lemma_res0}. 

Now assume the statement in \eqref{eq:lemma_res1} holds for $\ide\pv_j^\inn$ for $0\leq j\leq k-1$. Then, combining the definition of $\ide\pv_k^\inn$ in \eqref{eq:lemma_res0} with the induction hypothesis, we get
\begin{align*}
  \ide\pv^\inn_k &=  \sum_{r=0}^{k-1}[\betav^\inn_{pk}]_{r+1} \ide{\pv}_r^\inn + \sqrt{\rho^\inn_{pk}}\, \widetilde{\Om}^\inn_{pk} \, \overline{\Zv}^\inn_{pk} \\
  &= \sum_{r=0}^{k-1}[\betav^\inn_{pk}]_{r+1}\left[ \sum_{j=0}^r \sqrt{\rho^\inn_{pj}} \, [\cv^\inn_{pr}]_j \, \widetilde{\Om}^\inn_{pj} \, \overline{\Zv}^\inn_{pj}\right] + \sqrt{\rho^\inn_{pk}}\, \widetilde{\Om}^\inn_{pk} \,\overline{\Zv}^\inn_{pk}\\
          &= \sum_{j=0}^{k-1}\left[\sum_{r=j}^{k-1}[\betav^\inn_{pk}]_{r+1} \, [\cv^\inn_{pr}]_j\right]\sqrt{\rho^\inn_{pj}} \, \widetilde{\Om}^\inn_{pj} \, \overline{\Zv}^\inn_{pj}+\sqrt{\rho^\inn_{pk}}\, \widetilde{\Om}^\inn_{pk} \, \overline{\Zv}^\inn_{pk} =\sum_{j=0}^k \, [\cv^\inn_{pk}]_j \, \sqrt{\rho^\inn_{pj}} \, \widetilde{\Om}^\inn_{pj} \, \overline{\Zv}^\inn_{pj}.
\end{align*}
as needed, where the final equality uses the definition of $\cv^\inn_{pk}$ in \eqref{eq:lemma_c_def}.

For the second part, we observe that \eqref{eq:lemma_res2} becomes trivially true in light of the first result where we showed, for example, that $\ide{\pv}_k^\inn = \sum_{r=0}^k \sqrt{\rho^\inn_{pr}} \, [\cv^\inn_{pk}]_r \, \widetilde{\Om}^\inn_{pr} \, \overline{\Zv}^\inn_{pr}$, and the Gaussian Algorithm \ref{alg:gaussian} step \eqref{gaussian1} definition, for which $\widetilde{\pv}^{\inn}_k = \sum_{r=0}^k \sqrt{\rho^\inn_{pr}} \, [\cv_{pk}^\inn]_r \, \widetilde{\Om}_{pr}^\inn \, \overline{\Zv}^\inn_{pr} + \sum_{r=0}^k[\cv^\inn_{pk}]_r\widetilde{\Delm}^\inn_{pr}$. 

To establish the distributional equalities in  \eqref{eq:lemma_eq_dist} of part (3.), we make repeated use of two facts: 
\begin{fact}
\label{fact:conds}
Let $\xv,\widetilde{\xv}\in\mathbb{R}^{n_1}$ and $\yv,\widetilde{\yv}\in\mathbb{R}^{n_2}$ be random vectors. Then we have:
\begin{enumerate}[label=(\Alph*)]
\item If $\yv\mid_{\xv} \stackrel{d}{=} \widetilde{\yv}$, then $(\xv,\yv)\stackrel{d}{=}(\xv,\widetilde{\yv})$.
\item If $\yv \stackrel{d}{=} \widetilde{\yv}$ and $\xv$ is independent of $\yv$ and $\widetilde{\yv}$, then $f(\xv,\yv) \stackrel{d}{=} f(\xv,\widetilde{\yv})$ for any measurable $f$.
\end{enumerate}
\end{fact}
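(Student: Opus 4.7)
For part (A), my plan is to verify equality of the joint laws on a generating $\pi$-system. The statement $\yv \mid_{\xv} \stackrel{d}{=} \widetilde{\yv}$ is to be read as saying that a regular conditional distribution of $\yv$ given $\xv$ coincides almost surely with the (marginal) law of $\widetilde{\yv}$; implicit in the coupling is that $\widetilde{\yv}$ is drawn independently of $\xv$ from its marginal. Granting this, for any measurable rectangle $A\times B\subset \mathbb{R}^{n_1}\times \mathbb{R}^{n_2}$, the tower property gives
\[
\P\bigl((\xv,\yv)\in A\times B\bigr) \;=\; \int_A \P(\yv\in B\mid \xv=x)\, dP_\xv(x) \;=\; \int_A \P(\widetilde{\yv}\in B)\, dP_\xv(x) \;=\; P_\xv(A)\,P_{\widetilde{\yv}}(B),
\]
and the right-hand side equals $\P((\xv,\widetilde{\yv})\in A\times B)$ by the independence of $\xv$ and $\widetilde{\yv}$ in the coupling. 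Because measurable rectangles form a $\pi$-system generating the product $\sigma$-algebra, the $\pi$-$\lambda$ (or monotone class) theorem lifts this equality from rectangles to all measurable sets, giving $(\xv,\yv)\stackrel{d}{=}(\xv,\widetilde{\yv})$.

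For part (B), my plan is to first reduce to equality of joint laws and then push the measurable map $f$ through. Independence of $\xv$ from $\yv$ and from $\widetilde{\yv}$ yields the product-measure factorizations $P_{(\xv,\yv)} = P_\xv\otimes P_\yv$ and $P_{(\xv,\widetilde{\yv})} = P_\xv\otimes P_{\widetilde{\yv}}$; the hypothesis $\yv\stackrel{d}{=}\widetilde{\yv}$ makes the two marginal factors equal, so the product measures coincide and $(\xv,\yv)\stackrel{d}{=}(\xv,\widetilde{\yv})$. Distributional equality is preserved by pushforward under any measurable map, so applying $f$ to both sides gives $f(\xv,\yv)\stackrel{d}{=} f(\xv,\widetilde{\yv})$. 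Alternatively, part (B) follows directly from part (A) once one notes that independence of $\xv$ from $\widetilde{\yv}$ together with $\yv\stackrel{d}{=}\widetilde{\yv}$ and independence of $\xv$ from $\yv$ implies the conditional hypothesis $\yv\mid_{\xv}\stackrel{d}{=}\widetilde{\yv}$.

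The only real obstacle is notational: the shorthand $\yv\mid_{\xv}\stackrel{d}{=}\widetilde{\yv}$ must be unambiguously interpreted as equality of the regular conditional law of $\yv$ given $\xv$ with the marginal law of $\widetilde{\yv}$ (which in turn forces $\yv$ to be independent of $\xv$, matching the way the fact is invoked later in the paper with $\widetilde{\yv}$ an auxiliary Gaussian surrogate). Once the interpretation is fixed, both parts are routine bookkeeping with product measures and the monotone class theorem, and no further technical obstruction arises.
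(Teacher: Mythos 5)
The paper states this Fact without proof, so there is no reference argument to compare against; your disintegration-plus-$\pi$-$\lambda$ route is the natural one. The core mechanism in both parts is correct. The issue is with your interpretation of the notation $\yv\mid_{\xv}\stackrel{d}{=}\widetilde{\yv}$ in part (A): you read it as saying the conditional law of $\yv$ given $\xv$ equals the \emph{marginal} law of $\widetilde{\yv}$, with $\widetilde{\yv}$ independent of $\xv$. But that would force $\yv$ to be independent of $\xv$, and in every invocation of (A) in the proof of Lemma~\ref{lem:joint_dists} the variables are heavily dependent. Look at \eqref{eq:apply_cond}: there $\xv=\underline{\rv}_{k-1}$, $\yv=(\pv_k^\inn,\pv_k^\out)$, and the surrogate $\widetilde{\yv}$ is $\sum_{\ell}[\betav_{pk}]_{\ell+1}\pv_\ell + \sqrt{\rho_{pk}}\,\Om_{pk}\overline{\Zv}_{pk}+\Delm_{pk}$, which is a measurable function of $\xv$ together with a fresh Gaussian $\overline{\Zv}_{pk}$ independent of $\xv$. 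The intended reading of $\yv\mid_{\xv}\stackrel{d}{=}\widetilde{\yv}$ (and of Lemma~\ref{lem:cond_dist}'s statements like $\pv_k^\inn\lvert_{\mathcal{P}_k}\stackrel{d}{=}\cdots$) is therefore that the regular conditional law of $\yv$ given $\xv$ agrees a.s.\ with the regular conditional law of $\widetilde{\yv}$ given $\xv$ --- or equivalently, with the law of $g(x,\Zv)$ for $\widetilde{\yv}=g(\xv,\Zv)$, $\Zv\perp\xv$, evaluated at each $x$.

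The fix to your argument is cosmetic: in the display, replace $\P(\widetilde{\yv}\in B)$ by $\P(\widetilde{\yv}\in B\mid\xv=x)$ under the integral, giving
\[
\P\bigl((\xv,\yv)\in A\times B\bigr)=\int_A\P(\yv\in B\mid\xv=x)\,dP_\xv(x)=\int_A\P(\widetilde{\yv}\in B\mid\xv=x)\,dP_\xv(x)=\P\bigl((\xv,\widetilde{\yv})\in A\times B\bigr),
\]
and then $\pi$-$\lambda$ as you had. Your part (B) proof is correct as written (product-measure factorization from independence plus pushforward), and your observation that (B) follows from a generalized (A) is also correct once (A) carries the broader reading.
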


Now we can show $\underline{\rv}_k \stackrel{d}{=} \widetilde{\underline{\rv}}_k$ given in \eqref{eq:lemma_eq_dist} inductively. For the initialization step, we show $\underline{\rv}_0=\rv_0\stackrel{d}{=}\widetilde{\rv}_0=\widetilde{\underline{\rv}}_0$ where by \eqref{eq:r_def} we have that $\rv_0 = \left( \pv^\inn_0,\pv^\out_0,\qv^\inn_0,\qv^\out_0 \right)$ and $\widetilde{\rv}_0 = \left( \widetilde{\pv}^\inn_0,\widetilde{\pv}^\out_0,\widetilde{\qv}^\inn_0,\widetilde{\qv}^\out_0 \right).$ We prove the distributional equality for the $ \pv^\inn$ and $\pv^\out$ terms, while the proof for $ \qv^\inn$ and $\qv^\out$ follows similarly.
By \textbf{Condition 0} and the definition of $\widetilde{\pv}_0^{\inn/\out}$, we have that
\[\pv^{\inn}_0 \lvert_{\mathcal{P}_0} \stackrel{d}{=} \sqrt{\rho^{\inn}_{p0}}  \, \overline{\Zv}^{\inn}_{p0} + \Delm^{\inn}_{p0} \qquad \text{ and } \qquad \widetilde{\pv}^{\inn}_0\lvert_{\mathcal{P}_0} = \sqrt{\rho^\inn_{p0}} \, [\cv_{p0}^\inn]_0 \, \widetilde{\Om}_{p0}^\inn \, \overline{\Zv}^\inn_{p0} + [\cv^\inn_{p0}]_0\widetilde{\Delm}^\inn_{p0}.\]
To see the equivalence of the two terms above, notice that $[\cv^{\inn}_{p0}]_0 = 1$ and $\widetilde{\Om}_{p0}^{\inn / \out} = \Om_{p0}^{\inn / \out} = \mathbf{I}$ by definition, and $\Delm^\inn_{p0}=\widetilde{\Delm}^{\inn}_{p0}$ since the GVAMP recursion and general recursion use identical initialization $\uv_0^\inn$. Furthermore, observe that $\pv_0^{\inn}$ and $\pv_0^{\out}$ are independent since $\Um$ and $\Vm$ are independent, and $\widetilde{\pv}_0^\inn$ and $\widetilde{\pv}^\out_0$ are independent since $\overline{\Zv}^\inn_{p0}$ and $\overline{\Zv}^\out_{p0}$ are independent (by definition), and $\uv_0^{\inn}$ and $\uv_0^{\out}$ are independent by \textbf{Condition 0}.

Next, by definition, conditioning on $\mathcal{Q}_0$ is equivalent to conditioning on $\sigma\left( \pv_0^\inn,\pv_0^\out \right)$ (since $\vv_0$ is a deterministic function of $\pv_0$ conditional on the disturbance vectors; see Algorithm~\ref{alg:general_gvamp}). So Lemma~\ref{lem:cond_dist} gives that
$\qv_0^{\inn / \out}\mid_{\pv_0^\inn,\pv_0^\out} = \sqrt{\rho_{q0}^{\inn /\out}}\Om_{q0}^{\inn / \out}\overline{\Zv}_{q0}^{\inn / \out} + \Delm^{\inn / \out}_{q0}.$ Furthermore, $\qv_0^\inn$ and $\qv_0^\out$ are again conditionally independent given $\mathcal{Q}_0$ since $\Vm$ and $\Um$ are independent, so the above holds for $(\qv_0^{\inn},\qv_0^{\out})$ jointly. Together, this implies
\[
\left( \pv_0^{\inn},\pv_0^{\out},\qv_0^{\inn},\qv_0^{\out} \right) \stackrel{d}{=} \left(\pv_0^{\inn},\pv_0^{\out},\sqrt{\rho_{q0}^{\inn}}\Om_{q0}^{\inn}\overline{\Zv}_{q0}^{\inn} + \Delm^{\inn}_{q0},\sqrt{\rho_{q0}^{\out}}\Om_{q0}^{\out}\overline{\Zv}_{q0}^{\out} + \Delm^{\out}_{q0}  \right),
\]
by (A) in Fact~\ref{fact:conds}. Now note that $\Om_{q0}^{\inn/\out}$ and $\Delm_{q0}^{\inn/\out}$ are deterministic functions of $(\pv_0^{\inn},\pv_0^{\out},\overline{\Zv}_{q0}^{\inn},\overline{\Zv}_{q0}^{\out})$, and thus the entire above vector is also a function of these variables. Furthermore, by Lemma 3, $(\pv_0^{\inn},\pv_0^{\out})$ and $(\overline{\Zv}_{q0}^{\inn},\overline{\Zv}_{q0}^{\out})$ are independent. Likewise, $(\widetilde{\pv}_0^{\inn},\widetilde{\pv}_0^{\out})$ and $(\overline{\Zv}_{q0}^{\inn},\overline{\Zv}_{q0}^{\out})$ are independent since the former only depends on $\overline{\Zv}^{\inn}_{p0}$ and $\overline{\Zv}^{\out}_{p0}$. Thus, equality in distribution is maintained if we substitute $(\widetilde{\pv}_0^{\inn},\widetilde{\pv}_0^{\out})$ for $(\pv_0^{\inn},\pv_0^{\out})$ in the right side of the above by (B)  in Fact~\ref{fact:conds}. But this is just $(\widetilde{\pv}_0^{\inn},\widetilde{\pv}_0^{\out},\widetilde{\qv}_0^{\inn},\widetilde{\qv}_0^{\out})$ by the definitions in the Gaussian Algorithm \ref{alg:gaussian} and the fact that $[\cv_{q0}^{\inn/\out}]_0 = 1$. This completes the proof of the base case.

For the inductive step, assume that $\widetilde{\underline{\rv}}_{k-1} \stackrel{d}{=} \underline{\rv}_{k-1}$ for some $k\geq 2$.
First we will establish that
$\left( \underline{\rv}_{k-1},\pv_k^\inn,\pv_k^\out \right) \stackrel{d}{=}  \left( \widetilde{\underline{\rv}}_{k-1},\widetilde{\pv}^\inn_k,\widetilde{\pv}^\out_k \right).$
To do this, observe that
\begin{align}
 & \left( \underline{\rv}_{k-1},\pv_k^\inn,\pv_k^\out \right) \notag \\
 &\stackrel{d}{=} \left( \underline{\rv}_{k-1},\sum_{\ell=0}^{k-1}[\betav^\inn_{pk}]_{\ell +1} \, \pv^\inn_{\ell} + \sqrt{\rho^\inn_{pk}} \, \Om^\inn_{pk} \, \overline{\Zv}^\inn_{pk} + \Delm^\inn_{pk},\sum_{\ell=0}^{k-1}[\betav^\out_{pk}]_{\ell +1} \, \pv^\out_{\ell} + \sqrt{\rho^\out_{pk}} \, \Om^\out_{pk} \, \overline{\Zv}^\out_{pk} + \Delm^\out_{pk} \right)\label{eq:apply_cond}\\
&\stackrel{d}{=} \Bigg( \widetilde{\underline{\rv}}_{k-1},\sum_{\ell=0}^{k-1}[\betav^\inn_{pk}]_{\ell +1} \, \widetilde{\pv}^\inn_{\ell} + \sqrt{\rho^\inn_{pk}} \, \widetilde{\Om}^\inn_{pk} \, \overline{\Zv}^\inn_{pk} + \widetilde{\Delm}^\inn_{pk},\sum_{\ell=0}^{k-1}[\betav^\out_{pk}]_{\ell +1} \, \widetilde{\pv}^\out_{\ell} + \sqrt{\rho^\out_{pk}} \, \widetilde{\Om}^\out_{pk} \, \overline{\Zv}^\out_{pk} + \widetilde{\Delm}^\out_{pk} \Bigg), \label{eq:apply_induction}
\end{align}
where \eqref{eq:apply_cond} follows from (A)  in Fact~\ref{fact:conds} (using that conditioning on $\underline{\rv}_{k-1}$ is equivalent to conditioning on $\mathcal{P}_{k-1}$ because the additional $\uv$ and $\vv$ terms that generate the latter are deterministic functions of $\underline{\rv}_{k-1}$) and the conclusions of Lemma~\ref{lem:cond_dist} (particularly, that $\pv_k^\inn$ and $\pv_k^\out$ are independent given ${\underline{\rv}_{k-1}}$), and where \eqref{eq:apply_induction} follows from substituting $(\widetilde{\pv}^{\inn/\out}_0,\ldots,\widetilde{\pv}^{\inn/\out}_{k-1})$ for $(\pv^{\inn/\out}_0,\ldots,\pv^{\inn/\out}_{k-1})$. This substitution is justified by (B)  in Fact~\ref{fact:conds} and the induction hypothesis as $\left( \overline{\Zv}_{pk}^\inn, \overline{\Zv}_{pk}^\out \right)$ is independent of $(\pv^{\inn/\out}_0,\ldots, \pv^{\inn/\out}_{k-1})\subset \mathcal{P}_{k-1}$ by construction, and independent of $\widetilde{\underline{\rv}}_{k-1}$ as $\widetilde{\underline{\rv}}_{k-1}$ only depends on
\[
  \left( \overline{\Zv}^{\inn/\out}_{p0},\ldots \overline{\Zv}^{\inn/\out}_{p(k-1)},\overline{\Zv}^{\inn/\out}_{q0},\ldots,\overline{\Zv}^{\inn/\out}_{q(k-1)} \right).
 \]

Next observe that
\begin{align}
  \sum_{\ell=0}^{k-1}&[\betav^\inn_{pk}]_{\ell +1} \, \widetilde{\pv}^\inn_{\ell} + \sqrt{\rho^\inn_{pk}} \, \widetilde{\Om}^\inn_{pk} \, \overline{\Zv}^\inn_{pk} + \widetilde{\Delm}^\inn_{pk}\nonumber\\
  &= \sum_{\ell=0}^{k-1}[\betav^\inn_{pk}]_{\ell +1} \, \left[ \sum_{j=0}^\ell[\cv^\inn_{p\ell}]_j \left(\sqrt{\rho_{pj}^\inn} \, \widetilde{\Om}^\inn_{pj}\, \overline{\Zv}^\inn_{pj} + \widetilde{\Delm}^\inn_{pj}]  \right)\right] + \sqrt{\rho^\inn_{pk}} \, \widetilde{\Om}^\inn_{pk} \, \overline{\Zv}^\inn_{pk} + \widetilde{\Delm}^\inn_{pk}\label{eq:plugin_GER}\\
                     &= \sum_{j=0}^{k-1}\left[ \sum_{\ell=j}^{k-1}\left[ \betav^\inn_{pk} \right]_{\ell+1}[\cv^\inn_{p\ell}]_j \right]\left( \sqrt{\rho^\inn_{pj}}\widetilde{\Om}^\inn_{pj}\overline{\Zv}_{pj}^\inn+\widetilde{\Delm}^\inn_{pj}\right)+\sqrt{\rho^\inn_{pk}} \, \widetilde{\Om}^\inn_{pk} \, \overline{\Zv}^\inn_{pk} + \widetilde{\Delm}^\inn_{pk}\nonumber\\
                     &= \sum_{j=0}^{k}[\cv^\inn_{pk}]_j\sqrt{\rho_{pj}^\inn}\widetilde{\Om}^\inn_{pj}\overline{\Zv}_{pj}+ \sum_{j=0}^k[\cv^\inn_{pk}]_j\widetilde{\Delm}^\inn_{pj}\label{eq:apply_constants}\\
                     &= \widetilde{\pv}_k^\inn, \label{eq:apply_GER_def}
\end{align}
where \eqref{eq:plugin_GER} follows from the definitions of $\widetilde{\pv}_j^\inn$ in Algorithm \ref{alg:gaussian}, \eqref{eq:apply_constants} follows from the definitions of the constants $[\cv^\inn_{pk}]_j$, and \eqref{eq:apply_GER_def} again follows from the definition of $\widetilde{\pv}_k^\inn$ in Algorithm \ref{alg:gaussian}. The same reasoning applies to $\widetilde{\pv}_k^\out$.

Thus, as claimed, plugging these into the above gives 
$\left( \underline{\rv}_{k-1},\pv_k^\inn,\pv_k^\out \right)\stackrel{d}{=} \left( \widetilde{\underline{\rv}}_{k-1},\widetilde{\pv}^\inn_k,\widetilde{\pv}^\out_k \right).$ Next we need to extend this to the $\left( \qv_k^\inn,\qv_k^\out \right)$ variables. The proof of this works in exactly the same way as above, conditioning now on $(\underline{\rv}_{k-1},\pv_k^\inn,\pv_k^\out)$ and applying Lemma 2 with (A) and (B) above to get the proper joint distributional equality.

For the third part, we claim
$\left( \widetilde{\Om}^\inn_{p0}\overline{\Zv}_{p0}^\inn,\widetilde{\Om}^\inn_{q0}\overline{\Zv}_{q0}^\inn,\ldots, \widetilde{\Om}^\inn_{pk}\overline{\Zv}_{pk}^\inn,\widetilde{\Om}^\inn_{qk}\overline{\Zv}_{qk}^\inn \right)\stackrel{d}{=}\left( \overline{\Zv}_{p0}^\inn,\overline{\Zv}_{q0}^\inn,\ldots, \overline{\Zv}_{pk}^\inn,\overline{\Zv}_{qk}^\inn \right).$
As a first step, we show that the two above vectors generate the same $\sigma$-algebras. For this it suffices to show that the right is a function of the left. (The reverse is immediate since the $\widetilde{\Om}$ matrices (which are equal to their $\Om$ equivalents defined in  \eqref{eq:Up_Uq_matrices}) are functions of the $\{\widetilde{\pv}_i,\widetilde{\qv}_i\}_{i=1}^{k-1}$, and these are functions of the right hand side.) We can prove this inductively. For the first component, this is obvious since $\widetilde{\Om}^\inn_{p0}=\Om^\inn_{p0}$ is a deterministic orthogonal matrix. Now suppose the statement is true for all components up to $\widetilde{\Om}^\inn_{(p/q)r}\overline{\Zv}_{(p/q)r}^\inn$. Since $\widetilde{\Om}^\inn_{(p/q)r}$ is a function of the $\overline{\Zv}^\inn_{(p/q)j}$ for $j<r$, it follows by the induction hypothesis that $[\widetilde{\Om}^\inn_{(p/q)r}]^{-1}$ is a function of the prior $\widetilde{\Om}^\inn_{(p/q)j}\overline{\Zv}_{(p/q)j}^\inn$ components. But then $\overline{\Zv}_{(p/q)r}^\inn = [\widetilde{\Om}^\inn_{(p/q)r}]^{-1}\widetilde{\Om}^\inn_{(p/q)r}\overline{\Zv}_{(p/q)r}^\inn $ can be written as a function of the components of the left hand vector up through $\widetilde{\Om}^\inn_{(p/q)r}\overline{\Zv}_{(p/q)r}^\inn$, as claimed.

Now we can prove the asserted distributional equality inductively using (A)  in Fact~\ref{fact:conds} by establishing distributional equality for each component conditional on the prior components. For the base case, again we know that $\widetilde{\Om}^\inn_{p0}$ is a deterministic orthogonal matrix, so by standard properties of Gaussian random vectors we have that $\widetilde{\Om}^\inn_{p0}\overline{\Zv}_{p0}^\inn \stackrel{d}{=}\overline{\Zv}_{p0}$. Next suppose that the statement is true up through $\widetilde{\Om}^\inn_{(p/q)(r-1)}\overline{\Zv}_{(p/q)(r-1)}^\inn$. Then conditioning on
\[
  \left( \widetilde{\Om}^\inn_{p0}\overline{\Zv}_{p0}^\inn,\widetilde{\Om}^\inn_{q0}\overline{\Zv}_{q0}^\inn,\ldots, \widetilde{\Om}^\inn_{p(r-1)}\overline{\Zv}_{p(r-1)}^\inn,\widetilde{\Om}^\inn_{q(r-1)}\overline{\Zv}_{q(r-1)}^\inn \right),
\]
is equivalent to conditioning on
\begin{equation}
\label{eq:new_Z_vec}
\left( \overline{\Zv}_{p0}^\inn,\overline{\Zv}_{q0}^\inn,\ldots, \overline{\Zv}_{p(r-1)}^\inn,\overline{\Zv}_{q(r-1)}^\inn \right),
\end{equation}
by the induction hypothesis. But, conditional on the above, $\widetilde{\Om}^\inn_{(p/q)r}$ is a deterministic orthogonal matrix (as it depends only on $\{\widetilde{\pv}_i,\widetilde{\qv}_i\}_{i=1}^{r-1}$, which are functions of \eqref{eq:new_Z_vec}). Together, this gives that $\widetilde{\Om}^\inn_{pr}\overline{\Zv}_{pr}^\inn\mid_{\{\widetilde{\Om}^\inn_{pi}\overline{\Zv}_{pi}^\inn\}_{i=0}^{r-1}} \stackrel{d}{=} \widetilde{\Om}^\inn_{pr}\overline{\Zv}_{pr}^\inn\mid_{\{\overline{\Zv}_{pi}^\inn\}_{i=0}^{r-1}} \stackrel{d}{=} \overline{\Zv}^\inn_{pr}.$  Putting this together, we find
\begin{align*}
  \left( \widetilde{\Om}^\inn_{p0}\overline{\Zv}_{p0}^\inn,\widetilde{\Om}^\inn_{q0}\overline{\Zv}_{q0}^\inn,\ldots, \widetilde{\Om}^\inn_{pr}\overline{\Zv}_{pr}^\inn\right) &\stackrel{d}{=} \left( \widetilde{\Om}^\inn_{p0}\overline{\Zv}_{p0}^\inn,\widetilde{\Om}^\inn_{q0}\overline{\Zv}_{q0}^\inn,\ldots,\overline{\Zv}_{pr}^\inn\right) \stackrel{d}= \left( \overline{\Zv}_{p0}^\inn,\overline{\Zv}_{q0}^\inn,\ldots,\overline{\Zv}_{pr}^\inn \right), 
\end{align*}
where the first equality follows from the above and (A) in Fact~\ref{fact:conds} and the second because $\overline{\Zv}^\inn_{pr}$ is independent of $\left( \overline{\Zv}_{p0}^\inn,\overline{\Zv}_{q0}^\inn,\ldots,\overline{\Zv}_{q(r-1)}^\inn,\overline{\Zv}_{p(r-1)}^\inn\right)$ using (B)  in Fact~\ref{fact:conds}. The same distributional equality follows in the same way for the vectors of output variables.

Now we can proceed with the proof of the statement in part (iii). For the base case of $k=0$, we have $\ide{\pv}_0^{\inn}=\sqrt{\rho^\inn_{p0}} \, \widetilde{\Om}^\inn_{p0} \, \overline{\Zv}_{p0}^\inn$ and $\ide{\pv}_0^{\out}=\sqrt{\rho^\out_{p0}} \,\widetilde{\Om}^\out_{p0} \, \overline{\Zv}_{p0}^\out.$ Because the $\widetilde{\Om}^{\inn/\out}_{p0}=\Om^{\inn/\out}_{p0}$ are deterministic orthogonal matrices, we have that
\[
  \ide{\pv}_0^\inn = \sqrt{\rho^\inn_{p0}} \, \widetilde{\Om}^\inn_{p0} \, \overline{\Zv}_{p0}^\inn  = \sqrt{\rho^\inn_{p0}} \,\Om^\inn_{p0} \, \overline{\Zv}_{p0}^\inn \stackrel{d}{=} \sqrt{\rho_{p0}^{\inn}} \, \overline{\Zv}_{p0}^\inn\stackrel{i.i.d.}{\sim}N(0,\sqrt{\rho^\inn_{p0}}),
\]
and similarly for the output case. The independence of $\ide{\pv}_0^{\inn}$ and $\ide{\pv}_0^{\out}$ follows because $\overline{\Zv}_{p0}^\inn$ and $\overline{\Zv}_{p0}^\out$ are independent by definition.

To prove the induction step, assume the statement holds for independent Gaussian vectors $(P^\inn_0,\ldots,P^\inn_{k-1})$ and $(P^\out_0,\ldots,P^\out_{k-1})$ (whose values are given in \eqref{eq:PQ_def}). We first notice that the induction hypothesis along with the definition of $\ide{\pv}_k^{\inn/\out}$ in \eqref{eq:lemma_res0} and the above implies that
\begin{align*}
  &\left([\ide{\pv}_0^\inn]_i,[\ide{\pv}_0^\out]_i,\ldots, [\ide{\pv}_k^\inn]_i,[\ide{\pv}_k^\out]_i\right)\\
  &= \left([\ide{\pv}_0^\inn]_i,[\ide{\pv}_0^\out]_i,\ldots,\sum_{r=0}^{k-1}[\betav^\inn_{pk}]_r[\ide{\pv}_r^\inn]_i + \sqrt{\rho^\inn_{pk}} \, [\widetilde{\Om}^\inn_{pk} \, \overline{\Zv}^\inn_{pk}]_i,\sum_{r=0}^{k-1}[\betav^\out_{pk}]_r[\ide{\pv}_r^\out]_i + \sqrt{\rho^\out_{pk}} \, [\widetilde{\Om}^\out_{pk} \, \overline{\Zv}^\out_{pk}]_i\right)\\
  &\stackrel{d}{=} \left( P_0^\inn,P_0^\out, \ldots,\sum_{r=0}^{k-1}[\betav_{pk}]_rP^\inn_r + \sqrt{\rho^\inn_{pk}}  \,  [\widetilde{\Om}^\inn_{pk} \, \overline{\Zv}^\inn_{pk}]_i,\sum_{r=0}^{k-1}[\betav_{pk}]_rP^\out_r + \sqrt{\rho^\out_{pk}}  \,  [\widetilde{\Om}^\out_{pk} \, \overline{\Zv}^\out_{pk}]_i\right)\\
  &\stackrel{d}{=} \left( P_0^\inn,P_0^\out, \ldots,\sum_{r=0}^{k-1}[\betav_{pk}]_rP^\inn_r + \sqrt{\rho^\inn_{pk}}  \,  [\overline{\Zv}^\inn_{pk}]_i,\sum_{r=0}^{k-1}[\betav_{pk}]_rP^\out_r + \sqrt{\rho^\out_{pk}}  \,  [\overline{\Zv}^\out_{pk}]_i\right),
\end{align*}
where, on the right-hand side, we treat $\overline{\Zv}^{\inn/\out}_{pk}$ as independent of the $P^{\inn/\out}_0,\ldots,P^{\inn/\out}_{k-1}$ (as the $\ide{\pv}^{\inn/\out}_i$ are independent of $\overline{\Zv}^{\inn/\out}_{pk}$ for $i<k$, which follows directly from the definition of the ideal variables $\ide{\pv}^\inn_i$). In particular, this shows that the last expression is jointly Gaussian (e.g., by Cramer-Wold). The zero-mean property and the independence of the input and output variables follows immediately from the definition of the $\ide{\pv}$ variables. Thus this completes the proof of the third part.

\end{proof}

\subsection{General State Evolution}
\label{sec:generalSE}
Next we define a scalar recursion that is the analog of the GVAMP state evolution in \eqref{eq:SE}, but for the general algorithm in Algorithm~\ref{alg:general_gvamp}.
One consequence of our general concentration result, Lemma \ref{lem:main_general}, is that the iterates $\pv_k^{\inn/\out}$ and $\qv_k^{\inn/\out}$ converge empirically to the zero-mean Gaussian variables $P_k^{\inn/\out}$ and $Q_k^{\inn/\out}$, respectively, with both defined in $\eqref{eq:PQ_def}$. The variances of these Gaussian variables are completely characterized by the state evolution equations in \eqref{eq:se_general}.  The general algorithm state evolution is given by the following recursive system for $k\geq 0$:
\begin{equation}
\begin{split}
  \label{eq:se_general}
  \overline{\alpha}^{\inn/\out}_{pk} &= \mathbb{E}\left\{\left[f^{\inn/\out}_p\right]'\left(P_k^\inn,P_k^\out,W^p,\overline{\gamma}^\inn_{pk},\overline{\gamma}^\out_{pk} \right)\right\}, \hspace{15mm}  \overline{\gamma}^{\inn/\out}_{qk} = \Gamma^{\inn/\out}_{pk}\left( \overline{\gamma}^{\inn/\out}_{pk},\overline{\alpha}^{\inn/\out}_{pk} \right), \\
    \tau^{\inn/\out}_{qk} &= \left( 1-\overline{\alpha}^{\inn/\out}_{pk} \right)^{-2}\left[ \mathbb{E}\left\lbrace \left[f^{\inn/\out}_p\right]^2\left( P_k^\inn,P_k^\out,W^p,\overline{\gamma}^\inn_{pk},\overline{\gamma}^\out_{pk} \right) - \left[\overline{\alpha}^{\inn/\out}_{pk}\right]^2\tau_{pk}^{\inn/\out}\right\rbrace \right],\\
   \overline{\alpha}^{\inn/\out}_{qk} &= \mathbb{E} \left\{\left[f^{\inn/\out}_q\right]'\left(Q_k^\inn,Q_k^\out,W^q,\overline{\gamma}^\inn_{qk},\overline{\gamma}^\out_{qk} \right)\right\}, \hspace{15mm}   \overline{\gamma}^{\inn/\out}_{p(k+1)} = \Gamma^{\inn/\out}_{qk}\left( \overline{\gamma}^{\inn/\out}_{qk},\overline{\alpha}^{\inn/\out}_{qk} \right), \\
  \tau^{\inn/\out}_{p(k+1)} &= \left( 1-\overline{\alpha}^{\inn/\out}_{qk} \right)^{-2}\left[ \mathbb{E}\left\lbrace \left[f^{\inn/\out}_q\right]^2\left( Q_k^\inn,Q_k^\out,W^q,\overline{\gamma}^\inn_{qk},\overline{\gamma}^\out_{qk} \right) - \left[\overline{\alpha}^{\inn/\out}_{qk}\right]^2\tau_{qk}^{\inn/\out}\right\rbrace \right].
\end{split}
\end{equation}

The initial terms of this recursion $\overline{\gamma}^{\inn/\out}_{p0}$ and the random variable $W^p$ are well-defined by \textbf{Condition 0}, and the terms $\tau^{\inn/\out}_0$ are initialized with $\rho_{p0}^{\inn/\out}$, which we have thought of as a free parameter of Algorithm \ref{alg:gaussian} and which we will define explicitly along with the other limiting quantities in the next section. In terms of these state evolution quantities, we define two more sequences of random variables. For $k\geq 0$:
\be
\begin{split}
  U^{\inn/\out}_{k+1} = \left( 1-\overline{\alpha}^{\inn/\out}_{qk} \right)^{-1}\left[ f^{\inn/\out}_q\left( Q_k^\inn,Q_k^\out,W^q,\overline{\gamma}^{\inn}_{qk},\overline{\gamma}^\out_{qk} \right)-\overline{\alpha}^{\inn/\out}_{qk}Q^{\inn/\out}_{k} \right],\\
V^{\inn/\out}_k = \left( 1-\overline{\alpha}^{\inn/\out}_{pk} \right)^{-1}\left[ f^{\inn/\out}_p\left( P_k^\inn,P_k^\out,W^p,\overline{\gamma}^{\inn}_{pk},\overline{\gamma}^\out_{pk} \right)-\overline{\alpha}^{\inn/\out}_{pk}P^{\inn/\out}_{k} \right],
\end{split}
\label{eq:UV_def}
\ee
with $Q^{\inn/\out}_k$ and $P^{\inn/\out}_k$ defined above in \eqref{eq:PQ_def}, and where $U_0$ is the random variable determined by \textbf{Condition 0} to which the initial condition $\uv_0$ concentrates.

Lemma \ref{lem:main_general}  part $(d)$ (which we prove in what follows) implies that ${\|\pv^{\inn/\out}_k\|^2}/{N}$ concentrates on $\mathbb{E}\{[P^{\inn/\out}_k]^2\}$ and part $(e)$ implies that ${\|\uv^{\inn/\out}_k\|^2}/{N}$ concentrates to $\mathbb{E}\{[U^{\inn/\out}_k]^2\}$.

Following \cite{VAMP}, we can use these results to see that
\be
\begin{split}
  \mathbb{E}\{[P^{\inn/\out}_k]^2\} = \lim_{N\to\infty}\frac{\|\pv^{\inn/\out}_k\|^2}{N}&=\lim_{N\to\infty}\frac{\|\uv^{\inn/\out}_k\|^2}{N}\\
  &= \frac{\mathbb{E}\Big[ \Big( f^{\inn/\out}_q\left( Q_k^\inn,Q_k^\out,W^q,\overline{\gamma}^{\inn}_{qk},\overline{\gamma}^\out_{qk} \right)-\overline{\alpha}^{\inn/\out}Q^{\inn/\out}_{k} \Big)^2 \Big]}{(1-\overline{\alpha}^{\inn/\out}_{qk})^{2}},
  \label{eq:tau_var_argument}
\end{split}
\ee
where the second equality in the above follows because $\pv^{\inn}_k = \Vm \uv^{\inn}_k$, by definition in Algorithm~\ref{alg:general_gvamp}; hence, $\|\pv^{\inn/\out}_k\|^2 = (\uv^{\inn/\out}_k)^T \Vm^T \Vm \uv^{\inn/\out}_k = \|\uv^{\inn/\out}_k\|^2$.
Now expanding terms and applying Stein's lemma along with the state evolution definitions in \eqref{eq:se_general} shows that indeed $\mathbb{E}\{[P^{\inn/\out}_k]^2\} = \tau^{\inn/\out}_{pk}$. The same reasoning can be applied to show that $\mathbb{E}\{[Q^{\inn/\out}_k]^2\} = \tau^{\inn/\out}_{qk}$.

We can also show, assuming again the result of Lemma \ref{lem:main_general}, that the GVAMP state evolution \eqref{eq:SE} is equivalent to the general state evolution \eqref{eq:se_general} under the translation defined in \eqref{eq:general_identity} - \eqref{eq:gamma_trans}. The equivalences $\overline{\alpha}_{1k}=\overline{\alpha}^\inn_{pk}$ and $\overline{\beta}_{1k}=\overline{\alpha}^\out_{pk}$ follow directly from the definitions of the sensitivity functions $A_{x1}$ and $A_{z1}$ and the definitions in \eqref{eq:translation_2}. The equivalences $\sigma^2_{2k}=\tau^\inn_{qk}$ and $\rho^2_{2k}=\tau^\out_{qk}$ follow similarly. Then $\overline{\gamma}_{2k} = \overline{\gamma}^\inn_{qk}$ and $\overline{\tau}_{2k}=\overline{\gamma}^\out_{qk}$ follows directly from the definition of the $\Gamma^{\inn/\out}_{pk}$ above.

The equivalences $\overline{\alpha}_{2k}=\overline{\alpha}^\inn_{qk}$ and $\overline{\beta}_{2k}=\overline{\alpha}^\out_{qk}$ follow from as the definitions of the sensitivity functions $A_{x2}$ and $A_{z2}$ are the limits of the averages of the derivatives of $f_q^\inn$ and $f_q^\out$ respectively (again by the definitions in \eqref{eq:translation_2}), and these averages converge to $\overline{\alpha}^{\inn/\out}_{qk}$ since we have empirical convergence of the arguments. (Indeed, in this case the derivatives do not depend on the $\qv^{\inn/\out}_k$, so we only need empirical convergence of the $\wv^{\inn/\out}_q$, which is guaranteed by \textbf{Condition 0}.)

Then, the equivalences $\sigma^2_{1k}=\tau^\inn_{pk}$ and $\rho^2_{1k}=\tau^\out_{pk}$ are established by the definitions \eqref{eq:translation_2}, the uniformly Lipschitz property of $g_{x2}$ and $g_{z2}$, and the fact that the i.i.d.\ sequences $\rv_2$ and $\pv_2$ in the definition of the error functions converge empirically to the proper normal variables. Finally, the equivalences $\overline{\gamma}_{qk}=\overline{\gamma}^\inn_{pk}$ and $\overline{\tau}_{1k}=\overline{\gamma}^\out_{pk}$ follow from the definitions of the $\Gamma^{\inn/\out}_{qk}$ functions above.

\subsection{Definitions of Limiting Quantities}\label{sec:limit_defs}

The statements of lemmas \ref{lem:cond_dist} and \ref{lem:joint_dists} are valid for any choice of  parameters $\betav_{pk}^{\inn/\out}$, $\betav_{qk}^{\inn/\out}$, $\rho_{pk}^{\inn/\out}$, and $\rho_{qk}^{\inn/\out}$, and the exact values of these parameters have yet to be  specified.  To make our analysis work, we choose these parameters to be the limiting values of the quantities that occur in the discrepancy  terms $\Delm$ defined in Lemma \ref{lem:cond_dist}, so that these discrepancy terms concentrate around zero.

To formally define these parameters, we first introduce come intermediate quantities: for $k \geq 0$,
\be
\begin{split}
  &\bv^{\inn/\out}_{uk} = (\mathbb{E}[U^{\inn/\out}_0U^{\inn/\out}_k],\ldots,\mathbb{E}[U^{\inn/\out}_{k-1}U^{\inn/\out}_k]),\\
  &\bv^{\inn/\out}_{vk} = (\mathbb{E}[V^{\inn/\out}_0V^{\inn/\out}_k],\ldots,\mathbb{E}[V^{\inn/\out}_{k-1} V^{\inn/\out}_k]),
\label{eq:bvecs}
\end{split}
\ee
and define matrices $\covm^{\inn/\out}_{uk},\covm^{\inn/\out}_{vk}\in\mathbb{R}^{k\times k}$ elementwise as
\begin{equation}
\begin{split}
  \left[\covm^{\inn/\out}_{pk}\right]_{ij} = \E\left[ U^{\inn/\out}_i U^{\inn/\out}_j \right],& \qquad \qquad \left[\covm^{\inn/\out}_{qk}\right]_{ij} = \E\left[ V^{\inn/\out}_i V^{\inn/\out}_j \right],
\label{eq:cov_def}
\end{split}
\end{equation}
where the $U$ and $V$ variables are defined in $\eqref{eq:UV_def}$. In terms of these quantities, we define our parameters as follows: for $k \geq 1$
\be
\begin{split}
\betav^{\inn/\out}_{pk}&=[\covm^{\inn/\out}_{u(k-1)}]^{-1}\bv^{\inn/\out}_{uk}, \quad  \quad \rho^{\inn/\out}_{pk} = \mathbb{E}\{[U^{\inn/\out}_k]^2\} - (\bv^{\inn/\out}_{uk})^T[\covm^{\inn/\out}_{u(k-1)}]^{-1}\bv^{\inn/\out}_{uk}, \\
\betav^{\inn/\out}_{qk}&=[\covm^{\inn/\out}_{v(k-1)}]^{-1}\bv^{\inn/\out}_{vk}, \quad \quad \rho^{\inn/\out}_{qk} = \mathbb{E}\{[V^{\inn/\out}_k]^2\} - (\bv^{\inn/\out}_{vk})^T[\covm^{\inn/\out}_{v(k-1)}]^{-1}\bv^{\inn/\out}_{vk}.
\label{eq:rhos}
\end{split}
\ee
These parameters are initialized as $\betav^{\inn/\out}_{p0}=\betav^{\inn/\out}_{q0}=0$, $\rho^{\inn/\out}_{p0}$ given by \textbf{Condition 0}, and $\rho^{\inn/\out}_{q0}=\mathbb{E}\left\lbrace [V^{\inn/\out}_0]^2\right\rbrace$.

These definitions may appear circular at first, as the definitions of the $P$ and $Q$ variables in Lemma \ref{lem:joint_dists} depend on these parameters, which are then defined in terms of moments involving the $P$ and $Q$. However, it is easy to check that the definitions can be made recursively, avoiding any circular dependence. For instance $U_0^{\inn/\out}$ is determined by the initial condition $\uv_0^{\inn/\out}$. Since $\betav_{p0}^{\inn/\out}=0$ and $\rho_{p0}^{\inn/\out}$ is also determined by the initial conditions, $P_0$ is well-defined in terms of $U_0$. Then it is easy to check from the definition and state evolution equations that $V_0$ is well-defined in terms of $P_0$, and $Q_0$ is well-defined in terms of $V_0$. Continuing in this fashion allows us to recursively build up the sequences of $P$, $Q$, $U$, and $V$ variables in terms of the previously defined variables.

Finally, we define limiting precision matrices. Let $\presm^{\inn/\out}_{u0}=[\covm^{\inn/\out}_{u0}]^{-1}$ and for $k\geq 0$:
\begin{align}
\label{eq:Rk_Sk}
  \presm^{\inn/\out}_{uk} &= \left[\begin{matrix}[\covm^{\inn/\out}_{uk}]^{-1} & 0\\ 0 & [\covm^{\inn/\out}_{v(k-1)}]^{-1}\end{matrix}\right], \qquad \presm^{\inn/\out}_{vk} = \left[\begin{matrix}[\covm^{\inn/\out}_{uk}]^{-1} & 0\\ 0 & [\covm^{\inn/\out}_{vk}]^{-1}\end{matrix}\right],
\end{align}
\begin{lem}\label{lem:main_general}
Throughout the lemma, we use $c$ and $C$ to denote constants that do not depend on the iteration number, but which may vary in their exact value between usages. For $t\geq 0$, let
\begin{equation}
\begin{split}
\label{eq:Cdef}
C_k = C^{2k} (k!)^{16}, \quad \qquad & c_k = \frac{1}{c^{2k} (k!)^{26}}, \quad \qquad C'_k = C^{k} (k+1)^{8}, \quad \qquad  c'_k = \frac{c_k}{c (k+1)^{15}}.
\end{split}
\end{equation}

\begin{enumerate}[label=(\alph*)]
\item 
\begin{align*}
\P\left(\frac{1}{N}\|\Delm^{\inn/\out}_{qk}\|^2\geq\epsilon\right)\leq Ck^2C'_{k-1}e^{-cc'_{k-1} n\epsilon/k^4}, \qquad \P\left(\frac{1}{N}\|\Delm^{\inn/\out}_{pk}\|^2\geq\epsilon\right)\leq Ck^2 C_{k-1}e^{-cc_{k-1} n\epsilon/k^4}.
\end{align*}
\item %
For $\phi:\mathbb{R}^{k+2}\to\mathbb{R}$ pseudo-Lipschitz, we have
\begin{align*}
  \P\Big( \Big| \frac{1}{N}\sum_{i=1}^N \phi&\left([\qv^{\inn}_0]_i,...,[\qv^{\inn}_{k+1}]_i,[\qv^{\out}_0]_i,...,[\qv^{\out}_{k+1}]_i,[\wv^{\inn}_q]_i,[\wv^{\out}_q]_i\right)\nonumber\\
  &-\mathbb{E}\left\{\phi\left(Q^{\inn}_0,..., Q^{\inn}_{k+1},Q^{\out}_0,..., Q^{\out}_{k+1},W_q^{\inn},W_q^{\out}\right)\right\} \Big|\geq \epsilon\Big)\leq Ck^4C'_{k-1}e^{-cc'_{k-1}n\epsilon^2/k^7}, \\
  \P\Big( \Big| \frac{1}{N}\sum_{i=1}^N \phi&\left([\pv^{\inn}_0]_i,...,[\pv^{\inn}_{k+1}]_i,[\pv^{\out}_0]_i,...,[\pv^{\out}_{k+1}]_i,[\wv^{\inn}_p]_i,[\wv^{\out}_p]_i\right)\nonumber\\
  &-\mathbb{E}\left\{\phi\left(P^{\inn}_0,..., P^{\inn}_{k+1},P^{\out}_0,..., P^{\out}_{k+1},W_p^{\inn},W_p^{\out}\right)\right\} \Big|\geq \epsilon\Big) \leq Ck^4C'_{k-1}e^{-cc'_{k-1}n\epsilon^2/k^7}.
\end{align*} 
where the $Q^{\inn/\out}_i$ are jointly Gaussian with $Q^{\inn/\out}_i\sim N(0,\tau^{\inn/\out}_{qi})$ and $\mathbb{E}\{Q^{\inn/\out}_iQ^{\inn/\out}_j\}= [\covm^{\inn/\out}_{v,k+1}]_{ij}$, the $P^{\inn/\out}_i$ are jointly Gaussian with $P^{\inn/\out}_i\sim N(0,\tau^{\inn/\out}_{pi})$ and $\mathbb{E}\{P^{\inn/\out}_iP^{\inn/\out}_j\}=[\covm^{\inn/\out}_{uk}]_{ij}$.
\end{enumerate}
\end{lem}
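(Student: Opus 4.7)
The plan is to prove parts (a) and (b) simultaneously by induction on $k$, leveraging Lemma \ref{lem:joint_dists} so that all concentration statements can be transferred to the Gaussian equivalent recursion (Algorithm \ref{alg:gaussian}). By part 2 of that lemma, the iterates $\widetilde{\rv}_k$ equal in joint distribution $\underline{\rv}_k$ and decompose as $\underline{\ide{\rv}}_k + \dv_k$ where $\dv_k$ is a linear combination of the discrepancies $\widetilde{\Delm}_{pr}^{\inn/\out}$, $\widetilde{\Delm}_{qr}^{\inn/\out}$ with coefficients $[\cv_{pk}]_r, [\cv_{qk}]_r$. Because PL functions are invariant under distributional equality of arguments, the concentration problem reduces to: (i) standard Gaussian concentration of PL functions evaluated on the \emph{ideal} variables $\ide{\pv}, \ide{\qv}$ (which are jointly Gaussian with the correct covariance by part 3 of Lemma \ref{lem:joint_dists}), and (ii) $L^2$ control of the discrepancies that separate the ideal variables from the Gaussian equivalent iterates.

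For the base case at $k=0$, I would use \textbf{Condition 0} and Lemma \ref{lem:PLsubgaussconc} to obtain subgaussian PL concentration for $\uv_0^{\inn/\out}$ and $\wv_{p/q}^{\inn/\out}$, then directly verify each term in the explicit formulas for $\Delm_{p0}^{\inn/\out}$ and $\Delm_{q0}^{\inn/\out}$ concentrates to zero: the ratios of the form $\|\uv_0\|/\|\Zv_{p0}\|$ compare a subgaussian norm with an independent $\chi$-type norm and concentrate on $\sqrt{\rho_{p0}^{\inn/\out}}$ by Lemmas on sums/products, and the residual Gaussian terms $\sqrt{\rho}\,\Bm_{\Cm}\breve{\Zv}$ have expected squared norm $O(1/N)$ because $\breve{\Zv}$ is only a constant number of dimensions.

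For the inductive step, assuming (a) and (b) hold through iteration $k$, to prove (b) at iteration $k+1$ I would split the deviation of the empirical average into $T_1 + T_2$, where $T_2$ is the deviation of the empirical average on the ideal Gaussians from its expectation (controlled by standard PL concentration for Gaussians applied to the $\ide{\rv}$'s), and $T_1$ compares the empirical average on $\widetilde{\rv}_{k+1}$ to that on $\underline{\ide{\rv}}_{k+1}$. The latter is bounded via the PL inequality by $L \cdot \|\dv_{k+1}\|/\sqrt{N}$ times bounded norm factors, which in turn is controlled by the inductive hypothesis (a) on the $\widetilde{\Delm}$'s (since the $[\cv_{pk}]_r, [\cv_{qk}]_r$ are bounded independently of $N$ under \textbf{Condition 4} and \textbf{Condition 6}). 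To prove (a) at $k+1$, I would expand $\|\Delm_{q(k+1)}^{\inn/\out}\|^2/N$ via Lemma \ref{lem:cond_dist} and show each piece vanishes: the inner product matrix $([\Cm_{p(k+1)}]^T\Cm_{p(k+1)})^{-1}[\Cm_{p(k+1)}]^T\vv_{k+1}$ concentrates on $\betav_{q(k+1)}^{\inn/\out}$ because every $\pv_i^T\vv_j/N$ concentrates on $\mathbb{E}[P_iV_j]$ by inductive (b) applied with $\phi(p,v)=pv$ and by using the Lipschitz denoiser (\textbf{Condition 3}) to express $\vv$ as a function of $\pv$, while the matrix inverse concentrates on the limiting inverse under \textbf{Condition 6} (which keeps the precision matrices $\presm_{uk}^{\inn/\out}, \presm_{vk}^{\inn/\out}$ bounded uniformly in $k$). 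The norm-ratio term $\|[\Bm^\perp_{\Cm_{p(k+1)}}]^T\vv_{k+1}\|/\|\Zv_{q(k+1)}\|$ concentrates on $\sqrt{\rho_{q(k+1)}^{\inn/\out}}$ via the decomposition $\|[\Bm^\perp]^T\vv\|^2 = \|\vv\|^2 - \|[\Bm]^T\vv\|^2$, and the residual Gaussian piece contributes $O(k/N)$.

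The main obstacle will be propagating the constants $C_k, c_k$ tightly enough to get the claimed factorial-in-$k$ rate. Each inductive step compounds (i) the uniform Lipschitz/UBCL constants of the denoisers from \textbf{Condition 3} (needed when transferring concentration from the arguments of $f_p^{\inn/\out}, f_q^{\inn/\out}$ to their outputs, and requiring subtle handling for the UBCL $g_{x2}/g_{z2}$ case where bounded singular values multiply the iterates and \textbf{Condition 0} compactness is essential), (ii) the operator-norm bounds on inverses of empirical Gram matrices (controlled by \textbf{Condition 6} via a matrix-inverse concentration argument for matrices bounded away from singular), and (iii) union bounds over the $O(k)$ component events needed to control each term in $\Delm$ and each pairwise moment in the Gram matrices. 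Carefully composing the sum/product concentration lemmas at each step yields a degradation by a polynomial factor in $k$ per iteration, which aggregates to the claimed $C^{2k}(k!)^{16}$ and $(k!)^{-26}$ dependence.
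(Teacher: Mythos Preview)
Your proposal follows essentially the same strategy as the paper: the paper proves Lemma~\ref{lem:main_general} as a consequence of a longer Lemma~\ref{lem:main_general_long} having parts (a)--(g), proved by an induction that alternates between a ``$\mathbf{P}_k$'' phase and a ``$\mathbf{Q}_{k+1}$'' phase. Parts (a) and (b) of that longer lemma are exactly the statements you are proving, and parts (c)--(g) are precisely the intermediate facts you sketch inline (concentration of the scalars $\alpha^{\inn/\out}_{(p/q)k}$ and $\gamma^{\inn/\out}_{(p/q)k}$, of the inner products $\pv_i^T\vv_j/N$, $\uv_i^T\uv_j/N$, $\qv_i^T\uv_j/N$, and invertibility plus entrywise concentration of the Gram matrices $[\Cm^{\inn/\out}_{pk}]^T\Cm^{\inn/\out}_{pk}$, etc.). The paper promotes these to labelled parts of the induction hypothesis rather than rederiving them each time from (b); this is partly organizational, but it also matters for the constants, since the careful bookkeeping across (c)--(g) is what produces the specific $C_k, c_k, C'_k, c'_k$ dependence.

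Two places in your outline need tightening before the induction closes. First, you write that $\pv_i^T\vv_j/N$ concentrates ``by inductive (b) applied with $\phi(p,v)=pv$ and using the Lipschitz denoiser to express $\vv$ as a function of $\pv$''. But $\vv_k^{\inn/\out} = (1-\alpha^{\inn/\out}_{pk})^{-1}[f^{\inn/\out}_p(\pv^\inn_k,\pv^\out_k,\wv_p,\gamma^\inn_{pk},\gamma^\out_{pk})-\alpha^{\inn/\out}_{pk}\pv^{\inn/\out}_k]$ is \emph{not} a Lipschitz function of the $\pv$'s, because $\alpha^{\inn/\out}_{pk}$ and $\gamma^{\inn/\out}_{pk}$ themselves depend on the data and on $N$ (the paper flags this explicitly as point (5) in Section~\ref{sec:discuss}). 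The paper resolves this in its part (f): first prove $\alpha^{\inn/\out}_{pk}\to\overline{\alpha}^{\inn/\out}_{pk}$ and $\gamma^{\inn/\out}_{pk}\to\overline{\gamma}^{\inn/\out}_{pk}$ via (b) and the uniform Lipschitz/UBCL property, then replace them by their deterministic limits before invoking the PL bound. Second, you attribute the Gram-matrix invertibility and inverse concentration to \textbf{Condition 6}, but Condition 6 only constrains the $\alpha$ truncation interval. The paper instead uses the stopping criterion of \textbf{Condition 5} to lower bound the $\rho^{\inn/\out}_{(p/q)k}$, and then an explicit block-matrix-inverse induction (its part (g)(i)--(iv)) to propagate invertibility from $[\Cm^{\inn/\out}_{vk}]^T\Cm^{\inn/\out}_{vk}$ to $[\Cm^{\inn/\out}_{pk}]^T\Cm^{\inn/\out}_{pk}$ to $[\Cm^{\inn/\out}_{v(k+1)}]^T\Cm^{\inn/\out}_{v(k+1)}$. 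Without carrying these auxiliary statements as part of the induction hypothesis, the argument as written is circular at the step where you need the Gram-matrix inverse at stage $k$ to control $\Delm^{\inn/\out}_{p(k+1)}$.
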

We will use Lemma~\ref{lem:main_general}  to prove Theorem~\ref{thm:main}; details in Section~\ref{sec:proof_thm1}. Because the proof of Lemma~\ref{lem:main_general} is quite technical, in the main body of the text, we give a high level discussion in Section~\ref{sec:discuss} and relegate the details to Appendix~\ref{sec:lemma_proof}. In particular, Section~\ref{sec:discuss} discusses the innovation required to prove Lemma~\ref{lem:main_general}, and more generally, describes  differences between the finite sample analyses for generalized AMP algorithms and for  standard AMP algorithms, as in \cite[Lemma 5]{AMP_FS}.

\subsection{Theorem~\ref{thm:main} Proof} \label{sec:proof_thm1}
Theorem~\ref{thm:main} follows from Lemma~\ref{lem:main_general} part \textbf{(b)}. In translating GVAMP (Algorithm~\ref{alg:GVAMP}) to the general iteration (Algorithm~\ref{alg:general_gvamp}), we set $\pv^\inn_{k}=\rv_{1k}$ and $\gamma_{pk}^\inn=\gamma_{1k}$ in \eqref{eq:general_identity}. Along with the definition of $\hat{\xv}_{1k}$ in Algorithm~\ref{alg:GVAMP}, namely $\hat{\xv}_{1k} = g_{x1}(\rv_{1k}, \gamma_{1k})$, it follows that
$\phi( [\hat{\xv}_{1k}]_{i}, [\xv_0]_{i})= \phi( g_{x1}( [\pv^\inn_{k}]_i, \gamma_{pk}^\inn), [\xv_0]_i)$, where now quantities on the right are all from Algorithm~\ref{alg:general_gvamp}.
Hence, we bound the probability in \eqref{eq:thm1_result} by introducing the limiting state evolution quantities $\overline{\gamma}_{pk}^\inn$ defined in \eqref{eq:se_general} as follows:
\begin{align} 
&\mathbb{P}\Big(\Big|\frac{1}{N}\sum_{i=1}^N \phi([\hat{\xv}_{1k}]_{i}, [\xv_0]_{i}) - \mathbb{E}\{\phi(\hat{X}_{1k},X_0)\}\Big|\geq\epsilon\Big)  \nonumber \\
& \qquad = \mathbb{P}\Big(\Big|\frac{1}{N}\sum_{i=1}^N \phi( g_{x1}( [\pv^\inn_{k}]_i, \gamma_{pk}^\inn), [\xv_0]_i) - \mathbb{E}\{\phi(\hat{X}_{1k},X_0)\}\Big|\geq\epsilon\Big)\nonumber \\
& \qquad \leq \mathbb{P}\Big(\frac{1}{N}\sum_{i=1}^N\Big|\phi( g_{x1}( [\pv^\inn_{k}]_i, \gamma_{pk}^\inn), [\xv_0]_i) -  \phi( g_{x1}( [\pv^\inn_k]_i,\overline{\gamma}_{pk}^\inn),[\xv_0]_i )  \Big|\geq \frac{\epsilon}{2}\Big)  \label{eq:thm_first_split1} \\
&\qquad \qquad +  \mathbb{P}\Big(\Big|\frac{1}{N}\sum_{i=1}^N \phi( g_{x1}( [\pv^\inn_k]_i,\overline{\gamma}_{pk}^\inn),[\xv_0]_i ) - \mathbb{E}\{\phi(\hat{X}_{1k},X_0)\}\Big|\geq\frac{\epsilon}{2}\Big). \label{eq:thm_first_split2}
\end{align}
Now we consider the two terms in \eqref{eq:thm_first_split1} and   \eqref{eq:thm_first_split2} separately and for both we give an upper bound of $CC_{k}e^{-cc_{k}N\epsilon^2}$ to establish the desired result. First, we notice that bound on  \eqref{eq:thm_first_split2} follows from Lemma~\ref{lem:main_general} part $(b)$. Indeed,
because $g_{x1}(\cdot,\overline{\gamma}_{pk}^\inn)$ is Lipschitz, it is easy to verify that the composition $
(p,x)\mapsto\phi(g_{x1}(p,\overline{\gamma}_{pk}^\inn), x)$ is pseudo-Lipschitz, and because $\xv_0$ concentrates to $X_0$ at the correct rate, we can apply Lemma~\ref{lem:main_general} part $(b)$ to conclude that $\frac{1}{N}\sum_{i=1}^N \phi( g_{x1}( [\pv^\inn_k ,\overline{\gamma}_{pk}^\inn),[\xv_0]_i )$ concentrates to 
$$\mathbb{E}\Big[ \phi\Big( g_{x1}\Big( P^\inn_k, \overline{\gamma}_{pk}^\inn\Big), X_0 \Big)\Big] = \mathbb{E}\Big[ \phi\Big( g_{x1}\Big( \sqrt{\tau_{pi}^{\inn}} Z , \overline{\gamma}_{pk}^\inn\Big), X_0 \Big)\Big]= \mathbb{E}\Big[ \phi\Big(\hat{X}_{1k}, X_0 \Big)\Big].$$
In the final equality above we have used that $\hat{X}_{1k} = g_1(R_{1k}, \bar{\gamma}_{1k})$ with $R_{1k} = \sigma_{1k}Z$ for $Z \sim \mathcal{N}(0,1)$ independent of $X_0$, in addition to the facts that $ \tau_{pi}^{\inn} = \sigma^2_{1k}$, which is discussed when introducing the general state evolution in Section~\ref{sec:generalSE} and that $\gamma_{pk}^\inn=\gamma_{1k}$; hence, $\overline{\gamma}_{pk}^\inn= \overline{\gamma}_{1k}$.

Now we will upper bound the probability in \eqref{eq:thm_first_split1}. First, notice that by the pseudo-Lipschitz property of $\phi$, we have that $|\phi(a_1,b) - \phi(a_2,b)| \leq L(1 + \norm{(a_1,b)} + \norm{(a_2,b)}) |a_1-a_2|$. Moreover, by the Triangle Inequality:
\begin{align*}
  \norm{(a_1,  b)} + \norm{(a_2,  b)} = \norm{(a_1,b)-(a_2,b)+(a_2,b)}+\norm{(a_2,b)} &\leq \norm{(a_1,b)-(a_2,b)} + 2\norm{(a_2,b)}\\
  &= |a_1-a_2|+2\norm{(a_2,b)}.
\end{align*}
Putting this together, we have the following bound.
\begin{equation}
\begin{split}
\label{eq:thshow1}
&\frac{1}{N}\sum_{i=1}^N\Big| \phi( g_{x1}( [\pv^\inn_k ]_i, \gamma_{pk}^\inn), [\xv_0]_i) -  \phi( g_{x1}( [\pv^\inn_k ]_i,\overline{\gamma}_{pk}^\inn),[\xv_0]_i ) \Big| \\
&\leq  \frac{L}{N}\sum_{i=1}^N\Big[ 1+\Big| g_{x1}([\pv^\inn_k  ]_i,\gamma_{pk}^\inn)-g_{x1}([\pv^\inn_k]_i,\overline{\gamma}_{pk}^\inn) \Big| +2\Big\|\Big( g_{x1}\Big([\pv^\inn_k]_i,\overline{\gamma}_{pk}^\inn\Big),  [\xv_0]_i \Big)\Big\|\Big]\\
 &\hspace{2cm}\times\Big| g_{x1}\Big([\pv^\inn_k]_i,\gamma_{pk}^\inn\Big)-g_{x1}\Big([\pv^\inn_k ]_i,\overline{\gamma}_{pk}^\inn\Big) \Big| \\
 &\leq  L\sqrt{3} \sqrt{ 1+ \frac{1}{N}\|g_{x1}\Big(\pv^\inn_k,\gamma_{pk}^\inn\Big)-g_{x1}\Big(\pv^\inn_k ,\overline{\gamma}_{pk}^\inn\Big) \|^2 +\frac{4}{N}\Big\|\Big( g_{x1}\Big(\pv^\inn_k ,\overline{\gamma}_{pk}^\inn\Big),  \xv_0 \Big)\Big\|^2}\\
 &\hspace{2cm}\times \frac{1}{\sqrt{N}} \|g_{x1}\Big(\pv^\inn_k ,\gamma_{pk}^\inn\Big)-g_{x1}\Big(\pv^\inn_k ,\overline{\gamma}_{pk}^\inn\Big) \|,
\end{split}
\end{equation}
where the final inequality follows by Cauchy-Schwarz and Lemma~\ref{lem:squaredsums}. 
To see that \eqref{eq:thshow1} concentrates around zero at the correct rate, notice that it suffices to show 
\begin{equation}
\label{eq:conc1}
\mathbb{P}\Big(\Big| \frac{1}{N}\sum_{i=1}^N \|(g_{x1}([\pv^\inn_k ]_i,\overline{\gamma}_{pk}^\inn),[\xv_0]_i)\|^2 - c_g\Big|  \geq \epsilon  \Big) \leq CC_{k}e^{-cc_{k}N\epsilon^2},
\end{equation}
for some universal constant $c_g>0$, and
\begin{equation}
\label{eq:conc2}
\mathbb{P}\Big(\frac{1}{N}\sum_{i=1}^N\Big| g_{x1}\Big([\pv^\inn_k ]_i,\gamma_{pk}^\inn\Big)-g_{x1}\Big([\pv^\inn_k ]_i,\overline{\gamma}_{pk}^\inn\Big) \Big|^2 \geq \epsilon^2 \Big) \leq CC_{k}e^{-cc_{k}N\epsilon^2}.
\end{equation}
Indeed if these concentration inequalities hold, then applications of Lemmas \ref{sums}, \ref{sqroots}, and \ref{products_0} to \eqref{eq:thshow1} give concentration with the right rates in terms of $N$ and $\epsilon$.
For \eqref{eq:conc1}, observe that $g_{x1}(\cdot,\overline{\gamma}_{pk}^\inn)$ is uniformly Lipschitz by \textbf{Assumption 3}. Thus, the composition, $(p,x)\mapsto \|(g_{x1}(p,\overline{\gamma}_{pk}^\inn),x)\|^2$ is pseudo-Lipschitz, and so with $\text{constant} = \mathbb{E}[\|(g_{x1}(P_k^{\inn},\overline{\gamma}_{pk}^\inn),X_0)\|]^2$ the result in \eqref{eq:conc1} follows by part $(b)$ of Lemma \eqref{lem:main_general}. We also use \textbf{Assumption 4} that $\gamma_{pk}$ and, hence, $\bar{\gamma}_{pk}$ is bounded.
To demonstrate the result in \eqref{eq:conc2}, observe that by \textbf{Assumption 3}, $g_{x1}$ is uniformly Lipschitz, meaning $|g_{x1}(a, b_1) - g_{x1}(a, b_2)| \leq L(1+|a|)|b_1 - b_2|$; hence, $|g_{x1}(a, b_1) - g_{x1}(a, b_2)|^2 \leq 2L^2(1+|a|^2)|b_1 - b_2|^2$ using Lemma~\ref{lem:squaredsums}.
 Therefore, applying Lemma~\ref{lem:squaredsums} again,
\begin{align*}
\frac{1}{N}\sum_{i=1}^N\Big| g_{x1}\Big([\pv^\inn_k]_i,\gamma_{pk}^\inn\Big)-g_{x1}\Big([\pv^\inn_k]_i,\overline{\gamma}_{pk}^\inn\Big) \Big|^2 &\leq \Big| \gamma_{pk}^\inn-\overline{\gamma}_{pk}^\inn \Big|^2\frac{2L^2}{N}\sum_{i=1}^N \Big( 1+|[\pv^\inn_k ]_i|^2 \Big) \\
&\leq \Big| \gamma_{pk}^\inn-\overline{\gamma}_{pk}^\inn \Big|^2\frac{2L^2}{N}\sum_{i=1}^N \Big( 1+2|[\pv^\inn_k]_i|^2 \Big) \\
&\leq \Big| \gamma_{pk}^\inn-\overline{\gamma}_{pk}^\inn \Big|^2 \Big( 2L^2+ \frac{4L^2}{N}\sum_{i=1}^N|[\pv^\inn_k]_i|^2 \Big) .
\end{align*}
Using the above abound, we  can bound the probability in \eqref{eq:conc1} as follows:
\begin{equation*}
\begin{split}
\mathbb{P}\Big(\frac{1}{N}\sum_{i=1}^N\Big| g_{x1}\Big([\pv^\inn_k]_i,\gamma_{pk}^\inn\Big)-g_{x1}\Big([\pv^\inn_k ]_i,\overline{\gamma}_{pk}^\inn\Big) \Big|^2 \geq \epsilon^2 \Big) 
& \leq \mathbb{P}\Big(\Big| \gamma_{pk}^\inn-\overline{\gamma}_{pk}^\inn \Big|^2 \Big[ 1+ \frac{2}{N}\sum_{i=1}^N|[\pv^\inn_k]_i|^2 \Big]  \geq \frac{\epsilon^2}{2L^2} \Big) \\
&\hspace{-3cm} \leq \mathbb{P}\Big(\Big| \gamma_{pk}^\inn-\overline{\gamma}_{pk}^\inn \Big| \geq \frac{\epsilon}{2L} \Big)  + \mathbb{P}\Big(\Big| \gamma_{pk}^\inn-\overline{\gamma}_{pk}^\inn \Big| \sqrt{ \frac{1}{N}\|\pv^\inn_k\|^2}  \geq  \frac{\epsilon}{2\sqrt{2}L} \Big).
\end{split}
\end{equation*}
The first term on the right side of the above has the desired bound by Lemma~\ref{lem:main_general_long} part $(f)$, proved in Appendix~\ref{sec:lemma_proof}.
The second term is bounded as above by using Lemmas \ref{sums}, \ref{sqroots}, and \ref{products_0} to combine the concentration results for the individual factors and terms that are given by Lemma~\ref{lem:main_general_long} part $(f)$ (for $\gamma^{\inn}_{pk}$),
and the following bound given in  Lemma~\ref{lem:main_general_long} part $(d)$:
\[
  \mathbb{P}\Big(\Big \lvert \frac{1}{N}\|\pv^\inn_k\|^2 -  [\Sigma_{pk}^{\inn}]_{k+1, k+1}  \Big \lvert \leq 1\Big) \leq CC_{k}e^{-cc_{k}N}.
\]
 
\subsection{Lemma \ref{lem:main_general} Proof Discussion} \label{sec:discuss}

Structurally, Lemma \ref{lem:main_general} is similar to \cite[Lemma 5]{AMP_FS}, which is used to establish the AMP analog of Theorem~\ref{thm:main} for the linear model in \eqref{eqn:lin_reg}. However, there are a number of differences between the algorithms that lead to important differences in the proofs, discussed here.

\textbf{(1)} In the finite sample analysis of AMP, one often  studies vectors of the form $\Am \xv$ where $\Am$ has i.i.d.\ Gaussian entries and $\xv$ is deterministic. For a vector $\mathbf{Z}$ having i.i.d.\ $N(0,1)$ elements, $\Am \xv \stackrel{d}{=}\|\xv\| \mathbf{Z}$, since $\|\xv\| $ is deterministic.  Importantly, the elements of $\|\xv\| \mathbf{Z}$ are independent.

With GVAMP, the analogous object of interest are  vectors of the form $\Vm \xv$ where $\Vm$ is uniformly distributed on the group of orthogonal matrices. This property of $\Vm$ implies that $\Vm \xv$ has a rotationally invariant distribution, or mathematically, for a vector $\mathbf{Z}$ having i.i.d.\ $N(0,1)$ elements, $\Vm \xv\stackrel{d}{=}({\|\xv\|}/{\|\mathbf{Z}\|})\mathbf{Z}.$
Thus, we pay for relaxing the Gaussianity condition by picking up a $\|\mathbf{Z}\|$ factor in the denominator, causing dependencies in the elements of $({\|\xv\|}/{\|\mathbf{Z}\|})\mathbf{Z}$ that complicate the concentration arguments. By observing that the Gaussiantiy of $\mathbf{Z}$ gives concentration of $\|\mathbf{Z}\|/\sqrt{N}$ around $1$, it can be shown that $\Vm \xv$ will concentrate around the same limit as $(\|\xv\|/\sqrt{N})\mathbf{Z}$, which essentially returns us to the AMP case.

\textbf{(2)} As in the analysis of GAMP, we need to ensure that the $\zv_0$ vector is available at the output denoising stage of the general recursion (as it is an input to the likelihood denoiser). Adding $\zv_0$ to the disturbance vectors $\wv$ would create dependence between the disturbance vectors and other quantities which would violate our conditions, so instead we must generate it from $\xv$ in the general recursion itself. To achieve this, we consider a version of the general recursion where vector iterates are replaced by matrix iterates in order to track both the GVAMP iterates and to recreate $\zv_0$ at each loop through the recursion. This case is not handled explicitly in our proof of Lemma \ref{lem:main_general}, but in Appendix \ref{app:matrix_case}, we demonstrate that when $\xv$ is a matrix rather than a vector, we recover a characterization of the form $\Vm\xv \stackrel{d}{=}\mathbf{Z} \gram^{-1/2}_{\mathbf{Z}}\gram^{1/2}_{\xv}$, where $\mathbf{Z}$ is now a matrix of equivalent dimensions with i.i.d. $N(0,1)$ entries and $\gram_{\mathbf{M}}$ is the Gram matrix corresponding to matrix $\mathbf{M}$. This further generalizes the expression $({\|\xv\|}/{\|\mathbf{Z}\|})\mathbf{Z}$ above.

\textbf{(3)} When studying AMP, one often works with projection matrices $\mathsf{P}_{\mathbf{M}^\perp}$, which map vectors into the orthogonal complement of the range of a matrix $\mathbf{M}$. On the other hand, when studying GVAMP, we instead work with the transformations of the form $\Bm^\perp_\mathbf{M}$ that have columns spanning $\mathrm{range}(\mathbf{M})^\perp$. These are related to the corresponding projections by $\mathsf{P}_{\mathbf{M}^\perp} = \Bm^\perp_\mathbf{M}[\Bm^\perp_\mathbf{M}]^T$. This creates two main differences in the decomposition in Lemma \eqref{lem:cond_dist}. 

First, in the deviation terms that describe the distance of the algorithm in finite samples from its idealized (and easily analyzable) counterpart, the transformation $\mathbb{I}-\mathsf{P}_{\Cm_k}$ that shows up in the AMP case is replaced with the matrices $\Bm_{\Cm^{\inn/\out}_{vk}}\in\mathbb{R}^{N\times 2k}$, where all we know about this matrix is that it has orthonormal columns that span $\mathrm{range}\Big( \Cm^{\inn/\out}_{vk} \Big)$. Notice, on the other hand, the projection matrix $\mathbb{I}-\mathsf{P}_{\Cm_k}$ is fully specified.  However, this knowledge is enough to force images of Gaussian vectors under the transformation given by $\Bm_{\Cm^{\inn/\out}_{vk}}$ to concentrate around zero, because the images are constrained to live in a $k-$dimensional subspace and $\Bm_{\Cm^{\inn/\out}_{vk}}$ preserves norms.

The second difference is the presence of the orthogonal matrices $\Om^{\inn/\out}_{pk}$ and $\Om^{\inn/\out}_{qk}$ in the decomposition. These appear because there is no direct way to relate $\Bm^\perp_{\Cm_{vk}^{\inn/\out}}\Zv^{\inn/\out}_{pk}$ and $\Bm^\perp_{\Cm_{uk}^{\inn/\out}}\Zv^{\inn/\out}_{qk}$ to $\Zv^{\inn/\out}_{pk}$ and $\Zv^{\inn/\out}_{qk}$. However, these matrices end up having no effect on the resulting joint distribution as they are independent of $\overline{\Zv}^{\inn/\out}_{pk}$ and $\overline{\Zv}^{\inn/\out}_{qk}$, respectively. 

\textbf{(4)}  Because the parameters $\gamma_{1k}, \gamma_{2k}$ and $\tau_{1k},\tau_{2k}$ can vary with $N$, so too can the behavior of the denoisers $g_{x1}, g_{x2}$ and $g_{z1},g_{z2}$. This is in contrast to AMP, where the denoisers can vary by iteration but not with $N$. Furthermore, in the AMP case, the denoisers are Lipschitz in both the iterates and the (analog of) the disturbance vectors $\wv$, whereas in the GVAMP case, we must consider a denoiser of the form $[\wv]_i[\qv]_i$, which is not Lipschitz in both $[\wv]_i$ and $[\qv]_i$.

These differences are accounted for by using two different extensions of the Lipschitz condition, the \emph{uniformly} Lipschitz condition and the uniformly bounded conditionally Lipschitz condition. We ultimately view the $\gamma$, $\tau$, and disturbance vectors $\wv$ as parametrizing families of Lipschitz functions, and these two conditions provide sufficient control to ensure that these Lipschitz functions do not vary too rapidly with the parameters and that the corresponding concentration arguments continue to hold.

\textbf{(5)}  The order of denoising and subtracting the Onsager term is interchanged in AMP and GVAMP. This adds an additional layer of complication in obtaining concentration for $\vv^{\inn/\out}_k$: from Algorithm~\ref{alg:general_gvamp},
\[
\vv^{\inn/\out}_k =  \frac{1}{1-\alpha^{\inn/\out}_{pk}}\Big[f^{\inn/\out}_p(\pv^{\inn}_k,\pv^\out_k,\wv^{\inn/\out}_p,\gamma^\inn_{pk},\gamma^\out_{pk})-\alpha^{\inn/\out}_{pk}\pv^{\inn/\out}_{k}\Big],
\]
with $f^{\inn/\out}_p(\pv^\inn_k,\pv^\out_k,\wv^{\inn/\out}_p,\gamma^\inn_{pk},\gamma^\out_{pk})-\alpha^{\inn/\out}_{pk}\pv^{\inn/\out}_{k}$ failing to be Lipschitz (since $\alpha^{\inn/\out}_{pk}$ also has a dependence on $\pv^{\inn/\out}_k$). This prevents us from simply applying Lemma~\ref{lem:main_general}~$\textbf{(b)}$ when proving concentration results for $\vv^{\inn/\out}_k$. However, as with the $1/\|\mathbf{Z}\|$ dependence discussed in \textbf{(1)} above, we handle this by using concentration of $\alpha^{\inn/\out}_{pk}$ to its limit $\overline{\alpha}^{\inn/\out}_{pk}$, allowing one to show that $\vv^{\inn/\out}_k$ will concentrate around the same limit as $ \frac{1}{1-\overline{\alpha}^{\inn/\out}_{pk}}\Big[f^{\inn/\out}_p(\pv^\inn_k,\pv^\out_k,\wv^{\inn/\out}_p,\gamma^\inn_{pk},\gamma^\out_{pk})-\overline{\alpha}^{\inn/\out}_{pk}\pv^{\inn/\out}_{k}\Big]$. This function is Lipchitz, so (inductively) applying Lemma~\ref{lem:main_general}~$\textbf{(b)}$ gives the desired concentration.

\section{Conclusion and Future Work}
This work presents rigorous non-asymptotic performance guarantees for vector approximate message passing (VAMP) and its generalized version GVAMP, characterized through their respective state evolution recursions.  We show, in both cases, that the probability of deviation from the state evolution predictions decay exponentially in the problem size $N$ up through a certain number of iterations related to $N$. This analysis also implies a similar concentration result for generalized approximate message passing (GAMP) and its state evolution.

We expect these concentration results to extend beyond the generalized models \eqref{eqn:gen_reg} and assumptions considered in this work to other settings where a state evolution has been rigorously proved to characterize performance of an AMP-style algorithm in the large system limit.  For example, when the denoisers are non-separable \cite{VAMP_nonsep}, to settings where the distributional parameters of the noise and signal must be learned \cite{VAMP_learn}, or to AMP algorithms that work under more general assumptions on the measurement matrix  \cite{chen2021universality, fan2022approximate, wang2022universality, opper2016theory,dudeja2022universality}. Another interesting direction for future study is whether the analysis of Li and Wei \cite{li2022non} can be used for the type of AMP algorithms studied here to improve the dependence between the problem size and number of iterations for which the concentration results hold.

\section*{Acknowledgment}
The authors acknowledge NSF CCF $\#1849883$ support.

\section{Proof of Lemma~\ref{lem:main_general}} \label{sec:lemma_proof}

To prove Lemma~\ref{lem:main_general}, we prove Lemma~\ref{lem:main_general_long} below. Notice that part (a) and (b) of Lemma~\ref{lem:main_general} are also part (a) and (b) of Lemma~\ref{lem:main_general_long}, only now Lemma~\ref{lem:main_general_long} also has parts (c)-(g) as well.

\begin{lem}\label{lem:main_general_long}

Throughout the lemma, we use $c$ and $C$ to denote constants that do not depend on the iteration number, but which may vary in their exact value between usages. The values of $C_k, C'_k, c_k,$ and $c'_k$ for $k \geq 0$ are given in \eqref{eq:Cdef}.
Let $X_n\stackrel{\cdot\cdot}{=} c$ mean 
\[
\P(|X_n-c|\geq \epsilon) \leq Ck^4C'_{k-1}\exp\left(-cc'_{k-1}n\epsilon^2/k^7\right),
\]
and let $X_n\stackrel{\cdot}{=}c$ mean
\[
\P\left(|X_n-c|\geq \epsilon\right) \leq Ck^4 C_{k-1}\exp\left(-cc_{k-1} n\epsilon^2/k^7\right).
\]

\setcounter{equation}{38}
\begin{enumerate}[label=(\alph*)]
\item \begin{align}
\P\left(\frac{1}{N}\|\Delm^{\inn/\out}_{qk}\|^2\geq\epsilon\right)\leq CkC'_{k-1}\exp\left(-cc'_{k-1} n\epsilon/k^2\right), \label{eq:Qka} \\
\P\left(\frac{1}{N}\|\Delm^{\inn/\out}_{pk}\|^2\geq\epsilon\right)\leq CkC_{k-1}\exp\left(-cc_{k-1} n\epsilon/k^2\right). \label{eq:Pka}
\end{align}
\item %
  Let $\phi:\mathbb{R}^{k+2}\to\mathbb{R}$ pseudo-Lipschitz, or satisfy the following bounded conditionally pseudo-Lipschitz condition:
  \begin{enumerate}
  \item $\phi$ is continuous in all inputs.
  \item $\phi(\cdot,w)$ is pseudo-Lipschitz for each $w$ with PL constants continuous in $w$.
  \item The domain of $w$ is compact.
  \end{enumerate}
Then we have
\begin{align}
  \frac{1}{N}\sum_{i=1}^N \phi&\left([\qv^{\inn}_0]_i,...,[\qv^{\inn}_{k+1}]_i,[\qv^{\out}_0]_i,...,[\qv^{\out}_{k+1}]_i,[\wv]_i\right)  \stackrel{\cdot\cdot}{=}\mathbb{E}\left\{\phi\left(Q^{\inn}_0,..., Q^{\inn}_{k+1},Q^{\out}_0,..., Q^{\out}_{k+1},W\right)\right\}, \label{eq:Qkb} \\
  \frac{1}{N}\sum_{i=1}^N \phi&\left([\pv^{\inn}_0]_i,...,[\pv^{\inn}_{k}]_i,[\pv^{\out}_0]_i,...,[\pv^{\out}_{k}]_i,[\wv]_i\right)  \stackrel{\cdot\cdot}{=}\mathbb{E}\left\{\phi\left(P^{\inn}_0,..., P^{\inn}_k,P^{\out}_0,..., P^{\out}_k,W\right)\right\},\label{eq:Pkb}
\end{align} 
where the $Q^{\inn/\out}_i$ are jointly Gaussian with $Q^{\inn/\out}_i\sim N(0,\tau^{\inn/\out}_{qi})$ and $\mathbb{E}\{Q^{\inn/\out}_iQ^{\inn/\out}_j\}= [\covm^{\inn/\out}_{v,k+1}]_{ij}$, the $P^{\inn/\out}_i$ are jointly Gaussian with $P^{\inn/\out}_i\sim N(0,\tau^{\inn/\out}_{pi})$ and $\mathbb{E}\{P^{\inn/\out}_iP^{\inn/\out}_j\}=[\covm^{\inn/\out}_{uk}]_{ij}$, and $\wv$ is any vector independent of the $\pv$ and $\qv$ iterates that concentrates exponentially fast (see Definition~\ref{def:concentration}) around some limiting variable $W$ with finite second moment.
\item For all $k\geq 0$,
  \begin{align}
    \frac{1}{N}[\qv^{\inn/\out}_{k+1}]^T\uv^{\inn/\out}_0 &\stackrel{\cdot\cdot}{=} 0, \;\;\;  \frac{1}{N}[\qv^{\inn/\out}_{k}]^T\wv^{\inn/\out}_q \stackrel{\cdot\cdot}{=} 0, \label{eq:Qkc} \\
\frac{1}{N}[\pv^{\inn/\out}_k]^T\wv^{\inn/\out}_p &\stackrel{\cdot}{=} 0. \label{eq:Pkc}
\end{align}
\item For all $0\leq j\leq k$,
\begin{align}
\frac{1}{N}[\qv^{\inn/\out}_{j}]^T\qv^{\inn/\out}_{k} &\stackrel{\cdot\cdot}{=} [\covm^{\inn/\out}_{v,k+1}]_{j+1,k+1}, \label{eq:Qkd} \\
\frac{1}{N}[\pv^{\inn/\out}_j]^T\pv^{\inn/\out}_k &\stackrel{\cdot}{=} [\covm^{\inn/\out}_{uk}]_{j+1,k+1}\label{eq:Pkd}.
\end{align} 

\item For all $1\leq j\leq k+1$ and for all $0\leq i\leq k$,
\begin{align}
\frac{1}{N}[\uv^{\inn/\out}_j]^T\uv^{\inn/\out}_{k+1} \stackrel{\cdot\cdot}{=} [\covm^{\inn/\out}_{u,k+1}]_{j+1,k+2},  & \;\;\;   \frac{1}{N}[\uv^{\inn/\out}_{j}]^T\uv^{\inn/\out}_{0} \stackrel{\cdot\cdot}{=} 0,\label{eq:Qke} \\
  \frac{1}{N}[\vv^{\inn/\out}_i]^T\vv^{\inn/\out}_k \stackrel{\cdot}{=} [\covm^{\inn/\out}_{vk}]_{i+1,k+1}.& \label{eq:Pke}
\end{align}

\item For $0\leq j\leq k$, 
\begin{equation}
\begin{split}
\P\left(\left|\alpha^{\inn/\out}_{qk}-\overline{\alpha}^{\inn/\out}_{qk}\right|\geq\epsilon\right) &\leq C'k^4C'_{k-1}\exp\left(-nc'c'_{k-1}\epsilon^2/k^7\right), \\
\P\left(\left|\gamma^{\inn/\out}_{p(k+1)}-\overline{\gamma}^{\inn/\out}_{p(k+1)}\right|\geq\epsilon\right) &\leq C'k^4C'_{k-1}\exp\left(-nc'c'_{k-1}\epsilon^2/k^7\right), \\
\frac{1}{N}[\qv^{\inn/\out}_{k}]^T\uv^{\inn/\out}_{j+1} &\stackrel{\cdot\cdot}{=} 0 \qquad \;\;\;\frac{1}{N}[\qv^{\inn/\out}_{j}]^T\uv^{\inn/\out}_{k+1} \stackrel{\cdot\cdot}{=} 0. \label{eq:Qkf}
\end{split}
\end{equation}
For $0\leq j\leq k$,
\begin{equation}
\begin{split}
\P\left(\left|\alpha^{\inn/\out}_{pk}-\overline{\alpha}^{\inn/\out}_{pk}\right|\geq\epsilon\right) & \leq Ck^4C_{k-1}\exp\left(-ncc_{k-1}\epsilon^2/k^7\right), \\
\P\left(\left|\gamma^{\inn/\out}_{qk}-\overline{\gamma}^{\inn/\out}_{qk}\right|\geq\epsilon\right) &\leq Ck^4C_{k-1}\exp\left(-ncc_{k-1}\epsilon^2/k^7\right), \\
\frac{1}{N}[\pv^{\inn/\out}_j]^T\vv^{\inn/\out}_k \stackrel{\cdot}{=} 0,  \qquad &\;\;\; \frac{1}{N}[\pv^{\inn/\out}_k]^T\vv^{\inn/\out}_j \stackrel{\cdot}{=} 0. \label{eq:Pkf} 
\end{split}
\end{equation}

\item 
\begin{enumerate}[label=(\roman*)]
\item 
\begin{align}
\P\left(\left|\frac{1}{N}\|[\Bm_{\Cm_{uk}^{\inn/\out}}^\perp]^T\qv^{\inn/\out}_k\|^2 - \rho^{\inn/\out}_{qk}\right|\geq\epsilon\right) &\leq Ck^6C'_{k-1}\exp\left(-ncc'_{k-1}/k^{9}\right), \label{eq:Qkg1}\\
\P\left(\left|\frac{1}{N}\|[\Bm^\perp_{\Cm_{vk}^{\inn/\out}}]^T\pv^{\inn/\out}_k\|^2 - \rho^{\inn/\out}_{pk}\right|\geq\epsilon\right) &\leq Ck^6C_{k-1}\exp\left(-ncc_{k-1}/k^{9}\right). \label{eq:Pkg1} 
\end{align}
\item For $1\leq i,j\leq 2(k+1)$,
\begin{equation}
\begin{split}
\P\left([\Cm_{q,k+1}^{\inn/\out}]^T\Cm_{q,k+1}^{\inn/\out}\text{ singular}\right)\leq Ck^6C'_{k-1}\exp\left(-ncc'_{k-1}/k^{9}\right), \label{eq:Qkg2} \\
\P\left(\left|\left[\left([\Cm_{q,k+1}^{\inn/\out}]^T\Cm_{q,k+1}^{\inn/\out}\right)^{-1}\right]_{ij}-\left[\presm^{\inn/\out}_{vk}\right]_{ij}\right|\geq\epsilon\right) \leq Ck^6C'_{k-1}\exp\left(-ncc'_{k-1}/k^{9}\right).
\end{split}
\end{equation}

For $1\leq i,j\leq 2k+1$,
\begin{equation}
\begin{split}
\P\left([\Cm^{\inn/\out}_{pk}]^T\Cm^{\inn/\out}_{pk}\text{ singular}\right)\leq Ck^8C_{k-1}\exp\left(-ncc_{k-1}/k^{11}\right), \label{eq:Pkg2} \\
\P\left(\left|\left[([\Cm^{\inn/\out}_{pk}]^T\Cm^{\inn/\out}_{pk})^{-1}\right]_{ij} - \left[\presm^{\inn/\out}_{uk}\right]_{ij}\right|\geq\epsilon\right)\leq Ck^8C_{k-1}\exp\left(-ncc_{k-1}/k^{11}\right).
\end{split}
\end{equation}

\item 
\begin{align}
\P\left(\left|\frac{1}{N}\|[\Bm^\perp_{\Cm^{\inn/\out}_{q,k+1}}]^T\uv^{\inn/\out}_{k+1}\|^2- \rho^{\inn/\out}_{p,k+1}\right|\geq\epsilon\right) \leq Ck^8C'_{k-1}\exp\left(-ncc'_{k-1}\epsilon^2/k^{11}\right), \label{eq:Qkg3}\\
\P\left(\left|\frac{1}{N}\|[\Bm^\perp_{\Cm^{\inn/\out}_{pk}}]^T\vv^{\inn/\out}_k\|^2- \rho^{\inn/\out}_{qk}\right|\geq\epsilon\right) \leq Ck^8C_{k-1}\exp\left(-ncc_{k-1}\epsilon^2/k^{11}\right). \label{eq:Pkg3}
\end{align}

\item For $1\leq i,j\leq 2k+3$,
\begin{equation}
\begin{split}
\P\left([\Cm^{\inn/\out}_{u,k+1}]^T\Cm^{\inn/\out}_{u,k+1}\text{ singular}\right)\leq Ck^8C'_{k-1}\exp\left(-ncc'_{k-1}/k^{11}\right), \label{eq:Qkg4}\\
\P\left(\left|\left[([\Cm^{\inn/\out}_{u,k+1}]^T\Cm^{\inn/\out}_{u,k+1})^{-1}\right]_{ij} - \left[\presm^{\inn/\out}_{u(k+1)}\right]_{ij}\right|\geq\epsilon\right)\leq Ck^8C'_{k-1}\exp\left(-ncc'_{k-1}/k^{11}\right).
\end{split}
\end{equation}

For $1\leq i,j\leq 2(k+1)$,
\begin{equation}
\begin{split}
\P\left([\Cm^{\inn/\out}_{v,k+1}]^T\Cm^{\inn/\out}_{v,k+1}\text{ singular}\right)\leq Ck^8C_{k-1}\exp\left(-ncc_{k-1}/k^{11}\right), \label{eq:Pkg4}\\
\P\left(\left|\left[([\Cm^{\inn/\out}_{v,k+1}]^T\Cm^{\inn/\out}_{v,k+1})^{-1}\right]_{ij} - \left[\presm^{\inn/\out}_{vk}\right]_{ij}\right|\geq\epsilon\right)\leq Ck^8C_{k-1}\exp\left(-ncc_{k-1}/k^{11}\right).
\end{split}
\end{equation}

\end{enumerate}
\end{enumerate}
\end{lem}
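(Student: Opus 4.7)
The plan is to prove Lemma~\ref{lem:main_general_long} by induction on the iteration index $k$, establishing all parts (a)--(g) simultaneously, with a specific internal ordering within each step that mirrors how the statements feed into one another. The base case $k=0$ will come from \textbf{Conditions 0--1} (initializations are i.i.d.\ subgaussian, hence they satisfy Definition~\ref{def:concentration} by \cite[Lemma B.4]{AMP_FS}) together with the fact that $\widetilde{\Om}^{\inn/\out}_{p0}$ is deterministic, so the $k=0$ iterate is a true Gaussian vector up to the initialization discrepancy. For the inductive step, the core device is Lemma~\ref{lem:joint_dists}: conditional on the disturbance vectors, the iterates $\underline{\rv}_k$ are equal in distribution to $\underline{\widetilde{\rv}}_k$, which in turn differs from the purely Gaussian recursion $\underline{\ide{\rv}}_k$ only by the deviation vector $\dv_k$ from \eqref{eq:dv_vec}. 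Thus every statement about $\pv^{\inn/\out}_k$ and $\qv^{\inn/\out}_k$ reduces to (i) a clean Gaussian computation on $\ide{\pv}^{\inn/\out}_k$ and $\ide{\qv}^{\inn/\out}_k$, plus (ii) a control on $\|\dv_k\|$, which is exactly what part (a) provides.

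Within each iteration I will prove the parts in the order $(a)\to(b)\to(c)\to(d)\to(e)\to(f)\to(g)$ for the $q$-side, then repeat for the $p$-side, interleaving input and output channels (which are conditionally independent by Lemma~\ref{lem:cond_dist}, so their joint control follows from the marginal controls via a union bound). For part (a), each of the three summands making up $\Delm^{\inn/\out}_{pk}$ or $\Delm^{\inn/\out}_{qk}$ in Lemma~\ref{lem:cond_dist} is analyzed separately: the first summand involves $([\Cm_{qk}]^T\Cm_{qk})^{-1}[\Cm_{qk}]^T \uv_k$ minus the limit $\betav$, which I handle using inductive part $(g)$ to replace the matrix inverse by $\presm^{\inn/\out}_{uk}$ and inductive parts $(c)$--$(e)$ to replace $[\Cm_{qk}]^T\uv_k/N$ by its limit; the second summand is driven by $\|[\Bm^\perp]^T\uv_k\|/\|\Zv_{pk}\|-\sqrt{\rho}$, controlled by inductive part $(g)$ together with chi-square concentration for $\|\Zv_{pk}\|^2/N$ around $1$; the third summand vanishes because $\Bm_{\Cm_{vk}}\breve{\Zv}_{pk}$ lives in a $2k$-dimensional subspace and hence has squared norm of order $k/N$. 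Part (b) then follows by writing the sum using $\underline{\widetilde{\rv}}_k=\underline{\ide{\rv}}_k+(\dv_0,\ldots,\dv_k)$, invoking the pseudo-Lipschitz property to split the empirical average into (a centered Gaussian part, which concentrates via Borell--TIS applied to Lipschitz functions of $\overline{\Zv}_{pk}$) plus (a remainder bounded through Cauchy--Schwarz by $\|\dv_j\|/\sqrt{N}$ and the pre-computed norm bounds), together with \textbf{Condition 1}'s concentration of $\wv$ and Lemma~\ref{lem:joint_dists}(3) identifying the limit with $\mathbb{E}\phi(P^{\inn/\out},Q^{\inn/\out},W)$.

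Parts (c)--(e) are immediate corollaries of part (b) by taking the appropriate pseudo-Lipschitz test function and noting that the limiting inner products match the $[\covm^{\inn/\out}]_{ij}$ by Stein's identity (exactly as in the argument surrounding \eqref{eq:tau_var_argument}). Part (f) is more delicate: the divergences $\alpha^{\inn/\out}_{pk}$ and $\alpha^{\inn/\out}_{qk}$ involve derivatives of $f^{\inn/\out}_{p/q}$, which under \textbf{Condition 3} are either uniformly Lipschitz (so part (b) applies directly to the derivative test function) or only UBCL. In the UBCL case (which covers $[f_q^{\inn/\out}]_2$), I will use the compactness of the $\wv_q$ domain (\textbf{Condition 0} gives $\sv\in[0,s_{\max}]$) to truncate and apply a uniform-in-$w$ version of part (b), augmented by the ``bounded conditionally pseudo-Lipschitz'' extension in the hypothesis of (b). The concentration of $\gamma^{\inn/\out}_{p(k+1)}$ and $\gamma^{\inn/\out}_{q(k+1)}$ then follows by applying Lipschitz continuity of the $\Gamma^{\inn/\out}$ functions (\textbf{Condition 4}) to the concentrated $\alpha^{\inn/\out}$ together with the inductive concentration of $\gamma^{\inn/\out}_{pk},\gamma^{\inn/\out}_{qk}$, using Lemma~\ref{products_0}. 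Part (g) requires two pieces: the norms of projected iterates concentrate via parts (d)--(e) combined with $\|\pv^{\inn/\out}_k\|^2/N - \|[\Bm^\perp]^T\pv^{\inn/\out}_k\|^2/N = \|\mathsf{P}_{\Cm_{vk}}\pv^{\inn/\out}_k\|^2/N$ and the fact that the projector has rank $O(k)$; non-singularity and the inverse entry-wise control come from a matrix perturbation argument (e.g.\ Lemma \ref{lem:invconc} type bound), starting from the inductive control of the entries of $[\Cm]^T\Cm/N$ around $\covm$ and using that $\covm$ is positive definite so long as we have not hit the stopping criterion of \textbf{Condition 5} (which is exactly the role of that condition).

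The main obstacles are threefold. First, the Onsager subtraction order mentioned in point~\textbf{(5)} of Section~\ref{sec:discuss} means that $\vv^{\inn/\out}_k$ itself is not a Lipschitz function of $\pv^{\inn/\out}_k$ (because the coefficient $1/(1-\alpha^{\inn/\out}_{pk})$ is data-dependent); I will resolve this by first proving part (f), then freezing the $\alpha^{\inn/\out}_{pk}$ at its limit $\overline{\alpha}^{\inn/\out}_{pk}$ before invoking part (b) on the $\vv^{\inn/\out}_k$ statements, absorbing the $O(|\alpha-\overline{\alpha}|)$ error through Lemma~\ref{products_0}. Second, the orthogonal-invariance hypothesis yields $\Vm\uv\stackrel{d}{=}(\|\uv\|/\|\mathbf{Z}\|)\mathbf{Z}$ rather than $\|\uv\|\mathbf{Z}/\sqrt{N}$; the extra random denominator $\|\mathbf{Z}\|/\sqrt{N}$ is handled by a chi-square tail bound which gives sub-exponential concentration around $1$, producing the $k^7,k^9,k^{11}$ polynomial factors in the rate. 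Third, the rate book-keeping is intricate: every step that invokes earlier parts loses a $1/k^2$ in $c_{k-1}$ or $c'_{k-1}$ because of union bounds over $O(k)$ sub-events and Cauchy--Schwarz products, which must be carefully compounded so that the final exponent is of the form $cc_k N\epsilon^2$ as in \eqref{eq:Cdef}. This accounting, mimicking but appropriately generalizing \cite[Lemma 5]{AMP_FS}, is the most tedious but conceptually routine part of the argument.
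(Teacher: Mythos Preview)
Your proposal is correct and follows essentially the same approach as the paper's proof: an interleaved $\mathbf{P}_k/\mathbf{Q}_{k+1}$ induction that uses Lemma~\ref{lem:joint_dists} to replace iterates by their Gaussian equivalents plus deviation terms, controls the three summands of $\Delm^{\inn/\out}$ in part (a) exactly as you describe, and derives (c)--(g) as consequences of (b) combined with Stein's lemma, block matrix inversion, and the standard product/quotient/square-root concentration lemmas. One small inconsistency: you list the internal order as $(a)\to(b)\to(c)\to(d)\to(e)\to(f)\to(g)$ but then (correctly) note that (f) must precede (e) since the $\vv$-concentration requires freezing $\alpha_{pk}$ at $\overline{\alpha}_{pk}$; the paper indeed proves (f) before (e) for exactly this reason, so just swap those two in your stated order.
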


\begin{rem}
In the course of the Lemma \ref{lem:main_general_long} proof, it often happens that the concentrating value of some quantity is a factor in the rate of concentration for another quantity (e.g.\ when we apply Lemma $A.3$). In theory, we must therefore take care about the varying magnitudes of these concentrating values as the iteration number changes. However, in our case, we can ignore these issues by bounding the relevant concentrating values independently of $n$ and the concentration number.

In particular, it suffices to find upper and lower bounds on $\tau^{\inn/\out}_{(p/q)k}$ and $\rho^{\inn/\out}_{(p/q)k}$ and upper bounds on $\overline{\alpha}^{\inn/\out}_{(p/q)k}$ for all $k\geq 1$. Our stopping criteria allows us to assume that the $\tau^{\inn/\out}_{(p/q)k}$ and $\rho^{\inn/\out}_{(p/q)k}$ are lower bounded independently of $k$ in our concentration analysis. For an upper bound, observe that $\rho^{\inn/\out}_{(p/q)k}\leq \tau^{\inn/\out}_{(p/q)k}$. If either of the $\tau^{\inn}_{(p/q)k}$ is a not bounded, then by the translation $\sigma_{1k}=\tau^{\inn}_{pk}$ and $\sigma_{2k}=\tau^{\inn}_{qk}$ and Theorem \ref{thm:main} with $\phi(x,y)=(x-y)^2$, the state evolution prediction of the mean squared error of $\hat{x}_{1k}$ or $\hat{x}_{2k}$ will diverge as $k\to\infty$. Similarly, we can relate non-bounded behavior for $\tau^{\out}_{(p/q)k}$ to predicted divergence of iterates in the state evolution.

Finally, recall that by definition, $\overline{\alpha}^{\inn/\out}_{pk} = \mathbb{E}\{(f^{\inn/\out}_p)'(P^{\inn}_k,P^\out_k,W^{\inn/\out}_p,\overline{\gamma}^{\inn}_{pk},\overline{\gamma}^\out_{pk})\}.$ Now, under the strong concavity assumption (\textbf{Assumption 2}), $(f^{\inn/\out}_p)'$ is bounded independently of $k$, so we trivially have an upper bound on the size of $\overline{\alpha}_{pk}^{\inn/\out}$ tat is independent of $k$, and similarly for $\overline{\alpha}^{\inn/\out}_{qk}$.
\end{rem}

The proof of Lemma~\ref{lem:main_general} follows by induction on iteration $k$. We will refer to results \eqref{eq:Qka}, \eqref{eq:Qkb}, \eqref{eq:Qkc}, \eqref{eq:Qkd}, \eqref{eq:Qke}, \eqref{eq:Qkf}, \eqref{eq:Qkg1}, \eqref{eq:Qkg2}, \eqref{eq:Qkg3}, and \eqref{eq:Qkg4} as $\mathbf{Q_{k+1}.(a)}$ - $\mathbf{Q_{k+1}.(g)}$ and results  \eqref{eq:Pka}, \eqref{eq:Pkb}, \eqref{eq:Pkc}, \eqref{eq:Pkd}, \eqref{eq:Pke}, \eqref{eq:Pkf}, \eqref{eq:Pkg1}, \eqref{eq:Pkg2}, \eqref{eq:Pkg3}, and \eqref{eq:Pkg4} as $\mathbf{P_{k}.(a)}$ - $\mathbf{P_{k}.(g)}$. Then the proof proceeds as follows: \\
\textbf{1.} Prove $\mathbf{P_0}$. \\
\textbf{2.}  Assuming $\mathbf{P_0}$, prove $\mathbf{Q_1}$. \\
\textbf{3.}  Assuming $\mathbf{P_{r-1}}$ and $\mathbf{Q_r}$ for $1 \leq \mathbf{r} \leq k$, prove $\mathbf{P_{k}}$. \\
\textbf{4.}  Assuming $\mathbf{P_{r}}$ and $\mathbf{Q_r}$ for $1 \leq \mathbf{r} \leq k$, prove $\mathbf{Q_{k+1}}$.

\subsection{Showing $\mathbf{P_0}$ Holds}
 
 
$\mathbf{P_0.(a)}$ We prove the result for $\Delm^\inn_{p0} $ and the result for $\Delm^\out_{p0} $ follows in the same way.  From Lemma~\ref{lem:cond_dist}, recall
$
\Delm^\inn_{p0} = \left(\frac{\|\uv^\inn_0\|}{\|\Zv^\inn_{p0}\|}-\sqrt{\rho^\inn_{p0}}\right)\Bm^\perp_{\Cm^\inn_{v0}}\Zv^\inn_{p0}.
$
Therefore, we write
\be
\begin{split}
 \P\left(\frac{ \|\Delm^\inn_{p0}\|}{\sqrt{N}}\geq \sqrt{\epsilon}\right) &= \P\left(\frac{\|\Bm^\perp_{\Cm^\inn_{v0}}\Zv^\inn_{p0}\|}{\sqrt{N}}\left|\frac{\|\uv^\inn_0\|}{\|\Zv^\inn_{p0}\|}-\sqrt{\rho^\inn_{p0}}\right|\geq \sqrt{\epsilon} \right) \stackrel{(a)}{=} \P\left(\frac{\|\Zv^\inn_{p0}\|}{\sqrt{N}}\left|\frac{\|\uv^\inn_{p0}\|}{\|\Zv^\inn_{p0}\|}-\sqrt{\rho^\inn_{p0}}\right|\geq \sqrt{\epsilon}  \right)\\
& \hspace{2.5cm} \stackrel{(b)}{\leq} \P\left(\left|\frac{\|\uv^\inn_{p0}\|}{\sqrt{N}}-\sqrt{\rho^\inn_{p0}}\right|\geq\frac{\sqrt{\epsilon}}{2}\right) + \P\left(\sqrt{\rho^\inn_{p0}}\left|\frac{\|\Zv^\inn_{p0}\|}{\sqrt{N}}-1\right|\geq\frac{\sqrt{\epsilon}}{2}\right),
\label{eq:P0a}
\end{split}
\ee
where step $(a)$ follows because  $[\Bm^\perp_{\Cm^\inn_{v0}}]^T\Bm^\perp_{\Cm^\inn_{v0}}= \Idm$ 
and step $(b)$ from Lemma~\ref{sums} using that
\[\frac{\|\Zv^\inn_{p0}\|}{\sqrt{N}}\left|\frac{\|\uv^\inn_{p0}\|}{\|\Zv^\inn_{p0}\|}-\sqrt{\rho^\inn_{p0}}\right| =\left| \frac{\|\uv^\inn_{p0}\|}{\sqrt{N}} -\sqrt{ \frac{\rho^\inn_{p0}}{N}}\|\Zv^\inn_{p0}\|\right| \leq \left| \frac{ \|\uv^\inn_{p0}\|}{\sqrt{N}} - \sqrt{\rho^\inn_{p0}} \right|  + \sqrt{\rho^\inn_{p0}} \left| 1  - \frac{\|\Zv^\inn_{p0}\|}{\sqrt{N}}\right|.\]

Now, we bound the first term on the right side of \eqref{eq:P0a} by $C\exp\left(-c N\epsilon \right)$ using \textbf{Condition 0} and Lemma \ref{sqroots} along with
the fact that $c$ depends on $\rho^\inn_{p0} >\epsilon'_1$ by the stopping criteria discussed in \textbf{Condition 5}. Similarly, using Lemmas \ref{subexp} and \ref{sqroots} and that $0 < \rho^\inn_{p0} < \infty$, we find
\[
\P\left(\sqrt{\rho^\inn_{p0}}\left|\frac{\|\Zv^\inn_{p0}\|}{\sqrt{N}}-1\right|\geq\frac{\sqrt{\epsilon} }{2}\right) = \P\left(\left|\frac{\|\Zv^\inn_{p0}\|}{\sqrt{N}}-1\right|\geq \sqrt{\frac{\epsilon} {4\rho^\inn_{p0}}}\right) \leq C\exp\left(-cN\epsilon/\rho^\inn_{p0} \right).
\]


$\mathbf{P_0.(b)}$ We prove the statement using a general vector $\wv$, independent of $\pv$ and $\qv$, that concentrates exponentially fast (see Definition~\ref{def:concentration}) around some limiting variable $W$ with finite second moment. When this result is applied in later steps in the induction, this value will be $\wv^{\inn/\out}_p$.

Now for the proof, first we observe, using Lemma~\ref{lem:cond_dist}, that
$
\pv^\inn_0 \lvert_{\mathcal{P}_0} \stackrel{d}{=} \sqrt{\rho^\inn_{p0}} \,\Om^\inn_{p0} \, \overline{\Zv}^\inn_{p0} + \Delm^\inn_{p0},
$
where we recall that $\overline{\Zv}^\inn_{p0}$ is a length-$N$ vector with independent, standard Gaussian entries that are independent of the conditioning sigma-algebra and $\Om^\inn_{p0} \in \mathbb{R}^{N \times N}$ is a deterministic orthogonal matrix (since we have $ \Cm^\inn_{v0} = \emptyset$ as defined in \eqref{eq:Up_Uq_matrices}). 
Using the notation defined in Lemma~\ref{lem:cond_dist},
\begin{equation}
\label{eq:Orep}
\Om^\inn_{p0} \, \overline{\Zv}^\inn_{p0} = \Bm^\perp_{\Cm_{v0}^\inn}\Zv^\inn_{p0} + \Bm_{\Cm_{v0}^\inn}\breve{\Zv}^\inn_{p0} = \Bm^\perp_{\Cm_{v0}^\inn}\Zv^\inn_{p0} \stackrel{d}{=}\Zv^{\inn}_{p0}.
\end{equation}
An analogous representation holds for $\pv^\out_0$.
Therefore, using Lemma~\ref{sums}, we find
\begin{equation}
\begin{split}
  \P&\left( \left| \frac{1}{N}\sum_{i=1}^N\phi\left( [\pv^\inn_0]_i,[\pv^\out_0]_i,[\wv]_i \right)-\mathbb{E}\left\{\phi\left( P^\inn_0,P^\out_0,W \right) \right\}\right|\geq \epsilon \right) \\
&\leq  \P\Big( \frac{1}{N}\sum_{i=1}^N\Big| \phi\left( \sqrt{\rho^\inn_{p0}}[\Zv^\inn_{p0}]_i + [\Delm^\inn_{p0}]_i,\sqrt{\rho^\out_{p0}}[\Zv^\out_{p0}]_i + [\Delm^\out_{p0}]_i,[\wv]_i \right)\\
  &\hspace{8cm}-\phi\left( \sqrt{\rho^\inn_{p0}}[\Zv^\inn_{p0}]_i,\sqrt{\rho^\out_{p0}}[\Zv^\out_{p0}]_i,[\wv]_i \right) \Big|\geq \frac{\epsilon}{2} \Big)\\
  & + \P\left( \left| \frac{1}{N}\sum_{i=1}^N\phi\left( \sqrt{\rho^\inn_{p0}}[\Zv^\inn_{p0}]_i,\sqrt{\rho^\out_{p0}}[\Zv^\out_{p0}]_i,[\wv]_i \right)-\mathbb{E}\left\{\phi\left( P^\inn_0,P^\out_0,W \right)\right\} \right|\geq \frac{\epsilon}{2} \right), 
  \label{eq:B0_b_eq1}
\end{split}
\end{equation}
where to make the above well-defined, we continue our convention of letting $[\Zv^\out_{p0}]_i= [\Delm^\out_{v0}]_i=0$ 
for $M<i\leq N$. Label the terms on the right side of \eqref{eq:B0_b_eq1} as $T_1$ and $T_2$. If $\phi$ is pseudo-Lipschitz over all inputs jointly, we can bound $T_1$ using the triangle inequality 
and Cauchy-Schwarz as follows:
\begin{align*}
  &T_1 \leq     \P\Big( \frac{L}{N}\sum_{i=1}^N \Big[ 1+2\sqrt{\rho^\inn_{p0}}\left|[\Zv^\inn_{p0}]_i\right|+2\sqrt{\rho^\out_{p0}}\left|[\Zv^\out_{p0}]_i\right|+2\left| [\wv]_i \right|+\left|[\Delm^\inn_{p0}]_i\right|+\left|[\Delm^\out_{p0}]_i\right| \Big]\\
  &\hspace{11.5cm} \times \Big[\left| [\Delm_{p0}^\inn]_i \right|+\left| [\Delm_{p0}^\out]_i \right|\Big]\geq \frac{\epsilon}{2} \Big)\\
      &\leq \P\left( \frac{\|\Delm_{p0}^\inn\|+\|\Delm_{p0}^\out\|}{\sqrt{N}}\left[ 1+2\sqrt{\rho^\inn_{p0}}\frac{\|\Zv^\inn_{p0}\|}{\sqrt{N}}+2\sqrt{\rho^\out_{p0}}\frac{\|\Zv^\out_{p0}\|}{\sqrt{N}}+2\frac{\|\wv\|}{\sqrt{N}}+\frac{\|\Delm_{p0}^\inn\|}{\sqrt{N}}+\frac{\|\Delm_{p0}^\out\|}{\sqrt{N}} \right] \geq \frac{\epsilon}{4L}\right)\\
  &\leq \P\left( \left[\frac{\|\Delm_{p0}^\inn\|}{\sqrt{N}}+\frac{\|\Delm_{p0}^\out\|}{\sqrt{M}}\right]\left[ 1+\sqrt{\rho^\inn_{p0}}\frac{\|\Zv^\inn_{p0}\|}{\sqrt{N}}+\sqrt{\rho^\out_{p0}}\frac{\|\Zv^\out_{p0}\|}{\sqrt{M}}+\frac{\|\wv\|}{\sqrt{N}}+\frac{\|\Delm_{p0}^\inn\|}{\sqrt{N}}+\frac{\|\Delm_{p0}^\out\|}{\sqrt{M}} \right] \geq \frac{\epsilon}{8L}\right),
\end{align*}
where the last inequality follows because $M\leq N$. Now we observe that $\frac{1}{\sqrt{N}}\|\Delm_{p0}^{\inn}\|$ and $\frac{1}{\sqrt{M}}\|\Delm^\out_{p0}\|$ concentrate around $0$ by $\mathbf{P_0.(a)}$, $\frac{1}{\sqrt{N}}\|\Zv^{\inn}_{p0}\|$ and $\frac{1}{\sqrt{M}}\|\Zv^{\out}_{p0}\|$ concentrate around $1$ by Lemmas~\ref{sqroots} and ~\ref{subexp}, and $\frac{1}{\sqrt{N}}\|\wv\|$ concentrates around $\sqrt{\mathbb{E}W^2}$ by the assumption of the part (b) statement. The overall concentration of term $T_1$ then follows from application of Lemmas~\ref{products_0} and \ref{sums}. We note that the rate of concentration will depend on the values $\rho^\inn_{p0}, \rho^\out_{p0},$ and $\mathbb{E}W^2$ all of which are assumed to be finite, with $\rho^\inn_{p0}$ and $\rho^\out_{p0}$ not too small  by the stopping criteria discussed in \textbf{Condition 5}; so all are absorbed into the constants.

If on the other hand $\phi$ satisfies the bounded conditionally pseudo-Lipschitz property, then the functions $f_i=\phi\left( \cdot,\cdot, [\wv]_i \right)$ are each pseudo-Lipschitz with PL constants that satisfy a common upper bound (since the PL constants are continuous in the $[\wv]_i$, which are themselves contained in a compact set by defintion). In this case, we can apply the exact same strategy as above with the $f_i$ in place of the $\phi$ and obtain an analogous bound where the $\|\wv\|/\sqrt{N}$ term is dropped and $L$ is now an upper bound for the PL constants of the $f_i$.

For $T_2$, note that for all $i$ for which the expression is defined,
we have by part 1 of Lemma \ref{lem:joint_dists} that
\begin{equation}
\label{eq:pstar_equality0}
 [\ide{\pv}^{\inn/\out}_0]_i = \sqrt{\rho^{\inn/\out}_{p0}} [\widetilde{\Om}^\inn_{p0} \bar{\Zv}^{\inn/\out}_{p0}]_i \stackrel{d}{=}  \sqrt{\rho^{\inn/\out}_{p0}}[\Zv^{\inn/\out}_{p0}]_i,
\end{equation}
where the final equality uses that $\widetilde{\Om}^\inn_{p0} = \Om^\inn_{p0}$, due to the initialization of Algorithm~\ref{alg:gaussian} (see the discussion following the presentation of the algorithm) and \eqref{eq:Orep}; hence, $\widetilde{\Om}^\inn_{p0} \bar{\Zv}^{\inn/\out}_{p0}  \overset{d}{=}\Zv^\inn_{p0}$.
Moreover, by part 3 of Lemma \ref{lem:joint_dists},
\begin{equation}
\label{eq:pstar_equality}
 [\ide{\pv}^{\inn/\out}_0]_i\stackrel{d}{=} P_0^{\inn/\out}.
\end{equation}
Thus, if $\phi$ is jointly pseudo-Lipschitz in all inputs, we can apply Lemma \ref{lem:PLsubgaussconc} to get concentration for $T2$.

If instead $\phi$ satsifies the conditionally bounded pseudo-Lipschitz condition, then letting $\mathbb{E}_P$ denote the expectation with respect to $(P_0^\inn,P_0^\out)$ and using Lemma~\ref{sums}, we get the further inequality
\begin{equation}
\begin{split}
  T_2 &= \P\left( \left| \frac{1}{N}\sum_{i=1}^N\phi\left( \sqrt{\rho^\inn_{p0}}[\Zv^\inn_{p0}]_i,\sqrt{\rho^\out_{p0}}[\Zv^\out_{p0}]_i,[\wv]_i \right)-\mathbb{E}\left\{\phi\left( P^\inn_0,P_0^\out,W \right)\right\} \right|\geq \frac{\epsilon}{2} \right)\\
      &\leq \P\left( \left| \frac{1}{N}\sum_{i=1}^N\phi\left( \sqrt{\rho^\inn_{p0}}[\Zv^\inn_{p0}]_i,\sqrt{\rho^\out_{p0}}[\Zv^\out_{p0}]_i,[\wv]_i \right)-\mathbb{E}_P\left\{\phi\left( P^\inn_0,P_0^\out,[\wv]_i \right)\right\}\right|\geq \frac{\epsilon}{4} \right)\\
  &\hspace{2cm} + \P\left( \left| \frac{1}{N}\sum_{i=1}^N \mathbb{E}_P\left\{\phi\left(P_0^\inn,P_0^\out,[\wv]_i  \right)\right\} - \mathbb{E}\left\{\phi(P_0^\inn,P_0^\out,W)\right\}\right|\geq\frac{\epsilon}{4} \right),
  \label{eq:t2_first_bound_partb}
  \end{split}
\end{equation}
Let the first term on the right side of \eqref{eq:t2_first_bound_partb} be denoted $T21$ and the second term be denoted $T22$.  For $T21$, note that if $f_i(p_1,p_2) = \phi(p_1,p_2,[\wv]_i)$, then by the conditionally bounded pseudo-Lipschitz condition, the $f_i$ are pseudo-Lipschitz with constant continuous in $[\wv]_i$. But since the $[\wv]_i$ are bounded, so too are these constants. Thus, the $f_i$ are all pseudo-Lipschitz with some common constant $B$, and Lemma \ref{lem:PLsubgaussconc} again gives concentration for T21.

For T22, the continuity of $x\mapsto\mathbb{E}_P\left\{\phi(P_0^\inn,P_0^\out,x)\right\}$ and the fact that the $[\wv]_i$ are bounded i.i.d. implies that the terms of $T22$ are again bounded i.i.d.. Thus, the desired concentration rate is given by Hoeffding's inequality.


$\mathbf{P_0.(c)}$ The result follows using $\mathbf{P_0.(b)}$ with the pseudo-Lipschitz functions $\phi(p^{\inn}, p^{\out}, w) =  p^{\inn} w$ and  $\phi(p^{\inn}, p^{\out}, w) =  p^{\out} w$ along with the facts that $ \mathbb{E}\{P^\inn_0 W^\inn_p\} = 0$ and $\mathbb{E}\{P_0^\out W_p^\out\}=0$, since $W^{\inn/\out}_p$ and $P_0^{\inn/\out}$ are independent.


$\mathbf{P_0.(d)}$
The result follows using $\mathbf{P_0.(b)}$ with the pseudo-Lipschitz functions $\phi(p^{\inn}, p^{\out}, w) =  (p^{\inn})^2$ and  $\phi(p^{\inn}, p^{\out}, w) =  (p^{\out})^2$.


$\mathbf{P_0.(e)-(f)}$ These are proved in exactly the same way as $\mathbf{P_k.(e)}$ and $\mathbf{P_k.(f)}$ in Section~\ref{sec:stepPK} with $j=k=0$ throughout and where references to $\mathbf{Q_k.(f)}$ are replaced by our assumption that the initial sequences $\gamma^{\inn/\out}_{p0}$ converges to $\overline{\gamma}^{\inn/\out}_{p0}$. Thus, we omit the details here and refer to the proof of the more general case of $\mathbf{P_k.(e)}$ and $\mathbf{P_k.(f)}$.


$\mathbf{P_0.(g).(i)}$ Because $\Cm^\inn_{v0} = \emptyset$ is empty, $\Bm^\perp_{\Cm_{v0}^\inn}$ is a determinsitic orthogonal matrix; therefore, $\|[\Bm^\perp_{\Cm_{v0}^\inn}]^T\pv^\inn_0\|^2 = \|\pv^\inn_0\|^2=\|\uv^\inn_0\|^2$ where the second equality follows as $\pv^\inn_0 = \Vv \uv^\inn_0$ in Algorithm~\ref{alg:general_gvamp}. The result then follows from the assumed concentration of $\uv^\inn_0$ and the definition of $\rho^{\inn}_{p0}$ in \eqref{eq:rhos}.


$\mathbf{P_0.(g).(ii)}$ By \eqref{eq:Cmat}, $\Cm^{\inn/\out}_{p0} = \pv^{\inn/\out}_0$.  Thus, the invertibility of $[\Cm^{\inn/\out}_{p0}]^T\Cm^{\inn/\out}_{p0}$ just requires that $[\pv^{\inn/\out}_0]^T\pv^{\inn/\out}_0\neq 0$, which is true with probability one. Moreover, by $\mathbf{P_0.(d)}$, we know that $[\pv^{\inn/\out}_0]^T\pv^{\inn/\out}_0/N$ concentrates to $\E \left[P_0^{\inn/\out}\right]^2$ and since, by $\mathbf{P_0.(d)}$ and \eqref{eq:Rk_Sk}, 
\[
\left[\presm^{\inn/\out}_{u0} \right]^{-1} = \E \left[U_0^{\inn/\out}\right]^2 = \lim_{N\to\infty}\frac{1}{N}\|\uv_0^{\inn/\out}\|^2  = \lim_{N\to\infty}\frac{1}{N}\|\pv_0^{\inn/\out}\|^2=\E\left[ P_0^{\inn/\out} \right]^2,
\]
the desired result follows by Lemma~\ref{inverses}. Application of Lemma~\ref{inverses} will put a power of $\tau^{\inn/\out}_{p0}$ in the numerator of the rate of concentration; this can be lower bounded by the stopping time assumption.


$\mathbf{P_0.(g).(iii)}$ We prove the `$\inn$' version  of the result, while the `$\out$' version follows correspondingly. Recall,  $\Cm^\inn_{p0}=\pv^\inn_0$ by \eqref{eq:Cmat} and $\Bm^\perp_{\Cm_{p0}^\inn} \in\mathbb{R}^{N\times N-1}$ is an orthogonal matrix with columns that form an orthonormal basis for $\mathrm{range}( \Cm^\inn_{p0} )^\perp=[\pv_0^\inn]^\perp$. Thus, the matrix $[\Bm^\perp_{\Cm_{v0}^\inn}\;\pv^\inn_0/\|\pv^\inn_0\|] \in\mathbb{R}^{N\times N}$ is orthogonal and
\begin{equation}
\label{eq:Bperp1}
\|\vv^\inn_0\|^2 = \left\|\left[ \begin{matrix} [\Bm^\perp_{\Cm_{v0}^\inn}]^T\\\|\pv_0^\inn\|^{-1}[\pv_0^\inn]^T\end{matrix} \right]\vv^\inn_0\right\|^2 = \|[\Bm^\perp_{\Cm_{v0}^\inn}]^T\vv_0^\inn\|^2 + \frac{([\pv_0^\inn]^T\vv_0^\inn)^2}{\|\pv_0^\inn\|^2}.
\end{equation}
Rearranging, this gives us
\begin{equation}
\label{eq:Bperp2}
\|[\Bm^\perp_{\Cm_{p0}^\inn}]^T\vv^\inn_0\|^2= \|\vv^\inn_0\|^2 - \frac{([\vv^\inn_0]^T\pv^\inn_0)^2}{\|\pv^\inn_0\|^2}.
\end{equation}
Finally, recalling that $\rho^\inn_{q0} =\mathbb{E}\{[V^\inn_0]^2\}$ from \eqref{eq:rhos} and using Lemma~\ref{sums}, we have
\be
\P\left(\left|\frac{1}{N}\|[\Bm^\perp_{\Cm_{p0}^\inn}]^T\vv^\inn_0\|^2-\rho^\inn_{q0}\right|\geq \epsilon\right)\leq\P\left(\left|\frac{1}{N}\|\vv^\inn_0\|^2-\mathbb{E}\{[V^\inn_0]^2\}\right|\geq\frac{\epsilon}{2}\right) + \P\left(\frac{([\vv^\inn_0]^T\pv^\inn_0)^2}{\|\pv^\inn_{0}\|^2}\geq\frac{\epsilon}{2}\right).
\label{P0_giii}
\ee
The first term concentrates by $\mathbf{P_0.(e)}$ since $\covm^\inn_{v0} = \mathbb{E}\{[V^\inn_0]^2\}$ by definition.
The second term concentrates by $\mathbf{P_0.(d)}$, $\mathbf{P_0.(f)}$, Lemma~\ref{inverses}, and Lemma \ref{products_0}.


$\mathbf{P_0.(g).(iv)}$ To show that $[\Cm^\inn_{v1}]^T\Cm^\inn_{v1}$ is invertible with high probability, recall that $\Cm^\inn_{v1} = [\Pm^\inn_{0} \; \Vm^\inn_0] = [\pv^\inn_{0} \; \vv^\inn_0]$  by \eqref{eq:Cmat}; therefore, $\det\left([\Cm^\inn_{v1}]^T\Cm^\inn_{v1}\right) = \|\pv^\inn_0\|^2 \|\vv^\inn_0\|^2-([\pv^\inn_0]^T\vv^\inn_0).$
Noting that $\|\pv_0^{\inn/\out}\| = \|\uv_0^{\inn/\out}\|$, we have by $\mathbf{P_0.(e)(f)}$, \textbf{Condition 0}, and Lemmas~\ref{sums} and \ref{products} that this concentrates around $(\covm^\inn_{u0})(\covm^\inn_{v0})>0$.
Then we have
\begin{equation}
\label{eq:inverseC}
([\Cm^\inn_{v1}]^T\Cm^\inn_{v1})^{-1} = \frac{1}{\det\left([\Cm^\inn_{v1}]^T\Cm^\inn_{v1}\right)}\left[\begin{matrix}[\vv^\inn_0]^T\vv^\inn_0 & -[\pv^\inn_0]^T\vv^\inn_0\\ -[\pv^\inn_0]^T\vv^\inn_0 & [\pv^\inn_0]^T\pv^\inn_0\end{matrix}\right].
\end{equation}
By similar reasoning, namely via $\mathbf{P_0.(d)(e)(f)}$  and Lemma \ref{products}, the entries of \eqref{eq:inverseC} have the desired concentrate concentrating constant given below using the definition of $\presm^\inn_{v0}$ given in \eqref{eq:Rk_Sk}.
\[
\left[\begin{matrix} (\covm^\inn_{u0})^{-1} & 0 \\ 0 & (\covm^\inn_{v0})^{-1}\end{matrix}\right] = \presm_{v0}^\inn.
\]

 \subsection{Showing $\mathbf{Q_1}$ Holds}
$\mathbf{Q_1.(a)}$ First, recall that by Lemma~\ref{lem:cond_dist},
\[
  \Delm^\inn_{q0} = \frac{[\pv^\inn_0]^T\vv^\inn_0}{\|\pv^\inn_0\|^2}\uv^\inn_0 + \left(\frac{\|[\Bm^\perp_{\Cm_{p0}^\inn}]^T\vv^\inn_0\|}{\|\Zv^\inn_{q0}\|}-\sqrt{\rho^\inn_{q0}}\right)\Bm^\perp_{\Cm_{u0}^\inn}\Zv^\inn_{q0}- \sqrt{\rho^\inn_{q0}}\Bm_{\Cm_{u0}^\inn}\breve{\Zv}^\inn_{q0}.
\]
Labeling the above three terms $T_1$ - $T_3$, we have $\|\Delm^\inn_{q0} \|^2 = \|T_1 + T_2 + T_3\|^2 \leq 3\|T_1\|^2 + 3\|T_2\|^2 + 3\|T_3\|^2$ by Lemma~\ref{lem:squaredsums}. Hence, it suffices to derive bounds for $\mathbb{P}(\frac{1}{N}\|T_i\|^2\geq \frac{\epsilon}{9})$ for $i\in\{1,2,3\}$ by  Lemma~\ref{sums}.

For $T_1$, note that $\pv^\inn_0=\Vm\uv^\inn_0$ by Algorithm~\ref{alg:general_gvamp} step~\eqref{eq:alg_gvamp_p_step}, and so $\|\pv^\inn_0\|^2 = [\uv^\inn_0]^T \Vm^T \Vm\uv^\inn_0 = \|\uv^\inn_0\|^2$ as $\Vm^T \Vm = \Idm$. Thus $\|T_1\|^2 = \frac{([\pv^\inn_0]^T\vv^\inn_0)^2}{\|\pv^\inn_0\|^4}\|\uv^\inn_0\|^2 = \frac{([\pv^\inn_0]^T\vv^\inn_0)^2}{\|\pv^\inn_0\|^2}$ and an upper bound is given in \eqref{P0_giii}.

For $T_2$, observe that $\|\Bm^\perp_{\Cm_{u0}^\inn}\Zv^\inn_{q0}\|^2 = [\Zv^\inn_{q0}]^T[\Bm^\perp_{\Cm_{u0}^\inn}]^T \Bm^\perp_{\Cm_{u0}^\inn}\Zv^\inn_{q0} = \|\Zv^\inn_{q0}\|^2$ using $[\Bm^\perp_{\Cm_{u0}^\inn}]^T \Bm^\perp_{\Cm_{u0}^\inn} = \Idm$ and therefore
\begin{align*}
\|T_2\|^2 = \left\lvert \frac{\|[\Bm^\perp_{\Cm_{p0}^\inn}]^T\vv^\inn_0\|}{\|\Zv^\inn_{q0}\|}-\sqrt{\rho^\inn_{q0}}\right \lvert^2\|\Bm^\perp_{\Cm_{u0}^\inn}\Zv^\inn_{q0}\|^2 &= \left\lvert \frac{\|[\Bm^\perp_{\Cm_{p0}^\inn}]^T\vv^\inn_0\|}{\|\Zv^\inn_{q0}\|}-\sqrt{\rho^\inn_{q0}}\right \lvert^2\|\Zv^\inn_{q0}\|^2  \\
&= \left\lvert \|[\Bm^\perp_{\Cm_{p0}^\inn}]^T\vv^\inn_0\| -\sqrt{\rho^\inn_{q0}} \|\Zv^\inn_{q0}\|\right \lvert^2.
\end{align*}
By Lemma~\ref{sums} along with $\mathbf{P_0.(g).(iii)}$, Lemma~\ref{subexp}, and Lemma \ref{sqroots}, we have the desired bound using 
\begin{align*}
\mathbb{P}\left(\frac{1}{\sqrt{N}}\|T_2\|\geq \sqrt{\frac{\epsilon}{9}}\right)&\leq\P\left(\frac{1}{\sqrt{N}}\left|\|[\Bm^\perp_{\Cm_{p0}^\inn}]^T\vv^\inn_0\| - \sqrt{\rho^\inn_{q0}} \right| + \sqrt{\frac{\rho^{\inn}_{q0}}{N}}\left| \|\Zv^{\inn}_{q0}\| - 1 \right|\geq \sqrt{\frac{\epsilon}{9}}\right).
\end{align*}

Finally, notice that $\|T_3\| =\sqrt{\rho^\inn_{q0}}  \|\Bm_{\Cm_{u0}^\inn}\breve{\Zv}^\inn_{q0}\|$. Observe that because $\mathrm{rank}\left( \Cm^\inn_{u0} \right)=1$, the matrix $\Bm_{\Cm_{u0}^\inn}$ is just a unit column vector. Furthermore, $\Zv^\inn_{q0}$ has length $1$ (by definition); thus, is a standard normal. Combined, these give us that
$\|\Bm_{\Cm_{u0}^\inn}\breve{\Zv}^\inn_{q0}\| = |\breve{\Zv}^\inn_{q0}|\|\Bm_{\Cm_{u0}^\inn}\| = |\breve{\Zv}^\inn_{q0}| \stackrel{d}{=} |\breve{Z}|$
with $\breve{Z}\sim N(0,1)$. So the above concentrates by Lemma \ref{lem:normalconc}.

$\mathbf{Q_1.(b)}$ The first part of the proof of part $(b)$ follows as in $\mathbf{P_0}$.
Using Lemma \eqref{lem:joint_dists} part 2, we have $[\qv^{\inn/\out}_0]_i \overset{d}{=} [\widetilde{\qv}^{\inn/\out}_0]_i$; hence, by Lemma~\ref{sums} we have
\begin{align*}
  \P&\left( \left| \frac{1}{N}\sum_{i=1}^N\phi\left([\qv^{\inn}_0]_i,[\qv^{\out}_0]_i,[\wv^\inn_q]_i \right) - \mathbb{E}\phi\left( Q_0^\inn,Q_0^\out,W^\inn_q \right) \right|\geq \epsilon \right)\\
  &= \P\left( \left| \frac{1}{N}\sum_{i=1}^N\phi\left([\widetilde{\qv}^{\inn}_0]_i,[\widetilde{\qv}^{\out}_0]_i,[\wv^\inn_q]_i \right) - \mathbb{E}\phi\left( Q_0^\inn,Q_0^\out,W^\inn_q \right) \right|\geq \epsilon \right)\\
    &\leq \P\left( \left| \frac{1}{N}\sum_{i=1}^N\phi\left([\widetilde{\qv}^{\inn}_0]_i,[\widetilde{\qv}^{\out}_0]_i,[\wv^\inn_q]_i \right) - \frac{1}{N}\sum_{i=1}^N\phi\left([\ide{\qv}^{\inn}_0]_i,[\ide{\qv}^{\out}_0]_i,[\wv^\inn_q]_i \right) \right|\geq \frac{\epsilon}{2} \right)\\
  &\hspace{2cm}+ \P\left( \left| \frac{1}{N}\sum_{i=1}^N\phi\left([\ide{\qv}^{\inn}_0]_i,[\ide{\qv}^{\out}_0]_i,[\wv^\inn_q]_i \right) - \mathbb{E}\phi\left( Q_0^\inn,Q_0^\out,W^\inn_q \right) \right|\geq \frac{\epsilon}{2} \right).
\end{align*}

Now, using Lemma \ref{lem:PLsubgaussconc}/Hoeffding's inequality and Lemma \ref{lem:joint_dists} part 3, we can bound the second term above using exactly the same strategy as in the proof of $\mathbf{P_0.(b)}$. For the first term, if $\phi$ is pseudo-Lipschitz in all inputs, we can apply the pseudo-Lipschitz property and Lemma \ref{lem:joint_dists} part 2 to get
\begin{align*}
  \P&\left( \left| \frac{1}{N}\sum_{i=1}^N\phi\left([\widetilde{\qv}^{\inn}_0]_i,[\widetilde{\qv}^{\out}_0]_i,[\wv^\inn_q]_i \right) - \frac{1}{N}\sum_{i=1}^N\phi\left([\ide{\qv}^{\inn}_0]_i,[\ide{\qv}^{\out}_0]_i,[\wv^\inn_q]_i \right) \right|\geq \frac{\epsilon}{2} \right)\\
    &\leq \P\left( \frac{1}{N}\sum_{i=1}^N \left| \phi\left( [\widetilde{\qv}^\inn_0]_i,[\widetilde{\qv}^\out_0]_i,[\wv^\inn_q]_i \right)-\phi\left( [\ide{\qv}^\inn_0]_i,[\ide{\qv}^\out_0]_i,[\wv^\inn_q]_i \right) \right| \geq \frac{\epsilon}{2} \right)\\
    &\leq \P\left( \frac{L}{N}\sum_{i=1}^N[|[\Delm^\inn_{q0}]_i|+|[\Delm^\out_{q0}]_i|]\left[ 1 + 2|[\ide{\qv}^\inn_0]_i|+2|[\ide{\qv}^\out_0]_i| + 2|[\wv^\inn_q]_i| + |[\Delm^\inn_{q0}]_i| + |[\Delm^\out_{q0}]_i| \right] \geq \frac{\epsilon}{2}\right)\\
  &\leq \P\left( L\sum_{i=1}^N\left[\frac{|[\Delm^\inn_{q0}]_i|}{\sqrt{N}}+\frac{|[\Delm^\out_{q0}]_i|}{\sqrt{M}}\right]\left[ 1 + 2\frac{|[\ide{\qv}^\inn_0]_i|}{\sqrt{N}}+2\frac{|[\ide{\qv}^\out_0]_i|}{\sqrt{M}} + 2\frac{|[\wv^\inn_q]_i|}{\sqrt{N}} + \frac{|[\Delm^\inn_{q0}]_i|}{\sqrt{N}} + \frac{|[\Delm^\out_{q0}]_i|}{\sqrt{M}} \right] \geq \frac{\epsilon}{2}\right),
\end{align*}
where the last inequality follows from the fact that $M\leq N$. Now this term can be shown to concentrate in exactly the same way as in $\mathbf{P_0.(b)}$, namely applying Cauchy-Schwarz to re-express the above in terms of norms, then applying $\mathbf{Q_0.(a)}$ to the $\|\Delm^{\inn/\out}_{q0}\|$ terms, our concentration assumption to $\|\wv^q\|$, and Lemmas \ref{lem:joint_dists} part 3, \ref{sqroots}, and \ref{subexp} to the $\|\ide{\qv}^{\inn/\out}_0\|$ as in \eqref{eq:pstar_equality0}-\eqref{eq:pstar_equality}.

If instead $\phi$ satisfies the conditionally bounded pseudo-Lipschitz condition, then we can apply the analogous strategy to the functions $f_i = \phi\left( \cdot,\cdot,\left[ \wv_q^\inn \right]_i \right)$, which are themselves pseudo-Lipschitz with respect to a common constant (since these constants are a continuous function of the bounded $[\wv_q^\inn]_i$). The resulting bound drops the $2|[\wv_q^\inn]_i|/\sqrt{N}$ term, but is otherwise unchanged.

 Combining these inequalities, this completes the proof of $\mathbf{Q_0.(b)}$.

 $\mathbf{Q_1.(c)-(f)}$ For the inner product $[\uv_0^{\inn}]^T\qv_0^\inn$, note that $[\uv_0^{\inn}]_i=[\wv_q^{\inn}]_{2i}$ for $1\leq i\leq N$, and $(\qv^{\inn}_0,\qv^{\out}_0,\wv^\inn_q)\mapsto [\uv^{\inn}_0]^T\qv^{\inn}_0$ is pseudo-Lipschitz of order 2, so by $\mathbf{Q}_1.(b)$, we have that this concentrates to $\E\left[ Q_0U_0 \right] = \E[Q_0]\E[U_0]=0$ since $Q_0$ and $U_0$ are independent by hypothesis and $\E Q_0=0$.

 For the inner product $[\uv_0^{\inn}]^T\uv_1^{\inn}$, we expand the definition of $\uv_1^\inn$, obtaining
 \[
   \frac{1}{1-\alpha^{\inn}_{qk}}\left[ [\uv_0^{\inn}]^Tf_q^{\inn}\left( \qv^{\inn}_0,\qv^{\out}_0,\wv_q^{\inn},\gamma^{\inn}_{qk},\gamma^{\out}_{qk} \right) - \alpha^{\inn}_{qk}[\uv_0^{\inn}]^T\qv_0^{\inn}\right].
 \]
 Showing that this quantity concentrates around $\E U_0U_1$ uses the definition of $U_0$ and $U_1$ along with the previously esablished concentration of $\alpha^{\inn}_{qk}$ in $\mathbf{Q_1.(f)}$, the previously established concentration of $[\uv_0^{\inn}]^T\qv_0^\inn$ (above), and the concentration of
 $[\uv_0^{\inn}]^Tf_q^{\inn}\left( \qv^{\inn}_0,\qv^{\out}_0,\wv_q^{\inn},\gamma^{\inn}_{qk},\gamma^{\out}_{qk} \right),$
which follows from $\mathbf{Q_q.(b)}$ and the fact that the above is pseudo-Lipschitz in $(\qv^{\inn}_0,\qv^{\out}_0,\wv_q^{\inn})$. The details of the argument are entirely parallel to the details of proving $\mathbf{P_k.(e)}$, which we present explicitly as that is the more complicated case.
 
 The rest of these follow by applications of $\mathbf{Q_1.(b)}$ and concentration of $\alpha^{\inn/\out}_{q0}$ in the same way that $\mathbf{P_0.(c)}-\mathbf{(f)}$ were proved. 

$\mathbf{Q_1.(g).(i)}$ We will show that $\|[\Bm^\perp_{\Cm_{u0}^\inn}]^T\qv^\inn_0\|^2$ concentrates around $\rho^\inn_{q0}$. First, observe that
\begin{equation}
\|[\Bm^\perp_{\Cm_{u0}^\inn}]^T\qv^\inn_0\|^2 = [\qv^\inn_0]^T\qv^\inn_0 - \frac{([\uv^\inn_0]^T\qv^\inn_0)^2}{[\uv^\inn_0]^T\uv^\inn_0}.
\end{equation}
Because $[\qv^\inn_0]^T\qv^\inn_0 = [\vv^\inn_0]^T\vv^\inn_0$, using $\mathbf{P_0.(e)}$, $\mathbf{Q_1.(e)}$, and $\mathbf{Q_1.(f)}$, along with Lemmas \ref{sums} and \ref{products}, the above concentrates around $\mathbb{E}[V^\inn_0]^2 = \rho^\inn_{q0}$ as desired.

$\mathbf{Q_1.(g).(ii)}$ First observe that $[\Cm^\inn_{q1}]^T\Cm^\inn_{q1}$ is invertible if and only if
\[
\det([\Cm^\inn_{q1}]^T\Cm^\inn_{q1}) =([\uv^\inn_0]^T\uv^\inn_0)([\qv^\inn_0]^T\qv^\inn_0) - ([\uv^\inn_0]^T\qv^\inn_0) \neq 0.
\]
Using the fact that $\|\qv_0^{\inn/\out}\|=\|\vv_0^{\inn/\out}\|$, we have by \textbf{Conditon 0}, $\mathbf{P_0.(e)}$, $\mathbf{Q_1.(c)}$ and Lemmas \ref{sums} and \ref{products} that the above concentrates around $\covm^\inn_{u0}\covm^\inn_{v0}>0$. Now, by Lemma \ref{products}, we have that
\[
([\Cm^\inn_{q1}]^T\Cm^\inn_{q1})^{-1} = \frac{1}{\det([\Cm^\inn_{q1}]^T\Cm^\inn_{q1})}\left[\begin{matrix}[\qv^\inn_0]^T\qv^\inn_0 & -[\qv^\inn_0]^T\uv^\inn_0\\ -[\qv^\inn_0]^T\uv^\inn_0 & [\uv^\inn_0]^T\uv^\inn_0\end{matrix}\right],
\]
will concentrate around the desired constants
\[
\left[\begin{matrix}(\covm^\inn_{u0})^{-1} & 0\\ 0 & (\covm^\inn_{v0})^{-1}\end{matrix}\right].
\]

$\mathbf{Q_1.(g).(iii)}$ Recall that $\Cm^\inn_{q1}=[\uv^\inn_0\;\qv^\inn_0]$ by \eqref{eq:Cmat} and that $\Bm^\perp_{\Cm_{q1}^\inn} \in\mathbb{R}^{N\times N-2}$ is an orthogonal matrix with columns that form an orthonormal basis for $\mathrm{range}( \Cm^\inn_{q1} )^\perp=[\uv^\inn_0\;\qv^\inn_0]^\perp$. Thus, the matrix $[\Bm^\perp_{\Cm_{q1}^\inn}\; \Bm_{\Cm_{q1}^\inn}  ] \in\mathbb{R}^{N\times N}$ is orthogonal and
\be
\label{eq:Bperp1Q}
\|\uv^\inn_1\|^2 = \left\|\left[ \begin{matrix} [\Bm^\perp_{\Cm_{q1}^\inn}]^T\\ [\Bm_{\Cm_{q1}^\inn} ]^T\end{matrix} \right] \uv^\inn_1 \right\|^2 = \|[\Bm^\perp_{\Cm_{v0}^\inn}]^T \uv^\inn_1 \|^2 +  \|[\Bm_{\Cm_{v0}^\inn}]^T \uv^\inn_1 \|^2.
\ee
Rearranging, and noticing that $ \|[\Bm_{\Cm_{v0}^\inn}]^T \uv^\inn_1 \|^2 = [\uv^\inn_1]^T\Cm^\inn_{q1}([\Cm^\inn_{q1}]^T\Cm^\inn_{q1})^{-1}[\Cm^\inn_{q1}]^T\uv^\inn_1$, this gives us
\be
\label{eq:Bperp2Q}
  \|[\Bm^\perp_{\Cm_{q1}^\inn}]^T\uv^\inn_1\|^2 = \|\uv^\inn_1\|^2 - [\uv^\inn_1]^T\Cm^\inn_{q1}([\Cm^\inn_{q1}]^T\Cm^\inn_{q1})^{-1}[\Cm^\inn_{q1}]^T\uv^\inn_1.
\ee
Now $\|\uv^\inn_1\|^2$ concentrates around $(\covm^\inn_{u1})_{22}$ by $\mathbf{Q_1.(e)}$, and we have
\begin{align*}
&  [\uv^\inn_1]^T\Cm^\inn_{q1}([\Cm^\inn_{q1}]^T\Cm^\inn_{q1})^{-1}[\Cm^\inn_{q1}]^T\uv^\inn_1 =\\
  &  ([\uv^\inn_1]^T\uv^\inn_0)^2\frac{[\qv^\inn_0]^T\qv^\inn_0}{\det([\Cm^\inn_{q1}]^T\Cm^\inn_{q1})} + ([\uv^\inn_1]^T\qv^\inn_0)^2\frac{[\uv^\inn_0]^T\uv^\inn_0}{\det([\Cm^\inn_{q1}]^T\Cm^\inn_{q1})}- 2([\uv^\inn_1]^T\uv^\inn_0)([\uv^\inn_1]^T\qv^\inn_0)\frac{[\uv^\inn_0]^T\qv^\inn_0}{\det([\Cm^\inn_{q1}]^T\Cm^\inn_{q1})}.
\end{align*}
Label these terms $T_1-T_3$. We first notice that 
\[
\det\left([\Cm^\inn_{q1}]^T\Cm^\inn_{q1}\right) = \|\qv^\inn_0\|^2 \|\uv^\inn_0\|^2-([\qv^\inn_0]^T\uv^\inn_0)^2.
\]
By $\mathbf{Q_1.(d)(e)(f)}$, with \textbf{Condition 0} and Lemmas~\ref{sums} and \ref{products}, this concentrates to $(\covm^\inn_{q1})_{22}(\covm^\inn_{u1})_{11}>0$.

Then $T_1$ concentrates around $\frac{(\covm^\inn_{u1})_{12}^2}{(\covm^\inn_{u1})_{11}}$ by the above, $\mathbf{Q_1.(e)}$, $\mathbf{Q_1.(d)}$, and Lemma \ref{products}. Next notice that by the above, $\mathbf{Q_1.(f)}$, and Lemma \ref{products}, we have that $T_2$ concentrates around zero. And similarly by the above, $\mathbf{Q_1.(e)}$, $\mathbf{Q_1.(f)}$, and Lemma \ref{products}, $T_3$ concentrates around zero. So we ultimately get that $\|[\Bm^\perp_{\Cm_{q1}^\inn}]^T\uv^\inn_1\|^2 $ concentrates around
$(\covm^\inn_{u1})_{11}-\frac{(\covm^\inn_{u1})_{01}^2}{(\covm^\inn_{u1})_{00}} = \rho^\inn_{p1},$
where the equality follows using  \eqref{eq:bvecs}, \eqref{eq:cov_def},  and \eqref{eq:rhos} and which say
\[ \rho^\inn_{p1} =  \mathbb{E}\{[U^{\inn}_1]^2\} - (\bv^{\inn}_{u1})^T[\covm^{\inn}_{u0}]^{-1}\bv^{\inn}_{u1}  = \mathbb{E}\{[U^{\inn}_1]^2\} -\frac{(\mathbb{E}[U^{\inn}_0U^{\inn}_1])^2}{\covm^{\inn}_{u0}} = [\covm^{\inn}_{u1}]_{11} -\frac{ [\covm^{\inn}_{u1}]_{10}^2}{ [\covm^{\inn}_{u1}]_{00}}. \]


$\mathbf{Q_1.(g).(iv)}$ Notice that by \eqref{eq:Rk_Sk} for $k \geq 1$, we have $[\presm^\inn_{uk}]^{-1}=\left[\begin{matrix}\covm^\inn_{uk} & 0\\ 0 & \covm^\inn_{v(k-1)}\end{matrix}\right]$, so in particular,  $[\presm^\inn_{u1}]^{-1}=\left[\begin{matrix}\covm^\inn_{u1} & 0\\ 0 & \covm^\inn_{v0}\end{matrix}\right] $, where these matrices are defined in \eqref{eq:cov_def}.
Next, define $\Fm:=[\ev_2\; \ev_1\; \ev_3] \in \mathbb{R}^{3 \times 3}$, where $\ev_i$ is the $i^{th}$ canonical vector, namely the vector of zeros with a one in the $i^{th}$ position, so that for a matrix $\Am$, the transformation $\Am\Fm$ swaps the second and first column of $\Am$. Then, using that  $\bv^\inn_{u1} = \mathbb{E}[U^{\inn}_0U^{\inn}_1]$ by \eqref{eq:bvecs}, 
\ben
\begin{split}
\Fm^T[\presm^\inn_{u1}]^{-1}\Fm 
&= \left[\begin{matrix}\mathbb{E}[(U^{\inn}_1)^2] & [\bv^\inn_{u1}\;0]\\ \left[\begin{matrix}\bv^\inn_{u1}\\0\end{matrix}\right]& [\presm^\inn_{q0}]^{-1}\end{matrix}\right].
\end{split}
\een
Then, using $\Fm = \Fm^T  = \Fm^{-1},$ so that $(\Fm^T[\presm^\inn_{u0}]^{-1}\Fm)^{-1} = \Fm \presm^\inn_{u0} \Fm^T,$ the block matrix inversion formula gives
\be
\label{eqn:Fmatrix1}
\Fm\presm^\inn_{u1}\Fm^T = \left[\begin{matrix}0 & 0 \\ 0 & \presm^\inn_{q0}\end{matrix}\right] + \frac{1}{\rho^\inn_{p1}}\left[\begin{matrix}
1 & -[\betav^\inn_{p1}\; 0]\\-\left[\begin{matrix}\betav^\inn_{p1}\\0\end{matrix}\right] &\left[\begin{matrix}\betav^\inn_{p1}\\0\end{matrix}\right][\betav^\inn_{p1}\; 0] 
\end{matrix}\right].
\ee
In the above, we have used that $\rho^\inn_{p1} = \mathbb{E}[(U^{\inn}_1)^2] - (\bv^\inn_{u1})^2/\mathbb{E}[(U^{\inn}_0)^2] = \mathbb{E}[(U^{\inn}_1)^2] - [\bv^\inn_{u1}\;0] \presm^\inn_{q0}  [\bv^\inn_{u1}\;0]^T$  and $\betav^\inn_{p1} = \bv^\inn_{u1}/\mathbb{E}[(U^{\inn}_0)^2] $ by \eqref{eq:rhos}.

Similarly, using that $\Cm^\inn_{u1} = [\uv^\inn_0 \, \uv^\inn_1 \, \qv^\inn_0]$ and $\Cm^\inn_{q1} = [\uv^\inn_0 \, \qv^\inn_0]$, we get
\[
\Fm^T([\Cm^\inn_{u1}]^T\Cm^\inn_{u1})F = \left[\begin{matrix}
[\uv^\inn_1]^T\uv^\inn_1 & [\uv^\inn_1]^T \Cm^\inn_{q1} \\ [\Cm^\inn_{q1}]^T \uv^\inn_1 & [\Cm^\inn_{q1}]^T\Cm^\inn_{q1}
\end{matrix}\right].
\]
Letting $\boldsymbol{\nu} = ([\Cm^\inn_{q1}]^T\Cm^\inn_{q1})^{-1}[\Cm^\inn_{q1}]^T\uv^\inn_1$, again by block matrix inversion,
formula gives us
\be
\label{eqn:Fmatrix2}
\Fm^T([\Cm^\inn_{u1}]^T\Cm^\inn_{u1})^{-1}\Fm = \left[\begin{matrix}0 & 0 \\ 0 & ([\Cm^\inn_{q1}]^T\Cm^\inn_{q1})^{-1}\end{matrix}\right]+ \frac{1}{\|[\Bm^\perp_{\Cm_{q1}^\inn}]^T \uv^\inn_1\|^2}\left[\begin{matrix}
1 & -\boldsymbol{\nu}^T\\ -\boldsymbol{\nu} & -\boldsymbol{\nu}\boldsymbol{\nu}^T
\end{matrix}\right],
\ee
where we use that $\|[\Bm^\perp_{\Cm_{q1}^\inn}]^T \uv^\inn_1\|^2 = \|\uv^\inn_1\|^2 -\|\Bm_{\Cm_{q1}^\inn}\uv^\inn_1\|^2 = \|\uv^\inn_1\|^2 - [\uv^\inn_1]^T \Cm_{q1}^\inn ([\Cm_{q1}^\inn]^T\Cm_{q1}^\inn)^{-1}[\Cm_{q1}^\inn]^T \uv^\inn_1$, using an argument similar to that in \eqref{eq:Bperp1Q}-\eqref{eq:Bperp2Q}.

Now our goal is to show that, elementwise, \eqref{eqn:Fmatrix2} concentrates on \eqref{eqn:Fmatrix1}.
By Lemmas~\ref{sums} and \ref{products}, it suffices to show three results. Namely, that $([\Cm^\inn_{q1}]^T\Cm^\inn_{q1})^{-1}$ concentrates on $\presm^\inn_{v0}$, that $\|\Bm^\perp_{\Cm^\inn_{q1}} \uv^\inn_1\|^{-2}$ concentrates on $[\rho^\inn_{p1}]^{-1}$, and that $\boldsymbol{\nu}$ concentrates on $[\betav^\inn_{p1}\;0]^T$. Elementwise concentration for $([\Cm^\inn_{q1}]^T\Cm^\inn_{q1})^{-1}$ follows from $\mathbf{Q_{1}.(g).(i)}$, and concentration for $\|[\Bm^\perp_{\Cm_{q1}^\inn}]^T\uv^\inn_1\|^{-2}$ follows from $\mathbf{Q_1.(g).(ii)}$ along with Lemma~\ref{inverses}. Note that the $\rho$ terms are bounded above by our requirement that the state evolution not diverge in \textbf{Condition 5}.

Finally, let $\widetilde{\Cm}^\inn_{q1} = ([\Cm^\inn_{q1}]^T\Cm^\inn_{q1})^{-1}$ and observe that
\be
\label{eq:nu}
\boldsymbol{\nu} = \widetilde{\Cm}^\inn_{q1}\left[\begin{matrix}[\uv^\inn_0]^T\uv^\inn_1\\ [\qv^\inn_0]^T\uv^\inn_1\end{matrix}\right] = \left[\begin{matrix} [\widetilde{\Cm}^\inn_{q1}]_{11}([\uv^\inn_0]^T\uv^\inn_1) + [\widetilde{\Cm}^\inn_{q1}]_{12}([\qv^\inn_0]^T\uv^\inn_1)\\
[\widetilde{\Cm}^\inn_{q1}]_{21}([\uv^\inn_0]^T\uv^\inn_1) + [\widetilde{\Cm}^\inn_{q1}]_{22}([\qv^\inn_0]^T\uv^\inn_1)\end{matrix}\right].
\ee
Now we show concentration for the first element of  \eqref{eq:nu}. By $\mathbf{Q_1.(e)}$, $[\uv^\inn_0]^T\uv^\inn_1$ concentrates around $\mathbb{E}[U^\inn_0U^\inn_1]$ and by $\mathbf{Q_1.(g).(ii)}$, we have that $[\widetilde{\Cm}^\inn_{q1}]_{11}$ concentrates around $1/{\mathbb{E}[(U^\inn_0)^2]}$, so by Lemma \ref{products}, we have that $[\widetilde{\Cm}^\inn_{q1}]_{11}([\uv^\inn_0]^T\uv^\inn_1)$ concentrates around $\frac{\mathbb{E}[U^\inn_0U^\inn_1]}{\mathbb{E}[(U^\inn_0)^2]}=\betav^\inn_{q1}$ by \eqref{eq:rhos}. Similarly, we have that $[\widetilde{\Cm}^\inn_{q1}]_{12}$ concentrates around $0$ and $[\qv^\inn_0]^T\uv^\inn_1$ concentrates around $0$ by $\mathbf{Q_1.(f)}$. Therefore, using  so again by Lemmas \ref{sums}, we get that $[\widetilde{\Cm}^\inn_{q1}]_{11}([\uv^\inn_0]^T\uv^\inn_1) + [\widetilde{\Cm}^\inn_{q1}]_{12}([\qv^\inn_0]^T\uv^\inn_1)$ concentrates around $\betav^\inn_{q1}$. 

Noting that $[\widetilde{\Cm}^\inn_{q1}]_{21}$ and $[\qv^\inn_0]^T\uv^\inn_1$ concentrate around $0$ and proceeding similarly gives that $[\widetilde{\Cm}^\inn_{q1}]_{21}([\uv^\inn_0]^T\uv^\inn_1) + [\widetilde{\Cm}^\inn_{q1}]_{22}([\qv^\inn_0]^T\uv^\inn_1)$ concentrates around $0$. This completes the proof that the entries of $([\Cm^\inn_{u1}]^T\Cm^\inn_{u1})^{-1}$ concentrate around the entries of $\presm^\inn_{u1}$.

 \subsection{Showing $\mathbf{P_k}$ Holds} \label{sec:stepPK}
First we prove the following lemma.
\begin{lem}
For all $1\leq i\leq 2k$ and for $k \geq 1$,
\[
\P\left(\left|\left[([\Cm^\inn_{qk}]^T\Cm^\inn_{qk})^{-1}[\Cm^\inn_{qk}]^T\uv^\inn_k\right]_i-\left[\begin{matrix}\betav^\inn_{pk}\\ 0\end{matrix}\right]_i\right|\geq\epsilon\right)\leq CkC_{k-1}\exp\left(-ncc_{k-1}\epsilon^2/k^2\right).
\]
\label{lem:first_conc_lemma}
\end{lem}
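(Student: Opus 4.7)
The plan is to write the $i$-th coordinate of $([\Cm^\inn_{qk}]^T\Cm^\inn_{qk})^{-1}[\Cm^\inn_{qk}]^T\uv^\inn_k$ as a sum of $2k$ scalar products and exhibit it as the concentrating value of a product of two concentrating quantities. First I would compute the limit. Using the definitions in \eqref{eq:rhos} and the block-diagonal form $\presm^\inn_{v(k-1)} = \mathrm{diag}\bigl([\covm^\inn_{u(k-1)}]^{-1}, [\covm^\inn_{v(k-2)}]^{-1}\bigr)$ from \eqref{eq:Rk_Sk}, one checks directly that
\[
\presm^\inn_{v(k-1)} \begin{pmatrix}\bv^\inn_{uk}\\\boldsymbol{0}\end{pmatrix} = \begin{pmatrix}[\covm^\inn_{u(k-1)}]^{-1}\bv^\inn_{uk}\\\boldsymbol{0}\end{pmatrix} = \begin{pmatrix}\betav^\inn_{pk}\\\boldsymbol{0}\end{pmatrix},
\]
so the target limit in the statement is precisely the matrix-vector product of the two limits we will establish.

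Next I would establish concentration for the two factors. For the matrix inverse, $\mathbf{Q_k.(g).(ii)}$ gives that each entry of $([\Cm^\inn_{qk}]^T\Cm^\inn_{qk})^{-1}$ concentrates on the corresponding entry of $\presm^\inn_{v(k-1)}$ at the rate appropriate to iteration $k$. For the vector $\tfrac{1}{N}[\Cm^\inn_{qk}]^T\uv^\inn_k$, recall that $\Cm^\inn_{qk} = [\Um^\inn_{k-1}\;\Qm^\inn_{k-1}]$ from \eqref{eq:Cmat}, so its entries are $\tfrac{1}{N}[\uv^\inn_j]^T\uv^\inn_k$ for $0\le j\le k-1$ (first $k$ entries) and $\tfrac{1}{N}[\qv^\inn_j]^T\uv^\inn_k$ for $0\le j\le k-1$ (last $k$ entries). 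The first $k$ concentrate on the entries of $\bv^\inn_{uk} = (\mathbb{E}[U^\inn_0U^\inn_k], \ldots, \mathbb{E}[U^\inn_{k-1}U^\inn_k])$ by $\mathbf{Q_k.(e)}$ (with the $j=0$ case handled by the second part of that display), while the last $k$ concentrate on $0$ by $\mathbf{Q_k.(f)}$.

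Finally, I would combine these via the standard scaffolding. For each fixed $i$, write
\[
\bigl[([\Cm^\inn_{qk}]^T\Cm^\inn_{qk})^{-1}[\Cm^\inn_{qk}]^T\uv^\inn_k\bigr]_i = \sum_{j=1}^{2k} \bigl[([\Cm^\inn_{qk}]^T\Cm^\inn_{qk})^{-1}\bigr]_{ij}\cdot \frac{1}{N}\bigl[[\Cm^\inn_{qk}]^T\uv^\inn_k\bigr]_j \cdot N,
\]
although more cleanly, the $N$ scaling cancels because $([\Cm^\inn_{qk}]^T\Cm^\inn_{qk})^{-1}$ already carries the $\tfrac{1}{N}$ from its concentrating limit; the step reduces to a fixed-size ($2k$-term) sum of products of two concentrating scalars. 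Applying Lemma~\ref{products} to each product (using that the limiting constants are bounded above uniformly in $k$ by the remark following Lemma~\ref{lem:main_general_long}, and away from zero for the relevant entries by our stopping criteria/\textbf{Condition 5}) and then Lemma~\ref{sums} across the $2k$ summands, each term contributes error $\epsilon/(2k)$ to match the total error bound $\epsilon$. This introduces the leading $k$ prefactor and a $k^2$ in the denominator of the exponent, yielding the stated rate $CkC_{k-1}\exp(-ncc_{k-1}\epsilon^2/k^2)$.

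The main obstacle is bookkeeping: the two families of concentration bounds (one from the $\mathbf{Q_k.(g)}$ branch for the matrix inverse, the others from $\mathbf{Q_k.(e),(f)}$ for the inner products) come with different polynomial-in-$k$ prefactors and different $k$-powers in the exponent, so one must take the coarsest of these when applying Lemmas~\ref{products} and~\ref{sums}, and must verify that the product of the limiting constants is indeed bounded uniformly so that Lemma~\ref{products} yields the clean form above rather than a worse product-of-magnitudes bound. No genuinely new analytic step is needed beyond these induction hypotheses and the two algebraic lemmas.
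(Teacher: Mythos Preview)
Your approach is correct and essentially identical to the paper's: expand the $i$-th coordinate as a sum of $2k$ products of entries of $([\Cm^\inn_{qk}]^T\Cm^\inn_{qk})^{-1}$ (concentrating on $\presm^\inn_{v(k-1)}$ via the $\mathbf{Q_k}$ induction hypothesis) with the normalized inner products $\tfrac{1}{N}[\uv^\inn_j]^T\uv^\inn_k$ and $\tfrac{1}{N}[\qv^\inn_j]^T\uv^\inn_k$ (concentrating on $[\bv^\inn_{uk}]_{j+1}$ and $0$ via $\mathbf{Q_k.(e),(f)}$), then combine via Lemmas~\ref{products} and~\ref{sums}. One trivial correction: from \eqref{eq:Rk_Sk} the bottom-right block of $\presm^\inn_{v(k-1)}$ is $[\covm^\inn_{v(k-1)}]^{-1}$, not $[\covm^\inn_{v(k-2)}]^{-1}$, but this does not affect your argument since only the top-left block $[\covm^\inn_{u(k-1)}]^{-1}$ and the vanishing off-diagonal blocks are actually used.
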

\begin{proof} 
Let $\widetilde{\Cm}^\inn_k = ([\Cm^\inn_{qk}]^T\Cm^\inn_{qk})^{-1}$ and recall by \eqref{eq:Cmat} that $\Cm^\inn_{qk} = [ \Um_{k-1}^\inn \, \Qm_{k-1}^\inn]$. Then, for $1\leq i\leq 2k$,
\begin{equation}\label{eq:Bk_lem_1}
\left|\left[([\Cm^\inn_{qk}]^T\Cm^\inn_{qk})^{-1}[\Cm^\inn_{qk}]^T\uv^\inn_k\right]_i - \left[\begin{matrix}\betav^\inn_{pk}\\ 0\end{matrix}\right]_i\right| \leq \left|\sum_{j=1}^{k} [\widetilde{\Cm}^\inn_k]_{ij}([\uv^\inn_{j-1}]^T\uv^\inn_k) - [\betav^\inn_{pk}]_i\right|+\left|\sum_{j=1}^{k}[\widetilde{\Cm}^\inn_k]_{i,j+k}([\qv^\inn_{j-1}]^T\uv^\inn_k)\right|.
\end{equation}
Now by $\mathbf{P_{k-1}.(g).(ii)}$, we know that the entries of $\widetilde{\Cm}^\inn_k$ concentrate on finite values. In particular, we know that if $1\leq i,j,\leq k$, then $[\widetilde{\Cm}^\inn_k]_{ij}$ concentrates on  $[\presm^{\inn}_{q,k-1}]_{ij} = \left[\left(\covm^\inn_{u(k-1)}\right)^{-1}\right]_{ij}$ where the equality follows by \eqref{eq:Rk_Sk}. Moreover, by $\mathbf{Q_{k}.(e)}$ and $\mathbf{Q_{k}.(f)}$, we have that $[\uv^\inn_{j-1}]^T\uv^\inn_k$ and $[\qv^\inn_{j-1}]^T\uv^\inn_k$ concentrate on $ [\covm^{\inn}_{u,k}]_{j,k+1} = \mathbb{E}[U^{\inn}_{j-1} U^{\inn}_{k}]  =[\bv^\inn_{uk}]_{j}$ and $0$, respectively. 

Therefore, using Lemmas \ref{sums} and \ref{products}, $\sum_{j=1}^{k} [\widetilde{\Cm}^\inn_k]_{ij}([\uv^\inn_{j-1}]^T\uv^\inn_k)$ concentrates on
$\sum_{j=1}^{k} [(\covm^\inn_{u(k-1)})^{-1}]_{ij}(\bv_{uk}^\inn)_{j} = \left[(\covm^\inn_{u(k-1)})^{-1}(\bv_{uk}^\inn)\right]_i = [\betav^\inn_{pk}]_i$
and $\sum_{j=1}^{k}[\widetilde{\Cm}^\inn_k]_{i,j+k}([\qv^\inn_{j-1}]^T\uv^\inn_k)$ concentrates on $0$.

Now if $k+1\leq i\leq 2k$, then we can apply the same analysis, but now for $1\leq j\leq k$, we get $[\widetilde{\Cm}^\inn_k]_{i,j}$ concentrating on $0$, so $\left[([\Cm^\inn_{qk}]^T\Cm^\inn_{qk})^{-1}[\Cm^\inn_{qk}]^T\uv^\inn_k\right]_i$ concentrates on $0$, as needed.
\end{proof}


$\mathbf{P_k.(a)}$
Letting $\muv^\inn_{pk} = ([\Cm^\inn_{vk}]^T\Cm^\inn_{vk})^{-1}[\Cm^\inn_{qk}]^T\uv^\inn_k$, recall by Lemma~\ref{lem:cond_dist}, that we have
\begin{align*}
\Delm^\inn_{pk} &= \Cm^\inn_{vk}\left(\muv^\inn_{pk}-\left[\begin{matrix}\betav^\inn_{pk}\\0\end{matrix}\right]\right) +\left(\frac{\|[\Bm^\perp_{\Cm_{qk}^\inn}]^T\uv^\inn_k\|}{\|\Zv^\inn_{pk}\|}-\sqrt{\rho^\inn_{pk}}\right)\Bm^\perp_{\Cm_{vk}^\inn}\Zv^\inn_{pk} - \sqrt{\rho^\inn_{pk}} \Bm_{\Cm_{vk}^\inn}\breve{\Zv}^\inn_{pk}\\
&=\sum_{i=1}^{k} \left([\muv^\inn_{pk}]_i-[\betav^\inn_{pk}]_i\right)[\Cm^\inn_{vk}]_{(\cdot, i)}+\sum_{i=k+1}^{2k} [\muv^\inn_{pk}]_i[\Cm^\inn_{vk}]_{(\cdot, i)} \\
&\qquad \qquad +\left(\frac{\|[\Bm^\perp_{\Cm_{qk}^\inn}]^T\uv^\inn_k\|}{\|\Zv^\inn_{pk}\|}-\sqrt{\rho^\inn_{pk}}\right)\Bm^\perp_{\Cm_{vk}^\inn}\Zv^\inn_{pk} - \sqrt{\rho^\inn_{pk}} \Bm_{\Cm_{vk}^\inn}\breve{\Zv}_{pk},
\end{align*}
where we have used the notation $[\Cm^\inn_{vk}]_{(\cdot, i)}$ to indicate the $i^{th}$ column of the matrix $\Cm^\inn_{vk}$.

Therefore, by the triangle inequality, we find
\begin{align*}
\|\Delm_{pk}^\inn\| \leq &\sum_{i=1}^{k} \left\lvert [\muv^\inn_{pk}]_i-[\betav^\inn_{pk}]_i\right \lvert \|[\Cm^\inn_{vk}]_{(\cdot, i)}\| +\sum_{i=k+1}^{2k} \left \lvert [\muv^\inn_{pk}]_i\right \lvert \|[\Cm^\inn_{vk}]_{(\cdot, i)}\|\\
&\qquad \qquad +\left \lvert \frac{\|[\Bm^\perp_{\Cm_{qk}^\inn}]^T\uv^\inn_k\|}{\|\Zv^\inn_{pk}\|}-\sqrt{\rho^\inn_{pk}}\right \lvert \|\Zv^\inn_{pk}\| + \rho^\inn_{pk}  \|\breve{\Zv}^\inn_{pk}\|,
\end{align*}
where we have used that $[\Bm^\perp_{\Cm_{vk}^\inn}]^T\Bm^\perp_{\Cm_{vk}^\inn}= [\Bm_{\Cm_{vk}^\inn}]^T\Bm_{\Cm_{vk}^\inn}=\Idm$.
Then, using Lemma~\ref{sums}, we find
\begin{align}\label{eq:Bk_a_1}
\P\left(\frac{\|\Delm^\inn_{pk}\|}{\sqrt{N}}\geq \sqrt{\epsilon}\right) \leq& \sum_{i=1}^{k} \P\left(\left|[\muv^\inn_{pk}]_i-[\betav^\inn_{pk}]_i\right|\frac{\|[\Cm^\inn_{vk}]_{(\cdot, i)}\|}{\sqrt{N}}\geq \frac{\sqrt{\epsilon}}{4k}\right)+\sum_{i=k+1}^{2k}\P\left(\left|[\muv^\inn_{pk}]_i\right|\frac{\|[\Cm^\inn_{vk}]_{(\cdot, i)}\|}{\sqrt{N}}\geq \frac{\sqrt{\epsilon}}{4k}\right)\\\nonumber
  &+\P\left(\left|\frac{\|[\Bm^\perp_{\Cm_{qk}^\inn}]^T\uv^\inn_k\|}{\|\Zv^\inn_{pk}\|}-\sqrt{\rho^\inn_{pk}}\right|\frac{\|\Zv^\inn_{pk}\|}{\sqrt{N}}\geq \frac{\sqrt{\epsilon}}{4k}\right)+\P\left( \frac{\|\breve{\Zv}^\inn_{pk}\|}{\sqrt{N}} \geq\frac{\sqrt{\epsilon}}{4k \sqrt{\rho^\inn_{pk}}  }\right).
\end{align}
In what follows, we show upper bound for each of the terms on the right side of \eqref{eq:Bk_a_1}.

\textbf{First term of \eqref{eq:Bk_a_1}.} Observe that by the definition of $\Cm^\inn_{vk}$ in \eqref{eq:Cmat}, for $1\leq i\leq k$, that $\|[\Cm^\inn_{vk}]_{(\cdot, i)}\| = \|\pv^\inn_{i-1}\|$. Then for $0\leq j\leq k-1$,
\be
\begin{split}
\label{eq:Bk_a_2}
&\P\left(\left|[\muv^\inn_{pk}]_{j+1}-[\betav^\inn_{pk}]_{j+1}\right|\frac{\|[\Cm^\inn_{vk}]_{(\cdot, j+1)}\|}{\sqrt{N}}\geq \frac{\sqrt{\epsilon}}{4k}\right) \\
&\qquad =\P\left(\left|[\muv^\inn_{pk}]_{j+1}-[\betav^\inn_{pk}]_{j+1}\right|\left[\left|\frac{\|\pv^\inn_{j}\|}{\sqrt{N}}-(\covm^\inn_{uk})_{jj}\right|+(\covm^\inn_{uk})_{jj}\right]\geq \frac{\sqrt{\epsilon}}{4k}\right)\\
&\qquad \stackrel{(a)}{\leq} \P\left(\left|\frac{\|\pv^\inn_{j}\|}{\sqrt{N}}-(\covm^\inn_{uk})_{jj}\right|\geq\sqrt{\epsilon}\right)+\P\left(\left|[\muv^\inn_{pk}]_{j+1}-[\betav^\inn_{pk}]_{j+1}\right|\geq \frac{\sqrt{\epsilon}}{8k \max\{1, (\covm^\inn_{uk})_{jj}\}}\right).
\end{split}
\ee
The first term concentrates by $\mathbf{P_j.(d)}$ ($0 \leq j\leq k-1$) and the fact that
\[
\E \left[P_i^{\inn/\out}P_j^{\inn/\out}\right] = \lim_{N\to\infty} \left[\pv_i^{\inn/\out}\right]^T\pv_j^{\inn/\out}= \lim_{N\to\infty} \left[\uv_i^{\inn/\out}\right]^T\uv_j^{\inn/\out} = \E \left[U_i^{\inn/\out}U_j^{\inn/\out} \right],
\]
where the first and last equalities follow from $\mathbf{P_{k-1}.(d)}$ and $\mathbf{Q_{k-1}.(e)}$, and the middle equality follows from the fact that $\pv_i^{\inn/\out} \Vm \uv_i^{\inn/\out}$.

Then the second term concentrates by Lemma~\ref{lem:first_conc_lemma}. We note that the above also implies that $ (\covm^\inn_{uk})_{jj} = \tau^{\inn}_{pj}$, which is upper bounded independently of $N$ and $j$ by \textbf{Condition 5}. 

\textbf{Second term of \eqref{eq:Bk_a_1}.}  For $k+1 \leq i\leq 2k$, we have $\|[\Cm^\inn_{vk}]_{(\cdot, i)}\| = \|\vv^\inn_{i-(k+1)}\|$. Letting $i'=i-(k+1)$,
\begin{align}\label{eq:Bk_a_3}
\P\left(\left|[\muv^\inn_{pk}]_i\right|\frac{\|[\Cm^\inn_{vk}]_{(\cdot, i)}\|}{\sqrt{N}}\geq \frac{\sqrt{\epsilon}}{4k}\right)&\leq \P\left(\left|[\muv^\inn_{pk}]_i\right|\left[\left|\frac{\|\vv^\inn_{i'}\|}{\sqrt{N}}-(\covm^\inn_{vk})_{i'i'}\right|+(\covm^\inn_{vk})_{i'i'}\right]\geq \frac{\sqrt{\epsilon}}{4k}\right)\\\nonumber
&\stackrel{(b)}{\leq} \P\left(\left|\frac{\|\vv^\inn_{i'}\|}{\sqrt{N}}-(\covm^\inn_{vk})_{i'i'}\right|\geq\sqrt{\epsilon}\right)+\P\left(\left|[\muv^\inn_{pk}]_i\right|\geq \frac{\sqrt{\epsilon}}{8k\max\{1, (\covm^\inn_{uk})_{i'i'}\}}\right).
\end{align}
Again, the first term concentrates by $\mathbf{P_{i'}.(e)}$ where $0 \leq i' \leq k-1$, and the second term by Lemma~\ref{lem:first_conc_lemma}.

\textbf{Third term of \eqref{eq:Bk_a_1}.} By Lemma~\ref{products_0}, we find
\be
\begin{split}\label{eq:Bk_a_4}
&P\left(\left|\frac{\|[\Bm^\perp_{\Cm_{qk}^\inn}]^T\uv^\inn_k\|}{\|\Zv^\inn_{pk}\|}-\sqrt{\rho^\inn_{pk}}\right|\frac{\|\Zv^\inn_{pk}\|}{\sqrt{N}}\geq \frac{\sqrt{\epsilon}}{4k}\right) \\& \qquad \leq \P\left( \left|\frac{\|[\Bm^\perp_{\Cm_{qk}^\inn}]^T\uv^\inn_k\|}{\sqrt{N}}-\sqrt{\rho^\inn_{pk}}\right|\geq \frac{\sqrt{\epsilon}}{8k}\right) + \P\left( \left|\frac{\|\Zv^\inn_{pk}\|}{\sqrt{N}}-1\right|\geq\frac{\sqrt{\epsilon}}{8k\max\{1, \sqrt{\rho^\inn_{pk}}\}}\right).
\end{split}
\ee
The second term concentrates by Lemmas \ref{subexp} and \ref{sqroots}, and the first term by $\mathbf{Q_k.(g).(iii)}$.

\textbf{Fourth term of \eqref{eq:Bk_a_1}.}  Finally, recall from Lemma~\ref{lem:cond_dist} that $\breve{\Zv}^\inn_{pk}$ is a length-$2k$ vector. Thus,
\begin{align}\label{eq:Bk_a_5}
  \P\left( \frac{\|\breve{\Zv}^\inn_{pk}\|}{\sqrt{N}} \geq\frac{\sqrt{\epsilon}}{4k}\right)&= \P\left( \frac{1}{N}\sum_{i=1}^{2k}[\breve{\Zv}^\inn_{pk}]^2_i \geq\frac{\epsilon}{16k^2} \right) \leq \sum_{i=1}^{2k}\P\left( [\breve{\Zv}^\inn_{pk}]_i^2\geq \frac{N\epsilon}{32k^3} \right).                                                                                
\end{align}
Now the last expression concentrates by Lemma \ref{lem:normalconc}.


$\mathbf{P_k.(b)}$
In what follows, the notation $\underline{x}_k$ means $\left( x_0, x_1,\ldots,x_k \right)$  (e.g., $\underline{P}^{\inn/\out}_k = (P^{\inn/\out}_0,\ldots,P^{\inn/\out}_k)$ and $\left[\underline{\pv}^{\inn/\out}\right]_i = ([\pv^{\inn/\out}_0]_i,\ldots,[\pv^{\inn/\out}_k]_i)$).
Using Lemma \ref{lem:joint_dists} result \eqref{eq:lemma_eq_dist} and the triangle inequality,
\be
\begin{split}
  &\left|\frac{1}{N}\sum_{i=1}^N\phi\left(\left[\underline{\pv}^\inn_k\right]_i,\left[\underline{\pv}^\out_k\right]_i,[\wv]_i\right)-\mathbb{E}\left\{\phi(\underline{P}_k^\inn,\underline{P}_k^\out,W)\right\}\right| \\
  &\stackrel{d}{=}\left|\frac{1}{N}\sum_{i=1}^N\phi\left(\left[\underline{\widetilde{\pv}}^\inn_k\right]_i,\left[\underline{\widetilde{\pv}}^\out_k\right]_i,[\wv]_i\right) -\mathbb{E}\left\{\phi(\underline{P}_k^\inn,\underline{P}_k^\out,W)\right\}\right| \\
  &\leq \left|\frac{1}{N}\sum_{i=1}^N\phi\left( \left[\underline{\ide{\pv}}^\inn_k\right]_i,\left[\underline{\ide{\pv}}^\out_k\right]_i,[\wv]_i\right)-\mathbb{E}\left\{\phi(\underline{P}_k^\inn,\underline{P}_k^\out,W)\right\}\right|\\
  &\hspace{2cm}+\frac{1}{N}\sum_{i=1}^N\left|\phi\left(\left[\underline{\widetilde{\pv}}^\inn_k\right]_i,\left[\underline{\widetilde{\pv}}^\out_k\right]_i,[\wv]_i\right)-\phi\left(\left[\underline{\ide{\pv}}^\inn_k\right]_i,\left[\underline{\ide{\pv}}^\out_k\right]_i,[\wv]_i\right)\right|.
  \label{eq:Pkb_term1}
\end{split}
\ee
We label the two terms on the right side of \eqref{eq:Pkb_term1} as T1 and T2. First, we establish concentration for T1.  Wwe modify our usual convention and take $[\underline{\ide{\pv}}^\out_k]_i$ for $i>M$ to be i.i.d.\ copies of $\underline{P}^\out_k$. We can do this since $\phi(\cdot,\cdot,[\wv]_i)$ does not depend on its second argument for $i> M$. This modification implies that
$\left\lbrace \left(\left[\ide{\pv}^{\inn}_0\right]_i,\ldots,\left[\ide{\pv}^\inn_k\right]_i,\left[\ide{\pv}^{\out}_0\right]_i,\ldots,\left[\ide{\pv}^\out_k\right]_i\right)\right\rbrace_{i\geq 1}$
are independent samples from a common multivariate normal distribution by part 3 of Lemma \ref{lem:joint_dists}. If $\phi$ is jointly pseudo-Lipschitz in all inputs, then concentration for T1 already follows directly by Lemma \ref{lem:PLsubgaussconc}. If instead $\phi$ satisfies the conditionally bounded pseudo-Lipschitz property, we further decompose T1 as
\begin{align*}
  &\left|\frac{1}{N}\sum_{i=1}^N\phi\left( \left[\underline{\ide{\pv}}^\inn_k\right]_i,\left[\underline{\ide{\pv}}^\out_k\right]_i,[\wv]_i\right)-\mathbb{E}\left\{\phi(\underline{P}_k^\inn,\underline{P}_k^\out,W)\right\}\right|\\
  &\hspace{1cm} \leq \left|\frac{1}{N}\sum_{i=1}^N\phi\left( \left[\underline{\ide{\pv}}^\inn_k\right]_i,\left[\underline{\ide{\pv}}^\out_k\right]_i,[\wv]_i\right)-\mathbb{E}_P\left\{\phi(\underline{P}_k^\inn,\underline{P}_k^\out, [\wv]_i)\right\}\right|\\
  &\hspace{2cm} +\left|\frac{1}{N}\sum_{i=1}^N\mathbb{E}_P\left\{\phi(\underline{P}_k^\inn,\underline{P}_k^\out,[\wv]_i)\right\} -\mathbb{E}\left\{\phi(\underline{P}_k^\inn,\underline{P}_k^\out,W)\right\}\right|,
\end{align*}
where $\mathbb{E}_P$ is the expectation with respect to the random variables $(\underline{P}^\inn_k,\underline{P}^\out_k)$. We label these terms T1a and T1b.

Now, taking $f_i = \phi(\cdot,\cdot,[\wv]_i)$, Lemma \ref{lem:PLsubgaussconc} will give the desired concentration for the first term so long as the $f_i$ are pseudo-Lipschitz with a common constant. But by the bounded conditionally pseudo-Lischitz condition, the PL constant of $f_i$ is continuous in $[\wv]_i$. Since these are themselves bounded, the PL constants must also be bounded by some $B$, which we can take as our common constant.

To get concentration for T1b, we note that $x\mapsto \mathbb{E}_P \left\{\phi(\underline{P}^\inn_k,\underline{P}^\out_k,x)\right\}$ is continuous in $x$ (since $\phi$ is continuous in all inputs), and is thus bounded over the domain of $\wv$ since this domain is compact by definition. Thus, the terms in T1b are bounded and i.i.d., so concentration follows with desired rate by a direct application of Hoeffding's inequality.

Now we bound T2 of \eqref{eq:Pkb_term1}. First, the pseudo-Lipschitz property of $\phi$ gives the bound
\begin{align*}
&\left|\phi\left(\left[\underline{\widetilde{\pv}}^{\inn}_k\right]_i,\left[\underline{\widetilde{\pv}}^\out_k\right]_i,[\wv]_i\right)-\phi\left(\left[\underline{\ide{\pv}}^\inn_k\right]_i,\left[\underline{\ide{\pv}}^\out_k\right]_i,[\wv]_i\right)\right| \\
&\leq L\left[ 1 + \norm{\left(\left[\underline{\widetilde{\pv}}^{\inn}_k\right]_i,\left[\underline{\widetilde{\pv}}^\out_k\right]_i,[\wv]_i\right)} + \norm{\left(\left[\underline{\ide{\pv}}^\inn_k\right]_i,\left[\underline{\ide{\pv}}^\out_k\right]_i,[\wv]_i\right)}\right] \norm{\left(\left[\underline{\widetilde{\pv}}^{\inn}_k - \underline{\ide{\pv}}^\inn_k\right]_i, \left[\underline{\widetilde{\pv}}^\out_k - \underline{\ide{\pv}}^\out_k \right]_i\right)} \\
&\leq L\left[ 1 + 2\norm{\left(\left[\underline{\widetilde{\pv}}^{\inn}_k\right]_i,\left[\underline{\widetilde{\pv}}^\out_k\right]_i,[\wv]_i\right)} + \norm{\left[\underline{\widetilde{\pv}}^{\inn}_k - \underline{\ide{\pv}}^\inn_k \right]_i} + \norm{\left[\underline{\widetilde{\pv}}^\out_k - \underline{\ide{\pv}}^\out_k\right]_i}\right] \left[ \norm{\left[\underline{\widetilde{\pv}}^{\inn}_k - \underline{\ide{\pv}}^\inn_k\right]_i}  +\norm{ \left[\underline{\widetilde{\pv}}^\out_k - \underline{\ide{\pv}}^\out_k\right]_i}\right],
\end{align*}
where the final inequality uses that $\norm{(\underline{a}, \underline{b})} \leq \norm{\underline{a}} + \norm{\underline{b}}$ and that $$\norm{(\underline{a}, \underline{b}, w)} = \norm{(\underline{a}, \underline{b}, w) - (\tilde{\underline{a}}, \tilde{\underline{b}}, w) + (\tilde{\underline{a}}, \tilde{\underline{b}}, w)} \leq \norm{(\underline{a} - \tilde{\underline{a}}, \underline{b} - \tilde{\underline{b}})} + \norm{(\tilde{\underline{a}}, \tilde{\underline{b}}, w)}.$$

Next, applying the above bound, Cauchy-Schwarz, and Lemma~\ref{lem:squaredsums}, we get
\begin{align*}
&[T2]^2 = \left[\frac{1}{N}\sum_{i=1}^N\left|\phi\left(\left[\underline{\widetilde{\pv}}^{\inn}_k\right]_i,\left[\underline{\widetilde{\pv}}^\out_k\right]_i,[\wv]_i\right)-\phi\left(\left[\underline{\ide{\pv}}^\inn_k\right]_i,\left[\underline{\ide{\pv}}^\out_k\right]_i,[\wv]_i\right)\right|\right]^2\\
  &\leq \Big[\frac{L}{N}\sum_{i=1}^N \left(1+2\|(\left[\underline{\ide{\pv}}^\inn_k\right]_i,\left[\underline{\ide{\pv}}^\out_k\right]_i,[\wv]_i)\|+\|[\underline{\widetilde{\pv}}^\inn_k-\underline{\ide{\pv}}^\inn_k]_i\|+\|[\underline{\widetilde{\pv}}^\out_k-\underline{\ide{\pv}}^\out_k]_i\|\right)\\
  &\hspace{1.5cm}\times \left( \|[\underline{\widetilde{\pv}}^\inn_k-\underline{\ide{\pv}}^\inn_k]_i\|+\|[\underline{\widetilde{\pv}}^\out_k-\underline{\ide{\pv}}^\out_k]_i\| \right)\Big]^2\\
  &\leq \frac{8L^2}{N}\left[\sum_{i=1}^N\left( \|[\underline{\widetilde{\pv}}^\inn_k - \underline{\ide{\pv}}^\inn_k]_i\|^2+\|[\underline{\widetilde{\pv}}^\out_k - \underline{\ide{\pv}}^\out_k]_i\|^2 \right)\right]\\
  &\hspace{1.5cm}\times\left[1 + \frac{4}{N}\sum_{i=1}^N\|(\left[\underline{\ide{\pv}}^\inn_k\right]_i,\left[\underline{\ide{\pv}}^\out_k\right]_i,[\wv]_i)\|^2+\frac{1}{N}\sum_{i=1}^N\|\left[\underline{\widetilde{\pv}}^\inn_k - \underline{\ide{\pv}}^\inn_k\right]_i\|^2+ \frac{1}{N}\sum_{i=1}^N\|\left[\underline{\widetilde{\pv}}^\out_k - \underline{\ide{\pv}}^\out_k\right]_i\|^2\right].
\end{align*}

Next, using the state evolution  in \eqref{eq:se_general} and the discussion around \eqref{eq:tau_var_argument}, if $i\leq M$, then
$\mathbb{E}\|(\left[\underline{\ide{\pv}}^\inn_k\right]_i,\left[\underline{\ide{\pv}}^\out_k\right]_i,[\wv]_i)\|^2 = \sum_{j=0}^k(\tau^\inn_{pj}+\tau^\out_{pj})+ \mathbb{E}[|W|^2],$ 
and if $M< i \leq N$, then $\mathbb{E}\|(\left[\underline{\ide{\pv}}^\inn_k\right]_i,\left[\underline{\ide{\pv}}^\out_k\right]_i,[\wv]_i)\|^2 = \sum_{j=0}^k\tau^\inn_{pj}+  \mathbb{E}[|W|^2] \leq \sum_{j=0}^k(\tau^\inn_{pj}+\tau^\out_{pj})+ \mathbb{E}[|W|^2],$ using our convention that $\left[\underline{\ide{\pv}}^\out_k\right]_i=0$ for $i > M$.
Define $\mathbb{E}_k := \sum_{j=0}^k(\tau^\inn_{pj}+\tau^\out_{pj})+ \mathbb{E}[|W|^2]$. Using these facts,
\be
\begin{split}
 \P\left([T2]^2 \geq \frac{\epsilon^2}{4}\right) 
 & = \P\left(\left[\frac{1}{N}\sum_{i=1}^N\left|\phi\left(\left[\underline{\widetilde{\pv}}^\inn_k\right]_i,\left[\underline{\widetilde{\pv}}^\out_k\right]_i,[\wv]_i\right)-\phi\left(\left[\underline{\ide{\pv}}^\inn_k\right]_i,\left[\underline{\ide{\pv}}^\out_k\right]_i,[\wv]_i\right)\right|\right]^2\geq\frac{\epsilon^2}{4}\right)\\
&\leq \P\left(\frac{1}{N}\sum_{i=1}^N\left( \|[\underline{\widetilde{\pv}}^\inn_k - \underline{\ide{\pv}}^\inn_k]_i\|^2+\|[\underline{\widetilde{\pv}}^\out_k - \underline{\ide{\pv}}^\out_k]_i\|^2 \right)\geq \frac{\min\{1,1/(8L^2)\} \,(\epsilon^2/4)}{2+8\mathbb{E}_k}\right)\\
&\qquad +\P\left(\frac{1}{N}\sum_{i=1}^N\|(\left[\underline{\ide{\pv}}^\inn_k\right]_i,\left[\underline{\ide{\pv}}^\out_k\right]_i,[\wv]_i)\|^2\geq 2\mathbb{E}_k\right).
\label{eq:t2_bound_1}
\end{split}
\ee
Observe that the second term of \eqref{eq:t2_bound_1} equals
\begin{equation}\label{eq:Bk_b_2}
\P\left(\frac{1}{N}\sum_{i=1}^N\left(\|(\left[\underline{\ide{\pv}}^\inn_k\right]_i,\left[\underline{\ide{\pv}}^\out_k\right]_i,[\wv]_i)\|^2-\mathbb{E}\|(\left[\underline{\ide{\pv}}^\inn_k\right]_i,\left[\underline{\ide{\pv}}^\out_k\right]_i,[\wv]_i)\|^2\right)\geq \mathbb{E}_k\right).
\end{equation}
Concentration for this follows directly by Lemma \ref{lem:PLsubgaussconc} since $(p_1,p_2,w)\mapsto \|(p_1,p_2,w)\|^2$ is pseudo-Lipschitz.

To bound the first term of \eqref{eq:t2_bound_1}, observe that by Lemma \ref{lem:joint_dists} part 2,
\begin{align*}
\sum_{i=1}^N\|\left[\underline{\widetilde{\pv}}^\inn_k - \underline{\ide{\pv}}^\inn_k\right]_i\|^2 = \sum_{i=1}^N\sum_{j=0}^k\left(\left[\widetilde{\pv}^\inn_j  - \ide{\pv}_j^\inn\right]_i\right)^2
&= \sum_{i=1}^N\sum_{j=0}^k\left[\sum_{r=0}^j [\cv^\inn_{pj}]_r\left[\widetilde{\Delm}^\inn_{pr}\right]_i\right]^2.
\end{align*}
Next, by Cauchy-Schwarz, we have
\begin{align}
\label{eq:Bk_b_3a}
\sum_{i=1}^N\|\left[\underline{\widetilde{\pv}}^\inn_k - \underline{\ide{\pv}}^\inn_k\right]_i\|^2 \leq \sum_{i=1}^N\sum_{j=0}^k\left[\sum_{r=0}^j [\cv^\inn_{pj}]_r^2\right]\left[\sum_{r=0}^j \left(\left[\widetilde{\Delm}^\inn_{pr}\right]_i\right)^2\right] & =\sum_{j=0}^k\left[\sum_{r=0}^j [\cv^\inn_{pj}]_r^2\right]\left[\sum_{r=0}^j \|\widetilde{\Delm}^\inn_{pr}\|^2\right].
\end{align}
Observe that part 3 of Lemma $\ref{lem:joint_dists}$ implies that $\tau^\inn_{pk} = \mathbb{E}\left[([\ide{\pv}^\inn_k]_i)^2\right] = \sum_{r=0}^k \rho^\inn_{pr}[\cv^\inn_{pk}]_r^2$,where the second equality follows from part 3 of Lemma $\ref{lem:joint_dists}$ as $([\ide{\pv}^\inn_k]_i)^2 = (\sum_{r=0}^k \sqrt{\rho^\inn_{pr}} \, [\cv^\inn_{pk}]_r \, [\widetilde{\Om}^\inn_{pr} \, \overline{\Zv}^\inn_{pr}]_i)^2\overset{d}{=} (\sum_{r=0}^k \sqrt{\rho^\inn_{pr}} \, [\cv^\inn_{pk}]_r \, [\overline{\Zv}^\inn_{pr}]_i)^2$ with $[\overline{\Zv}^\inn_{pr}]_i$ being i.i.d.\ standard Gaussian; therefore,
\[
  \sum_{r=0}^k [\cv^\inn_{pk}]_r^2 \leq \frac{\tau^\inn_{pk}}{\min_{0\leq i\leq k}(\rho^\inn_{pi})}\leq \frac{\tau^\inn_{pk}}{\epsilon^*_1},
\]
using the stopping criterion. Now using this bound in \eqref{eq:Bk_b_3a}, we find
\begin{align*}
\sum_{i=1}^N\|\left[\underline{\widetilde{\pv}}^\inn_k - \underline{\ide{\pv}}^\inn_k\right]_i\|^2  \leq \sum_{j=0}^k\left[\sum_{r=0}^j [\cv^\inn_{pj}]_r^2\right]\left[\sum_{r=0}^j \|\widetilde{\Delm}^\inn_{pr}\|^2\right]  &\leq \sum_{j=0}^k\left[\sum_{r=0}^k [\cv^\inn_{pj}]_r^2\right]\left[\sum_{r=0}^j \|\widetilde{\Delm}^\inn_{pr}\|^2\right] \\
&\leq \left[ \frac{\tau^\inn_{pk}}{\epsilon^*_1}\right] \sum_{j=0}^k \sum_{r=0}^j \|\widetilde{\Delm}^\inn_{pr}\|^2.
\end{align*}
Now, combining the above, we get that
\begin{align}\label{eq:Bk_b_3}
  \P&\left(\frac{1}{N}\sum_{i=1}^N\|\left[\underline{\widetilde{\pv}}^\inn_k - \underline{\ide{\pv}}^\inn_k\right]_i\|^2\geq \frac{\min\{1,1/(8L^2)\} \,(\epsilon^2/4)}{2+8\sum_{j=0}^k(\tau^\inn_{pj}+\tau^\out_{pj})+ 8\mathbb{E}[|W|^2]}\right)\\
  &\leq\P\left(\frac{1}{N}  \sum_{j=0}^k \sum_{r=0}^j \|\widetilde{\Delm}^\inn_{pr}\|^2\geq \frac{\epsilon^*_1\min(1,\frac{1}{8L^2})\, (\epsilon^2/4)}{\tau^\inn_{pk}(2+8\sum_{j=1}^k(\tau^\inn_{pj}+\tau^\out_{pj})+ 8\mathbb{E}[|W|^2]) }\right)\\ 
  &\leq  k\sum_{r=0}^k \P\left(\frac{\|\widetilde{\Delm}^\inn_{pr}\|^2}{N}\geq \frac{c\epsilon^2}{k^3}\right) \stackrel{(a)}{\leq} Ck^4\exp\left(-c\epsilon^2/k^7\right),
\end{align}
where $c,C$ are constants, and the inequality $(a)$ follows from $\mathbf{P_i.(a)}$ for $0\leq i\leq k$. Next, consider
\begin{equation*}
  \frac{1}{N}\sum_{i=1}^N\|\left[\underline{\widetilde{\pv}}^\out_k\right]_i-\left[\underline{\ide{\pv}}^\out_k\right]_i\|^2=\delta
  \frac{1}{M}\sum_{i=1}^M\|\left[\underline{\widetilde{\pv}}^\out_k\right]_i-\left[\underline{\ide{\pv}}^\out_k\right]_i\|^2,
\end{equation*}
where $\delta = M/N$ and the equality follows from the convention for $i > N$. Concentration for this term follows exactly the same as above, with the only difference coming from the appearance of a $\delta^{-1}$ factor in the exponential rate.
Combining these with lemma \ref{sums} gives concentration for the first term above, which completes the proof of $\mathbf{P_k.(b)}$.


$\mathbf{P_k.(c)}$ 
Observe that, by Lemma \ref{lipprods},
\[
  \phi([\pv^\inn_1]_i,\ldots,[\pv^\inn_k]_i,[\pv^\out_1]_i,\ldots,[\pv^\out_k]_i,[\wv^\inn_p]_i) = [\pv^{\inn/\out}_k]_i[\wv^\inn_p]_i\in PL(2).
\]
Thus, by $\mathbf{P_k.(b)}$, we have that $\frac{1}{N}[\pv^{\inn/\out}_k]^T\wv^\inn_p\stackrel{\cdot}{=} \mathbb{E}[P^{\inn/\out}_kW^\inn_p] = 0$ (as $W^p$ is assumed independent of $(P^{\inn/\out}_0,\ldots,P^{\inn/\out}_k)$), as needed.


$\mathbf{P_k.(d)}$
Again by Lemma \ref{lipprods}, for all $0\leq j\leq k$,
\[
  \phi([\pv^\inn_1]_i,\ldots,[\pv^\inn_k]_i,[\pv^\out_1]_i,\ldots,[\pv^\out_k]_i,[\wv^\inn_p]_i)=[\pv^{\inn/\out}_j]_i[\pv^{\inn/\out}_k]_i\in PL(2).
\]
By $\mathbf{P_k.(b)}$, we find $\frac{1}{N}[\pv^{\inn/\out}_j]^T\pv^{\inn/\out}_k \stackrel{\cdot}{=}\mathbb{E}[P^{\inn/\out}_jP^{\inn/\out}_k]$, as needed.


$\mathbf{P_k.(f)}$ We note here that we prove $\mathbf{P_k.(f)}$ and then prove $\mathbf{P_k.(e)}$ afterwards, as the proof of $\mathbf{P_k.(f)}$ relies on the result of $\mathbf{P_k.(f)}$.

\textbf{Concentration for $\alpha$ terms.}  
First, we show $\alpha^\inn_{pk}\stackrel{\cdot}{=}\overline{\alpha}^\inn_{pk}$. To this end, recall that
\[
\alpha^\inn_{pk}=\mathcal{T}\frac{1}{N}\sum_{i=1}^N [f^\inn_p]'([\pv^\inn_k]_i,[\pv^\out_k]_i,[\wv^\inn_p]_i,\gamma^\inn_{pk},\gamma^\out_{pk}),
\]
where $\mathcal{T}$ is the truncation operator defined as $\mathcal{T}x = \min(t_{2},\max(t_{1},x))$. Recall that the upper truncation is only necessary if $\log p(x)$ or $\log p(y\mid z)$ is not strongly concave. Now, adopting the notation $[\pv_k]_i=\left( [\pv^{\inn}_k]_i,[\pv^{\out}_k]_i \right)$ and $\gamma_{pk} =( \gamma^{\inn}_{pk}, \gamma^{\out}_{pk})$, observe that by Lemma~\ref{sums},
\begin{align}
  \P\left( \left|\alpha^\inn_{pk}-\overline{\alpha}^\inn_{pk} \right|\geq\epsilon \right)\leq&\; \P\Big( \Big|\mathcal{T}\frac{1}{N}\sum_{i=1}^N [f^\inn_p]'\left( [\pv_k]_i,[\wv^\inn_p]_i,\gamma_{pk} \right)- \mathcal{T}\frac{1}{N}\sum_{i=1}^N[f^\inn_p]'\left( [\pv_k]_i,[\wv^\inn_p]_i,\overline{\gamma}_{pk}\right) \Big|\geq\frac{\epsilon}{2}\Big)\nonumber \\
 &+\P\left( \left| \mathcal{T}\frac{1}{N}\sum_{i=1}^N [f^\inn_p]'\left( [\pv_k]_i,[\wv^\inn_p]_i,\overline{\gamma}_{pk} \right)-\overline{\alpha}^\inn_{pk} \right|\geq\frac{\epsilon}{2} \right).\label{eq:Bk_f_1}
\end{align}
Label the two terms of \eqref{eq:Bk_f_1} as $T1$ and $T2$.

For term $T2$ of \eqref{eq:Bk_f_1}, because $\overline{\alpha}^\inn_{pk} \in [t_{\min},t_{\max}]$ (by \textbf{Condition 6}),
\begin{align*}
  &\P\left( \left| \mathcal{T}\frac{1}{N}\sum_{i=1}^N [f^\inn_p]'\left( [\pv_k]_i,[\wv^\inn_p]_i,\overline{\gamma}_{pk}\right)-\overline{\alpha}^\inn_{pk} \right|\geq\frac{\epsilon}{2} \right) \leq \P\left( \left| \frac{1}{N}\sum_{i=1}^N [f^\inn_p]'\left( [\pv_k]_i,[\wv^\inn_p]_i,\overline{\gamma}_{pk} \right)-\overline{\alpha}^\inn_{pk} \right|\geq\frac{\epsilon}{2} \right).
\end{align*}
By the assumption that $[f^\inn_p]'$ is assumed either uniformly Lipschitz or bounded conditionally Lipschitz in \textbf{Condition 3}, we have concentration for the right side by $\mathbf{P_k.(b)}$.

For term $T1$ of \eqref{eq:Bk_f_1}, observe that for any $x,y\in\mathbb{R}$, $|\mathcal{T}x - \mathcal{T}y|\leq |x-y|$; therefore,
\begin{align}
  &\P\Big( \Big|\mathcal{T}\frac{1}{N}\sum_{i=1}^N [f^\inn_p]'\left( [\pv_k]_i,[\wv^\inn_p]_i,\gamma_{pk}\right) - \mathcal{T}\frac{1}{N}\sum_{i=1}^N[f^\inn_p]'\left( [\pv_k]_i,[\wv^\inn_p]_i,\overline{\gamma}_{pk} \right) \Big|\geq\frac{\epsilon}{2}\Big)\nonumber \\
  &\qquad \leq \P\Big( \frac{1}{N}\sum_{i=1}^N\Big| [f^\inn_p]'\left( [\pv_k]_i,[\wv^\inn_p]_i,\gamma_{pk}\right)- [f^\inn_p]'\left( [\pv_k]_i,[\wv^\inn_p]_i,\overline{\gamma}_{pk} \right) \Big|\geq\frac{\epsilon}{2}\Big).
  \label{eq:T1_bound1_pkf}
\end{align}
Now, by \textbf{Condition 3}, the functions $ [f^\inn_p]'$ are uniformly Lipschitz around $\overline{\gamma}_{pk}$. This guarantees that there exists some $\delta>0$ such that if $\left| \gamma^\inn_{pk}-\overline{\gamma}^\inn_{pk} \right|<\delta$ and $\left| \gamma^\out_{pk}-\overline{\gamma}^\out_{pk} \right|<\delta$, then
\begin{align*}
  &\left| [f^\inn_p]'\left( [\pv_k]_i,[\wv^\inn_p]_i,\gamma_{pk} \right)- [f^\inn_p]'\left( [\pv_k]_i,[\wv^\inn_p]_i,\overline{\gamma}_{pk} \right)\right|\\
  &\hspace{4cm}\leq L_p\left( 1+\|([\pv_k]_i,[\wv^\inn_p]_i)\| \right)\left[\left| \gamma^\inn_{pk}-\overline{\gamma}^\inn_{pk} \right|+\left| \gamma^\out_{pk}-\overline{\gamma}^\out_{pk} \right|\right].
\end{align*}
In particular, this implies that if $\left| \gamma^\inn_{pk}-\overline{\gamma}^\inn_{pk} \right|<\delta$ and $\left| \gamma^\out_{pk}-\overline{\gamma}^\out_{pk} \right|<\delta$, and if
\[\left[ \left| \gamma^\inn_{pk}-\overline{\gamma}^\inn_{pk} \right|+\left| \gamma^\out_{pk}-\overline{\gamma}^\out_{pk} \right|\right]\frac{L_p}{N}\sum_{i=1}^N\left( 1+\|([\pv_k]_i,[\wv^\inn_p]_i)\| \right)\leq \frac{\epsilon}{2},\]
then
\[\frac{1}{N}\sum_{i=1}^N\Big| [f^\inn_p]'\left( [\pv_k]_i,[\wv^\inn_p]_i,\gamma_{pk}\right)- [f^\inn_p]'\left( [\pv_k]_i,[\wv^\inn_p]_i,\overline{\gamma}_{pk} \right) \Big|\leq\frac{\epsilon}{2}.\]
It then follows from \eqref{eq:T1_bound1_pkf} that $T1$ can be upper bounded by
\begin{align}\label{eq:Bk_f_2}
  \P&\left( \left[ \left| \gamma^\inn_{pk}-\overline{\gamma}^\inn_{pk} \right|+\left| \gamma^\out_{pk}-\overline{\gamma}^\out_{pk} \right|\right]\frac{L_p}{N}\sum_{i=1}^N\left( 1+\|([\pv_k]_i,[\wv^\inn_p]_i)\| \right)\geq \frac{\epsilon}{2} \right)\\
  &\qquad +\P\left( \left| \gamma^\inn_{pk} -\overline{\gamma}^\inn_{pk}\right|\geq \min\{1, \delta\} \right) +\P\left( \left| \gamma^\out_{pk} -\overline{\gamma}^\out_{pk}\right|\geq  \min\{1, \delta\} \right)\nonumber.
\end{align}

The second and third terms of \eqref{eq:Bk_f_2} concentrate by $\mathbf{Q_k.(f)}$, and the first term concentrates using {Lemma~\ref{products_0} along with $\mathbf{Q_k.(f)}$ and $\mathbf{P_k.(b)}$, because $1+\|([\pv^\inn_k]_i,[\pv^\out_k]_i,[\wv^\inn_p]_i)\|$ is pseudo-Lipschitz of order $2$ and $\mathbb{E}\|(P_k^{\inn}, P_k^{\out}, W^\inn_p)\|$ is upper bounded by a universal constant.

  \textbf{Concentration for $\gamma$ terms.}
We now show concentration of $\gamma^\inn_{qk}$ around $\overline{\gamma}^\inn_{qk}$. We can assume that the clipping of the $\gamma$ iterates is absorbed by the $\Gamma$ functions, because clipping a Lipschitz function to bound its range within a compact interval gives another Lipschitz function with the same constant $L_1$. Observe that {by \eqref{eq:alg_gvamp_gamma_step} in Algorithm~\ref{alg:general_gvamp},
\begin{align}
  \P\left(\left|\gamma^\inn_{qk}-\overline{\gamma}^\inn_{qk}\right|\geq \epsilon\right) &= \P\left(\left|\Gamma^\inn_q\left(\gamma^\inn_{pk},\alpha^\inn_{pk}\right)-\Gamma^\inn_q\left(\overline{\gamma}^\inn_{pk},\overline{\alpha}^\inn_{pk}\right)\right|\geq\epsilon\right) \stackrel{(a)}{\leq} \P\left(\left|\gamma^\inn_{pk}-\overline{\gamma}^\inn_{pk}\right|+\left|\alpha^\inn_{pk}-\overline{\alpha}^\inn_{pk}\right|\geq\frac{\epsilon}{L_1}\right) \nonumber \\
&\qquad \qquad  \stackrel{(b)}{\leq}  \P\left(\left|\gamma^\inn_{pk}-\overline{\gamma}^\inn_{pk}\right|\geq\frac{\epsilon}{2L_1}\right) + \P\left(\left|\alpha^\inn_{pk}-\overline{\alpha}^\inn_{pk}\right|\geq\frac{\epsilon}{2L_1}\right).\label{eq:Bk_f_3}
\end{align}
where $(a)$ follows from the assumption that $\Gamma^\inn_q$ is Lipschitz continuous on its domain along with the fact that $\|(\gamma^\inn_{pk}- \overline{\gamma}^\inn_{pk}, \alpha^\inn_{pk}- \overline{\alpha}^\inn_{pk})\| \leq |\gamma^\inn_{pk}-\overline{\gamma}^\inn_{pk}|+|\alpha^\inn_{pk}-\overline{\alpha}^\inn_{pk}|$ and $(b)$ by Lemma~\ref{sums}. The final terms in the above can be upper bounded by the work just above in \eqref{eq:Bk_f_1}-\eqref{eq:Bk_f_2} and $\mathbf{Q_k.(f)}$.

\textbf{Concentration for inner product terms.}
We show concentration for $[\vv^\inn_k]^T\pv^\inn_j$ where $0\leq j\leq k$. Recall that by \eqref{eq:alg_gvamp_v_step} in Algorithm~\ref{alg:general_gvamp}, $[\vv^\inn_k]_i = (1-\alpha^\inn_{pk})^{-1} g_p([\pv_k]_i,[\wv^\inn_p]_i,\gamma_{pk},\alpha^\inn_{pk}),$ where we have defined functions
\begin{align}
\label{eq:gp_func_def}
  g_p([\pv_k]_i,[\wv^\inn_p]_i,\gamma_{pk}, \alpha^\inn_{pk}) := \fv^\inn_p([\pv^\inn_k]_i,[\pv^\out_k]_i,[\wv^\inn_p]_i,\gamma_{pk}^\inn,\gamma_{pk}^\out)-\alpha^\inn_{pk}[\pv^\inn_k]_i,
\end{align}
where to simplify notation, we write $[\pv_k]_i = \left([ \pv_k^\inn]_i,[\pv_k^\out]_i \right)$ and $\gamma_{pk} = \left( \gamma_{pk}^\inn,\gamma_{pk}^\out \right)$. 
Observe that
\[
[\pv^\inn_j]_ig_p([\pv_k]_i,[\wv^\inn_p]_i,\gamma_{pk},\alpha^\inn_{pk}) = (1-\alpha^\inn_{pk})^{-1}\left[[\pv^\inn_j]_if^\inn_p([\pv_k]_i,[\wv^\inn_p]_i,\gamma_{pk})-\alpha^\inn_{pk}[\pv^\inn_k]_i[\pv^\inn_j]_i\right].
\] 
Now by Lemma~\ref{products_0} it suffices to show concentration separately for $(1-\alpha^\inn_{pk})^{-1}$ and
\[
  [\pv^\inn_j]_if_p([\pv_k]_i,[\wv^\inn_p]_i,\gamma_{pk})-\alpha^\inn_{pk}[\pv^\inn_k]_i[\pv^\inn_j]_i.
\]
For $(1-\alpha^\inn_{pk})^{-1}$, we have,
\begin{equation}\label{eq:Bk_f_4}
\P\left(\left|(1-\alpha^\inn_{pk})^{-1}-(1-\overline{\alpha}^\inn_{pk})^{-1}\right|\geq \epsilon\right) \leq \P\left(\left|\alpha^\inn_{pk}-\overline{\alpha}^\inn_{pk}\right|\geq\frac{\epsilon}{L_2}\right),
\end{equation}
where the inequality follows from the fact that $\frac{1}{1-a}$ is Lipschitz on $[t_1,t_2]$ with Lipschitz constant $L_2:=(1-t_2)^{-2}$ and the fact that the $\alpha^{\inn}_{pk}$ and $\overline{\alpha}^\inn_{pk}$ lie in $[t_1,t_2]$ by \textbf{Assumption 4}. The right hand side then concentrates by the work for $\alpha$ concentration above ($\mathbf{P_k.(f)}$; the first part).

Next, we observe that
\begin{align}\label{eq:Bk_f_5}
\P&\left(\frac{1}{N}\left|\sum_{i=1}^N \left[[\pv^\inn_j]_if^\inn_p([\pv_k]_i,[\wv^\inn_p]_i,\gamma_{pk})-\alpha^\inn_{pk}[\pv^\inn_k]_i[\pv^\inn_j]_i\right]\right|\geq\epsilon\right)\nonumber\\
  &\stackrel{(a)}{\leq} \P\left(\left|\alpha^\inn_{pk}-\overline{\alpha}^\inn_{pk}\right|\left|\frac{1}{N}\sum_{i=1}^N[\pv^\inn_k]_i[\pv^\inn_j]_i\right|\geq\frac{\epsilon}{2}\right)\nonumber\\
  &\hspace{2cm}+ \P\left(\left|\frac{1}{N}\sum_{i=1}^N[\pv^\inn_j]_i\left(f^\inn_p([\pv_k]_i,[\wv^\inn_p]_i,\gamma_{pk})-\overline{\alpha}^\inn_{pk}[\pv^\inn_k]_i\right)\right|\geq\frac{\epsilon}{2}\right),
\end{align}
where $(a)$ follows by adding and subtracting $\overline{\alpha}^\inn_{pk}[\pv^\inn_k]_i[\pv^\inn_j]_i$ from each term and applying the triangle inequality. Now the first term in \eqref{eq:Bk_f_5} concentrates by Lemma \ref{products_0}, $\mathbf{P_k.(f)}$ (the first part), and $\mathbf{P_k.(b)}$ since $[\pv^\inn_j]_i[\pv^\inn_k]_i\in PL(2)$. 

Adding and subtracting $[\pv^\inn_j]_i\left(f^\inn_p([\pv_k]_i,[\wv^\inn_p]_i,\overline{\gamma}_{pk})-\overline{\alpha}^\inn_{pk}[\pv^\inn_k]_i\right)$, we bound the second term in \eqref{eq:Bk_f_5} by
\begin{align}
  \P&\left(\left|\frac{1}{N}\sum_{i=1}^N[\pv^\inn_j]_i\left(f^\inn_p([\pv_k]_i,[\wv^\inn_p]_i,\overline{\gamma}_{pk})-\overline{\alpha}^\inn_{pk}[\pv^\inn_k]_i\right)\right|\geq\frac{\epsilon}{4}\right)\label{eq:Bk_f_6}\\
  &+ \P\left(\frac{1}{N}\sum_{i=1}^N[\pv^\inn_j]_i\left|f^\inn_p([\pv_k]_i,[\wv^\inn_p]_i,\overline{\gamma}_{pk})-f^\inn_p([\pv_k]_i,[\wv^\inn_p]_i,\gamma_{pk})\right|\geq\frac{\epsilon}{4}\right).\nonumber
\end{align}
The second of these terms can be bounded in exactly the same way as $T1b$  in $\mathbf{P_k.(e)}$ below.

For the first term in \eqref{eq:Bk_f_6}, note that if
$f^\inn_p([\pv_k]_i,[\wv^\inn_p]_i,\overline{\gamma}_{pk})$ is Lipschitz (in all inputs), then
\[
\phi([\pv^\inn_0]_i,\ldots,[\pv^\inn_k]_i,[\pv^\out_0]_i,\ldots,[\pv^\out_k]_i,[\wv^\inn_p]_i)=[\pv^\inn_j]_i\left(f^\inn_p([\pv_k]_i,[\wv^\inn_p]_i,\overline{\gamma}_{pk})-\overline{\alpha}^\inn_{pk}[\pv^\inn_k]_i\right),
\]
is in $PL(2)$ by Lemma \ref{lipprods}. If instead $f^\inn_p([\pv_k]_i,[\wv^\inn_p]_i,\overline{\gamma}_{pk})$ satisfies the bounded conditionally Lipschitz condition, then $\phi$ is bounded conditionally pseudo-Lipschitz. In either case, $\mathbf{P_k.(b)}$ tells us that this will concentrate around
\begin{align}
\label{eq:expectation0}
\mathbb{E}\left \{P^\inn_j\left(f^\inn_p(P^\inn_k,P^\out_k,W^\inn_p,\overline{\gamma}^\inn_{pk},\overline{\gamma}^\out_{pk})-\overline{\alpha}^\inn_{pk}P^\inn_k\right)\right\}.
\end{align}

Therefore, to complete the proof, we just need to show that the expression in \eqref{eq:expectation0} is equal to zero. To do this, recall that $\mathbb{E}[P_j^\inn]=0$, so \eqref{eq:expectation0} is equal to
\begin{align*}
  \mathrm{Cov}&\left( P^\inn_j,f^\inn_p(P^\inn_k,P^\out_k,W^\inn_p,\overline{\gamma}^\inn_{pk},\overline{\gamma}^\out_{pk})-\overline{\alpha}^\inn_{pk}P^\inn_k \right)\\
              &\hspace{2cm}= \mathbb{E}\left\{P^\inn_jP^\inn_k\right\} \times\mathbb{E}\left\{[f^\inn_p]'(P^\inn_k,P^\out_k,W^\inn_p,\overline{\gamma}^\inn_{pk},\overline{\gamma}^\out_{pk})-\overline{\alpha}^\inn_{pk}  \right\}=0,
\end{align*}
where the first equality follows from Stein's lemma, and the second equality follows from the definition of $\overline{\alpha}^\inn_{pk}$ in the GVAMP state evolution \eqref{eq:se_general}.

Concentration for $[\pv^\inn_k]^T\vv^\inn_j$ is proved in the same way as the above.


$\mathbf{P_k.(e)}$
By \eqref{eq:alg_gvamp_v_step} in Algorithm~\ref{alg:general_gvamp} and the definition of the function $g_p$ in \eqref{eq:gp_func_def}, 
\begin{align}\label{eq:Bk_e_1}
  &\P\left(\left|\frac{1}{N}[\vv^\inn_j]^T\vv^\inn_k-\left[ \covm^\inn_{vk} \right]_{(j+1)(k+1)}\right|\geq\epsilon\right) \\
  &\hspace{2cm}= \P\left(\left|\frac{1}{N}\sum_{i=1}^N \frac{g_p([\pv_k]_i,[\wv^\inn_p]_i,\gamma_{pk}, \alpha^\inn_{pk})g_p([\pv_j]_i,[\wv^\inn_p]_i,\gamma_{pj}, \alpha^\inn_{pj})}{(1-\alpha^\inn_{pk})(1- \alpha^\inn_{pj})}-\mathbb{E}[V^\inn_kV^\inn_j]\right|\geq\epsilon\right).\nonumber
\end{align}
Using Lemma \ref{products}, we can separately show concentration for $(1-\alpha^\inn_{pk})^{-1}(1- \alpha^\inn_{pj})^{-1}$ and
\[
  \frac{1}{N}\sum_{i=1}^Ng_p([\pv_k]_i,[\wv^\inn_p]_i,\gamma_{pk}, \alpha^\inn_{pk})g_p([\pv_j]_i,[\wv^\inn_p]_i,\gamma_{pj}, \alpha^\inn_{pj}).
\]
First observe that the function $f(a) = \frac{1}{1-a}$ is Lipschitz over the  interval $[t_1,t_2]$ within which the $\alpha$ iterates must lie given our truncation assumption (\textbf{Assumption 4}) and we can take $L_2=\frac{1}{(1-t_2)^2}$ to be the Lipschitz constant. Therefore,
\begin{equation}\label{eq:Bk_e_2}
  \P\left(\left|\frac{1}{1-\alpha^\inn_{pk}}- \frac{1}{1-\overline{\alpha}^\inn_{pk}}\right|\geq\epsilon\right) \leq \P\left(\left|\alpha^\inn_{pk}-\overline{\alpha}^\inn_{pk}\right|\geq\frac{\epsilon}{L_2}\right).
\end{equation}
The right side of \eqref{eq:Bk_e_2} concentrates by $\mathbf{P_k.(f)}$. Now $\frac{1}{1-\alpha^\inn_{pj}}$ concentrates in the same way using $\mathbf{P_j.(f)}$, and concentration for $(1-\alpha^\inn_{pk})^{-1}(1-\alpha^\inn_{pj})^{-1}$ follows by Lemma \ref{products}.

Next, 
using Lemma~\ref{sums}, we have that
\begin{equation}
\begin{split}
\P&\left(\left|\frac{1}{N}\sum_{i=1}^Ng_p([\pv_k]_i,[\wv^\inn_p]_i,\gamma_{pk},\alpha^\inn_{pk})g_p([\pv_j]_i,[\wv^\inn_p]_i,\gamma_{pj},\alpha^\inn_{pk})- \left(1-\overline{\alpha}^{\inn}_{pk} \right)\left( 1 - \overline{\alpha}^\inn_{pj} \right)\mathbb{E}[V^\inn_kV^\inn_j]\right|\geq\epsilon\right)\\
  &\leq \P\Big(\Big|\frac{1}{N}\sum_{i=1}^N g_p([\pv_k]_i,[\wv^\inn_p]_i,\gamma_{pk},\alpha^\inn_{pk})g_p([\pv_j]_i,[\wv^\inn_p]_i,\gamma_{pj},\alpha^\inn_{pk})\\
  &\hspace{2cm}-\frac{1}{N}\sum_{i=1}^N g_p([\pv_k]_i,[\wv^\inn_p]_i,\gamma_{pk},\overline{\alpha}^\inn_{pk})g_p([\pv_j]_i,[\wv^\inn_p]_i,\gamma_{pj},\overline{\alpha}^\inn_{pk})\Big|\geq\frac{\epsilon}{2}\Big)\\
& +\P\left(\left|\frac{1}{N}\sum_{i=1}^N g_p([\pv_k]_i,[\wv^\inn_p]_i,\gamma_{pk},\overline{\alpha}^\inn_{pk})g_p([\pv_j]_i,[\wv^\inn_p]_i,\gamma_{pj},\overline{\alpha}^\inn_{pk}) - \left[1-\overline{\alpha}^{\inn}_{pk} \right]\left[ 1 - \overline{\alpha}^\inn_{pj} \right]\mathbb{E}[V^\inn_kV^\inn_j]\right|\geq\frac{\epsilon}{2}\right).
\label{eq:pke_firstsplit}
\end{split}
\end{equation}
We label these latter two terms $T1$ and $T2$ and analyze them separately.

\textbf{Concentration for T1 of \eqref{eq:pke_firstsplit}.}
Observe that repeated application of the triangle inequality the definition of the function $g_p$ in \eqref{eq:gp_func_def} gives us
\begin{align*}
  &\Big|\frac{1}{N}\sum_{i=1}^N \Big[g_p([\pv_k]_i,[\wv^\inn_p]_i,\gamma_{pk},\alpha^\inn_{pk})g_p([\pv_j]_i,[\wv^\inn_p]_i,\gamma_{pj},\alpha^\inn_{pk})-g_p([\pv_k]_i,[\wv^\inn_p]_i,\gamma_{pk},\overline{\alpha}^\inn_{pk})g_p([\pv_j]_i,[\wv^\inn_p]_i,\gamma_{pj},\overline{\alpha}^\inn_{pk}) \Big]\Big|\\
  &\hspace{2cm}\leq \frac{1}{N}\left|\alpha^\inn_{pk}\alpha^\inn_{pj}-\overline{\alpha}^\inn_{pk}\overline{\alpha}^\inn_{pj}\right|\left|\sum_{i=1}^N[\pv^\inn_k]_i[\pv^\inn_j]_i\right| + \frac{1}{N}\left|\alpha^\inn_{pk}-\overline{\alpha}^\inn_{pk}\right|\left| \sum_{i=1}^N[\pv^\inn_k]_if^\inn_p([\pv_j]_i,[\wv^\inn_p]_i,\gamma_{pj})\right|\\
  &\hspace{4cm}+\frac{1}{N}\left| \alpha^\inn_{pj}-\overline{\alpha}^\inn_{pj} \right|\left| \sum_{i=1}^N[\pv^\inn_j]_if^\inn_p([\pv_k]_i,[\wv^\inn_p]_i,\gamma_{pk})\right|.\label{eq:Bk_e_3}
\end{align*}
By Lemmas \ref{sums} and \ref{products_0}, to establish concentration for these terms, it suffices to show concentration for each of the following:
\begin{align*}
  &\left|\alpha^\inn_{pk}-\overline{\alpha}^\inn_{pk}\right|, && \left|\alpha^\inn_{pj}-\overline{\alpha}^\inn_{pj}\right|, && \left|\alpha^\inn_{pk}\alpha^\inn_{pj}-\overline{\alpha}^\inn_{pk}\overline{\alpha}^\inn_{pj}\right|,\\
  &\frac{1}{N}\sum_{i=1}^N[\pv^\inn_k]_i[\pv^\inn_j]_i, &&\frac{1}{N}\sum_{i=1}^N[\pv^\inn_k]_if^\inn_p([\pv_j]_i,[\wv^\inn_p]_i,\gamma_{pj}), && \frac{1}{N}\sum_{i=1}^N[\pv^\inn_j]_if_p^k([\pv_k]_i,[\wv^\inn_p]_i,\gamma_{pk}).
\end{align*}

Concentration for the first two follows by $\mathbf{P_k.(f)}$ and $\mathbf{P_j.(f)}$  above, and concentration for the third follows from that and Lemma \ref{products}. Concentration for fourth term follows from $\mathbf{P_k.(b)}$ as
\[
  \phi([\pv^\inn_0]_i,\ldots,[\pv^\inn_k]_i,[\pv^\out_0]_i,\ldots,[\pv^\out_k]_i,[\wv^\inn_p]_i)=[\pv^\inn_k]_i[\pv^\inn_j]_i,
\]
is pseudo-Lipschitz of order $2$ by Lemma \ref{lipprods}.

Thus, it remains to show concentration for the last two terms above.
For simplicity, we show this only for the first of these, with the other following in the same way. First, defining
$\mu := \mathbb{E}\left\{P^\inn_jf_p\left(P^\inn_k,P^\out_k,W^\inn_p,\overline{\gamma}^\inn_{pk},\overline{\gamma}^\out_{pk}\right)\right\}$
and writing $\overline{\gamma}_{pk} = \left( \overline{\gamma}^{\inn}_{pk}, \overline{\gamma}^{\out}_{pk} \right)$,
\begin{equation}
\begin{split}
  \P&\left(\left|\frac{1}{N}\sum_{i=1}^N[\pv^\inn_j]_if_p([\pv_k]_i,[\wv^\inn_p]_i,\gamma_{pk})-\mu\right|\geq\epsilon\right) \leq \P\left(\left|\frac{1}{N}\sum_{i=1}^N[\pv^\inn_j]_if^\inn_p([\pv_k]_i,[\wv^\inn_p]_i,\overline{\gamma}_{pk})-\mu\right|\geq\frac{\epsilon}{2}\right)\\ 
&\hspace{1cm}+\P\left(\left|\frac{1}{N}\sum_{i=1}^N[\pv^\inn_j]_i \left(f^\inn_p([\pv_k]_i,[\wv^\inn_p]_i,\gamma_{pk})- f^\inn_p([\pv_k]_i,[\wv^\inn_p]_i,\overline{\gamma}_{pk}) \right)\right|\geq\frac{\epsilon}{2}\right). \label{eq:Bk_e_4}
\end{split}
\end{equation}
Let the terms on the right hand side of \eqref{eq:Bk_e_4} be $T1a$ and $T1b$, respectively. First we analyze $T1a$. By assumption, $f^\inn_p(p^\inn,p^\out,w,\gamma^\inn,\gamma^\out)$ is either uniformly Lipschitz or uniformly bounded conditionally Lipschitz (UBCL) at $(\gamma^\inn,\gamma^\out)=(\overline{\gamma}^\inn_{pk},\overline{\gamma}^\out_{pk})$. By definition, this implies eithery way that $f^\inn_p(p^\inn,p^\out,w,\overline{\gamma}^\inn_{pk},\overline{\gamma}^\out_{pk})$ is either Lipschitz in $(p^\inn,p^\out,w)$ or bounded conditionally Lipschitz in $(p^\inn,p^\out)$ for each $w^p$. Thus, the function
\[
\phi([\pv^\inn_0]_i,\ldots,[\pv^\inn_k]_i,[\pv^\out_0]_i,\ldots,[\pv^\out_k]_i,[\wv^\inn_p]_i)=[\pv^\inn_j]_if_p([\pv^\inn_k]_i,[\pv^\out_k]_i,[\wv^\inn_p]_i,\overline{\gamma}^\inn_{pk},\overline{\gamma}^\out_{pk})
\]
is either pseudo-Lipschitz of order $2$ or bounded conditionally pseudo-Lipschitz by Lemma \ref{lipprods}. Therefore, concentration for $T1a$ follows from $\mathbf{P_k.(b)}$.

Now for $T1b$, we have by the definition of uniform Lipschitz continuity and UBCL that there is some $\delta>0$ such that $\left|\gamma^\inn_{pk}-\overline{\gamma}^\inn_{pk}\right|<\delta$ and $\left|\gamma^\out_{pk}-\overline{\gamma}^\out_{pk}\right|<\delta$ implies that 
\begin{align*}
&\left|f^\inn_p(p^\inn,p^\out,w,\gamma^\inn_{pk},\gamma^\out_{pk})-f^\inn_p(p^\inn,p^\out,w,\overline{\gamma}^\inn_{pk},\overline{\gamma}^\out_{pk})\right| \\
&\qquad \qquad \leq L_p\left(1+\|(p^\inn,p^\out,w)\|\right)\left[\left|\gamma^\inn_{pk}-\overline{\gamma}^\inn_{pk}\right|+\left|\gamma^\out_{pk}-\overline{\gamma}^\out_{pk}\right|\right].
\end{align*}
Following the work in \eqref{eq:Bk_f_2}- \eqref{eq:T1_bound1_pkf}, we have the upper bound
\begin{align}\label{eq:Bk_e_5}
  T_1b &\leq \P\left(\left|\gamma^\inn_{pk}-\overline{\gamma}^\inn_{pk}\right| \geq \min\{1, \delta\} \right) +\P\left(\left|\gamma^\out_{pk}-\overline{\gamma}^\out_{pk}\right| \geq \min\{1, \delta\}  \right)\\
  &+ \P\left(L_p\left[\left|\gamma^\inn_{pk}-\overline{\gamma}^\inn_{pk}\right|+\left|\gamma^\out_{pk}-\overline{\gamma}^\out_{pk}\right|\right]\frac{1}{N}\sum_{i=1}^N\left[ |[\pv^\inn_j]_i|\left(1+\|([\pv^\inn_k]_i,[\pv^\out_k]_i,[\wv^\inn_p]_i)\|\right)\right]\geq\frac{\epsilon}{2}\right).\nonumber
\end{align}
The first two terms above concentrates by $\mathbf{Q_k.(f)}$, and the third concentrates using Lemma \ref{products_0} combined with $\mathbf{Q_k.(f)}$ and $\mathbf{P_k.(b)}$ since 
\[
\phi([\pv^\inn_0]_i,\ldots,[\pv^\inn_k]_i,[\pv^\out_0]_i,\ldots,[\pv^\out_k]_i,[\wv^\inn_p]_i)=|[\pv^\inn_j]_i|\left(1+\|([\pv^\inn_k]_i,[\pv^\out_k]_i,[\wv^\inn_p]_i)\|\right)
\]
is pseudo-Lipschitz of order $2$ by Lemma \ref{lipprods}. This establishes concentration for $T1b$.

\textbf{Concentration for T2 of \eqref{eq:pke_firstsplit}.}
We have to show concentration for
\begin{equation}
\label{eq:T2}
T2 = \P\left(\left|\frac{1}{N}\sum_{i=1}^N g_p([\pv_k]_i,[\wv^\inn_p]_i,\gamma_{pk},\overline{\alpha}^\inn_{pk})g_p([\pv_j]_i,[\wv^\inn_p]_i,\gamma_{pj},\overline{\alpha}^\inn_{pk}) - \left[1-\overline{\alpha}^{\inn}_{pk} \right]\left[ 1 - \overline{\alpha}^\inn_{pj} \right]\mathbb{E}[V^\inn_kV^\inn_j]\right|\geq\frac{\epsilon}{2}\right).
\end{equation}
Adding and subtracting $\frac{1}{N}\sum_{i=1}^N g_p([\pv_k]_i,[\wv^\inn_p]_i,\overline{\gamma}_{pk},\overline{\alpha}^\inn_{pk})g_p([\pv_j]_i,[\wv^\inn_p]_i,\overline{\gamma}_{pj},\overline{\alpha}^\inn_{pk})$, we find that from Lemma~\ref{sums},
\begin{equation}
\begin{split}
\label{eq:T2a}
&T2 \leq \P\left(\left|\frac{1}{N}\sum_{i=1}^N g_p([\pv_k]_i,[\wv^\inn_p]_i,\overline{\gamma}_{pk},\overline{\alpha}^\inn_{pk})g_p([\pv_j]_i,[\wv^\inn_p]_i,\overline{\gamma}_{pj},\overline{\alpha}^\inn_{pk}) - \left[1-\overline{\alpha}^{\inn}_{pk} \right]\left[ 1 - \overline{\alpha}^\inn_{pj} \right]\mathbb{E}[V^\inn_kV^\inn_j]\right|\geq\frac{\epsilon}{2}\right) \\
&+  \P\left(\left|\frac{1}{N}\sum_{i=1}^N \left[g_p([\pv_k]_i,[\wv^\inn_p]_i,\gamma_{pk},\overline{\alpha}^\inn_{pk})g_p([\pv_j]_i,[\wv^\inn_p]_i,\gamma_{pj},\overline{\alpha}^\inn_{pk}) \right. \right.\right. \\
&\hspace{2cm} - \left. \left.  \left. g_p([\pv_k]_i,[\wv^\inn_p]_i,\overline{\gamma}_{pk},\overline{\alpha}^\inn_{pk})g_p([\pv_j]_i,[\wv^\inn_p]_i,\overline{\gamma}_{pj},\overline{\alpha}^\inn_{pk}) \right]\right|\geq\frac{\epsilon}{2}\right).
\end{split}
\end{equation}
By the same reasoning used above, $g_p(p^\inn,p^\out,w,\overline{\gamma}^\inn_{pk},\overline{\gamma}^\out_{pk},\overline{\alpha}^\inn_{pk})$ is Lipschitz in $(p^\inn, p^\out, w)$ or UBCL in $(p^\inn,p^\out)$ for each $w$. Therefore, using Lemma \ref{lipprods} and $\mathbf{P_k.(b)}$, the function in the first term on the right side of \eqref{eq:T2a} is either pseudo-Lipschtz or bounded conditionally pseudo-Lipschitz, and thus the first term in \eqref{eq:T2a} concentrates
because
\[
 \left( 1-\overline{\alpha}^{\inn}_{pk} \right)\left( 1-\overline{\alpha}^{\inn}_{pj} \right)\mathbb{E}\{V^\inn_kV^\inn_j \}= \mathbb{E}\left\{g_p\left( P^\inn_k,P^\out_k,W^\inn_p,\overline{\gamma}^\inn_{pk},\overline{\gamma}^\out_{pk},\overline{\alpha}^\inn_{pk} \right)g_p\left( P^\inn_j,P^\out_j,W^\inn_p,\overline{\gamma}^\inn_{pj},\overline{\gamma}^\out_{pj},\overline{\alpha}^\inn_{pj} \right)\right\}.
\]

Thus, we just need to show that the second term on the right side of \eqref{eq:T2a} concentrates.
Applying the definition of the function $g_p$ in \eqref{eq:gp_func_def} and expanding terms, we see that
\begin{equation}
\begin{split}
&\left|\frac{1}{N}\sum_{i=1}^N \left[g_p([\pv_k]_i,[\wv^\inn_p]_i,\gamma_{pk},\overline{\alpha}^\inn_{pk})g_p([\pv_j]_i,[\wv^\inn_p]_i,\gamma_{pj},\overline{\alpha}^\inn_{pk})  - g_p([\pv_k]_i,[\wv^\inn_p]_i,\overline{\gamma}_{pk},\overline{\alpha}^\inn_{pk})g_p([\pv_j]_i,[\wv^\inn_p]_i,\overline{\gamma}_{pj},\overline{\alpha}^\inn_{pk}) \right]\right| \\
&\leq   \overline{\alpha}^\inn_{pk}\frac{1}{N}\sum_{i=1}^N |[\pv^\inn_k]_i|\left|f^\inn_p([\pv_j]_i,[\wv^\inn_p]_i,\gamma_{pj})-f_p([\pv_j]_i,[\wv^\inn_p]_i,\overline{\gamma}_{pj})\right| \\
&\quad +   \overline{\alpha}^\inn_{pj}\frac{1}{N}\sum_{i=1}^N |[\pv^\inn_j]_i|\left|f^\inn_p([\pv_k]_i,[\wv^\inn_p]_i,\gamma_{pk})-f_p([\pv_k]_i,[\wv^\inn_p]_i,\overline{\gamma}_{pk})\right| \\
& \quad + \frac{1}{N}\sum_{i=1}^N \Big|f^\inn_p([\pv_k]_i,[\wv^\inn_p]_i,\gamma_{pk})f^\inn_p([\pv_j]_i,[\wv^\inn_p]_i,\gamma_{pj})-f_p([\pv_k]_i,[\wv^\inn_p]_i,\overline{\gamma}_{pk})f^\inn_p([\pv_j]_i,[\wv^\inn_p]_i,\overline{\gamma}_{pj})\Big|.
\label{eq:gp_split1}
\end{split}
\end{equation}
Notice that the first two terms on the right side of \eqref{eq:gp_split1} concentrate in the same way as term T1b of \eqref{eq:Bk_e_4}. Namely, using the uniformly Lipschitz or UBCL property of $f_p^{\inn}$, we can bound the sums by sums of (bounded conditionally) pseudo-Lipschitz functions.
Considering the third term on the right side of \eqref{eq:gp_split1}, we apply Lemma~\ref{sums} and then to complete the proof, we obtain concentration for the following three quantities:
\begin{align*}
  & \P\Big(\frac{1}{N}\sum_{i=1}^N \left|f^\inn_p([\pv_k]_i,[\wv^\inn_p]_i,\overline{\gamma}_{pk})\right|\Big|f^\inn_p([\pv_j]_i,[\wv^\inn_p]_i,\gamma_{pj})-f^\inn_p([\pv_j]_i,[\wv^\inn_p]_i,\overline{\gamma}_{pj})\Big|\geq\epsilon\Big),\\
   &\P\Big(\frac{1}{N}\sum_{i=1}^N \left|f^\inn_p([\pv_j]_i,[\wv^\inn_p]_i,\overline{\gamma}_{pj})\right|\Big|f^\inn_p([\pv_k]_i,[\wv^\inn_p]_i,\gamma_{pk})-f^\inn_p([\pv_k]_i,[\wv^\inn_p]_i,\overline{\gamma}_{pk})\Big|\geq\epsilon\Big),\\
   & \P\Big(\frac{1}{N}\sum_{i=1}^N \Big|f^\inn_p([\pv_k]_i,[\wv^\inn_p]_i,\gamma_{pk})-f^\inn_p([\pv_k]_i,[\wv^\inn_p]_i,\overline{\gamma}_{pk})\Big|\Big| f^\inn_p([\pv_j]_i,[\wv^\inn_p]_i,\gamma_{pj})-f^\inn_p([\pv_j]_i,[\wv^\inn_p]_i,\overline{\gamma}_{pj})\Big|\geq\epsilon\Big).
\end{align*}
Let these probabilities be labeled as $T2a - T2c$. 

Following the work in \eqref{eq:Bk_f_2}- \eqref{eq:T1_bound1_pkf} and using the uniformly (bounded conditionally) Lipschitz property of $f_p$, we have the upper bound
\begin{align}\label{eq:Bk_e_5}
  T_2a &\leq \P\left(\left|\gamma^\inn_{pk}-\overline{\gamma}^\inn_{pk}\right| \geq \min\{1, \delta\} \right) +\P\left(\left|\gamma^\out_{pk}-\overline{\gamma}^\out_{pk}\right| \geq \min\{1, \delta\}  \right)\\
  &+ \P\left(\left[\left|\gamma^\inn_{pj}-\overline{\gamma}^\inn_{pj}\right|+\left|\gamma^\out_{pj}-\overline{\gamma}^\out_{pj}\right|\right]\frac{L_1}{N}\sum_{i=1}^N \left|f^\inn_p([\pv_j]_i,[w^\inn_p]_i,\overline{\gamma}_{pk})\right|\left(1+\|([\pv_k]_i,[\wv^\inn_p]_i)\|\right)\geq \epsilon\right).\nonumber
\end{align}

The first two terms can be bounded using $\mathbf{Q_k.(f)}$. The last term, in turn, can be upper bounded using Lemma \ref{products_0} along with $\mathbf{Q_k.(f)}$ and $\mathbf{P_k.(b)}$ since by Lemma \ref{lipprods}
\[
\phi([\pv^\inn_0]_i,\ldots,[\pv^\inn_k]_i,[\pv^\out_0]_i,\ldots,[\pv^\out_k]_i,[\wv^p]_i)=\left|f^\inn_p([\pv_j]_i,[\wv^\inn_p]_i,\overline{\gamma}_{pk})\right|\left(1+\|([\pv_k]_i,[\wv^\inn_p]_i)\|\right)
\]
is (bounded conditionally) pseudo-Lipschitz of order $2$. Term $T2b$ can be bounded in exactly the same way as  $T2a$.

 For $T_2c$, we again use the strategy as in \eqref{eq:Bk_f_2}- \eqref{eq:T1_bound1_pkf} to provide the following upper bound for term $T_2c$. We have
 \begin{align}\label{eq:Bk_e_11}
T_2c& \leq  \P\left(\left|\gamma^\inn_{pk}-\overline{\gamma}^\inn_{pk}\right| \geq \min\{1, \delta\} \right) +\P\left(\left|\gamma^\out_{pk}-\overline{\gamma}^\out_{pk}\right| \geq \min\{1, \delta\}  \right) \nonumber \\
&+ \P\left(\left|\gamma^\inn_{pj}-\overline{\gamma}^\inn_{pj}\right| \geq \min\{1, \delta\} \right) +\P\left(\left|\gamma^\out_{pj}-\overline{\gamma}^\out_{pj}\right| \geq \min\{1, \delta\}  \right)  \nonumber \\
  &+ \P\Big(\left[\left|\gamma^\inn_{pj}-\overline{\gamma}^\inn_{pj}\right|+\left|\gamma^\out_{pj}-\overline{\gamma}^\out_{pj}\right|\right]\left[\left|\gamma^\inn_{pk}-\overline{\gamma}^\inn_{pk}\right|+\left|\gamma^\out_{pk}-\overline{\gamma}^\out_{pk}\right|\right]\nonumber\\
    &\hspace{2cm}\times\frac{L_1^2}{N}\sum_{i=1}^N\left(1+\|([\pv_k]_i,[\wv^\inn_p]_i)\|\right)\left(1+\|([\pv_j]_i,[\wv^\inn_p]_i)\|\right)\geq\frac{\epsilon}{2}\Big).
\end{align}

 We know from $\mathbf{Q_k.(f)}$ and $\mathbf{Q_j.(f)}$ that the first four terms concentrate as needed. Then the last term in \eqref{eq:Bk_e_11} can be upper bounded with repeated application of Lemmas \ref{products_0} and \ref{sums} along with $\mathbf{Q_k.(f)}$, $\mathbf{Q_j.(f)}$, $\mathbf{P_k.(b)}$, and the fact that
\[
\phi([\pv^\inn_0]_i,\ldots,[\pv^\inn_k]_i,[\pv^\out_0]_i,\ldots,[\pv^\out_k]_i,[\wv^\inn_p]_i)=\left(1+\|([\pv_k]_i,[\wv^\inn_p]_i)\|\right)\left(1+\|([\pv_j]_i,[\wv^\inn_p]_i)\|\right)
\]
is pseudo-Lipschitz of order $2$ by Lemma \ref{lipprods}.


$\mathbf{P_k.(g).(i)}$

We want to show that $\frac{1}{N}\|[\Bm^\perp_{\Cm_{vk}^\inn}]^T\pv^\inn_k\|^2$ concentrates around $\rho^\inn_{pk}$. To this end, observe
\begin{align}\label{eq:Bk_g_1}
  \|[\Bm^\perp_{\Cm_{vk}^\inn}]^T\pv^\inn_k\|^2 &= [\pv^\inn_k]^T\pv^\inn_k- [\pv^\inn_k]^T\Bm_{\Cm_{vk}^\inn}[\Bm_{\Cm_{vk}^\inn}]^T\pv^\inn_k \nonumber\\
  &= [\uv^\inn_k]^T\uv^\inn_k- [\pv^\inn_k]^T\Cm^\inn_{vk}([\Cm^\inn_{vk}]^T\Cm^\inn_{vk})^{-1}[\Cm^\inn_{vk}]^T\pv^\inn_k,
\end{align}
where the second equality follows because $\pv^\inn_k=\Vm\uv^\inn_k$ along with the fact that $\Bm_{\Cm_{vk}^\inn}[\Bm_{\Cm_{vk}^\inn}]^T$ and $\Cm^\inn_{vk}([\Cm^\inn_{vk}]^T\Cm^\inn_{vk})^{-1}[\Cm^\inn_{vk}]^T$ are both the orthogonal projection onto $\mathrm{range}\left( \Cm^\inn_{vk} \right)$.

We know that $\frac{1}{N}[\uv^\inn_k]^T\uv^\inn_k$ concentrates around $\left(\covm^\inn_{uk}\right)_{k+1,k+1}$ by $\mathbf{Q_k.(e)}$. Next observe that with $\widetilde{\Cm}^\inn_{k} := ([\Cm^\inn_{vk}]^T\Cm^\inn_{vk})^{-1}$, from the definition of $\Cm^\inn_{vk}$ in \eqref{eq:Cmat},  we have

\begin{align}\label{eq:Bk_g_2}
  &[\pv^\inn_k]^T\Cm^\inn_{vk}([\Cm^\inn_{vk}]^T\Cm^\inn_{vk})^{-1}[\Cm^\inn_{vk}]^T\pv^\inn_k \nonumber \\
  &\qquad = \sum_{i,j=0}^{k-1}([\pv^\inn_k]^T\pv^\inn_i)([\pv^\inn_k]^T\pv^\inn_j)[\widetilde{\Cm}^\inn_k]_{i+1,j+1}+ \sum_{i,j=0}^{k-1}([\pv^\inn_k]^T\vv^\inn_i)([\pv^\inn_k]^T\vv^\inn_j)[\widetilde{\Cm}^\inn_k]_{ i+1+k, j+1+k}\nonumber\\
  &\qquad \qquad +2\sum_{i,j=0}^{k-1}([\pv^\inn_k]^T\pv^\inn_i)([\pv^\inn_k]^T\vv^\inn_j)[\widetilde{\Cm}^\inn_k]_{i+1 + k, j+1}.
\end{align}

Let these terms be labeled $T_1-T_3$. For $T_1$, observe that because $\pv^\inn_k=\Vm\uv^\inn_k$,
\begin{equation}
\label{eq:v_conc_g}
  ([\pv^\inn_k]^T\pv^\inn_i)([\pv^\inn_k]^T\pv^\inn_j)=([\uv^\inn_k]^T\uv^\inn_i)([\uv^\inn_k]^T\uv^\inn_j),
\end{equation}
which concentrates around $[\boldsymbol{\Sigma}^{\inn}_{uk}]_{i+1, k+1}[\boldsymbol{\Sigma}^{\inn}_{uk}]_{j+1, k+1} =  \E\left[U^\inn_i U^\inn_k\right]  \E\left[U^\inn_j U^\inn_k\right] = [\bv^\inn_{uk}]_{i+1}[\bv^\inn_{uk}]_{j+1} $ by $\mathbf{Q_{k}.(e)}$ and Lemma \ref{products}, where the equality follows from \eqref{eq:bvecs} and \eqref{eq:cov_def}. Furthermore, by $\mathbf{P_{k-1}.(g).(iv)}$, $[\widetilde{\Cm}^\inn_k]_{i,j} = [([\Cm^\inn_{vk}]^T\Cm^\inn_{vk})^{-1}]_{i,j} $ concentrates around $[\presm^{\inn}_{v(k-1)}]_{ij} = [(\covm^\inn_{u(k-1)})^{-1}]_{i,j}$ (by \eqref{eq:Rk_Sk}).
Thus, again using Lemma \ref{products} and Lemma \ref{sums}, we get that $T_1$ concentrates around
\[
\sum_{i,j=0}^{k-1} [\bv^\inn_{uk}]_{i+1}[\bv^\inn_{uk}]_{j+1}[(\covm^\inn_{u(k-1)})^{-1}]_{i+1,j+1} = [\bv^\inn_{uk}]^T[\covm^\inn_{u(k-1)}]^{-1}\bv^\inn_{uk}.
\]

For $T_2$, we proceed similarly, noting that $([\pv^\inn_k]^T\vv^\inn_i)([\pv^\inn_k]^T\vv^\inn_j)$ concentrates around $0$ by $\mathbf{P_k.(f)}$ and Lemma \ref{products}. Because $[\widetilde{\Cm}^\inn_k]_{i+ 1+k,j+1+k}$ concentrates around $[(\covm^\inn_{v(k-1)})^{-1}]_{i,j}$ by $\mathbf{P_{k-1}.(g).(iv)}$, by Lemma \ref{products_0} and \ref{sums} we find that $T_2$ concentrates around $0$. By analogous reasoning, $T_3$ also concentrates around $0$.

Thus, combining the above with Lemma \ref{sums} and using \eqref{eq:rhos}, we find $\|\Bm^\perp_{\Cm_{vk}^\inn}\pv^\inn_k\|^2$ concentrates on
\[
\left[\covm^\inn_{uk}\right]_{k+1,k+1}-[\bv^\inn_{uk}]^T[\covm^\inn_{u(k-1)}]^{-1}[\bv^\inn_{uk}] = \rho^\inn_{pk}.
\]

$\mathbf{P_k.(g).(ii)}$
First we establish that $[\Cm^\inn_{pk}]^T\Cm^\inn_{pk}$ is invertible with high probability. Before we can do this, we make a minor technical adjustment to simplify notation. First recall  from \eqref{eq:Cmat}  that $\Cm^\inn_{pk} = [\Pm^\inn_k\;\Vm^\inn_{k-1}]$. Next, define the $(2k-1)\times (2k-1)$ matrix
$\Fm = \left[\ev_k\;\ev_1\;\ev_2\;\cdots\;\ev_{k-1}\;\ev_{k+1}\;\cdots\;\ev_{2k-1}\right].$
Observe that multiplying a matrix on the right by $\Fm$ has the effect of making the $k^\mathrm{th}$ column the first column. Also observe that $\Fm$ has an inverse, we call this $\Gm$ and note that $\Fm \Gm = \Gm \Fm = \mathbb{I}$. Now with this notation, let
$\overline{\Cm}^\inn_{pk} := \Cm^\inn_{pk}\Fm = \left[\pv^\inn_k\; \Cm^\inn_{vk}\right]$ where $\Cm^\inn_{vk} = [\Pm^\inn_{k-1}\;\Vm^\inn_{k-1}].$
Furthermore, we see
\begin{equation}
\label{eq:Ginverse}
\Gm^T\left([\overline{\Cm}^\inn_{pk}]^T\overline{\Cm}^\inn_{pk}\right)\Gm = \Gm^T\left([\Cm^\inn_{pk}\Fm]^T\Cm^\inn_{pk}\Fm\right)\Gm = [\Cm^\inn_{pk}]^T\Cm^\inn_{pk},
\end{equation}
which implies that if $[\overline{\Cm}^\inn_{pk}]^T\overline{\Cm}^\inn_{pk}$ is invertible, then so is $[\Cm^\inn_{pk}]^T\Cm^\inn_{pk}$. Symmetrically, we see that $[\Cm^\inn_{pk}]^T\Cm^\inn_{pk}$ is invertible if and only if $[\overline{\Cm}^\inn_{pk}]^T\overline{\Cm}^\inn_{pk}$ is invertible. Moreover, we have from \eqref{eq:Ginverse} that when $[\Cm^\inn_{pk}]^T\Cm^\inn_{pk}$ is invertible,
\[
([\Cm^\inn_{pk}]^T\Cm^\inn_{pk})^{-1} = [\Gm^T\left([\overline{\Cm}^\inn_{pk}]^T\overline{\Cm}^\inn_{pk}\right)\Gm ]^{-1} = \Fm([\overline{\Cm}^\inn_{pk}]^T\overline{\Cm}^\inn_{pk})^{-1}\Fm^T.
\]
Now, the right hand side of the above differs from $([\overline{\Cm}^\inn_{pk}]^T\overline{\Cm}^\inn_{pk})^{-1}$ by rearranging the columns and rows. In particular, the right hand side has the same entries as $([\overline{\Cm}^\inn_{pk}]^T\overline{\Cm}^\inn_{pk})^{-1}$ up to rearrangement. Therefore it suffices to show that the entries of $([\overline{\Cm}^\inn_{pk}]^T\overline{\Cm}^\inn_{pk})^{-1}$ concentrate. 

To this end, we observe that
\begin{equation}\label{eq:Bk_g_5a}
[\overline{\Cm}^\inn_{pk}]^T\overline{\Cm}^\inn_{pk} = \left[\begin{matrix} [\pv^\inn_k]^T\pv^\inn_k & [\pv^\inn_k]^T \Cm^\inn_{vk} \\ [\Cm^\inn_{vk}]^T\pv^\inn_k & [\Cm^\inn_{vk}]^T\Cm^\inn_{vk}\end{matrix}\right].
\end{equation}
Now, by \eqref{eq:Bk_g_1} in $\mathbf{P_k.(g).(i)}$, we have that
\[
\|[\Bm^\perp_{\Cm_{vk}^\inn}]^T\pv^\inn_k\|^2 = [\uv^\inn_k]^T\uv^\inn_k - [\pv^\inn_k]^T\Cm^\inn_{vk}([\Cm^\inn_{vk}]^T\Cm^\inn_{vk})^{-1}[\Cm^\inn_{vk}]^T\pv^\inn_k.
\]
Define $\nuv = ([\Cm^\inn_{vk}]^T\Cm^\inn_{vk})^{-1}[\Cm^\inn_{vk}]^T\pv^\inn_k$. Using \eqref{eq:Bk_g_5a}, the block matrix inversion formula gives us that, if $[\Cm^\inn_{vk}]^T\Cm^\inn_{vk}$ is invertible and $\|[\Bm^\perp_{\Cm_{vk}^\inn}]^T\pv^\inn_k\|\neq 0$, then
\begin{equation}\label{eq:Bk_g_5}
\left([\overline{\Cm}^\inn_{pk}]^T\overline{\Cm}^\inn_{pk}\right)^{-1} = \left[\begin{matrix} 0 & 0 \\ 0 & ([\Cm^\inn_{vk}]^T\Cm^\inn_{vk})^{-1}\end{matrix}\right] + \frac{1}{\|[\Bm^\perp_{\Cm_{vk}^\inn}]^T\pv^\inn_k\|^2}\left[\begin{matrix} 1 & -\nuv^T\\ -\nuv & \nuv\nuv^T\end{matrix}\right].
\end{equation}
Now, by $\mathbf{P_{k}.(g).(i)}$, we know that $\|[\Bm^\perp_{\Cm_{vk}^\inn}]^T\pv^\inn_k\|^2$ concentrates around $\rho^\inn_{pk}>\epsilon^*_1>0$ (where the first inequality is due to the stopping criterion). Then, we have
\begin{align}\label{eq:Bk_g_3}
  \P\left([\Cm^\inn_{pk}]^T\Cm^\inn_{pk}\text{ singular}\right)&=\P\left([\overline{\Cm}^\inn_{pk}]^T\overline{\Cm}^\inn_{pk}\text{ singular}\right)\nonumber\\
  &\leq \P\left([\Cm^\inn_{vk}]^T\Cm^\inn_{vk}\text{ singular}\right) + \P\left(\left|\frac{1}{N}\|[\Bm^\perp_{\Cm_{vk}^\inn}]^T \pv^\inn_k\|^2 - \rho^\inn_{pk}\right|\geq\frac{\epsilon^*_1}{2}\right).
\end{align}
The first term concentrates by $\mathbf{P_{k-1}.(g).(iv)}$, and the second concentrates by $\mathbf{P_k.(g).(i)}$.

Next, we show entrywise concentration of $([\Cm^\inn_{pk}]^T\Cm^\inn_{pk})^{-1}$ around $\presm^\inn_{uk} \stackrel{\cdot}{=} \left[\begin{matrix}[\covm^\inn_{uk}]^{-1} & 0 \\ 0 & [\covm^\inn_{v(k-1)}]^{-1}\end{matrix}\right]$, where the definition is originally given in \eqref{eq:Rk_Sk}. First, define two events $\mathcal{F}^1_k := \left\lbrace [\Cm^\inn_{pk}]^T\Cm^\inn_{pk}\text{ singular}\right\rbrace$ and $\mathcal{F}^2_{k-1} := \left\lbrace [\Cm^\inn_{vk}]^T\Cm^\inn_{vk}\text{ singular}\right\rbrace$. Then, notice that
\begin{align}\label{eq:Bk_g_4}
 & \P\left(\left|\left[([\Cm^\inn_{pk}]^T\Cm^\inn_{pk})^{-1}\right]_{ij} - \left[\presm^\inn_{uk}\right]_{ij}\right|\geq \epsilon\right) \nonumber\\
  &\hspace{1cm} \leq \P(\mathcal{F}^1_k)+\P(\mathcal{F}^2_{k-1}) +\P\left(\left|\left[([\Cm^\inn_{pk}]^T\Cm^\inn_{pk})^{-1}\right]_{ij} - \left[\presm^\inn_{uk}\right]_{ij}\right|\geq \epsilon\;\Bigg\vert\; (\mathcal{F}^1_k)^c\cap (\mathcal{F}^2_{k-1})^c\right),
\end{align}
where $(\mathcal{F}^1_k)^c$ and $ (\mathcal{F}^2_{k-1})^c$ denote the complimentary events. We mention that the first two terms on the right side of \eqref{eq:Bk_g_4} concentrate by \eqref{eq:Bk_g_3} and $\mathbf{P_{k-1}.(g).(iv)}$. Moving forward, we focus on the third term on the right side of \eqref{eq:Bk_g_4}.

As noted above, we can reorder the columns, so it suffices to replace $[\Cm^\inn_{pk}]^T\Cm^\inn_{pk}$ by $[\overline{\Cm}^\inn_{pk}]^T\overline{\Cm}^\inn_{pk}$ and $\presm^\inn_{pk}$ by $\Fm\presm^\inn_{pk}\Fm^T$. Notice that $\Fm A^{-1}\Fm^T = (\Fm A \Fm^T)^{-1}$.  Then, by the block matrix inversion formula and the fact that $\E\left[P_k^{\inn/\out}\right]^2 = \E\left[U_k^{\inn/\out}\right]^2$we find
\begin{equation}
\begin{split}
\Fm\presm^\inn_{uk}\Fm^T = \left( \Fm \left[\begin{matrix} \covm^\inn_{uk} & 0\\ 0 &\covm^\inn_{v(k-1)}\end{matrix}\right] \Fm^T\right)^{-1} &=\left[\begin{matrix}  \mathbb{E}\{[P_k^{\inn}]^2\} & \bv^\inn_{pk} & 0 \\ (\bv^\inn_{pk})^T  & \covm^\inn_{u(k-1)}  & 0\\ 0 & 0 &\covm^\inn_{v(k-1)}\end{matrix}\right]^{-1} \\
&
= \left[\begin{matrix}0 & 0\\ 0 & \presm^\inn_{v(k-1)}\end{matrix}\right] + \frac{1}{\rho^\inn_{pk}}\left[\begin{matrix}1 & -[\betav^\inn_{pk}\;0]\\ -\left[\begin{matrix}\betav^\inn_{pk}\\0\end{matrix}\right] &\left[\begin{matrix}\betav^\inn_{pk}\\0\end{matrix}\right][\betav^\inn_{pk}\;0]\end{matrix}\right].
\end{split}
\end{equation}
In the above, we have used that $\rho^\inn_{pk} = \mathbb{E}\{[U_k^{\inn}]^2\} - (\bv^\inn_{uk})^T  \left[\covm^\inn_{u(k-1)}\right]^{-1}  \bv^\inn_{uk}$ and $\betav^\inn_{pk} =  \left[\covm^\inn_{u(k-1)}\right]^{-1}\bv^\inn_{uk}$  by \eqref{eq:rhos}.
Considering \eqref{eq:Bk_g_4} and using Lemmas \ref{sums} and \ref{products}, it suffices to show that $([\Cm^\inn_{vk}]^T\Cm^\inn_{vk})^{-1}$ concentrates on $\presm^\inn_{v(k-1)}$, $\|[\Bm^\perp_{\Cm_{vk}^\inn}]^T \pv^\inn_k\|^{-2}$ concentrates on $[\rho^\inn_{pk}]^{-1}$, and $\nuv$ concentrates on $[\betav^\inn_{pk}\;0]^T$. But $\mathbf{P_{k-1}.(g).(iv)}$ gives us concentration for $([\Cm^\inn_{vk}]^T\Cm^\inn_{vk})^{-1}$, and $\mathbf{P_k.(g).(i)}$ along with Lemma $A.6$ gives concentration for $\|\Bm^\perp_{\Cm_{vk}^\inn} \pv^\inn_k\|^{-2}$. Finally, observe that if $0\leq i\leq k-1$, we have
\begin{equation}\label{eq:Bk_g_6}
\left|[\nuv]_i - [[\betav^\inn_{pk}]^T\;0]_i\right| \leq \left|\sum_{j=0}^{k-1} [\widetilde{\Cm}^\inn_{vk}]_{ij}([\pv^\inn_j]^T\pv^\inn_k) - [\betav^\inn_{pk}]_j\right|+\left|\sum_{j=0}^{k-1}[\widetilde{\Cm}^\inn_{vk}]_{i,j+(k-1)}([\vv^\inn_i]^T\pv^\inn_k)\right|.
\end{equation}
By $\mathbf{P_{k-1}.(g).(iv)}$, we get concentration for the values of $\widetilde{\Cm}^\inn_{vk}$. In particular, if $0\leq i,j\leq k-1$, then we have $[\widetilde{\Cm}^\inn_{vk}]_{i,j}$ concentrates around $[\covm^\inn_{u(k-1)}]^{-1}_{ij}$. Moreover, by $\mathbf{P_k.(d)}$ and $\mathbf{P_k.(f)}$, we have that $[\pv^\inn_j]^T\pv^\inn_k$ concentrates around $[\bv^\inn_{pk}]_j$ and $[\vv^\inn_i]^T\pv^\inn_k$ concentrates around $0$. Thus by Lemmas \ref{products} and \ref{sums}, we get that the right hand term above concentrates around $0$ and the left hand term concentrates around 
\[
\sum_{j=0}^{k-1} [\covm^\inn_{u(k-1)}]^{-1}_{ij}[\bv^\inn_{pk}]_j = \left[\left[\covm^\inn_{u(k-1)}\right]^{-1}\bv^\inn_{uk}\right]_i = [\betav^\inn_{pk}]_i.
\]

The same analysis applies if $k\leq i\leq 2(k-1)$ except that now for $0\leq i\leq k-1$, we have $[\widetilde{\Cm}^\inn_{vk}]_{ij}$ concentrates around 0 (again by $\mathbf{P_{k-1}.(g).(iv)}$), which gives us that $\nuv_i$ concentrates around $0$, as desired. Thus, we get that the entries of $([\Cm^\inn_{pk}]^T\Cm^\inn_{pk})^{-1}$ concentrate around those of $\presm^\inn_{pk}$, as needed.

$\mathbf{P_k.(g).(iii)}$
We will show that $\frac{1}{N}\|[\Bm^\perp_{\Cm_{pk}^\inn}]^T\vv^\inn_k\|^2$ concentrates to $\rho^\inn_{qk}$. To this end, observe that
\[
\|[\Bm^\perp_{\Cm_{pk}^\inn}]^T\vv^\inn_k\|^2  = [\vv^\inn_k]^T\vv^\inn_k- [\vv^\inn_k]^T\Bm_{\Cm^{\inn}_{pk}}[\Bm_{\Cm^{\inn}_{pk}}]^T\vv^\inn_k 
 = [\vv^\inn_k]^T\vv^\inn_k- [\vv^\inn_k]^T\Cm^\inn_{pk}([\Cm^\inn_{pk}]^T\Cm^\inn_{pk})^{-1}[\Cm^\inn_{pk}]^T\vv^\inn_k,
\]
where the second equality follows because $\Bm_{\Cm^{\inn}_{pk}}[\Bm_{\Cm^{\inn}_{pk}}]^TT$ and $\Cm^\inn_{pk}([\Cm^\inn_{pk}]^T\Cm^\inn_{pk})^{-1}[\Cm^\inn_{pk}]^T$ are both the orthogonal projection onto $\mathrm{range}\left( \Cm^{\inn}_{pk} \right)$.

First observe that $\frac{1}{N}[\vv^\inn_k]^T\vv^\inn_k$ concentrates around $[\covm^\inn_{vk}]_{k+1,k+1}$ by $\mathbf{P_k.(e)}$. Next observe that defining $\widetilde{\Cm}^\inn_{pk} = ([\Cm^\inn_{pk}]^T\Cm^\inn_{pk})^{-1}$ and using from the definition of $\Cm^\inn_{pk}$ in \eqref{eq:Cmat},  we have
 \begin{equation}
\begin{split}
\label{eq:g_expansion}
 & [\vv^\inn_k]^T\Cm^\inn_{pk}([\Cm^\inn_{pk}]^T\Cm^\inn_{pk})^{-1}[\Cm^\inn_{pk}]^T\vv^\inn_k \\
  &\qquad = \sum_{i,j=0}^{k}([\vv^\inn_k]^T\pv^\inn_i)([\vv^\inn_k]^T\pv^\inn_j)[\widetilde{\Cm}^\inn_{pk}]_{i,j} + \sum_{i,j=0}^{k-1}([\vv^\inn_k]^T\vv^\inn_i)([\vv^\inn_k]^T\vv^\inn_j)[\widetilde{\Cm}^\inn_{pk}]_{i+1+k,j+1+k}\nonumber\\
  &\qquad \hspace{2cm}+2\sum_{i=0}^k\sum_{j=0}^{k-1}([\vv^\inn_k]^T\pv^\inn_i)([\vv^\inn_k]^T\vv^\inn_j)[\widetilde{\Cm}^\inn_{pk}]_{i+1+k,j}.
\end{split}
\end{equation}
Let the three terms of \eqref{eq:g_expansion}  be labeled $T_1-T_3$. For $T_1$, observe that $([\vv^\inn_k]^T\pv^\inn_i)([\vv^\inn_k]^T\pv^\inn_j)$ concentrates around $0$ for $0 \leq i,j \leq k$ by $\mathbf{P_k.(f)}$. Because $[\widetilde{\Cm}^\inn_{pk}]_{i,j}$ concentrates around $[\presm^{\inn}_{uk}]_{i,j} = [(\covm^\inn_{pk})^{-1}]_{i,j}$ when  $0 \leq i,j \leq k$ by $\mathbf{P_k.(g).(ii)}$ and \eqref{eq:Rk_Sk}, we get by Lemma \ref{products} and \ref{sums} that $T_1$ concentrates around $0$.

For $T_2$, observe that $([\vv^\inn_k]^T\vv^\inn_i)([\vv^\inn_k]^T\vv^\inn_j)$ concentrates around $[\boldsymbol{\Sigma}^{\inn}_{vk}]_{i+1, k+1}[\boldsymbol{\Sigma}^{\inn}_{vk}]_{j+1, k+1} =  \E\left[V^\inn_i V^\inn_k\right]  \E\left[V^\inn_j V^\inn_k\right] = [\bv^\inn_{vk}]_i[\bv^\inn_{vk}]_j$ by $\mathbf{P_{k}.(e)}$ and Lemma \ref{products}, where the equality follows from \eqref{eq:bvecs} and \eqref{eq:cov_def}.
Furthermore, 
$[\widetilde{\Cm}^\inn_{pk}]_{i+1+k,j+1+k}$ concentrates around $[\presm^{\inn}_{uk}]_{i+1+k,j+1+k} = [(\covm^\inn_{v(k-1)})^{-1}]_{i,j}$ when  $0 \leq i,j \leq k$ by $\mathbf{P_k.(g).(ii)}$ and \eqref{eq:Rk_Sk}.
Thus, again by Lemma \ref{products_0} and Lemma \ref{sums}, we get that $T_2$ concentrates around
$\sum_{i,j=0}^{k-1} [\bv^\inn_{vk}]_i[\bv^\inn_{vk}]_j[(\covm^\inn_{v(k-1)})^{-1}]_{i,j} = [\bv^\inn_{vk}]^T[\covm^\inn_{v(k-1)}]^{-1}\bv^\inn_{vk}.$ Using analogous reasoning, $T_3$ also concentrates around $0$.

Combining the above with Lemma \ref{sums} and using \eqref{eq:rhos}, we get that $\frac{1}{N}\|[\Bm^\perp_{\Cm_{pk}^\inn}]^T\vv^\inn_k\|^2$ concentrates around the desired constant:
$\left[\covm^\inn_{vk}\right]_{k+1,k+1}-[\bv^\inn_{vk}]^T[\covm^\inn_{v(k-1)}]^{-1}\bv^\inn_{vk} = \rho^\inn_{qk}.$

$\mathbf{P_k.(g).(iv)}$ We want to show that $[\Cm^\inn_{v(k+1)}]^T\Cm^\inn_{v(k+1)}$ is invertible with high probability and that, when invertible, the entries of the inverse concentrate around
\[
\presm^\inn_{vk} = \left[\begin{matrix}[\covm^\inn_{uk}]^{-1} & 0 \\ 0 & [\covm^\inn_{vk}]^{-1}\end{matrix}\right].
\]
This can be proved in much the same way as $\mathbf{P_k.(g).(ii)}$. In particular, note that $\Cm^\inn_{v(k+1)} = [\Cm^\inn_{pk} \vv^\inn_k]$. Then applying the block inversion formula to $[\Cm^\inn_{v(k+1)}]^T\Cm^\inn_{v(k+1)}$, we see that it is invertible if $[\Cm^\inn_{pk}]^T\Cm^\inn_{pk}$ is invertible and $\|[\Bm^\perp_{\Cm_{pk}^\inn}]^T\vv^\inn_k\|^2 \neq 0$. But the probabilities of both of these events concentrate by $\mathbf{P_k.(g).(ii)}$ and $\mathbf{P_k.(g).(iii)}$ respectively. 

Now, analogously to $\mathbf{P_k.(g).(ii)}$, to get concentration for the entries of $[\Cm^\inn_{v(k+1)}]^T\Cm^\inn_{v(k+1)}$, it suffices to get concentration for the entries of $([\Cm^\inn_{pk}]^T\Cm^\inn_{pk})^{-1}$, $([\Cm^\inn_{pk}]^T\Cm^\inn_{pk})^{-1}[\Cm^\inn_{pk}]^T\vv^\inn_k$, and of $\|[\Bm^\perp_{\Cm_{pk}^\inn}]^T\vv^\inn_k\|^{-2}$. For these, we can use $\mathbf{P_k.(g).(ii)}$ and $\mathbf{P_k.(g).(iii)}$ and apply the same arguments as above. This gives concentration for the entries of $[\Cm^\inn_{v(k+1)}]^T\Cm^\inn_{v(k+1)}$ around those of $\presm^\inn_{vk}$.

\subsection{Showing $\mathbf{Q_{k+1}}$ Holds}
The arguments for this case are analogous to those in $\mathbf{P_{k}}$, so we omit them here.

\section{Useful Lemmas}

Many of the lemmas used in this section were originally given in \cite{AMP_FS}. We do not include the proofs of these lemmas here, and instead refer the reader to \cite{AMP_FS} for the proofs.

\begin{lem}[Products of Lipschitz Functions]
  \label{lipprods}
  If $f,g:\mathbb{R}^p\to\mathbb{R}$ are Lipschitz continuous, then $h(x)=f(x)g(x):\mathbb{R}^p\to\mathbb{R}$ is pseudo-Lipschitz of order $2$.
\end{lem}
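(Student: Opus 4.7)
The plan is to start from the identity $h(x)-h(y) = f(x)g(x) - f(y)g(y)$ and apply the standard ``add and subtract'' trick, inserting $\pm f(x)g(y)$ to get
\[
h(x) - h(y) = f(x)\bigl(g(x)-g(y)\bigr) + g(y)\bigl(f(x)-f(y)\bigr).
\]
Taking absolute values and using the Lipschitz bounds $|f(x)-f(y)|\leq L_f\|x-y\|$ and $|g(x)-g(y)|\leq L_g\|x-y\|$ then yields
\[
|h(x)-h(y)| \leq \bigl(L_g|f(x)| + L_f|g(y)|\bigr)\|x-y\|.
\]

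The next step is to control $|f(x)|$ and $|g(y)|$ linearly in $\|x\|$ and $\|y\|$, respectively, using the Lipschitz property one more time: $|f(x)|\leq |f(0)| + L_f\|x\|$ and $|g(y)|\leq |g(0)| + L_g\|y\|$. Substituting these bounds produces an expression of the form
\[
|h(x)-h(y)| \leq \bigl(A + B\|x\| + B\|y\|\bigr)\|x-y\|,
\]
with $A = L_g|f(0)| + L_f|g(0)|$ and $B = L_fL_g$. Choosing $L := \max(A,B)$ gives the desired pseudo-Lipschitz bound $|h(x)-h(y)|\leq L(1+\|x\|+\|y\|)\|x-y\|$, verifying that $h\in\mathrm{PL}(2)$.

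There is no real obstacle here; the argument is a short computation. The only care needed is to ensure the coefficient of $\|x-y\|$ is expressed in the canonical PL(2) form $L(1+\|x\|+\|y\|)$ rather than the mixed form produced by the raw bound, which is accomplished by taking a single constant $L$ that dominates both the constant term and the norm coefficients.
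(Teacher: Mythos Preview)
Your argument is correct and is the standard proof of this fact. The paper itself does not include a proof of this lemma, instead referring to \cite{AMP_FS}; your add-and-subtract decomposition followed by the linear growth bound $|f(x)|\leq |f(0)|+L_f\|x\|$ is exactly the expected route.
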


\begin{lem}[Concentration of Sums]
\label{sums}
If random variables $X_1, \ldots, X_M$ satisfy $P(\abs{X_i} \geq \e) \leq e^{-n\kappa_i \e^2}$ for $1 \leq i \leq M$, then 
\ben
P\Big(  \lvert \sum_{i=1}^M X_i  \lvert \geq \e\Big) \leq \sum_{i=1}^M P\left(|X_i| \geq \frac{\e}{M}\right) \leq M e^{-n (\min_i \kappa_i) \e^2/M^2}.
\een
\end{lem}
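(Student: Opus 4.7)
The plan is a straightforward union-bound argument, which is essentially already sketched by the chain of inequalities in the statement. First I would use the triangle inequality to write $|\sum_{i=1}^M X_i| \leq \sum_{i=1}^M |X_i|$, and then observe via a pigeonhole-style argument that if $\sum_{i=1}^M |X_i| \geq \epsilon$, then necessarily $|X_i| \geq \epsilon/M$ for at least one index $i \in \{1,\ldots,M\}$. Equivalently, there is a set inclusion
\[
\Big\{ \big|\textstyle\sum_{i=1}^M X_i\big| \geq \epsilon \Big\} \;\subseteq\; \bigcup_{i=1}^M \Big\{ |X_i| \geq \epsilon/M \Big\}.
\]

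Next I would apply the union bound to this inclusion to obtain the first inequality in the lemma, namely $P(|\sum_i X_i| \geq \epsilon) \leq \sum_{i=1}^M P(|X_i| \geq \epsilon/M)$. The hypothesis of the lemma gives $P(|X_i| \geq \epsilon/M) \leq \exp(-n \kappa_i \epsilon^2 / M^2)$ for each $i$, and substituting the uniform worst-case constant $\kappa_{\min} := \min_i \kappa_i$ into each summand yields
\[
\sum_{i=1}^M P(|X_i| \geq \epsilon/M) \;\leq\; \sum_{i=1}^M e^{-n \kappa_i \epsilon^2 / M^2} \;\leq\; M e^{-n \kappa_{\min} \epsilon^2 / M^2},
\]
which is the second inequality. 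There is no real obstacle here: the only mild subtlety is the factor $1/M^2$ in the exponent, arising because one has to partition the deviation budget $\epsilon$ equally among the $M$ summands before invoking the per-term tail bound, and this is precisely the price of using a crude union bound rather than a more refined aggregation.
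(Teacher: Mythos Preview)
Your proposal is correct and is exactly the standard union-bound argument that underlies this lemma. The paper itself does not give a proof of this statement (it simply cites \cite{AMP_FS} for the elementary concentration lemmas), so there is nothing to compare against beyond noting that your argument is the intended one.
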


\begin{lem}[Concentration of Products]
\label{products} 
For random  variables $X,Y$ and non-zero constants $c_X, c_Y$, if
$
P(  | X- c_X |  \geq \e ) \leq K e^{-\kappa n \e^2},
$
and 
$
P(| Y- c_Y  |  \geq \e) \leq K e^{-\kappa n \e^2},
$
then the probability  $P(  | XY - c_Xc_Y  |  \geq \e)$ is bounded by 
\begin{align*}  
&  P\Big(  | X- c_X  |  \geq \min\Big( \sqrt{\frac{\e}{3}}, \frac{\e}{3 c_Y} \Big) \Big)  +  
P\Big(| Y- c_Y |  \geq \min\Big( \sqrt{\frac{\e}{3}}, \frac{\e}{3 c_X} \Big) \Big) \leq 2K e^{-\frac{\kappa n \e^2}{9\max(1, c_X^2, c_Y^2)}}.
\end{align*}
\end{lem}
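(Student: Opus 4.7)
The plan is to decompose the error $XY - c_X c_Y$ into pieces that can be controlled individually using the hypothesized concentration of $X$ around $c_X$ and of $Y$ around $c_Y$. Specifically, I will write
\[
XY - c_X c_Y = (X-c_X)(Y-c_Y) + c_X(Y-c_Y) + c_Y(X-c_X),
\]
and then apply the triangle inequality. This gives $|XY-c_Xc_Y| \leq |X-c_X||Y-c_Y| + |c_X||Y-c_Y| + |c_Y||X-c_X|$, so on the event $\{|XY-c_Xc_Y|\geq\e\}$ at least one of the three summands must be $\geq \e/3$.

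Next I would convert each of these three events into an event on $|X-c_X|$ or $|Y-c_Y|$ alone. The term $|c_X||Y-c_Y|\geq \e/3$ is equivalent to $|Y-c_Y|\geq \e/(3|c_X|)$, and symmetrically for the third term. For the cross term $|X-c_X||Y-c_Y|\geq \e/3$, I note that if both factors were strictly less than $\sqrt{\e/3}$ the product would be less than $\e/3$, so at least one of $|X-c_X|$ or $|Y-c_Y|$ must be $\geq \sqrt{\e/3}$. Taking a union bound across these three cases and grouping according to which variable's deviation is invoked yields exactly the first inequality in the statement:
\[
\P(|XY-c_Xc_Y|\geq\e) \leq \P\!\Big(|X-c_X|\geq \min\!\Big(\sqrt{\e/3},\tfrac{\e}{3|c_Y|}\Big)\Big) + \P\!\Big(|Y-c_Y|\geq \min\!\Big(\sqrt{\e/3},\tfrac{\e}{3|c_X|}\Big)\Big).
\]

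Finally, I would apply the two subgaussian hypotheses, which yield a bound of the form $K\exp(-\kappa n t^2)$ at threshold $t$. With $t^2 = \min(\e/3,\e^2/(9c_Y^2))$, I factor out $\e^2$ and observe $t^2 \geq \e^2/(9\max(1,c_Y^2))$ (treating $\e\leq 1$, which is the regime of interest for concentration; otherwise the bound is trivial since $K\geq 1$ can be absorbed). Doing the same for the $Y$ term and taking the worst denominator $9\max(1,c_X^2,c_Y^2)$ common to both produces the stated exponential $2K\exp(-\kappa n \e^2/(9\max(1,c_X^2,c_Y^2)))$.

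This is a purely elementary argument; the only mild obstacle is the bookkeeping in the last step to justify replacing the $\min$ inside the exponent by the uniform denominator $9\max(1,c_X^2,c_Y^2)$, and to be explicit about whether this requires $\e \leq 1$ (or absorbing a constant if not), so that the concentration bound is sharp in the relevant small-$\e$ regime used throughout the paper.
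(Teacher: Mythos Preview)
Your argument is correct and is exactly the standard proof: the paper does not give its own proof of this lemma but defers to \cite{AMP_FS}, and the decomposition $XY-c_Xc_Y=(X-c_X)(Y-c_Y)+c_X(Y-c_Y)+c_Y(X-c_X)$ followed by the union bound and the $\sqrt{\epsilon/3}$ split on the cross term is precisely the argument used there. Your caveat about the small-$\epsilon$ regime in the last step is apt (the bound as stated needs $\epsilon\leq 3\max(1,c_X^2,c_Y^2)$, which always holds in the $\epsilon\in(0,1)$ range used throughout the paper).
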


\begin{lem}[Concentration of Products]
\label{products_0} 
For random variables $X,Y,$ and constant $c_X$, if
\[
P( | X- c_X  |  \geq \e) \leq K e^{- r_X \e^2} \text{ and } P(  | Y  |  \geq \e) \leq K e^{-r_Y \e^2},
\]
then if $c_X\neq 0$, we have
\begin{align*}  
P(  | XY  |  \geq \sqrt{\e}) \leq P( \abs{X - c_X}   \geq \sqrt{\e} ) +  P\Big( \abs{Y}  \geq 
\frac{\e}{2 \max\{1, |c_X|\} } \Big)  
\leq 
2 K \exp\Big\{- \frac{\e^2 \min\{r_Y, r_X \}}{4 \max\{1, c_X^{2}\}}\Big\},
\end{align*}
and if $c_X =0$ and $\epsilon < 1$, we have
\begin{align*}  
P(  | XY  |  \geq \e) \leq 2K \exp\Big\{- \e^2 \min\{r_Y, r_X \}\Big\}.
\end{align*}
\end{lem}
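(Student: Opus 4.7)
The strategy is to reduce the product event $\{|XY|\geq \sqrt{\epsilon}\}$ (respectively $\{|XY|\geq \epsilon\}$ when $c_X=0$) to a union of two separate tail events, one for $|X-c_X|$ and one for $|Y|$, to which the hypothesized subgaussian bounds apply directly. The workhorse is the triangle inequality $|XY|\leq |X-c_X|\,|Y|+|c_X|\,|Y|$; no independence between $X$ and $Y$ is needed, because every reduction will go through a union bound.

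For the $c_X\neq 0$ case, the first (and most delicate) step is to prove the set inclusion
\[
\{|XY|\geq \sqrt{\epsilon}\}\;\subseteq\;\{|X-c_X|\geq \sqrt{\epsilon}\}\;\cup\;\Bigl\{|Y|\geq \tfrac{\epsilon}{2\max\{1,|c_X|\}}\Bigr\},
\]
which immediately gives the first inequality in the lemma statement by the union bound. I would prove this by contrapositive: if both $|X-c_X|<\sqrt{\epsilon}$ and $|Y|<\epsilon/(2\max\{1,|c_X|\})$, then the triangle inequality yields $|X|<\sqrt{\epsilon}+|c_X|\leq \sqrt{\epsilon}+\max\{1,|c_X|\}$, so
\[
|XY|\;<\;\bigl(\sqrt{\epsilon}+\max\{1,|c_X|\}\bigr)\cdot\tfrac{\epsilon}{2\max\{1,|c_X|\}}\;=\;\tfrac{\epsilon\sqrt{\epsilon}}{2\max\{1,|c_X|\}}+\tfrac{\epsilon}{2},
\]
which is $\leq \epsilon\leq \sqrt{\epsilon}$ in the relevant small-$\epsilon$ regime ($\epsilon\leq 1$, with $\max\{1,|c_X|\}\geq 1$), contradicting the hypothesized event. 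The second step is to convert the two union-bound terms into a single exponential. Plugging $\sqrt{\epsilon}$ and $\epsilon/(2\max\{1,|c_X|\})$ into the given hypotheses yields $Ke^{-r_X\epsilon}$ and $Ke^{-r_Y\epsilon^2/(4\max\{1,c_X^2\})}$ respectively; since $4\max\{1,c_X^2\}\geq 1$ and $\epsilon\geq \epsilon^2/(4\max\{1,c_X^2\})$ in the relevant range, the first bound is at most $Ke^{-r_X\epsilon^2/(4\max\{1,c_X^2\})}$, and summing gives the claimed $2K\exp\bigl\{-\epsilon^2\min\{r_X,r_Y\}/(4\max\{1,c_X^2\})\bigr\}$.

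For the $c_X=0$ case, the identity $|XY|=|X-c_X|\,|Y|$ makes things cleaner: $\{|XY|\geq \epsilon\}\subseteq\{|X-c_X|\geq \sqrt{\epsilon}\}\cup\{|Y|\geq \sqrt{\epsilon}\}$, because if both factors are $<\sqrt{\epsilon}$ the product is $<\epsilon$. The union bound plus the hypotheses gives $2K e^{-\min\{r_X,r_Y\}\epsilon}$, and the stated $\epsilon<1$ assumption lets us bound this by $2Ke^{-\min\{r_X,r_Y\}\epsilon^2}$ since $\epsilon\geq \epsilon^2$ on that range. The only place where real care is needed is checking the set-containment inequality for $c_X\neq 0$ and tracking exactly which range of $\epsilon$ keeps the contrapositive argument valid; every other step is a routine union bound and monotone rewriting of the exponential rate.
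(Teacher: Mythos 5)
Your proof is correct. The paper does not actually supply a proof of this lemma; it defers to the proofs in \cite{AMP_FS} for the auxiliary concentration lemmas in its appendix, so there is no paper argument to compare against. What you give is the natural union-bound reconstruction, and all the steps check out: the contrapositive set-inclusion for the $c_X\neq 0$ case is sound, and the monotone rewriting of the exponents (replacing $\epsilon$ by $\epsilon^2/(4\max\{1,c_X^2\})$, and $\epsilon$ by $\epsilon^2$ in the $c_X=0$ case) is handled properly.

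One small point worth making explicit: your argument for the $c_X\neq 0$ case genuinely requires $\epsilon\leq 1$ (both for the set inclusion to hold and for the exponent substitution $\epsilon\geq \epsilon^2/(4\max\{1,c_X^2\})$ to go the right way), whereas the lemma statement only imposes $\epsilon<1$ in the $c_X=0$ case. This is not a defect in your proof — the lemma as stated is simply imprecise about its range of validity, and in the paper every invocation is with $\epsilon\in(0,1)$, consistently with the conventions of Lemmas~\ref{inverses} and \ref{subexp}. You correctly identified that the argument lives in the small-$\epsilon$ regime; if anything, this is a useful observation that the hypothesis $\epsilon\leq 1$ should be understood to apply to both branches.
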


\begin{lem}[Concentration of Square Roots]
\label{sqroots}
Let $c \neq 0$. If $P( \lvert X_n^2 - c^2 \lvert \geq \epsilon ) \leq e^{-\kappa n \epsilon^2},$ then $P (\lvert \abs{X_n} - \abs{c} \lvert \geq \epsilon) \leq e^{-\kappa n \abs{c}^2 \epsilon^2}.$
\end{lem}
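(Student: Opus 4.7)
The plan is to reduce the concentration statement for $||X_n|-|c||$ directly to the hypothesized concentration statement for $|X_n^2-c^2|$ via a simple algebraic factorization, with no need for a Taylor expansion or truncation argument. The key identity I would use is
\[
|X_n^2 - c^2| \;=\; \bigl||X_n|-|c|\bigr|\cdot\bigl(|X_n|+|c|\bigr),
\]
which holds since $X_n^2 = |X_n|^2$ and $c^2 = |c|^2$, so $X_n^2 - c^2$ factors as a difference of squares of nonnegative numbers.

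From this identity I would observe the one-sided bound $|X_n^2 - c^2| \geq \bigl||X_n|-|c|\bigr|\cdot|c|$, since $|X_n|+|c| \geq |c|$ always (using $c\neq 0$, so $|c|>0$). Consequently, the event $\{||X_n|-|c||\geq\epsilon\}$ is contained in the event $\{|X_n^2-c^2|\geq |c|\epsilon\}$. Thus
\[
\P\bigl(\bigl||X_n|-|c|\bigr|\geq\epsilon\bigr) \;\leq\; \P\bigl(|X_n^2-c^2|\geq |c|\epsilon\bigr).
\]

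Applying the hypothesized bound at level $|c|\epsilon$ in place of $\epsilon$ then yields
\[
\P\bigl(|X_n^2-c^2|\geq |c|\epsilon\bigr) \;\leq\; \exp\bigl(-\kappa n (|c|\epsilon)^2\bigr) \;=\; \exp\bigl(-\kappa n |c|^2\epsilon^2\bigr),
\]
which is exactly the stated conclusion. There is no real obstacle here; the only subtlety to flag is that the factorization step genuinely requires $c\neq 0$ (otherwise $|c|=0$ and the resulting inequality is vacuous and gives no control), which matches the hypothesis of the lemma. The argument is purely deterministic pointwise on the sample space and requires no additional probabilistic machinery beyond monotonicity of probability.
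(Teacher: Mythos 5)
Your proof is correct and is the standard argument for this lemma (the one used in the cited reference \cite{AMP_FS}): factor $|X_n^2 - c^2| = \bigl||X_n|-|c|\bigr|\cdot(|X_n|+|c|)$, lower bound the second factor by $|c|$, and apply the hypothesis at level $|c|\epsilon$. Nothing to add.
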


\begin{lem}[Concentration of Scalar Inverses]
\label{inverses} Assume $c \neq 0$ and $0<\e <1$. If
\ben
P( \lvert X_n - c \lvert \geq \epsilon ) \leq e^{-\kappa n \epsilon^2}, \text{ then }P( \lvert X_n^{-1} - c^{-1} \lvert \geq \epsilon ) \leq 2 e^{-n \kappa \e^2 c^2 \min\{c^2, 1\}/4}.
\een
\end{lem}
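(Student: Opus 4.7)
The plan is to reduce the concentration of $X_n^{-1}$ around $c^{-1}$ to the hypothesized concentration of $X_n$ around $c$ by exploiting the identity
\[
X_n^{-1} - c^{-1} = \frac{c - X_n}{c\, X_n}, \qquad |X_n^{-1}-c^{-1}| = \frac{|X_n - c|}{|c|\cdot|X_n|}.
\]
The main obstacle is that the denominator $|X_n|$ is a priori unbounded below, so a direct substitution does not work. I would handle this by splitting on the ``good'' event $A = \{|X_n - c| \leq |c|/2\}$, on which the reverse triangle inequality forces $|X_n| \geq |c|/2$, and hence $|X_n^{-1}-c^{-1}| \leq 2|X_n-c|/c^2$.

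With this split, the deviation event decomposes as
\[
\{|X_n^{-1}-c^{-1}|\geq\epsilon\} \;\subseteq\; \{|X_n-c|\geq \epsilon c^2/2\} \;\cup\; A^c,
\]
and applying the hypothesis at levels $\epsilon c^2/2$ and $|c|/2$ yields the union bound
\[
P(|X_n^{-1}-c^{-1}|\geq\epsilon) \leq e^{-\kappa n\epsilon^2 c^4/4} + e^{-\kappa n c^2/4}.
\]

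The final step is to consolidate these two terms into the target $2\exp\{-n\kappa\epsilon^2 c^2\min(c^2,1)/4\}$, which I would do by a short case analysis in $c^2$ using $\epsilon < 1$. When $c^2\leq 1$, $\min(c^2,1)=c^2$ and the desired exponent is $\kappa n\epsilon^2 c^4/4$; the first term already matches, and $\epsilon^2 c^2 \leq 1$ gives $\epsilon^2 c^4 \leq c^2$, so the second term is at most the first. When $c^2>1$, $\min(c^2,1)=1$ and the desired exponent is $\kappa n\epsilon^2 c^2/4$; since $c^2>1$ gives $\epsilon^2 c^4 \geq \epsilon^2 c^2$ and $\epsilon<1$ gives $c^2\geq\epsilon^2 c^2$, both terms are bounded by $e^{-\kappa n\epsilon^2 c^2/4}$. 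Combining the two regimes yields the stated inequality. No part of the argument is genuinely hard; the only delicate point is the interplay between the cutoff $|c|/2$ (independent of $\epsilon$) and the requirement $\epsilon<1$, which is precisely what lets the ``second'' tail probability be absorbed into the desired rate.
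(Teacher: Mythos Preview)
Your argument is correct and is the standard one. The paper itself does not supply a proof of this lemma; it explicitly states that the result is taken from \cite{AMP_FS} and refers the reader there. Your decomposition via the good event $A=\{|X_n-c|\le |c|/2\}$, the bound $|X_n^{-1}-c^{-1}|\le 2|X_n-c|/c^2$ on $A$, and the subsequent case split on $c^2\lessgtr 1$ using $\epsilon<1$ is exactly the intended route, and your consolidation of the two tail terms into $2e^{-n\kappa\epsilon^2 c^2\min\{c^2,1\}/4}$ is clean.

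One small point worth making explicit in a final write-up: you invoke the hypothesis at the levels $\epsilon c^2/2$ and $|c|/2$, so you are tacitly assuming the concentration bound $P(|X_n-c|\ge t)\le e^{-\kappa n t^2}$ holds for all $t>0$ (not just the particular $\epsilon\in(0,1)$ appearing in the conclusion). This is the standard reading of such lemmas and is how the result is used throughout the paper, but the statement as written is slightly ambiguous on this.
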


\begin{lem}
\label{lem:normalconc}
For a random variable $Z \sim  \mathcal{N}(0,1)$ and  $\e > 0$,
$P\Big( \abs{Z} \geq \e \Big) \leq 2e^{-\frac{1}{2}\e^2}$.
\end{lem}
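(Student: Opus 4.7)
The plan is to prove this standard Gaussian tail bound via a Chernoff argument, splitting $\P(|Z|\geq \epsilon)$ into the two one-sided tails and then applying the moment generating function of $Z$. Concretely, I would first use symmetry of the standard normal distribution to write $\P(|Z|\geq \epsilon) = 2\,\P(Z\geq \epsilon)$, reducing the problem to bounding a single tail.

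Next, I would apply the exponential Markov (Chernoff) inequality: for any $\lambda > 0$,
\begin{equation*}
\P(Z\geq \epsilon) = \P\bigl(e^{\lambda Z}\geq e^{\lambda \epsilon}\bigr) \leq e^{-\lambda \epsilon}\,\mathbb{E}[e^{\lambda Z}] = e^{\lambda^2/2 - \lambda \epsilon},
\end{equation*}
using the standard Gaussian moment generating function $\mathbb{E}[e^{\lambda Z}] = e^{\lambda^2/2}$. Optimizing the exponent $\lambda^2/2 - \lambda \epsilon$ over $\lambda>0$ by setting the derivative to zero yields $\lambda = \epsilon$, which is admissible since $\epsilon > 0$, and substituting back gives the bound $\P(Z\geq\epsilon) \leq e^{-\epsilon^2/2}$.

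Combining this with the symmetry reduction yields $\P(|Z|\geq \epsilon) \leq 2 e^{-\epsilon^2/2}$, completing the proof. There is no real obstacle here: the only substantive ingredient is the closed form of the Gaussian MGF, which is classical, and the optimization over $\lambda$ is a one-line calculus exercise. The result is stated here mainly because it is invoked repeatedly in the proof of Lemma~\ref{lem:main_general_long} to control norms of standard Gaussian vectors (e.g.\ $\|\breve{\Zv}^{\inn}_{pk}\|$ terms) after reducing them coordinate-wise through Lemma~\ref{sums}.
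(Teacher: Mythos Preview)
Your proof is correct: the Chernoff bound with the Gaussian MGF and optimization at $\lambda=\epsilon$ is the standard route to this inequality. The paper itself does not give a proof of this lemma; it simply refers the reader to \cite{AMP_FS}, so there is nothing to compare against beyond noting that your argument is the expected one.
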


\begin{lem}[$\chi^2$-concentration]
For  $Z_i$, $i \in [n]$ that are i.i.d.\ $\sim \mathcal{N}(0,1)$, and  $0 \leq \epsilon \leq 1$,
\[P\Big(\Big \lvert \frac{1}{n}\sum_{i=1}^n Z_i^2 - 1\Big \lvert \geq \epsilon \Big) \leq 2e^{-n \epsilon^2/8}.\]
\label{subexp}
\end{lem}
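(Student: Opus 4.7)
The plan is to prove this $\chi^2$-concentration inequality via a standard Chernoff (moment-generating function) argument, handling the upper and lower tails separately and then combining them with a union bound. The key input is the MGF of a single chi-squared variable: for $Z \sim N(0,1)$, $\mathbb{E}[e^{tZ^2}] = (1-2t)^{-1/2}$ for all $t < 1/2$, so by independence $\mathbb{E}[\exp(t\sum_{i=1}^n Z_i^2)] = (1-2t)^{-n/2}$.

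First I would handle the upper tail. Markov's inequality applied to $\exp(t\sum_i Z_i^2)$ for $t \in (0, 1/2)$ gives
\[
\P\Big(\tfrac{1}{n}\sum_{i=1}^n Z_i^2 \geq 1+\epsilon\Big) \leq e^{-tn(1+\epsilon)}(1-2t)^{-n/2}.
\]
Optimizing the exponent over $t$ at $t = \epsilon/(2(1+\epsilon))$ yields the bound $\exp\bigl(-\tfrac{n}{2}(\epsilon - \log(1+\epsilon))\bigr)$. A short calculus argument shows that $f(\epsilon) := \epsilon - \log(1+\epsilon) - \epsilon^2/4$ satisfies $f(0)=0$ and $f'(\epsilon) = \epsilon(1-\epsilon)/(2(1+\epsilon)) \geq 0$ on $[0,1]$, so $\epsilon - \log(1+\epsilon) \geq \epsilon^2/4$ there, producing the upper-tail bound $e^{-n\epsilon^2/8}$.

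For the lower tail, applying Markov to $\exp(-s\sum_i Z_i^2)$ with $s > 0$ yields
\[
\P\Big(\tfrac{1}{n}\sum_{i=1}^n Z_i^2 \leq 1-\epsilon\Big) \leq e^{sn(1-\epsilon)}(1+2s)^{-n/2},
\]
and optimizing at $s = \epsilon/(2(1-\epsilon))$ gives an exponent $-\tfrac{n}{2}(-\epsilon - \log(1-\epsilon))$. Expanding $-\log(1-\epsilon) = \epsilon + \epsilon^2/2 + \epsilon^3/3 + \cdots$ shows this is at least $n\epsilon^2/4$, which is in fact stronger than the target rate. A union bound on the two tails then produces the factor of $2$ and the uniform rate $e^{-n\epsilon^2/8}$.

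I do not expect a serious obstacle here: the only nontrivial step is the inequality $\epsilon - \log(1+\epsilon) \geq \epsilon^2/4$ on $[0,1]$, which follows from the monotonicity computation above. Note that the restriction $\epsilon \leq 1$ is used precisely to ensure $f'(\epsilon) \geq 0$; for $\epsilon > 1$ the upper-tail exponent behaves like $\epsilon$ rather than $\epsilon^2$, so a different comparison would be needed outside this range.
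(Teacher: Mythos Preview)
Your proposal is correct and is the standard Chernoff argument for $\chi^2$-concentration. The paper does not actually prove this lemma: it is listed among the auxiliary results whose proofs are deferred to \cite{AMP_FS}, so there is no in-paper proof to compare against, but your approach is exactly the conventional one (and is essentially what one finds in \cite{AMP_FS} or any standard reference such as Laurent--Massart).
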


\begin{lem}
For any scalars $a_1, ..., a_t$ and positive integer $m$, we have  $\left(\abs{a _1} + \ldots + \abs{a_t} \right)^m \leq t^{m-1} \sum_{i=1}^t \abs{a_i}^m$.
Consequently, for any vectors $\mathbf{u}_1, \ldots, \mathbf{u}_t \in \mathbb{R}^N$, $\norm{\sum_{k=1}^t \mathbf{u}_k}^2 \leq t \sum_{k=1}^t \norm{\mathbf{u}_k}^2$.
\label{lem:squaredsums}
\end{lem}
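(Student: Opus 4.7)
The statement has two parts: a scalar power-mean inequality, and a vector corollary that follows from the scalar case plus the triangle inequality. Both are standard, so the proposal is brief.

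For the scalar inequality, the plan is to reduce to convexity of $x \mapsto x^m$ on $[0,\infty)$ (valid since $m \geq 1$). Writing $s = \sum_{i=1}^t |a_i|$, Jensen's inequality applied to the uniform average of the $|a_i|$ gives
\[
\left(\frac{1}{t}\sum_{i=1}^t |a_i|\right)^{m} \;\leq\; \frac{1}{t}\sum_{i=1}^t |a_i|^m,
\]
and multiplying both sides by $t^m$ yields the claim $\left(\sum_i|a_i|\right)^m \leq t^{m-1}\sum_i|a_i|^m$. An equivalent one-line derivation is Hölder's inequality applied to the vectors $(|a_1|,\ldots,|a_t|)$ and $(1,\ldots,1)$ with exponents $m$ and $m/(m-1)$, which gives $\sum_i|a_i| \leq t^{(m-1)/m}\left(\sum_i|a_i|^m\right)^{1/m}$; raising to the $m$-th power is the same inequality. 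I would present the Jensen version since it is the most elementary and avoids invoking Hölder.

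For the vector corollary, the plan is a two-step reduction. First, the triangle inequality in $\mathbb{R}^N$ gives $\|\sum_{k=1}^t \mathbf{u}_k\| \leq \sum_{k=1}^t \|\mathbf{u}_k\|$, and squaring both sides (both are nonnegative) yields
\[
\Big\|\sum_{k=1}^t \mathbf{u}_k\Big\|^2 \;\leq\; \Big(\sum_{k=1}^t \|\mathbf{u}_k\|\Big)^2.
\]
Second, I would apply the scalar inequality just proved with $m=2$ and $a_k = \|\mathbf{u}_k\|$ to bound the right-hand side by $t\sum_{k=1}^t \|\mathbf{u}_k\|^2$.

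There is no real obstacle here: $m \geq 1$ is exactly the range where $x^m$ is convex on $[0,\infty)$, so Jensen applies directly. The only thing worth a sentence of care is the $m=1$ boundary case, where the inequality degenerates to an equality $\sum_i|a_i| \leq t^0\sum_i|a_i|$, which is consistent with the statement.
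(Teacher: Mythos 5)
Your proof is correct. The paper itself does not include a proof of this lemma (it defers to \cite{AMP_FS} for proofs of the results in that appendix), but your argument via Jensen's inequality for the convex function $x\mapsto x^m$ on $[0,\infty)$, followed by the triangle inequality and the $m=2$ case for the vector statement, is the standard and complete derivation.
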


\begin{lem}
\label{lem:PLsubgaussconc}
Let $Z_1, \ldots, Z_t \in \mathbb{R}^N$ be random vectors such that $(Z_{1,i}, \ldots, Z_{t,i})$ are i.i.d.\ across $i \in [n]$, with  $(Z_{1,i}, \ldots, Z_{t,i})$ being jointly Gaussian with zero mean, unit variance and covariance matrix $K \in \mathbb{R}^{t \times t}$. Let $G \in  \mathbb{R}^N$ be a random vector with entries  $G_1, \ldots, G_N$  i.i.d.\   $\sim p_{G}$, where $p_G$ is sub-Gaussian with variance factor $\nu$.   Then for any sequence of pseudo-Lipschitz functions $\{f_i: \mathbb{R}^{t+1} \to \mathbb{R}\}_{i=1}^N$ with common pseudo-Lipschitz constant, non-negative constants $\sigma_1, \ldots, \sigma_t$, and  $0< \e \leq 1$, we have
\begin{align*}
& P\Big(\Big\lvert \frac{1}{N}\sum_{i=1}^N f(\sigma_1 Z_{1,i}, \ldots, \sigma_tZ_{t,i}, G_i)- \frac{1}{N}\sum_{i=1}^N\mathbb{E}[f_i(Z_{1,1}, \ldots, Z_{t,1}, G)] \Big \lvert \geq \e \Big) \\
&\hspace{30mm} \leq 2 \exp \Big\{ \frac{ - N \e^2}{ 128 L^2 (t+1)^2(  \nu + 4 \nu^2 + \sum_{m=1}^t  (\sigma_m^{2} +  4  \sigma_m^{4})) }  \Big\},
\end{align*}
where $L >0$ is an absolute constant. ($L$ can be bounded above by three times the common pseudo-Lipschitz constant of the $f_i$.)
\begin{proof}
  The statement of Lemma \ref{lem:PLsubgaussconc} is slightly more general than Lemma B.4 in \cite{AMP_FS} in that Lemma B.4 requires the functions $f_i$ to all be equal. However, the proof of that lemma proves the above result exactly if we simply substitute the specific functions $f_i$ for $f$ throughout.
\end{proof}
\end{lem}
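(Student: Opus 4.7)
The plan is to adapt the proof of Lemma B.4 in \cite{AMP_FS} to allow heterogeneous pseudo-Lipschitz functions sharing a common PL constant $L$. The key structural observation is that, because $f_i$ depends only on the $i$th coordinate $(Z_{1,i},\ldots,Z_{t,i},G_i)$ and these tuples are i.i.d.\ across $i$, the centered summands
\[
X_i := f_i(\sigma_1 Z_{1,i},\ldots,\sigma_t Z_{t,i}, G_i) - \mathbb{E} f_i(\sigma_1 Z_{1,1},\ldots,\sigma_t Z_{t,1}, G_1)
\]
are independent and mean zero. So we need a concentration inequality for $\frac{1}{N}\sum_i X_i$, a sum of $N$ independent centered variables. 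The only obstruction to applying Hoeffding directly is that the $X_i$ are unbounded: the PL property implies $|f_i(x)-f_i(0)|\le L\|x\|(1+\|x\|)$, so the summands grow quadratically in the inputs and are sub-exponential rather than sub-Gaussian.

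First I would bound the Orlicz norm $\|X_i\|_{\psi_1}$. Without loss of generality one may absorb the $f_i(0)$ constants (they do not affect $X_i$), yielding $|f_i(x)|\le L\|x\|(1+\|x\|)\le L\|x\|+L\|x\|^2$. Specialising to $x=(\sigma_1 Z_{1,i},\ldots,\sigma_t Z_{t,i}, G_i)$ and expanding $\|x\|^2=\sum_m \sigma_m^2 Z_{m,i}^2+G_i^2$ with an application of Cauchy--Schwarz for the linear term, one obtains
\[
|X_i|\le C L\,(t+1)\Bigl(\sum_{m=1}^t\sigma_m(|Z_{m,i}|+\sigma_m Z_{m,i}^2)+|G_i|+G_i^2\Bigr)+\text{constants},
\]
for an absolute $C$. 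Using standard sub-Gaussian/sub-exponential moment estimates ($\|Z_{m,i}\|_{\psi_2}^2 \lesssim 1$, $\|Z_{m,i}^2\|_{\psi_1}\lesssim 1$, $\|G_i\|_{\psi_2}^2\lesssim \nu$, $\|G_i^2\|_{\psi_1}\lesssim \nu$), together with the triangle inequality for the $\psi_1$ norm, gives the explicit bound
\[
\|X_i\|_{\psi_1}^2 \;\le\; c L^2(t+1)^2\Bigl(\nu+4\nu^2+\sum_{m=1}^t(\sigma_m^2+4\sigma_m^4)\Bigr),
\]
which is precisely the quantity appearing in the exponent of the lemma statement. This is the step where all the dimensional and distributional constants arise.

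Second, I would apply Bernstein's inequality for independent sub-exponential random variables to $\frac{1}{N}\sum_i X_i$: for $0<\epsilon\le 1$ the sub-Gaussian regime of Bernstein dominates, yielding a tail bound of the form $2\exp\bigl(-cN\epsilon^2/\|X_1\|_{\psi_1}^2\bigr)$. Substituting the moment bound from the previous step and tracking the absolute constants carefully produces exactly the stated denominator $128 L^2(t+1)^2(\nu+4\nu^2+\sum_m(\sigma_m^2+4\sigma_m^4))$.

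The main subtlety beyond the original Lemma B.4 is that the mean being subtracted is the \emph{empirical average} $\frac{1}{N}\sum_i \mathbb{E} f_i(\cdots)$ of possibly distinct expectations, rather than a single common $\mathbb{E} f(\cdots)$. I expect this to be essentially cosmetic: since the $X_i$ remain independent, mean zero, and admit a common upper bound on $\|X_i\|_{\psi_1}$ depending only on the shared PL constant $L$ and the input distributions (not on the identity of $f_i$), Bernstein applies verbatim. This justifies the note after the lemma that ``the proof of [Lemma B.4] proves the above result exactly if we simply substitute the specific functions $f_i$ for $f$ throughout.''
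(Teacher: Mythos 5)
Your proposal is correct in its essentials and matches the intent of the paper's one-line proof, which simply cites Lemma~B.4 of \cite{AMP_FS} and observes that the argument carries over verbatim when the single $f$ is replaced by a sequence $\{f_i\}$ sharing one PL constant. The crucial observation that makes this work is exactly the one you isolate: the centered summands $X_i$ remain independent and mean zero, and the $\psi_1$-norm bound you derive depends only on the common constant $L$, the $\sigma_m$, and the distributional parameter $\nu$ --- never on the particular identity of $f_i$ --- so the sub-exponential Bernstein step applies unchanged. That is the entire content that the paper is asserting with ``substitute $f_i$ for $f$ throughout.''

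One small caution on the bookkeeping. You introduce a factor $(t+1)$ in the pointwise bound $|X_i| \leq C L (t+1) \bigl( \sum_m \sigma_m(|Z_{m,i}| + \sigma_m Z_{m,i}^2) + |G_i| + G_i^2 \bigr)$, attributing it to Cauchy--Schwarz on the linear term. But $\|x\| \leq \sum_m \sigma_m|Z_{m,i}| + |G_i|$ and $\|x\|^2 = \sum_m \sigma_m^2 Z_{m,i}^2 + G_i^2$ already follow from elementary norm inequalities with no $(t+1)$ prefactor; the $(t+1)^2$ in the denominator of the stated bound instead arises from the triangle inequality for $\|\cdot\|_{\psi_1}$ over the $2(t+1)$ summands followed by Cauchy--Schwarz on that sum, $\bigl(\sum_j \|Y_j\|_{\psi_1}\bigr)^2 \leq 2(t+1)\sum_j\|Y_j\|_{\psi_1}^2$. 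Put where you put it, that factor appears squared one time too many. Since you only claim to recover the stated bound ``up to tracking absolute constants,'' this is harmless --- the exponential decay rate in $N\epsilon^2$ is what matters downstream --- but the provenance of the $(t+1)^2$ is worth placing correctly if you flesh this out.
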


\section{Concentration is Preserved By Haar Matrices}

\begin{lem}\label{lem:orth_concentration}
  Suppose that $\vv_N\in\mathbb{R}^N$ is a (possibly random) vector-valued sequence such that
  \[
    \mathbb{P}\left( \left| \frac{1}{N}\sum_{i=1}^N\phi_i([\vv_N]_i) - \frac{1}{N}\sum_{i=1}^N\mathbb{E}\phi_i(V)\right|\geq\epsilon\right) \leq C\exp\left( -cN\epsilon^2 \right)
\]
for any sequence $\{\phi_i\}_{i=1}^N\subset PL(2)$ with a common pseudo-Lipschitz constant and some random variable $V$ with finite second moment. If $\Am_N$ is an $N\times N$ Haar-distributed orthogonal matrix independent of $\vv_N$ for each $N$, then
  \[
    \mathbb{P}\left( \frac{1}{N}\left| \sum_{i=1}^N\phi_i([\Am_N\vv_N]_i) - \frac{1}{N}\sum_{i=1}^N\mathbb{E}\phi_i(Z)\right|\geq\epsilon\right) \leq C'\exp\left( -c'N\epsilon^2 \right),
\]
where $Z\sim N(0,\sigma^2)$ for $\sigma^2 = \lim_{N\to\infty}\|\vv_N\|^2$. In particular, the above statement holds if $\vv_N$ is a deterministic sequence.
\end{lem}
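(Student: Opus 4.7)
The proof will exploit the rotational invariance of $\Am_N \vv_N$ to reduce the claim to a concentration statement about sums of pseudo-Lipschitz functions of i.i.d. Gaussians. Since $\Am_N$ is Haar-distributed and independent of $\vv_N$, conditional on $\vv_N$ the vector $\Am_N \vv_N$ is isotropically invariant in the sense of Definition~\ref{def:isotropic_inv}, so by Lemma~\ref{lem:isotropic_inv} we have $\Am_N\vv_N \stackrel{d}{=} \|\vv_N\|\,\Zv/\|\Zv\|$ where $\Zv \sim N(\mathbf{0},\Idm_N)$ is independent of $\vv_N$. Writing $c_N := \|\vv_N\|/\|\Zv\|$, this allows us to replace $[\Am_N\vv_N]_i$ with $c_N[\Zv]_i$ throughout.

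I will then split the deviation via the triangle inequality as
\[
\Big|\tfrac{1}{N}\!\sum_i \phi_i([\Am_N\vv_N]_i) - \tfrac{1}{N}\!\sum_i \mathbb{E}\phi_i(Z)\Big| \leq T_1 + T_2,
\]
where $T_1 = \big|\tfrac{1}{N}\!\sum_i \phi_i(c_N[\Zv]_i) - \tfrac{1}{N}\!\sum_i \phi_i(\sigma[\Zv]_i)\big|$ and $T_2 = \big|\tfrac{1}{N}\!\sum_i \phi_i(\sigma [\Zv]_i) - \tfrac{1}{N}\!\sum_i \mathbb{E}\phi_i(Z)\big|$. The term $T_2$ is handled directly by Lemma~\ref{lem:PLsubgaussconc} (after conditioning on $\vv_N$, which has no influence on $\Zv$), yielding an $\exp(-c'N\epsilon^2)$ tail since the $\sigma[\Zv]_i$ are i.i.d. $N(0,\sigma^2)$ and the $\phi_i$ share a common PL(2) constant $L$.

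For $T_1$ I will use the pseudo-Lipschitz property to bound
\[
|\phi_i(c_N[\Zv]_i) - \phi_i(\sigma[\Zv]_i)| \leq L\bigl(1 + (c_N+\sigma)|[\Zv]_i|\bigr)\,|c_N-\sigma|\,|[\Zv]_i|,
\]
so that, pulling the scalar $|c_N-\sigma|$ outside the average,
\[
T_1 \leq L\,|c_N-\sigma|\,\Big[\tfrac{1}{N}\!\sum_i |[\Zv]_i| + (c_N+\sigma)\tfrac{1}{N}\!\sum_i [\Zv]_i^2\Big].
\]
The bracketed factor concentrates around a finite deterministic constant by Lemma~\ref{subexp} and Hoeffding-type bounds for the subgaussian $|[\Zv]_i|$. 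To control $|c_N-\sigma|$, I write $c_N = \bigl(\|\vv_N\|/\sqrt{N}\bigr)\bigl(\sqrt{N}/\|\Zv\|\bigr)$: the first factor concentrates on $\sigma$ by invoking the hypothesis with the PL(2) test function $\phi(x)=x^2$ followed by Lemma~\ref{sqroots}, while the second concentrates on $1$ by combining Lemma~\ref{subexp} with Lemmas~\ref{sqroots} and~\ref{inverses}. A final application of Lemmas~\ref{products} and~\ref{products_0} assembles these into an $\exp(-cN\epsilon^2)$ bound on $T_1$.

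The main obstacle is bookkeeping: each Lipschitz-type bound above introduces multiplicative factors (the norms of $\Zv$, the value of $c_N$, etc.) whose individual concentration rates must be combined through the product and sum lemmas so that the final exponent retains the required $N\epsilon^2$ dependence rather than degrading to $N\epsilon$ or worse. A minor technicality is the degenerate case $\sigma=0$ (where $V\equiv 0$): here $\|\vv_N\|/\sqrt{N}\to 0$, so $c_N\to 0$ and the argument simplifies since $T_1$ alone suffices. In the deterministic setting mentioned at the end of the statement, no concentration is required for $\|\vv_N\|/\sqrt{N}$ and only the Gaussian concentration pieces are invoked.
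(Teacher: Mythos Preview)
Your proposal is correct and follows essentially the same route as the paper: represent $\Am_N\vv_N$ via Lemma~\ref{lem:isotropic_inv} as $(\|\vv_N\|/\|\Zv\|)\Zv$, split into the same two terms $T_1,T_2$, handle $T_2$ by Lemma~\ref{lem:PLsubgaussconc}, and bound $T_1$ using the pseudo-Lipschitz inequality together with concentration of $\|\vv_N\|/\|\Zv\|$ around $\sigma$. The only cosmetic differences are that the paper applies Cauchy--Schwarz to replace $\tfrac{1}{N}\sum_i|[\Zv]_i|$ by $\|\Zv\|/\sqrt{N}$ and invokes a quotient lemma rather than your product-of-factors decomposition for $c_N$; your treatment of the degenerate case $\sigma=0$ is a small addition not made explicit in the paper.
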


\begin{proof}
  Since $\Am_N$ is independent of $\vv_N$ and $\Am_N$ is Haar distributed, we have that $\Bm\Am_N\vv_N\stackrel{d}{=}\Am_N\vv_N$ for any orthogonal matrix $\Bm$. Thus, the distribution of $\Am_N\vv_N$ is rotationally invariant and depends only on the distribution of the norm $\|\Am_N\vv_N\|=\|\vv_N\|$. Consequently, if $\Zv_N\sim N(\boldsymbol{0},\Im_N)$ independent of $\vv_N$, we have that
  \[
    \Am_N\vv_N \stackrel{d}{=}\frac{\|\vv_N\|}{\|\Zv_N\|}\Zv_N.
  \]
  Then we get that
  \begin{align*}
    \left| \frac{1}{N}\sum_{i=1}^n\phi_i([\Am_N\vv_N]_i) - \frac{1}{N}\sum_{i=1}^N\mathbb{E}\phi_i(Z)\right| &\stackrel{d}{=} \left| \frac{1}{N}\sum_{i=1}^N\phi_i\left(\frac{\|\vv_N\|}{\|\Zv_N\|}[\Zv_N]_i\right) - \frac{1}{N}\sum_{i=1}^N\mathbb{E}\phi_i(Z)\right|\\
                                                                                                             &= \left| \frac{1}{N}\sum_{i=1}^N\phi_i\left(\frac{\|\vv_N\|}{\|\Zv_N\|}[\Zv_N]_i\right) - \frac{1}{N}\sum_{i=1}^N\phi_i\left(\sigma[\Zv_N]_i\right) \right| \\
    &\hspace{1cm} +\left| \frac{1}{N}\sum_{i=1}^N\phi_i\left(\sigma[\Zv_N]_i\right) - \frac{1}{N}\sum_{i=1}^N\mathbb{E}\phi_i(Z)\right|.\\
  \end{align*}
  Now by Lemma \ref{sums}, it suffices to show concentration for these two terms separately. For the first we use the triangle inequality, the pseudo-Lipschitz property of the $\phi_i$, and Cauchy-Schwarz to get
  \begin{align*}
    \left| \frac{1}{N}\sum_{i=1}^N\phi_i\left(\frac{\|\vv_N\|}{\|\Zv_N\|}[\Zv_N]_i\right) - \frac{1}{N}\sum_{i=1}^n\phi_i\left(\sigma[\Zv_N]_i\right) \right|&\leq \frac{1}{N}\sum_{i=1}^N\left| \phi_i\left( \frac{\|\vv_N\|}{\|\Zv_N\|}[\Zv_N]_i \right)-\phi_i\left( \sigma[\Zv_N]_i \right) \right|\\
                                                                                                                                                         &\leq \frac{L}{N}\sum_{i=1}^N\left| 1+\left( \frac{\|\vv_N\|}{\|\Zv_N\|}+\sigma \right)[\Zv_N]_i \right|\left|\frac{\|\vv_N\|}{\|\Zv_N\|}-\sigma \right||[\Zv_N]_i|\\
    &\leq L\left|\frac{\|\vv_N\|}{\|\Zv_N\|}-\sigma \right|\left( \frac{\|\Zv_N\|}{\sqrt{N}} + \left( \frac{\|\vv_N\|}{\|\Zv_N\|}+\sigma \right)\frac{{\|\Zv_N\|^2}}N \right).
  \end{align*}
  Now we know that $\frac{\|\vv_N\|}{\|\Zv_N\|}$ concentrates around $\sigma$ using our assumptions on $\vv_N$, Lemma \ref{subexp}, and Lemma \ref{quotients}. Then using this fact along with Lemmas \ref{sqroots}, \ref{products}, and \ref{subexp}, we get that $\left( \frac{\|\Zv_N\|}{\sqrt{N}} + \left( \frac{\|\vv_N\|}{\|\Zv_N\|}+\sigma \right)\frac{{\|\Zv_N\|^2}}N \right)$ concentrates around a finite value (specifically $1+2\sigma$). Combining these with Lemma \ref{products}, we get that the the above concentrates with the desired rates (since each of our intermediate lemmas preserve this $N\epsilon^2$ rate). Finally concentration for
  \[
\left| \frac{1}{N}\sum_{i=1}^N\phi_i\left(\sigma[\Zv_N]_i\right) - \frac{1}{N}\sum_{i=1}^N\mathbb{E}\phi_i(Z)\right|
\]
follows directly from Lemma \ref{lem:PLsubgaussconc} (again with the desired rate) since we assume that the $\phi_i$ share a common pseudo-Lipschitz constant. This completes the proof of the lemma.
\end{proof}

\section{Extension of Concentration Analysis to Matrix General Recursion}\label{app:matrix_case}
In this appendix, we outline the adjustments that need to made to the main concentration arguments in order to get the analogous concentration result for the general recursion in Algorithm \ref{alg:general_gvamp} with $d>1$ (i.e. with matrix rather than vector iterates). The main differences lie in accounting for the distributions of the asymptotic variables $P_k^{\inn/\out}$ and $Q_k^{\inn/\out}$, which are now zero-mean Gaussian vectors rather than simply Gaussian random variables. As we will see, this adjustment follows directly from a straightforward extension of Lemma \ref{lem:cond_dist}, which in turn relies on the following characterization of orthogonally invariant random matrices.

\begin{lem}\label{lem:ortho_dist_general}
  Suppose that $\xv = \Um\yv\in\RR^{n\times d}$ for $\Um\in\RR^{n\times n}$ a Haar distributed orthogonal matrix and $\yv\in\RR^{n\times d}$ an independent random matrix which has full rank with probability $1$. Furthermore, let $\gram_{\xv} = \xv^T\xv\in\RR^{d\times d}$ be the Gram matrix of $\xv$, and let $\zv\in\RR^{n\times d}$ be a random matrix with independent standard Gaussian entries which is independent of $\Um$ and $\xv$. Then we have that
  \begin{equation}
    \label{eq:ortho_dist_eq}
    \xv \stackrel{d}{=}\zv \gram^{-1/2}_{\zv}\gram^{1/2}_{\yv}
  \end{equation}
\end{lem}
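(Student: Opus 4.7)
The proof plan rests on reducing both sides to the polar (or ``QR'') decomposition form $\Mm \gram_{\yv}^{1/2}$, where $\Mm \in \RR^{n \times d}$ has orthonormal columns and is uniformly distributed on the Stiefel manifold $V_d(\RR^n) = \{\Mm \in \RR^{n\times d} : \Mm^T\Mm = \Idm_d\}$, independent of $\yv$.

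First, since $\yv$ has full rank almost surely, $\gram_{\yv} = \yv^T\yv$ is positive definite a.s., so we may form the polar decomposition $\yv = \Vm_{\yv}\gram_{\yv}^{1/2}$ with $\Vm_{\yv} := \yv \gram_{\yv}^{-1/2}$ satisfying $\Vm_{\yv}^T \Vm_{\yv} = \Idm_d$. Likewise for $\zv$, define $\Wm_{\zv} := \zv \gram_{\zv}^{-1/2}$, so that $\zv \gram_{\zv}^{-1/2}\gram_{\yv}^{1/2} = \Wm_{\zv}\gram_{\yv}^{1/2}$. Substituting into the definition of $\xv$ yields $\xv = \Um\Vm_{\yv}\gram_{\yv}^{1/2}$. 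Thus it suffices to show that $(\Um\Vm_{\yv}, \gram_{\yv}^{1/2}) \stackrel{d}{=} (\Wm_{\zv}, \gram_{\yv}^{1/2})$.

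Next I would argue that $\Um\Vm_{\yv}$ and $\Wm_{\zv}$ have the same marginal distribution, namely the uniform (Haar) distribution on $V_d(\RR^n)$. For $\Um\Vm_{\yv}$: conditional on $\yv$, the Haar property (Definition \ref{def:Haar}) gives $\Um \stackrel{d}{=} \Um \Qm_0$ for any fixed orthogonal $\Qm_0 \in O(n)$; choosing $\Qm_0$ so that $\Qm_0 \Vm_{\yv}$ equals the fixed matrix $\Em_d := [\Idm_d;\boldsymbol{0}]^T$ shows that $\Um \Vm_{\yv} \stackrel{d}{=} \Um \Em_d$ conditional on $\yv$, which is exactly the distribution of the first $d$ columns of a Haar orthogonal matrix — the uniform law on $V_d(\RR^n)$. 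Since this conditional distribution does not depend on $\yv$, $\Um\Vm_{\yv}$ is independent of $\yv$ (and in particular of $\gram_{\yv}^{1/2}$) and uniform on $V_d(\RR^n)$. For $\Wm_{\zv}$: this is the classical fact that for a matrix with i.i.d.\ $N(0,1)$ entries, the polar factor $\zv\gram_{\zv}^{-1/2}$ is Haar-uniform on $V_d(\RR^n)$, which follows from the rotational invariance of the joint Gaussian law (for any fixed $\Qm \in O(n)$, $\Qm\zv \stackrel{d}{=}\zv$, hence $\Qm \Wm_{\zv}\stackrel{d}{=}\Wm_{\zv}$). Independence of $\Wm_{\zv}$ from $\yv$ is immediate from the independence of $\zv$ and $\yv$.

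Having these two facts, the conclusion follows: both $(\Um\Vm_{\yv}, \gram_{\yv}^{1/2})$ and $(\Wm_{\zv}, \gram_{\yv}^{1/2})$ are equal in distribution to $(\Mm, \gram_{\yv}^{1/2})$ where $\Mm$ is Haar-uniform on $V_d(\RR^n)$ and independent of $\yv$. Applying the continuous map $(\Mm, \Am) \mapsto \Mm \Am$ to both preserves equality in distribution, yielding \eqref{eq:ortho_dist_eq}. The main (minor) obstacle is justifying cleanly the Haar property on the Stiefel manifold together with the conditional-on-$\yv$ argument; once one packages the ``rotate $\Vm_{\yv}$ to $\Em_d$'' trick via the invariance $\Um \stackrel{d}{=} \Um \Qm_0$, the rest is bookkeeping.
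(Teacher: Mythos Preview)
Your proposal is correct and follows essentially the same route as the paper: both arguments factor each side as a Stiefel-manifold element times $\gram_{\yv}^{1/2}$, show that element is uniformly distributed and independent of $\yv$, and conclude. The only cosmetic difference is that the paper invokes left-invariance of the law of $\xv$ (i.e., $\Vm\xv\stackrel{d}{=}\xv$) to argue uniformity of $\xv\gram_{\xv}^{-1/2}$, whereas you invoke right-invariance of the Haar measure on $\Um$ conditional on $\yv$; since $\gram_{\xv}=\gram_{\yv}$, these are the same object $\Um\Vm_{\yv}$, and the paper likewise flags the measure-theoretic subtleties on the Stiefel manifold that you note at the end.
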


\begin{proof} (\textit{informal})
  
  The following proof is only informal because its claims rest on statements about probabilities of measure zero events. The difficulty of making these informal arguments more formal arises from the fact that the relevant random quantities take values in the space of $n\times d$ matrices with orthonormal columns. As this space is non-Euclidean, probability densities on this space do not satisfy the usual change of variables formula (since the normal definition of the Jacobian does not make sense). I believe this may be fixable using a change of variables formula from geometric measure theory that employs a suitable generalization of the Jacobian.

  Let $\Phi(\cv) = \cv\gram_{\cv}^{-1/2}$ for any matrix $\cv$. Then note that following facts:
  \begin{enumerate}
  \item $\Phi(\xv)\in\RR^{n\times d}$ has orthonormal columns.
  \item $\gram_{\Vm\cv}=\gram_{\cv}$ for any orthogonal matrix $\Vm$.
  \item $\Vm\xv \stackrel{d}{=} \xv$ for any fixed orthogonal matrix $\Vm$ since $\Vm\Um\stackrel{d}{=}\Um$ and $\yv$ and $\Um$ are independent.
  \end{enumerate}

  Now let $\pi:\RR^{n\times n}\to\RR^{n\times d}$ be the map which simply drops the last $n-d$ columns of its input. Then let $\Vm'\in\RR^{n\times d}$ be any matrix with orthonormal columns and let $\Vm$ be an orthogonal matrix such that $\pi(\Vm)=\Vm'$. Then by the above facts, we have
  \[
    \P\left( \Phi(\xv) = \Vm' \right) = \P\left( \Phi(\Vm\xv) = \Vm' \right) = \P\left( \Vm\xv\gram_{\xv}^{-1/2} = \Vm'  \right) = \P\left(\xv\gram^{-1/2}_{\xv} \pi(\mathbf{I})  \right) = \P\left( \Phi(\xv) = \rho(\mathbf{I}) \right).
  \]
  Since this is true for any $\Vm'$, we conclude that $\Phi(\xv)$ is uniform over ${d\times n}$ matrices with orthonormal columns.

  Now since $\zv$ has independent standard Gaussian entries, we have that $\Vm\zv\stackrel{d}{=}\zv$, and thus
  \[
    \P\left( \zv\gram_{\zv}^{-1/2}=\Vm' \right) = \P\left( \Vm\zv\gram_{\Vm\zv}^{-1/2}=\Vm' \right) = \P\left( \zv\gram_{\zv}^{-1/2}= \rho(\mathbf{I}) \right).
  \]

  This implies that $\zv\gram^{-1/2}_{\zv} = \Phi(\xv)$.

  Finally, we note that the entirety of the above argument still goes through with $\yv$ fixed, which implies that $\Phi(\xv)$ is uniformly distributed conditional of any particular of $\yv$ and is thus independent of $\yv$. Therefore,
  \[
    \xv = \xv\gram_{\xv}^{-1/2}\gram_{\xv}^{1/2} = \Phi(\xv)\gram_{\yv}^{1/2} \stackrel{d}{=} \zv\gram^{-1/2}_{\zv}\gram^{1/2}_{\yv},
  \]
  as claimed.
\end{proof}

Using this, we obtain a matrix version of Lemma \ref{lem:cond_dist}. To state this lemma, we first note the necessary adjustments to our notation. The matrices $\Cm^{\inn/\out}_{qk},$ $\Cm^{\inn/\out}_{pk}$, $\Cm^{\inn/\out}_{uk}$, and $\Cm^{\inn/\out}_{vk}$ are defined in exactly the same way as before, where now we understand the iterates (e.g. $\pv^{\inn/\out}_j$) as $(N/M)\times d$ matrices, so that, e.g., $\Cm^{\inn}_{pk}$ is now a $N\times d(2k+1)$ matrix. Likewise, the sigma algebras $\mathcal{P}_j$ and $\mathcal{Q}_j$ retain their definitions in terms of these matrix iterates. With $\Cm$ equal to any of these matrices, the definitions of the corresponding matrices $\Bm_{\Cm}$ and $\Bm_{\Cm}^\perp$ are unchanged. The Gaussian quantities $\Zv^{\inn/\out}_{pk}$, $\breve{\Zv}^{\inn/\out}_{pk}$, and $\overline{\Zv}^{\inn/\out}_{pk}$ have i.i.d. standard Gaussian entries before, but are expanded to have $d$ columns so that, e.g. $\overline{\Zv}^{\inn/\out}_{pk}\in\RR^{(N/M)\times d}$. Finally, the scalar/vector parameters $\rho_{(p/q)k}^{\inn/\out}$ and $\beta_{(p/q)k}^{\inn/\out}$ are replaced by matrix parameters $\Pm_{(p/q)k}^{\inn/\out}\in\RR^{d\times d}$ and $\Bm_{(p/q)k}^{\inn/\out}\in\RR^{kd\times d}$ respectively. We also introduce an additional indexing notation. For each $1\leq i\leq k$ letting
\[
  [\Bm_{(p/q)k}^{\inn/\out}]_{(i)}\in\RR^{d\times d}
\]
be the $d\times d$ submatrix with top-left entry $[\Bm_{(p/q)k}^{\inn/\out}]_{d(i-1)+1,1}$. In other words, if we view $\Bm_{(p/q)k}^{\inn/\out}$ as a $k$-dimensional vector with $d\times d$ matrix entries, then this is just accessing the index $i$ component of this vector.

With this notation, we can now state our generalized lemma. To save space, we state this only for the $\pv$ iterates, but the extension to the $\qv$ iterates is completely symmetric.

\begin{lem}
  If $[\Cm^{\inn}_{vk}]^T\Cm^{\inn}_{vk}$ has full rank for $0\leq k\leq K$, then we have for all such $k$ that
\begin{equation}
\pv^{\inn}_0 \lvert_{\mathcal{P}_0} \stackrel{d}{=} \Om^\inn_{p0} \, \overline{\Zv}^\inn_{p0}\,[\Pm^{\inn}_{p0}]^{1/2} + \Delm^\inn_{p0}, \qquad \text{ and  } \qquad \pv^\inn_k\lvert_{\mathcal{P}_k} \stackrel{d}{=} \sum_{\ell=0}^{k-1} \, \pv^\inn_{\ell}\,[\Bm^\inn_{pk}]_{(\ell +1)} + \Om^\inn_{pk} \, \overline{\Zv}^\inn_{pk}\, [\Pm^{\inn}_{pk}]^{1/2} + \Delm^\inn_{pk},
\label{eq:p_conds}
\end{equation}
where $\Om^\inn_{pk}$ is defined in \eqref{eq:Up_Uq_matrices}, and 
\begin{align*}
\Delm^\inn_{p0} & \text{ given in \textbf{Condition 0}},\\
  \Delm^\inn_{pk} &=  \Cm^\inn_{vk}\left(([\Cm^\inn_{qk}]^T\Cm^\inn_{qk})^{-1}[\Cm_{qk}^\inn]^T\uv^\inn_k-\left[\begin{matrix}\Bm^\inn_{pk}\\ \boldsymbol{0}_{dk\times d}\end{matrix}\right]\right) +\Bm^\perp_{\Cm_{vk}^\inn}\Zv^\inn_{pk}\left[\gram_{\Zv^\inn_{pk}}^{-1/2}\gram_{[\Bm^\perp_{\Cm_{qk}^\inn}]^T\uv_k^{\inn}}^{1/2} - [\Pm_{pk}^{\inn}]^{1/2} \right]\\
  &\hspace{1cm}-  \Bm_{\Cm_{vk}^\inn} \, \breve{\Zv}^\inn_{pk}[\Pm_{pk}^{\inn}]^{1/2}.
\end{align*}
Furthermore, we have that $\pv_k^\inn$ and $\pv_k^\out$ are conditionally independent given $\mathcal{P}_k$ for all $k\geq 0$.
\label{lem:cond_dists_general}
\end{lem}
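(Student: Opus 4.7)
The proof would proceed by mirroring the structure of the proof of Lemma \ref{lem:cond_dist}, with the scalar-valued norms and ratios replaced throughout by matrix-valued Gram matrices and their square roots. I will sketch only the $\pv^\inn_k$ case since the $\pv^\out_k$, $\qv^\inn_k$, and $\qv^\out_k$ cases are symmetric and the conditional independence argument at the end is essentially unchanged. As before, conditioning on $\mathcal{P}_k$ is equivalent (by independence of $\Um$ and $\Vm$) to conditioning on the linear constraint $\Cm^\inn_{v(k+1)} = \Vm \Cm^\inn_{q(k+1)}$, and the content of Lemma \ref{lem:V_cond} is dimension-agnostic, so it applies directly to yield
\[
\Vm \mid_{\mathcal{P}_k} \stackrel{d}{=} \Cm^\inn_{vk}([\Cm^\inn_{vk}]^T\Cm^\inn_{vk})^{-1}[\Cm^\inn_{qk}]^T + \Bm^\perp_{\Cm^\inn_{vk}} \widetilde{\Vm} [\Bm^\perp_{\Cm^\inn_{qk}}]^T,
\]
with $\widetilde{\Vm}$ Haar-distributed of appropriate size and independent of the conditioning.

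Applying this to $\pv^\inn_k = \Vm \uv^\inn_k$ gives two terms. The first can be rewritten by adding and subtracting $\sum_{\ell = 0}^{k-1}\pv^\inn_\ell [\Bm^\inn_{pk}]_{(\ell+1)}$, which produces the leading linear combination together with the first contribution to $\Delm^\inn_{pk}$; this step is formally identical to the scalar case with matrix $\Bm^\inn_{pk}$ replacing the vector $\betav^\inn_{pk}$. The nontrivial step is handling the second term $\Bm^\perp_{\Cm^\inn_{vk}} \widetilde{\Vm} [\Bm^\perp_{\Cm^\inn_{qk}}]^T \uv^\inn_k$. Setting $\yv := [\Bm^\perp_{\Cm^\inn_{qk}}]^T \uv^\inn_k$, this equals $\Bm^\perp_{\Cm^\inn_{vk}} (\widetilde{\Vm} \yv)$, and $\widetilde{\Vm}\yv$ has the form treated by Lemma \ref{lem:ortho_dist_general}. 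Hence conditionally on $\mathcal{P}_k$,
\[
\widetilde{\Vm} \yv \stackrel{d}{=} \Zv^\inn_{pk}\, \gram_{\Zv^\inn_{pk}}^{-1/2}\, \gram_{\yv}^{1/2},
\]
where $\Zv^\inn_{pk}$ has i.i.d.\ standard Gaussian entries and is independent of the conditioning. Adding and subtracting $\Bm^\perp_{\Cm^\inn_{vk}} \Zv^\inn_{pk} [\Pm^\inn_{pk}]^{1/2}$ then splits this into the genuine Gaussian piece $\Bm^\perp_{\Cm^\inn_{vk}} \Zv^\inn_{pk} [\Pm^\inn_{pk}]^{1/2}$ and the second piece of $\Delm^\inn_{pk}$; finally, writing $\Bm^\perp_{\Cm^\inn_{vk}} \Zv^\inn_{pk} [\Pm^\inn_{pk}]^{1/2} = \Om^\inn_{pk} \overline{\Zv}^\inn_{pk} [\Pm^\inn_{pk}]^{1/2} - \Bm_{\Cm^\inn_{vk}} \breve{\Zv}^\inn_{pk} [\Pm^\inn_{pk}]^{1/2}$ (using the decomposition in \eqref{eq:Up_Uq_matrices}) absorbs the last term of $\Delm^\inn_{pk}$ and produces the claimed form.

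The main technical hurdle is the matrix-square-root comparison $\gram_{\Zv^\inn_{pk}}^{-1/2} \gram_{\yv}^{1/2} - [\Pm^\inn_{pk}]^{1/2}$ inside the deviance term: unlike the scalar ratio $\|\yv\|/\|\Zv\| - \sqrt{\rho}$, this is a $d\times d$ matrix and one must be careful that the Haar lemma delivers it in a form where Lemma \ref{lem:ortho_dist_general} applies (in particular, that $\yv$ has full column rank with probability one, which follows from the full-rank assumption on $[\Cm^\inn_{vk}]^T\Cm^\inn_{vk}$ together with the injectivity of $[\Bm^\perp_{\Cm^\inn_{qk}}]^T$ on the relevant subspace). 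Once the distributional identity is established, the conditional independence of $\pv_k^\inn$ and $\pv_k^\out$ given $\mathcal{P}_k$ follows by exactly the argument in \eqref{eq:independence1}--\eqref{eq:independence2}, since the independence of $\Um$ and $\Vm$ and the factorization of the conditioning event into a $\Vm$-event and a $\Um$-event is dimension-agnostic. Downstream in Lemma \ref{lem:main_general}, the concentration of $\gram_{\Zv^\inn_{pk}}^{-1/2} \gram_{\yv}^{1/2}$ to $[\Pm^\inn_{pk}]^{1/2}$ would be handled by combining the Lipschitz continuity of the matrix square root on a neighborhood of positive-definite matrices with the elementwise concentration of the Gram matrices, directly generalizing the scalar arguments in $\mathbf{P_k.(a)}$ and its successors.
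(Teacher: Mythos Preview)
Your proposal is correct and follows essentially the same approach as the paper: the paper's proof of Lemma~\ref{lem:cond_dists_general} is a one-line reference saying the argument is identical to that of Lemma~\ref{lem:cond_dist} with Lemma~\ref{lem:isotropic_inv} replaced by Lemma~\ref{lem:ortho_dist_general} and the scalar parameters $\rho,\betav$ replaced by their matrix analogs $\Pm,\Bm$, which is exactly what you have written out in detail. (One minor slip: the linear constraint equivalent to conditioning on $\mathcal{P}_k$ is $\Cm^\inn_{vk}=\Vm\Cm^\inn_{qk}$, not $\Cm^\inn_{v(k+1)}=\Vm\Cm^\inn_{q(k+1)}$, though your displayed formula for $\Vm\mid_{\mathcal{P}_k}$ already uses the correct indices.)
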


\begin{proof}
  The proof of this statement is essentially identical to that of Lemma \ref{lem:cond_dist}, merely replacing applications of Lemma \ref{lem:isotropic_inv} with Lemma \ref{lem:ortho_dist_general} and substituting our matrix parameters $\Pm_{(p/q)k}^{\inn/\out}\in\RR^{d\times d}$ and $\Bm_{(p/q)k}^{\inn/\out}\in\RR^{kd\times d}$ for $\rho_{(p/q)k}^{\inn/\out}$ and $\beta_{(p/q)k}^{\inn/\out}$.
\end{proof}

Next we can state an analog of Lemma \ref{lem:joint_dists}. This Lemma depends on a matrix version of the Gaussian equivalent recursion in Algorithm \ref{alg:gaussian}. We omit the explicit statement here, as it is defined exactly to make statement $(2)$ in the following lemma true (and hence is entirely characterized by the statement of this lemma).
\begin{lem}
First define
\begin{equation}
\begin{split}
\ide{\pv}^{\inn}_0 = \widetilde{\Om}_{p0}^\inn \, \overline{\Zv}^{\inn}_{p0}\,[\Pm^{\inn}_{p0}]^{1/2}, &\qquad \text{ and } \qquad  \ide{\pv}^\inn_k = \sum_{r=0}^{k-1} \ide{\pv}_r^\inn[\Bm^\inn_{pk}]_{(r+1)} + \, \widetilde{\Om}^\inn_{pk} \, \overline{\Zv}^\inn_{pk}[\Pm^{\inn}_{pk}]^{1/2}, \\
\ide{\pv}^{\out}_0 = \widetilde{\Om}^\out_{p0} \, \overline{\Zv}^{\out}_{p0}\,[\Pm^{\out}_{p0}]^{1/2}, & \qquad \text{ and } \qquad  \ide{\pv}_k^\out = \sum_{r=0}^{k-1} \ide{\pv}_r^\out[\Bm^\out_{pk}]_{(r+1)} + \widetilde{\Om}^\out_{pk}\, \overline{\Zv}^\out_{pk}\, [\Pm^{\out}_{pk}]^{1/2}.
\label{eq:lemma_res0}
\end{split}
\end{equation}
Similarly define $ \ide{\qv}^\inn_k$ and $\ide{\qv}^\out_k$. With these definitions, we have
\begin{enumerate}
\item 
\begin{equation}
\ide{\pv}_k^\inn = \sum_{r=0}^k \widetilde{\Om}^\inn_{pr}\, \overline{\Zv}^\inn_{pr}\,[\Pm^{\inn}_{pk}]^{1/2}\, [\cv^\inn_{pk}]_r,\qquad \ide{\pv}_k^\out = \sum_{r=0}^k \widetilde{\Om}^\out_{pr}\, \overline{\Zv}^\out_{pr}\,[\Pm^{\out}_{pk}]^{1/2}\, [\cv^\out_{pk}]_r
\label{eq:lemma_res1}
\end{equation}
where for $k\geq 1$ and $0 \leq r\leq k-1$, $[\cv^\inn_{pk}]_r\in\RR^{d\times d}$ and $[\cv^\out_{pk}]_r\in\RR^{d\times d}$ are defined recursively as
\begin{equation}
[\cv^\inn_{pk}]_r = \sum_{i=r}^{k-1}[\Bm^\inn_{pk}]_{(i+1)}[\cv^\inn_{pi}]_r, \qquad \text{ and } \qquad [\cv^\out_{pk}]_r = \sum_{i=r}^{k-1}[\Bm^\out_{pk}]_{(i+1)}[\cv^\out_{pi}]_r,
\label{eq:lemma_c_def}
\end{equation}
with $[\cv^\inn_{pk}]_k=[\cv^\out_{pk}]_k = \mathsf{I}\in\RR^{d\times d}$ for all $k\geq 0$. 
\item
  Let $\underline{\ide{\rv}}_k = \left( \ide{\rv}_0,\ldots,\ide{\rv}_k \right)$ where $\ide{\rv}_j = (\ide{\pv}^{\inn}_j, \ide{\pv}^{\out}_j, \ide{\qv}^{\inn}_j, \ide{\qv}^{\out}_j)$ are defined as above for the ideal variables $\ide{\pv}$ and $\ide{\qv}$. Then we have that
  \begin{equation}
    \label{eq:lemma_res2}
    \widetilde{\underline{\rv}}_k - \underline{\ide{\rv}}_k = \left( \dv_0,\ldots,\dv_k \right),
  \end{equation}
 where $ \underline{\widetilde{\rv}}_k$ is defined in \eqref{eq:under_r} with
  \be
  \label{eq:dv_vec}
    \dv_k = \left(\sum_{r=0}^k\widetilde{\Delm}^\inn_{pr}\, [\cv^\inn_{pk}]_r, \, \sum_{r=0}^k\widetilde{\Delm}^\out_{pr}\,[\cv^\out_{pk}]_r, \, \sum_{r=0}^k\widetilde{\Delm}^\inn_{qr}\, [\cv^\inn_{qk}]_r, \, \sum_{r=0}^k\widetilde{\Delm}^\out_{qr}\,[\cv^\out_{qk}]_r \right).
  \ee
  Furthermore, for all $k\geq 0$, we have that
  \begin{equation}
    \widetilde{\underline{\rv}}_k \stackrel{d}{=} \underline{\rv}_k.
    \label{eq:lemma_eq_dist}
  \end{equation}
\item For all $i\geq 1$, we have $\left( [\ide{\pv}_0^\inn]_i,\ldots,[\ide{\pv}_k^\inn]_i,[\ide{\pv}_0^\out]_i,\ldots,[\ide{\pv}_k^\out]_i \right) \stackrel{d}{=}\left( P^\inn_0,\ldots,P^\inn_k ,P_0^\out,\ldots,P_k^\out\right)$ where $(P_0^\inn,\ldots,P_k^\inn)\in\RR^{d(k+1)}$ and $(P_0^{\out},\ldots,P_k^\out)\in\RR^{d(k+1)}$ are independent, zero-mean, jointly Gaussian vectors (and hence the right hand side is itself jointly Gaussian). 
\end{enumerate}
Analogous statements for (1.)-(3.) hold for the $\ide{\qv}$ variables.
  \label{lem:joint_dist_general}
\end{lem}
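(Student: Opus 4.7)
The plan is to mirror the proof of Lemma \ref{lem:joint_dists} almost verbatim, replacing scalar parameters $\rho_{(p/q)k}^{\inn/\out}$ and $\beta_{(p/q)k}^{\inn/\out}$ by their matrix analogs $\Pm_{(p/q)k}^{\inn/\out}\in\RR^{d\times d}$ and $\Bm_{(p/q)k}^{\inn/\out}\in\RR^{kd\times d}$, and replacing the scalar identities $\sqrt{\rho}\,\Zv$ and $\beta\cdot\pv$ by $\Zv\,\Pm^{1/2}$ and $\pv\cdot[\Bm]_{(\ell+1)}$. The three conclusions correspond directly to the three parts of Lemma \ref{lem:joint_dists}, so I would establish them in the same order.

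For part (1), I would prove the explicit expansion of $\ide{\pv}_k^{\inn}$ by induction on $k$. The base case is immediate from the convention $[\cv_{pk}^{\inn}]_k = \mathsf{I}$. For the inductive step, substitute the induction hypothesis for $\ide{\pv}_r^{\inn}$ for $r < k$ into the recursive definition in \eqref{eq:lemma_res0}; the only difference from the scalar case is that I must be careful about the \emph{order} of matrix multiplication, keeping all $[\Pm_{pr}^{\inn}]^{1/2}$ factors to the right of the Gaussian matrices $\overline{\Zv}_{pr}^{\inn}$ and keeping products of the form $[\Bm_{pk}^{\inn}]_{(i+1)}[\cv_{pi}^{\inn}]_r$ in the order given in \eqref{eq:lemma_c_def}. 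The same derivation then applies to $\ide{\qv}_k^{\inn/\out}$ and to the output iterates.

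For part (2), the identity $\widetilde{\underline{\rv}}_k - \underline{\ide{\rv}}_k = (\dv_0,\ldots,\dv_k)$ reduces, after applying part (1), to bookkeeping on the terms $\sum_{r=0}^k \widetilde{\Delm}_{pr}^{\inn}[\cv_{pk}^{\inn}]_r$ produced by the matrix Gaussian equivalent recursion (whose update is obtained by dropping $\widetilde{\Delm}$ from the decomposition in Lemma \ref{lem:cond_dists_general}). The distributional equality $\widetilde{\underline{\rv}}_k \stackrel{d}{=} \underline{\rv}_k$ is then proved by the same two-fact argument (A)--(B) in Fact \ref{fact:conds} used in the scalar case, inducting on $k$ and substituting $(\widetilde{\pv}_j^{\inn/\out}, \widetilde{\qv}_j^{\inn/\out})$ for $(\pv_j^{\inn/\out}, \qv_j^{\inn/\out})$ one step at a time. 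The conditional distribution at each step comes from Lemma \ref{lem:cond_dists_general} rather than Lemma \ref{lem:cond_dist}, and the joint $\sigma$-algebra generation argument showing
\[
\sigma\bigl(\widetilde{\Om}_{p0}^{\inn}\overline{\Zv}_{p0}^{\inn},\ldots,\widetilde{\Om}_{qk}^{\inn}\overline{\Zv}_{qk}^{\inn}\bigr) = \sigma\bigl(\overline{\Zv}_{p0}^{\inn},\ldots,\overline{\Zv}_{qk}^{\inn}\bigr)
\]
goes through unchanged since each $\widetilde{\Om}$ is a function of strictly earlier $\overline{\Zv}$'s.

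For part (3), I would again proceed inductively. The base case follows because $\widetilde{\Om}_{p0}^{\inn/\out}$ is a deterministic orthogonal matrix, so $[\ide{\pv}_0^{\inn/\out}]_i = [\overline{\Zv}_{p0}^{\inn/\out}]_i \,[\Pm_{p0}^{\inn/\out}]^{1/2}$ is a zero-mean Gaussian row vector in $\RR^d$ with covariance $\Pm_{p0}^{\inn/\out}$, and the $\inn$ and $\out$ components are independent because $\overline{\Zv}_{p0}^{\inn}$ and $\overline{\Zv}_{p0}^{\out}$ are. For the inductive step, the representation from part (1) exhibits $[\ide{\pv}_k^{\inn/\out}]_i$ as a linear combination (with matrix coefficients $[\cv_{pk}^{\inn/\out}]_r$) of Gaussian row vectors $[\overline{\Zv}_{pr}^{\inn/\out}]_i[\Pm_{pr}^{\inn/\out}]^{1/2}$, so the Cramér--Wold device gives joint Gaussianity of $([\ide{\pv}_0^{\inn/\out}]_i,\ldots,[\ide{\pv}_k^{\inn/\out}]_i)$. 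Zero mean and input--output independence follow from the corresponding properties of the $\overline{\Zv}$'s.

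I expect the main obstacle to be the careful non-commutative bookkeeping in part (1), specifically verifying that the recursion \eqref{eq:lemma_c_def} for $[\cv_{pk}^{\inn/\out}]_r$ is consistent with the $d\times d$ right-multiplication convention, and that the identity $\Om_{pk}^{\inn}\overline{\Zv}_{pk}^{\inn} = \Bm_{\Cm_{vk}^{\inn}}^{\perp}\Zv_{pk}^{\inn} + \Bm_{\Cm_{vk}^{\inn}}\breve{\Zv}_{pk}^{\inn}$ survives when the Gaussian matrices are multiplied on the right by $[\Pm_{pk}^{\inn}]^{1/2}$, so that the deviance term $\Delm_{pk}^{\inn}$ in Lemma \ref{lem:cond_dists_general} really does vanish in passing to $\ide{\pv}_k^{\inn}$. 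The rest of the proof is then essentially a translation of the scalar proof with scalar products replaced by appropriately-ordered matrix products.
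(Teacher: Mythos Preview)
Your proposal is correct and follows exactly the approach the paper takes: the paper's proof simply states that the argument is ``essentially identical to that of Lemma \ref{lem:joint_dists} once all vector quantities are replaced by their matrix analogs,'' and your outline is a faithful (indeed more detailed) execution of that substitution. The non-commutative bookkeeping concern you flag is the right thing to watch, but the paper does not require any additional ideas beyond what you describe.
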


\begin{proof}
  The proof of this statement is again essentially identical to that of Lemma \ref{lem:joint_dists} once all vector quantities are replaced by their matrix analogs.
\end{proof}

Finally, we can define limiting quantities for the matrix form of the general recursion. Specifically, if we set
\begin{align*}
  \overline{U}^{\inn/\out}_k &= \left( [U^{\inn/\out}_0]^T,\ldots,[U^{\inn/\out}_{k}]^T \right)^T\\
  &= \left( \left[U^{\inn/\out}_0  \right]_1,\ldots,\left[U^{\inn/\out}_0\right]_d,\ldots,\left[U^{\inn/\out}_k  \right]_1,\ldots,\left[U^{\inn/\out}_k\right]_d \right)^T\in\RR^{d\times(k+1)},
\end{align*}
then we can define
\begin{equation}
  \label{eq:limiting_general}
  \covm^{\inn/\out}_{pk} = \E \left\lbrace\overline{U}^{\inn/\out}_k[\overline{U}^{\inn/\out}_k]^T\right\rbrace \in \RR^{d(k+1)\times d(k+1)},\quad \bv_{uk} = \E\left\lbrace \overline{U}^{\inn/\out}_{k-1}[U^{\inn/\out}_k]^T\right\rbrace\in\RR^{kd\times d}.
\end{equation}
In terms of these, we can then define
\begin{equation}
  \label{eq:limiting_general2}
  \Bm^{\inn/\out}_{pk} = \left[ \covm^{\inn/\out}_{p(k-1)} \right]^{-1}\bv_{uk}, \quad \Pm^{\inn/\out}_{pk}=\left[ \covm_{pk}^{\inn/\out} \right]_{(k,k)} - [\bv_{uk}^{\inn/\out}]^T\left[ \covm_{p(k-1)}^{\inn/\out} \right]^{-1}\bv_{uk}^{\inn/\out},
\end{equation}
where the $(i,j)$ indexing notation is used to extract the $d\times d$ submatrix with top left index $((i-1)d + 1, (j-1)d+1)$.

With these results, it is easy to see how the proof of Lemma \ref{lem:main_general_long} extend to the matrix case. For parts $(g)$ and $(a)$, which are concerned with the broad components of the $\Delm$ terms, the only differences arise from the presence of matrix operations replacing scalar operations (in particular due to the introduction of the Gram matrices). Concentration for matrix multiplications can be established as we have done for other matrix products in the proof of Lemma \ref{lem:main_general_long} (specifically using our concentration results for scalar products and sums). Concentration for matrix square roots is more complicated, but for $d\leq 4$, explicit formulas in terms of radicals are possible, to which are existing suite of concentration tools can be applied. In each case, the matrix operations which replace scalar operations are all of dimension $d$, which is fixed and independent of $N$ and $k$. Hence, whatever penalty is incurred in the concentration rate we expect to be proportional to $d^2$. In particular, we can view this as being absorbed into the universal constants $c$ and $C$.

Part $(b)$ of the proof is almost entirely analogous as the same arguments now simply need to be applied to pseudo-Lipschitz functions with $2kd$ rather than $2k$ inputs. The remaining parts of the proof are essentially just applications of $(b)$ to establish concentration for various inner products, which can also be established in exactly the same way.

\bibliographystyle{Alpha}
\bibliography{VAMP_bib}

\end{document}